\newcommand{\area}{\mathsf{area}}
\newcommand{\dinv}{\mathsf{dinv}}
\newcommand{\bounce}{\mathsf{bounce}}
\newtheorem{theorem}{Theorem}[section]
\newtheorem{proposition}[theorem]{Proposition}
\newtheorem{conjecture}[theorem]{Conjecture}
\newtheorem{corollary}[theorem]{Corollary}
\newtheorem{lemma}[theorem]{Lemma}
\theoremstyle{remark}
\newtheorem{remark}[theorem]{Remark}
\numberwithin{equation}{section}
\begin{document}

\title{Decorated Dyck paths, the Delta conjecture,\\ and a new $q,t$-square}

\author[M. D'Adderio]{Michele D'Adderio}
\address{Universit\'e Libre de Bruxelles (ULB)\\D\'epartement de Math\'ematique\\ Boulevard du Triomphe, B-1050 Bruxelles\\ Belgium}\email{mdadderi@ulb.ac.be}

\author[A. Vanden Wyngaerd]{Anna Vanden Wyngaerd}
\address{Universit\'e Libre de Bruxelles (ULB)\\D\'epartement de Math\'ematique\\ Boulevard du Triomphe, B-1050 Bruxelles\\ Belgium}\email{anvdwyng@ulb.ac.be}

\begin{abstract}
We discuss the combinatorics of the decorated Dyck paths appearing in the Delta conjecture framework in \cite{haglundremmelwilson} and \cite{zabrocki}, by introducing two new statistics, $\mathsf{bounce}$ and $\mathsf{bounce}'$. We then provide plethystic formulae for their $q,t$-enumerators, by proving in this way a decorated version of Haglund's $q,t$-Schr\"{o}der theorem, answering a question in \cite{haglundremmelwilson}. In particular we provide both an algebraic and a combinatorial explanation of a symmetry conjectured in \cite{haglundremmelwilson} and \cite{zabrocki}. Finally, we show a link between these results and the $q,t$-square theorem of Can and Loehr \cite{canloehr} (conjectured by Loehr and Warrington \cite{loehrwarringtonqtsquare}), getting a new $q,t$-square, whose specialization at $t=1/q$ is identical to the classical one, and hinting to a broader phenomenon of this type.
\end{abstract}

\maketitle

\tableofcontents

\section*{Introduction}

Motivated by the problem of proving the Schur positivity of the modified Macdonald polynomials, Garsia and Haiman introduced the $\mathfrak{S}_n$-module of diagonal harmonics, and they conjectured that its Frobenius characteristic is $\nabla e_n$, where the operator $\nabla$ is just a special case of a whole family of so called delta operators $\Delta_f$, where $f$ is a symmetric function.

In \cite{hhlru} a combinatorial formula for $\nabla e_n$ has been conjectured, in terms of labelled Dyck paths (i.e. parking functions). This formula has been known as the \emph{shuffle conjecture}, and it has been recently proved by Carlsson and Mellit in \cite{carlssonmellit}, where they actually proved a refinement of this formula, known as the \emph{compositional shuffle conjecture}, proposed in \cite{haglundmorsezabrocki}.

A generalization of the shuffle conjecture, known as the \emph{delta conjecture}, has been proposed in \cite{haglundremmelwilson}, in terms of decorated labelled Dyck paths. Some special cases and consequences of this conjecture have been proved: see \cite{haglundremmelwilson} for a survey of these partial results. We should mention here also the work of Romero in \cite{romero}, who proved the conjecture at $q=1$, as it does not appear in \cite{haglundremmelwilson}. To this date, the general problem remains open.

\medskip

In \cite{zabrocki}, Zabrocki established one of these consequences, the so called \emph{$4$-variable Catalan conjecture}. In fact, this conjecture, stated in \cite{haglundremmelwilson}, makes other predictions that have not been explained in \cite{zabrocki}. One of them is the symmetry 
\begin{equation} \label{eq:HRW_symmetry}
\langle\Delta_{h_{\ell}}\nabla e_{n-\ell},s_{k+1,1^{n-\ell-k-1}}\rangle=\langle\Delta_{h_{k}}\nabla e_{n-k},s_{\ell+1,1^{n-k-\ell-1}}\rangle\qquad \text{for }n>k+\ell.
\end{equation}

In \cite{haglundremmelwilson} and \cite{wilsonPhD}, the authors asked for a proof of the decorated version of the famous $q,t$-Schr\"{o}der theorem of Haglund \cite{haglundschroeder}. This is a combinatorial interpretation of the formula
\begin{equation} \label{eq:qtSchroederLHS}
\langle \Delta_{e_{a+b-k-1}}'e_{a+b},e_a h_{b}\rangle
\end{equation}
as a sum of weights $q^{\mathsf{dinv}(D)}t^{\mathsf{area}(D)}$ of decorated Dyck paths.
 
\medskip
In the present article we provide a solution of the last two problems. In particular we give both an algebraic and a combinatorial explanation of the symmetry in \eqref{eq:HRW_symmetry}, and we prove three combinatorial interpretations of \eqref{eq:qtSchroederLHS}.

In fact, we introduce two more statistics on decorated Dyck paths, $\mathsf{bounce}$ and $\mathsf{bounce}'$, which both extend Haglund's bounce statistic on Dyck paths. We prove that the zeta map of Haglund (also known as sweep map) switches the bistatistics $(\mathsf{dinv},\mathsf{area})$ and $(\mathsf{area},\mathsf{bounce})$ and the numbers of decorated rises and decorated peaks. Also, we provide a recursion for the $q,t$-enumerators of these two bistatistics, which matches a recursion in \cite{wilsonPhD}.

We prove a recursion also for the $q,t$-enumerator of the bistatistic $(\mathsf{area},\mathsf{bounce}')$, and we define a bijection $\psi$ on decorated Dyck paths that preserves the bistatistic $(\mathsf{area},\mathsf{bounce}')$ but switches the numbers of decorated rises and decorated peaks: this will eventually explain combinatorially the symmetry in \eqref{eq:HRW_symmetry}.

On the symmetric function side, we introduce three families of plethystic formulae, and we provide recursions for each one of them. The key ingredient to prove these recursions is the following summation formula, that generalizes a result of Haglund.
\begin{theorem}
	For $m,k\geq 1$ and $\ell\geq 0$, we have
	\begin{equation} 
	\sum_{\gamma\vdash m}\frac{\widetilde{H}_\gamma[X]}{w_\gamma} h_k[(1-t)B_\gamma]e_\ell[B_\gamma]=\qquad \qquad \qquad \qquad  \qquad \qquad  \qquad \qquad 
	\end{equation}
	\begin{align*} 
	=\sum_{j=0}^{\ell} t^{\ell-j}\sum_{s=0}^{k}q^{\binom{s}{2}} \begin{bmatrix}
	s+j\\
	s
	\end{bmatrix}_q \begin{bmatrix}
	k+j-1\\
	s+j-1
	\end{bmatrix}_qh_{s+j}\left[\frac{X}{1-q}\right] h_{\ell-j}\left[\frac{X}{M}\right] e_{m-s-\ell}\left[\frac{X}{M}\right].
	\end{align*}
\end{theorem}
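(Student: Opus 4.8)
The plan is to reduce the statement to its $\ell=0$ specialization, which is exactly the summation formula of Haglund that the theorem generalizes, and then to track the effect of the extra factor $e_\ell[B_\gamma]$ by means of the delta operator $\Delta_{e_\ell}$. Since $\Delta_{e_\ell}$ acts on $\widetilde{H}_\gamma[X]$ by the scalar $e_\ell[B_\gamma]$, and since $\sum_{\gamma\vdash m}\widetilde{H}_\gamma[X]/w_\gamma=e_m[X/M]$, applying $\Delta_{e_\ell}$ to both sides of Haglund's formula (both sides are homogeneous of degree $m$, so this is legitimate) converts its left-hand side into the left-hand side of our theorem. Hence it suffices to compute $\Delta_{e_\ell}$ applied to each summand $h_s\!\left[\frac{X}{1-q}\right]e_{m-s}\!\left[\frac{X}{M}\right]$ occurring on Haglund's right-hand side, and then reassemble the sum.

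The computational heart is therefore a closed formula for $\Delta_{e_\ell}\!\left(h_s\!\left[\frac{X}{1-q}\right]e_n\!\left[\frac{X}{M}\right]\right)$. Two observations make this feasible despite $\Delta_{e_\ell}$ satisfying no Leibniz rule: first, $h_s\!\left[\frac{X}{1-q}\right]$ equals $\widetilde{H}_{(s)}[X]$ up to the scalar $(q;q)_s=\prod_{i=1}^s(1-q^i)$, so $\Delta_{e_j}$ acts on it diagonally with eigenvalue $e_j[B_{(s)}]=e_j[1+q+\cdots+q^{s-1}]=q^{\binom{j}{2}}\begin{bmatrix}s\\j\end{bmatrix}_q$; and second, $\Delta_{e_\ell}$ applied to $e_n\!\left[\frac{X}{M}\right]$ alone has a known closed product form. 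To handle the two factors simultaneously I would expand $e_n\!\left[\frac{X}{M}\right]=\sum_{\delta\vdash n}\widetilde{H}_\delta[X]/w_\delta$ and use the Cauchy identity for $\{\widetilde{H}_\mu\}$ together with Macdonald--Koornwinder reciprocity to evaluate the resulting sums, so that $e_\ell$ of the combined multiset of arm--leg contents distributes as a $q$-convolution of the two single-factor answers; this should land on an expression of the shape $\sum_{j=0}^{\ell}t^{\ell-j}\,c_{s,j}(q)\,h_{s+j}\!\left[\frac{X}{1-q}\right]h_{\ell-j}\!\left[\frac{X}{M}\right]e_{n-\ell}\!\left[\frac{X}{M}\right]$, with $c_{s,j}(q)$ an explicit product of $q$-binomial coefficients; the cleanest route to this intermediate identity is probably a direct application of reciprocity, or failing that a separate induction on $\ell$.

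Finally, I would substitute this back with $n=m-s$, interchange the $s$- and $j$-summations, reindex, and reconcile the accumulated coefficients with $q^{\binom{s}{2}}\begin{bmatrix}s+j\\s\end{bmatrix}_q\begin{bmatrix}k+j-1\\s+j-1\end{bmatrix}_q$. Here one must be careful: the products $h_a\!\left[\frac{X}{1-q}\right]h_b\!\left[\frac{X}{M}\right]e_c\!\left[\frac{X}{M}\right]$ are not linearly independent, so the two sides need not agree term by term, and one also has to use the plethystic identity $\sum_{j=0}^{\ell}t^{\ell-j}h_j\!\left[\frac{X}{1-q}\right]h_{\ell-j}\!\left[\frac{X}{M}\right]=h_\ell\!\left[\frac{X}{M}\right]$ (which follows from $\frac{1}{1-q}+\frac{t}{M}=\frac{1}{M}$) and its refinements to move $h_\bullet\!\left[\frac{X}{M}\right]$-factors between the two expressions, after which a $q$-Chu--Vandermonde convolution collapses the $q$-binomials. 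The step I expect to be the main obstacle is the middle one: because $\Delta_{e_\ell}$ does not distribute over products one is forced through the $\widetilde{H}$-expansion, and it is precisely the rectangular shape $(s)$ and the alphabet $X/M$ that keep the otherwise intractable Macdonald structure constants under control via reciprocity; the opening reduction and the closing $q$-binomial bookkeeping are comparatively routine.
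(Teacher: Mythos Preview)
Your opening reduction---apply $\Delta_{e_\ell}$ to both sides of Haglund's $\ell=0$ formula---is sound and does transform the left-hand side correctly. The difficulty is that this reformulates the theorem rather than reducing it. By the Garsia--Hicks--Stout expansion \eqref{eq:GHS_2_6}, the product $h_s[X/(1-q)]\,e_n[X/M]$ expands in the Macdonald basis with coefficients that are linear combinations of $h_r[(1-t)B_\mu]$ for $r\le s$, and Haglund's formula \eqref{eq:Haglund_summation} is precisely the inverse of this linear transformation. Hence a closed formula for $\Delta_{e_\ell}\bigl(h_s[X/(1-q)]\,e_n[X/M]\bigr)$ for all $s$ is \emph{equivalent} to the theorem for all $k$; substituting one into the other collapses to a tautology. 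The ``main obstacle'' you flag is therefore not merely the hard step---it is the whole theorem in disguise, and you have not given a concrete way to prove it.

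Your conjectured shape for that intermediate formula is also not correct. Already for $\ell=1$ one computes
\[
\Delta_{e_1}\!\left(h_s\!\left[\tfrac{X}{1-q}\right]e_n\!\left[\tfrac{X}{M}\right]\right)
= t\,h_s\!\left[\tfrac{X}{1-q}\right]h_1\!\left[\tfrac{X}{M}\right]e_{n-1}\!\left[\tfrac{X}{M}\right]
+[s{+}1]_q\,h_{s+1}\!\left[\tfrac{X}{1-q}\right]e_{n-1}\!\left[\tfrac{X}{M}\right]
+[s]_q\,h_s\!\left[\tfrac{X}{1-q}\right]e_n\!\left[\tfrac{X}{M}\right],
\]
with a genuine $e_n[X/M]$ term that cannot be absorbed into the $e_{n-1}[X/M]$ terms by the relation $\sum_j t^{\ell-j}h_j[X/(1-q)]h_{\ell-j}[X/M]=h_\ell[X/M]$ you cite (that identity trades $h$-factors only). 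In general the answer involves $e_{n-\ell+i}[X/M]$ for all $0\le i\le\ell$, not only $i=0$. Neither an induction on $\ell$ (there is no usable recursion relating $\Delta_{e_\ell}$ to $\Delta_{e_{\ell-1}}$ on such products) nor a bare appeal to reciprocity avoids this.

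The paper proceeds differently. It does \emph{not} start from the $\ell=0$ case but from the opposite extreme $\ell=m$, which it establishes independently via the $E_{n,k}$'s. That case yields a closed formula for $\nabla\bigl(h_i[X/(1-q)]\,e_j[X/M]\bigr)$ (Proposition~\ref{prop:key_identity}). The point is that the Garsia--Haiman--Tesler identity \eqref{eq:glenn_formula} packages the desired sum as $\sum_d e_{m-d}[X/M]\,(f[X])_d$ for an $f$ determined by $\nabla^{-1}\tau_{-\epsilon}f[X]=h_k[(X+1)/(1-q)]\,e_\ell[(X+1)/M]$; computing $f$ therefore requires only the action of $\nabla$ (not of a general $\Delta_{e_\ell}$) on the building blocks $h_i[X/(1-q)]\,e_j[X/M]$, which is exactly what Proposition~\ref{prop:key_identity} supplies. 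The remaining work is an elementary $q$-binomial simplification.
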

Moreover, we establish an identity that will provide both an explanation of the symmetry in \eqref{eq:HRW_symmetry} and a crucial step in the proof of the decorated $q,t$-Schr\"{o}der.
\begin{theorem} 
	For all $a,b,k\in \mathbb{N}$, with $a\geq 1$, $b\geq 1$ and $1\leq k\leq a$, we have
	\begin{equation}  \label{eq:intro1}
	\langle \Delta_{e_a}'\Delta_{e_{a+b-k-1}}'e_{a+b},h_{a+b}\rangle = \langle \Delta_{h_k}\Delta_{e_{a-k}}' e_{a+b-k},e_{a+b-k}\rangle.
	\end{equation}
\end{theorem}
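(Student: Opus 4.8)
The plan is to pass to the modified Macdonald basis $\{\widetilde{H}_\mu\}$, reduce each side of \eqref{eq:intro1} to an explicit sum of rational functions in $q,t$, and then match the two sums by means of the summation formula above and the recursions it produces. Write $M=(1-q)(1-t)$ and, for a partition $\mu$, let $B_\mu,\Pi_\mu,T_\mu,w_\mu$ be the usual statistics, so that $\Delta_f\widetilde{H}_\mu=f[B_\mu]\widetilde{H}_\mu$, $\Delta'_f\widetilde{H}_\mu=f[B_\mu-1]\widetilde{H}_\mu$, $e_N=\sum_{\mu\vdash N}\frac{MB_\mu\Pi_\mu}{w_\mu}\widetilde{H}_\mu$, and $\langle\widetilde{H}_\mu,e_ah_{|\mu|-a}\rangle=e_a[B_\mu]$ (in particular $\langle\widetilde{H}_\mu,h_{|\mu|}\rangle=1$ and $\langle\widetilde{H}_\mu,e_{|\mu|}\rangle=T_\mu$). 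Since the $\Delta'_{e_i}$, $\Delta_{h_k}$ and $\nabla$ are self-adjoint and diagonal in this basis, the left-hand side of \eqref{eq:intro1} becomes
\[
\sum_{\mu\vdash a+b}\frac{MB_\mu\Pi_\mu}{w_\mu}\,e_a[B_\mu-1]\,e_{a+b-k-1}[B_\mu-1],
\]
while the right-hand side, absorbing the factor $T_\nu$ produced by $\langle\widetilde{H}_\nu,e_{a+b-k}\rangle$ into a $\nabla$, equals $\langle\Delta_{h_k}\Delta'_{e_{a-k}}\nabla e_{a+b-k},h_{a+b-k}\rangle$, that is
\[
\sum_{\nu\vdash a+b-k}\frac{MB_\nu\Pi_\nu T_\nu}{w_\nu}\,h_k[B_\nu]\,e_{a-k}[B_\nu-1].
\]
Thus \eqref{eq:intro1} is equivalent to the equality of these two sums; note that after this manoeuvre both are pairings against a complete homogeneous $h_N$, i.e.\ both are \emph{sums of the modified Macdonald coefficients} of an explicit symmetric function.

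Two elementary remarks orient the argument. First, since $\Delta_{e_j}=\Delta'_{e_j}+\Delta'_{e_{j-1}}$, the left-hand sum equals $\sum_{j=0}^{a}(-1)^{a-j}\langle\Delta'_{e_{a+b-k-1}}e_{a+b},e_jh_{a+b-j}\rangle$, an alternating combination of the quantities \eqref{eq:qtSchroederLHS} appearing in the decorated $q,t$-Schr\"oder statement — this is the precise sense in which \eqref{eq:intro1} feeds that theorem. Second, and this is the crux, the left sum runs over partitions of $a+b$ whereas the right sum runs over partitions of $a+b-k$: it is exactly this drop of $k$ boxes that the summation formula above is designed to realize, as it rewrites any sum of the shape $\sum_{\gamma\vdash m}\frac{\widetilde{H}_\gamma}{w_\gamma}h_k[(1-t)B_\gamma]e_\ell[B_\gamma]$ as an explicit superposition of products $h_{s+j}[X/(1-q)]\,h_{\ell-j}[X/M]\,e_{m-s-\ell}[X/M]$ with no partition sum surviving.

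Concretely, I would bring each side into a form to which the summation formula applies. On the left, expand the symmetric-function-valued quantity $MB_\mu\Pi_\mu\,e_a[B_\mu-1]\,e_{a+b-k-1}[B_\mu-1]$ (for fixed $m=a+b$ this is $g[B_\mu]$ for a definite $g\in\Lambda$, and the $MB_\mu$ factor already contains a $(1-t)B_\mu$) in the basis of products $h_{k'}[(1-t)B_\mu]\,e_{\ell'}[B_\mu]$, which span $\Lambda$ degree by degree; do the same on the right with $MB_\nu\Pi_\nu T_\nu\,h_k[B_\nu]\,e_{a-k}[B_\nu-1]$ and $m=a+b-k$. Applying the summation formula to each resulting sum — paired with $h_m$, so as to extract the scalar $\sum_{\gamma\vdash m}\frac{1}{w_\gamma}(\cdots)$ — then produces, after the plethystic substitutions $X\mapsto X/M$, $X\mapsto X/(1-q)$, a closed expression for each side. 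Probably the cleanest way to organise this is the one the paper already uses for its three families of plethystic formulae: distil from the summation formula a recursion in the parameters $(a,b,k)$ satisfied separately by each side of \eqref{eq:intro1}, and then check one base case — for instance $b=1$, where $\Delta'_{e_{m-1}}e_m=\nabla e_m$ collapses an operator and both sides can be computed outright, or $k=a$ — so that \eqref{eq:intro1} follows by induction.

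I expect the main obstacle to be the bookkeeping inside that final comparison: the summation formula outputs a double sum carrying the $q$-binomial weights $q^{\binom{s}{2}}$ times $q$-binomial coefficients and powers of $t$ displayed above, whereas the right-hand side of \eqref{eq:intro1} is a single clean term, so one must prove that, for the specific $k$ and $\ell$ forced here and after the substitutions $X\mapsto X/M$, $X\mapsto X/(1-q)$ interact with the pairing partner $h_N$, that double sum telescopes down to one term. A secondary difficulty — the reason no soft argument such as a conjugation $\mu\leftrightarrow\mu'$ or a formal symmetry can close the gap — is the genuine asymmetry between the two sides: the left carries $e_a[B_\mu-1]$ and is an object of ``$e$-type'', while the right carries $h_k[B_\nu]$ together with a $\nabla$ and is of ``$h$-type'', and it is precisely the summation formula that lets one pass between the two worlds rather than merely recognise a symmetry between them.
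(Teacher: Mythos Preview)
Your initial reduction is correct: both sides are indeed
\[
\text{LHS}=\sum_{\mu\vdash a+b}\frac{MB_\mu\Pi_\mu}{w_\mu}\,e_a[B_\mu-1]\,e_{a+b-k-1}[B_\mu-1],
\qquad
\text{RHS}=\sum_{\nu\vdash a+b-k}\frac{MB_\nu\Pi_\nu T_\nu}{w_\nu}\,h_k[B_\nu]\,e_{a-k}[B_\nu-1].
\]
After that, however, what you have written is a plan rather than a proof, and the plan has a genuine gap. The summation formula of Section~3 governs sums of the shape $\sum_{\gamma}\frac{\widetilde{H}_\gamma[X]}{w_\gamma}h_r[(1-t)B_\gamma]e_\ell[B_\gamma]$; the sums you need carry instead the factor $MB_\mu\Pi_\mu/w_\mu$, and $\Pi_\mu$ is \emph{not} of the form $g[B_\mu]$ for a fixed homogeneous symmetric function $g$. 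Your proposal to ``expand $MB_\mu\Pi_\mu\,e_a[B_\mu-1]\,e_{a+b-k-1}[B_\mu-1]$ in the basis of products $h_{k'}[(1-t)B_\mu]\,e_{\ell'}[B_\mu]$'' therefore does not make sense as stated, and no mechanism is given for getting rid of $\Pi_\mu$. The subsequent steps (``distil a recursion'', ``check one base case'') are not carried out, and you yourself flag the telescoping of the double sum as an unresolved obstacle. In short, the summation formula is the wrong tool for this particular identity, and the proposal does not close.

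The paper's proof takes an entirely different route and never invokes the summation formula. Starting from the RHS, it writes $h_k[B_\gamma]e_{a-k-i}[B_\gamma]=\sum_{\mu\vdash a-i}\frac{e_k[B_\mu]\widetilde{H}_\mu[MB_\gamma]}{w_\mu}$ via the Cauchy identity \eqref{eq:e_h_expansion}, then applies Macdonald--Koornwinder reciprocity \eqref{eq:Macdonald_reciprocity} to swap the roles of $\gamma$ and $\mu$. After a further application of the Pieri rule $e_k^*\widetilde{H}_\gamma=\sum_{\beta\supset_k\gamma}d_{\beta\gamma}^{(k)}\widetilde{H}_\beta$, the RHS becomes a sum over $\beta\vdash a+b$, matching the range of the LHS. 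Both sides are then sums over $\beta\vdash a+b$ of $e_a[B_\beta-1]\frac{\Pi_\beta M}{w_\beta}$ times a coefficient, and the proof reduces to the purely combinatorial lemma
\[
e_{n-k-1}[B_\beta-1]\,B_\beta=\sum_{\gamma\subset_k\beta}c_{\beta\gamma}^{(k)}\,B_\gamma T_\gamma\qquad(\beta\vdash n,\ 1\le k<n),
\]
proved by induction on $n-k$ using the Pieri recursion \eqref{eq:cmunu_recursion}. The key idea you are missing is this use of reciprocity plus Pieri coefficients to bridge the size gap between $a+b$ and $a+b-k$; no application of the Section~3 summation formula is needed or used.
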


Combining the algebraic recursions with the combinatorial ones, we are able to establish combinatorial interpretations of each family of plethystic formulae in terms of $q,t$-enumerators of bistatistics involving $\mathsf{area}$, $\mathsf{dinv}$, $\mathsf{bounce}$ and $\mathsf{bounce}'$. Combining these with \eqref{eq:intro1}, we are able to prove the three combinatorial interpretations of \eqref{eq:qtSchroederLHS} mentioned above (see Section~2 for the relevant definitions):
\begin{theorem}
For $a,b,k\in \mathbb{N}\cup\{0\}$, $a+b\geq k+1$, we have
	\begin{align}
	\langle \Delta_{e_{a+b-k-1}}'e_{a+b},e_a h_{b}\rangle & = \sum_{D\in  \mathcal{D}_{a+b}^{(b,k)}}q^{\mathsf{dinv}(D)}t^{\mathsf{area}(D)}\\
	 & =\sum_{D\in \mathcal{D}_{a+b}^{(b,k)}}q^{\mathsf{area}(D)}t^{\mathsf{bounce}(D)}\\
	  & =\sum_{D\in\mathcal{D}_{a+b}^{(b,k)}}q^{\mathsf{area}(D)}t^{\mathsf{bounce'}(D)}.
	\end{align}
\end{theorem}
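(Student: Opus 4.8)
The plan is to prove all three equalities at once by the classical strategy for $q,t$-Schröder–type results: exhibit a single recursion, in the parameters $(a,b,k)$, that is satisfied both by the scalar product $\langle \Delta_{e_{a+b-k-1}}'e_{a+b},e_a h_{b}\rangle$ in \eqref{eq:qtSchroederLHS} and by each of the three combinatorial $q,t$-enumerators over $\mathcal{D}_{a+b}^{(b,k)}$, and to check that the initial conditions coincide. Since the recursion determines the quantity uniquely, all of these expressions are then forced to be equal.

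\textbf{Algebraic side.} First I would bring $\langle \Delta_{e_{a+b-k-1}}'e_{a+b},e_a h_{b}\rangle$ into a form to which the machinery of the earlier sections applies. Expanding $\Delta'_{e_{a+b-k-1}}e_{a+b}$ in the modified Macdonald basis and $e_ah_b$ via the appropriate Cauchy-type identity, and then peeling off one row, produces a sum of exactly the shape $\sum_{\gamma\vdash m}\widetilde{H}_\gamma[X]/w_\gamma\, h_\bullet[(1-t)B_\gamma]\,e_\bullet[B_\gamma]$ that the first displayed Theorem (the summation formula) collapses into a $q$-binomial-weighted, $t$-shifted sum. Iterating this yields a recursion for a member of the three families of plethystic formulae introduced earlier, and one verifies — this is where the identity \eqref{eq:intro1} enters — that $\langle \Delta_{e_{a+b-k-1}}'e_{a+b},e_a h_{b}\rangle$ really is that member: concretely, \eqref{eq:intro1} is what converts the $e_ah_b$-pairing, after applying $\Delta'_{e_a}$, into the ``$\langle\,\cdot\,,e_{a+b-k}\rangle$'' pairing on which the recursion is transparent.

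\textbf{Combinatorial side and base cases.} Next I would show that each of $\sum_{D\in\mathcal D_{a+b}^{(b,k)}}q^{\dinv(D)}t^{\area(D)}$, $\sum_{D\in\mathcal D_{a+b}^{(b,k)}}q^{\area(D)}t^{\bounce(D)}$, and $\sum_{D\in\mathcal D_{a+b}^{(b,k)}}q^{\area(D)}t^{\mathsf{bounce'}(D)}$ satisfies the same recursion. For the two bounce enumerators this is a decomposition along the first step of the relevant bounce path, recording how the $b$ marked rows and the $k$ decorations split between the two resulting pieces; for the $(\dinv,\area)$ enumerator one either removes the bottom row of the path or, more cheaply, transports the $(\area,\bounce)$ recursion through the zeta/sweep map, which (as established earlier) restricts to a bijection on $\mathcal D_{a+b}^{(b,k)}$ carrying $(\dinv,\area)$ to $(\area,\bounce)$ and preserving the numbers of decorated rises and decorated peaks — this already proves the second equality of the Theorem bijectively. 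For the $\mathsf{bounce'}$ enumerator one uses the bijection $\psi$, which preserves $(\area,\mathsf{bounce'})$ and swaps decorated rises with decorated peaks, as the combinatorial shadow of \eqref{eq:intro1}, to match it with the same plethystic member. Finally one checks the terminal cases of the recursion: at $k=0$ one has $\Delta'_{e_{a+b-1}}e_{a+b}=\nabla e_{a+b}$, so the statement is Haglund's original $q,t$-Schröder theorem, and at $b=0$ it reduces to the decorated $q,t$-Catalan/Dyck results already in hand; these match the combinatorial initial data, closing the induction.

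\textbf{Main obstacle.} The delicate point is making all of the recursions coincide literally rather than superficially. On the algebraic side this means extracting the correct recursion from the summation formula while keeping the two $\Delta'$ operators and the $e_ah_b$ bookkeeping under control, for which \eqref{eq:intro1} is indispensable. On the combinatorial side the real work is the bounce-path decomposition for $\mathsf{bounce'}$: it is a brand-new statistic whose bounce path behaves differently from the classical one, so verifying that its $q,t$-enumerator obeys exactly the recursion of the plethystic member — and that the distribution of the $b$ marked rows and the $k$ decorations across the decomposition is the one demanded by the algebra — is the crux of the whole argument.
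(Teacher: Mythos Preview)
Your overall strategy --- match a recursion on the symmetric-function side with a recursion on each combinatorial enumerator, then check base cases --- is exactly the paper's. But two aspects of your sketch are not quite right and would stall a genuine write-up.

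First, a recursion ``in the parameters $(a,b,k)$'' is not fine enough. The paper refines everything by an additional parameter: it expands $e_{n-\ell}=\sum_k E_{n-\ell,k}$ and works with the families $\widetilde F_{n,k}^{(d,\ell)}$, $\widetilde G_{n,k}^{(d,\ell)}$, $\overline H_{n,k,i}^{(d,\ell)}$, whose recursions (Theorems~4.2--4.7) only close up with this extra ``touching'' index. On the combinatorial side the matching refinement is by the number of zeros in the area word / bounce word (the sets $\widetilde S_{n,k}$, $\widehat S_{n,k}$, $\overline T_{n,k,i}$). Without this extra parameter the recursion does not determine the quantity, and this is where the summation formula is actually applied. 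The identity \eqref{eq:intro1} then enters not inside the recursion but only at the very end (Proposition~5.4), to identify $\sum_k \widetilde F_{n,k}^{(d,\ell)}$ with $\langle \Delta'_{e_{n-\ell-1}}e_n, s_{d+1,1^{n-d-1}}\rangle$.

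Second, the bijections $\zeta$ and $\psi$ do not act on the full $\mathcal D_{a+b}^{(b,k)}$: $\zeta$ goes from $\widetilde{\mathcal D}_n^{(a,b)}$ to $\widehat{\mathcal D}_n^{(b,a)}$ and $\psi$ is an involution on $\overline{\mathcal D}_n^{(a,b)}$. So the paper first proves the restricted identity $\langle \Delta'_{e_{a+b-k-1}}e_{a+b}, s_{b+1,1^{a-1}}\rangle = \mathbf D\widetilde{\mathcal D}_{a+b}^{(b,k)} = \mathbf{B'}\overline{\mathcal D}_{a+b}^{(b,k)} = \mathbf B\widehat{\mathcal D}_{a+b}^{(k,b)}$ via matched recursions (Theorem~6.1), and only then deduces the $e_ah_b$ statement by adding the cases $b$ and $b-1$. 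In particular the $\mathsf{bounce}'$ enumerator is handled by its own recursion (the $\overline H$--$\overline T$ match of Theorems~2.15 and~4.7), not by $\psi$; the map $\psi$ is invoked only afterwards, to pass from $\mathbf B\widehat{\mathcal D}^{(k,b)}+\mathbf B\widehat{\mathcal D}^{(k,b-1)}$ to $\mathbf B\mathcal D^{(b,k)}$ via the symmetry $\mathbf{B'}\overline{\mathcal D}^{(b,k)}=\mathbf{B'}\overline{\mathcal D}^{(k,b)}$. Your proposed base cases ($k=0$ as Haglund's theorem, $b=0$ as the Catalan case) are plausible sanity checks but are not the base cases of the actual recursion, which bottoms out at $k=n$ (the staircase path).
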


Finally, we establish a connection between the delta conjecture for $\Delta_{e_{n-1}}e_n$ and
the so called \emph{square conjecture} of Loehr and Warrington \cite{loehrwarringtonqtsquare}, recently proved in \cite{leven} after the breakthrough in \cite{carlssonmellit}. In fact, we show that, under an easy translation, the delta conjecture for $\Delta_{e_{n-1}}e_n$ can be interpreted as a new square conjecture, i.e. as a weighted sum over labelled square paths. In particular, the analogue of the $q,t$-square proved by Can and Loehr in \cite{canloehr} corresponds to a special case of our aforementioned results, providing in this way the new $q,t$-square mentioned in the title. This $q,t$-square will have many properties in common with the original one: in particular, it involves an extension of the classical statistics $\mathsf{area}$, $\mathsf{dinv}$ and $\mathsf{bounce}$ on Dyck paths to square paths, and it will have precisely the same evaluation at $t=1/q$. In fact, we prove a symmetric function identity that suggests that this phenomenon is not isolated.

\bigskip

This article is organized in the following way.

In Section~1 we provide some background on symmetric functions. In particular we fix some notation, and we collect some identities from the literature that we are going to need later in the text.

In Section~2 we discuss the combinatorics of decorated Dyck paths. In particular we discuss four statistics: $\mathsf{area}$ and $\mathsf{dinv}$, already introduced in \cite{haglundremmelwilson}, and two new ones, $\mathsf{bounce}$ and $\mathsf{bounce}'$. We show that Haglund's $\zeta$ map (also known as sweep map) behaves nicely with $\mathsf{area}$, $\mathsf{dinv}$ and $\mathsf{bounce}$. We introduce a $\psi$ map that behaves nicely with $\mathsf{area}$ and $\mathsf{bounce}'$. Finally we prove some combinatorial recursions for the related $q,t$-enumerators.

In Section~3 we prove a summation formula, which generalizes a formula of Haglund. This will be the key for the proofs in the next section.

In Section~4 we introduce three families of plethystic formulae, and we use the result in Section~3 to provide recursions among these families.

In Section~5 we prove another identity, which will provide in particular an algebraic proof of a symmetry conjectured in \cite{haglundremmelwilson} (cf. also \cite{zabrocki}).

In Section~6 we will draw the consequences of all the results in previous sections: we will provide combinatorial interpretations of our three families of plathystic formulae in terms of our $q,t$-enumerators coming from decorated Dyck paths. In particular we will provide a decorated version of Haglund's $q,t$-Schr\"{o}der theorem, and we will show how our $\psi$ map provide a combinatorial explanation of the symmetry conjectured in \cite{haglundremmelwilson} (cf. also \cite{zabrocki}).

In Section~7 we show the relation between our results and the square conjecture of Loehr and Warrington.

\section{Symmetric functions}

\subsection{Symmetric function notation}
The main references that we will use for symmetric functions
are \cite{macdonald} and \cite{stanleybook}. 

The standard bases of the symmetric functions that will appear in our
calculations are the complete $\{h_{\lambda}\}_{\lambda}$, elementary $\{e_{\lambda}\}_{\lambda}$, power $\{p_{\lambda}\}_{\lambda}$ and Schur $\{s_{\lambda}\}_{\lambda}$ bases.

\medskip

\emph{We will use implicitly the usual convention that $e_0=h_0=1$ and $e_k=h_k=0$ for $k<0$.}

\medskip

The ring $\Lambda$ of symmetric functions can be thought of as the polynomial ring in the power
sum generators $p_1, p_2, p_3,\dots$. This ring has a grading $\Lambda=\bigoplus_{n\geq 0}\Lambda^{(k)}$ given by assigning degree $i$ to $p_i$ for all $i\geq 1$. As we are working with Macdonald symmetric functions
involving two parameters $q$ and $t$, we will consider this polynomial ring over the field $\mathbb{Q}(q,t)$.
We will make extensive use of the \emph{plethystic notation} (see \cite{loehrremmel} for a nice exposition). 

With this notation we will be able to add and subtract alphabets, which will be represented as sums of monomials $X = x_1 + x_2 + x_3+\cdots $. Then, given a symmetric function $f$, and thinking of it as an element of $\Lambda$, we denote by $f[X]$ the expression $f$ with $p_k$ replaced by $x_{1}^{k}+x_{2}^{k}+x_{3}^{k}+\cdots$, for all $k$. 

We have for example the addition formulas 
\begin{equation}
p_k[X+Y]=p_k[X]+p_k[Y]\quad \text{ and } \quad p_k[X-Y]=p_k[X]-p_k[Y],
\end{equation}
and
\begin{equation} \label{eq:e_h_sum_alphabets}
e_n[X+Y]=\sum_{i=0}^ne_{n-i}[X]e_i[Y]\quad \text{ and } \quad  h_n[X+Y]=\sum_{i=0}^nh_{n-i}[X]h_i[Y].
\end{equation}
Notice in particular that $p_k[-X]$ equals $-p_k[X]$ and not $(-1)^kp_k[X]$. As the latter sort of negative sign can be also useful, it is customary to use the notation $\epsilon$ to express it: we will have $p_k[\epsilon X] = (-1)^k p_k[X]$, so that, in general, 
\begin{equation} \label{eq:minusepsilon}
f[-\epsilon X] = \omega f[X]
\end{equation} 
for any symmetric function $f$, where $\omega$ is the fundamental algebraic involution which sends $e_k$ to $h_k$, $s_{\lambda}$ to $s_{\lambda'}$ and $p_k$ to $(-1)^kp_k$.

We denote by $\langle\, , \rangle$ the \emph{Hall scalar product} on symmetric functions, which can be defined by saying that the Schur functions form an orthonormal basis. With this definition, we have the orthogonality
\begin{equation}
\langle p_{\lambda},p_{\mu}\rangle=z_{\mu}\chi(\lambda=\mu)
\end{equation}
which defines the integers $z_{\mu}$, where $\chi(\mathcal{P})=1$ if the statement $\mathcal{P}$ is true, and $\chi(\mathcal{P})=0$ otherwise.

Recall also the \emph{Cauchy identities}
\begin{equation} \label{eq:Cauchy_identities}
e_n[XY]=\sum_{\lambda\vdash n}s_{\lambda}[X]s_{\lambda'}[Y]\quad \text{ and } \quad h_n[XY]=\sum_{\lambda\vdash n}s_{\lambda}[X]s_{\lambda}[Y].
\end{equation}

With the symbol ``$\perp$'' we denote the operation of taking the adjoint of an operator with respect to the Hall scalar product, i.e.
\begin{equation}
\langle f^\perp g,h\rangle=\langle g,fh\rangle\quad \text{ for all }f,g,h\in \Lambda.
\end{equation}

We introduce also the operator
\begin{equation}
\tau_z f[X]:=f[X+z]\qquad \text{for all }f[X]\in \Lambda,
\end{equation}
so for example
\begin{equation}
\tau_{-\epsilon} f[X]=f[X-\epsilon]\qquad \text{for all }f[X]\in \Lambda.
\end{equation}
The operator $\tau_z$ can be computed using the following formula (see \cite[Theorem~1.1]{garsiahaimantesler} for a proof):
\begin{equation}
\tau_z =\sum_{r\geq 0}z^rh_r^\perp.
\end{equation}

We record here also the following well-known identity (see \cite[Lemma~2.1]{garsiahicksstout} for a proof).

For all $\mu\vdash n$ we have 
\begin{equation} \label{eq:s_plethystic_eval}
s_{\mu}[1-u]=\left\{\begin{array}{ll}
(-u)^r(1-u) & \text{ if }\mu=(n-r,1^r)\text{ for some }r\in \{0,1,2,\dots,n-1\},\\
0 & \text{ otherwise}.
\end{array}\right.
\end{equation}

We refer also to \cite{haglundbook} for more informations on this topic.

\subsection{Macdonald symmetric function toolkit}

For a partition $\mu\vdash n$, we denote by
\begin{equation}
\widetilde{H}_{\mu}:=\widetilde{H}_{\mu}[X]=\widetilde{H}_{\mu}[X;q,t]=\sum_{\lambda\vdash n}\widetilde{K}_{\lambda \mu}(q,t)s_{\lambda}
\end{equation}
the \emph{(modified) Macdonald polynomials}, where 
\begin{equation}
\widetilde{K}_{\lambda \mu}:=\widetilde{K}_{\lambda \mu}(q,t)=K_{\lambda \mu}(q,1/t)t^{n(\mu)}\quad \text{ with }\quad n(\mu)=\sum_{i\geq 1}\mu_i(i-1)
\end{equation}
are the \emph{(modified) Kostka coefficients} (see \cite[Chapter~2]{haglundbook} for more details). 

The set $\{\widetilde{H}_{\mu}[X;q,t]\}_{\mu}$ is a basis of the ring of symmetric functions $\Lambda$ with coefficients in $\mathbb{Q}(q,t)$. This is a modification of the basis introduced by Macdonald \cite{macdonald}, and they are the Frobenius characteristic of the so called Garsia-Haiman modules (see \cite{garsiahaimanPNAS}).

If we identify the partition $\mu$ with its Ferrers diagram, i.e. with the collection of cells $\{(i,j)\mid 1\leq i\leq \mu_i, 1\leq j\leq \ell(\mu)\}$, then for each cell $c\in \mu$ we refer to the \emph{arm}, \emph{leg}, \emph{co-arm} and \emph{co-leg} (denoted respectively as $a_\mu(c), l_\mu(c), a_\mu(c)', l_\mu(c)'$) as the number of cells in $\mu$ that are strictly to the right, above, to the left and below $c$ in $\mu$, respectively (see Figure~\ref{fig:notation}).

\begin{figure}[h]
	\centering
	\begin{tikzpicture}[scale=.4]
	\draw[gray,opacity=.4](0,0) grid (15,10);
	\fill[white] (1,10)|-(3,9)|- (5,7)|-(9,5)|-(13,2)--(15.2,2)|-(1,10.2);
	\draw[gray]  (1,10)|-(3,9)|- (5,7)|-(9,5)|-(13,2)--(15,2)--(15,0)-|(0,10)--(1,10);
	\fill[blue, opacity=.2] (0,3) rectangle (9,4) (4,0) rectangle (5,7); 
	\fill[blue, opacity=.5] (4,3) rectangle (5,4);
	\draw (7,4.5) node {\tiny{Arm}} (3.25,5.5) node {\tiny{Leg}} (6.25, 1.5) node {\tiny{Co-leg}} (2,2.5) node {\tiny{Co-arm}} ;
	\end{tikzpicture}
	\caption{}
	\label{fig:notation}
\end{figure}

We set
\begin{equation}
M:=(1-q)(1-t),
\end{equation}
and we define for every partition $\mu$
\begin{align}
B_{\mu} & :=B_{\mu}(q,t)=\sum_{c\in \mu}q^{a_{\mu}'(c)}t^{l_{\mu}'(c)}\\
D_{\mu} & :=MB_{\mu}(q,t)-1\\
T_{\mu} & :=T_{\mu}(q,t)=\prod_{c\in \mu}q^{a_{\mu}'(c)}t^{l_{\mu}'(c)}\\
\Pi_{\mu} & :=\Pi_{\mu}(q,t)=\prod_{c\in \mu}(1-q^{a_{\mu}'(c)}t^{l_{\mu}'(c)})\\
w_{\mu} & :=w_{\mu}(q,t)=\prod_{c\in \mu}(q^{a_{\mu}(c)}-t^{l_{\mu}(c)+1})(t^{l_{\mu}(c)}-q^{a_{\mu}(c)+1}).
\end{align}

Notice that
\begin{equation} \label{eq:Bmu_Tmu}
B_{\mu}=e_1[B_{\mu}]\quad \text{ and } \quad T_{\mu}=e_{|\mu|}[B_{\mu}].
\end{equation}

It is useful to introduce the so called \emph{star scalar product} on $\Lambda$ given by
$$
\langle p_{\lambda},p_{\mu} \rangle_*=(-1)^{|\mu|-|\lambda|}\prod_{i=1}^{\ell(\mu)}(1-q^{\mu_i})(1-t^{\mu_i}) z_{\mu}\chi(\mu=\lambda).
$$
For every symmetric function $f[X]$ and $g[X]$ we have (see \cite[Proposition~1.8]{garsiahaimantesler})
\begin{equation}
\langle f,g\rangle_*= \langle \omega \phi f,g\rangle=\langle \phi \omega f,g\rangle
\end{equation}
where 
\begin{equation}
\phi f[X]:=f[MX]\qquad \text{ for all } f[X]\in \Lambda.
\end{equation}
For every symmetric function $f[X]$ we set
\begin{equation}
f^*=f^*[X]:=\phi^{-1}f[X]=f\left[\frac{X}{M}\right].
\end{equation}
Then for all symmetric functions $f,g,h$ we have
\begin{equation} \label{eq:hperp_estar_adjoint}
\langle h^\perp f,g\rangle_*=\langle h^\perp f,\omega \phi g\rangle=\langle f,h\omega \phi g\rangle=\langle f,\omega \phi ( (\omega h)^* \cdot g)\rangle =\langle f,  (\omega h)^* \cdot g \rangle_*,
\end{equation}
so the operator $h^\perp$ is the adjoint of the multiplication by $(\omega h)^*$ with respect to the star scalar product.

It turns out that the Macdonald polynomials are orthogonal with respect to the star scalar product: more precisely
\begin{equation} \label{eq:H_orthogonality}
\langle \widetilde{H}_{\lambda},\widetilde{H}_{\mu}\rangle_*=w_{\mu}(q,t)\chi(\lambda=\mu).
\end{equation}
These orthogonality relations give the following Cauchy identities
\begin{equation} \label{eq:Mac_Cauchy}
e_n\left[\frac{XY}{M}\right]=\sum_{\mu\vdash n} \frac{\widetilde{H}_\mu[X]\widetilde{H}_\mu[Y]}{w_\mu}\quad \text{ for all }n.
\end{equation}

The following linear operators were introduced in \cite{bergerongarsiasf,bergerongarsiashaimantesler}, and they are at the basis of the conjectures relating symmetric function coeffcients and $q,t$-combinatorics in this area. 

We define the \emph{nabla} operator on $\Lambda$ by
\begin{equation}
\nabla  \widetilde{H}_{\mu}=T_{\mu} \widetilde{H}_{\mu}\quad \text{ for all }\mu,
\end{equation}
and we define the \emph{delta} operators $\Delta_f$ and $\Delta_f'$ on $\Lambda$ by
\begin{equation}
\Delta_f \widetilde{H}_{\mu}=f[B_{\mu}(q,t)]\widetilde{H}_{\mu}\quad \text{ and } \quad 
\Delta_f' \widetilde{H}_{\mu}=f[B_{\mu}(q,t)-1]\widetilde{H}_{\mu},\quad \text{ for all }\mu.
\end{equation}
Observe that on the vector space of symmetric functions homogeneous of degree $n$, denoted by $\Lambda^{(n)}$, the operator $\nabla$ equals $\Delta_{e_n}$. Moreover, for every $1\leq k\leq n$,
\begin{equation} \label{eq:deltaprime}
\Delta_{e_k}=\Delta_{e_k}'+\Delta_{e_{k-1}}'\quad \text{ on }\Lambda^{(n)},
\end{equation}
and for any $k>n$, $\Delta_{e_k}=\Delta_{e_{k-1}}'=0$ on $\Lambda^{(n)}$, so that $\Delta_{e_n}=\Delta_{e_{n-1}}'$ on $\Lambda^{(n)}$.

We will use the following form of \emph{Macdonald-Koornwinder reciprocity} (see \cite{macdonald} p. 332 or \cite{garsiahaimantesler}): for all partitions $\alpha$ and $\beta$
\begin{equation} \label{eq:Macdonald_reciprocity}
\frac{\widetilde{H}_{\alpha}[MB_{\beta}]}{\Pi_{\alpha}}=\frac{\widetilde{H}_{\beta}[MB_{\alpha}]}{\Pi_{\beta}}.
\end{equation}

One of the most important identities in this theory is the following one (see \cite[Theorem~I.2]{garsiahaimantesler}): for every symmetric function $f[X]$ and every partition $\mu$, we have
\begin{equation} \label{eq:glenn_formula}
\langle f[X], \widetilde{H}_\mu[X+1]\rangle_*= \left.\nabla^{-1}\tau_{-\epsilon} f[X]\right|_{X=D_\mu}.
\end{equation}

\subsection{Pieri rules and summation formulae}

For a given $k\geq 1$, we define the Pieri coefficients $c_{\mu \nu}^{(k)}$ and $d_{\mu \nu}^{(k)}$ by setting
\begin{align}
\label{eq:def_cmunu} h_{k}^\perp \widetilde{H}_{\mu}[X] & =\sum_{\nu \subset_k \mu} c_{\mu \nu}^{(k)}\widetilde{H}_{\nu}[X],\\
\label{eq:def_dmunu} e_{k}\left[\frac{X}{M}\right] \widetilde{H}_{\nu}[X] & =\sum_{\mu \supset_k \nu} d_{\mu \nu}^{(k)}\widetilde{H}_{\mu}[X].
\end{align}

The following identity is \cite[Proposition~5]{bergeronhaiman}, written in the notation of \cite{garsiahaglundxinzabrocki}, which is coherent with ours:
\begin{equation} \label{eq:cmunu_recursion}
c_{\mu \nu}^{(k+1)}=\frac{1}{B_{\mu/\nu}}\sum_{\nu\subset_1 \alpha\subset_k\mu}c_{\mu \alpha}^{(k)}c_{\alpha \nu}^{(1)}\frac{T_{\alpha}}{T_{\mu}}\quad \text{ with }\quad B_{\mu/\nu}:=B_{\mu}-B_{\nu},
\end{equation}
where $\nu\subset_k \mu$ means that $\nu$ is
contained in $\mu$ (as Ferrers diagrams) and $\mu/\nu$ has $k$ lattice cells, while the
symbol $\mu\supset_k \nu$ is analogously defined. It follows from \eqref{eq:H_orthogonality} that
\begin{equation} \label{eq:rel_cmunu_dmunu}
c_{\mu \nu}^{(k)}=\frac{w_{\mu}}{w_{\nu}}d_{\mu \nu}^{(k)}.
\end{equation}
For every $m\in \mathbb{N}$ with $m\geq 1$, and for every $\gamma\vdash m$, we have 
\begin{align*}
B_{\gamma} & = e_{1}[B_{\gamma}]\\
\text{(using \eqref{eq:Mac_hook_coeff})}& = \langle \widetilde{H}_{\gamma}, e_1h_{m-1}\rangle \\
& = \langle \widetilde{H}_{\gamma}, h_1h_{m-1}\rangle \\
& = \langle h_{1}^{\perp} \widetilde{H}_{\gamma}, h_{m-1}\rangle\\
\text{(using \eqref{eq:def_dmunu})} & = \sum_{\delta\subset_1 \gamma}c_{\gamma \delta}^{(1)}\langle \widetilde{H}_{\delta}, h_{m-1}\rangle\\ 
\text{(using \eqref{eq:Mac_hook_coeff})} & = \sum_{\delta\subset_1 \gamma}c_{\gamma \delta}^{(1)},
\end{align*}	
so we get the well-known summation formula
\begin{equation} \label{eq:Pieri_sum1}
B_{\gamma}=\sum_{\delta\subset_1 \gamma}c_{\gamma \delta}^{(1)}.
\end{equation}

\subsection{$q$-notation}

We recall here some standard notations for $q$-analogues.
For $n,k\in \mathbb{N}$, we set
\begin{equation}
[0]_q:=0,\quad \text{ and }\quad [n]_q:=\frac{1-q^n}{1-q}=1+q+q^2+\cdots+q^{n-1} \quad \text{ for } n\geq 1,
\end{equation}
\begin{equation}
[0]_q!:=1\quad \text{ and }\quad [n]_q!:=[n]_q[n-1]_q\cdots [2]_q[1]_q\quad \text{ for }n\geq 1,
\end{equation}
and
\begin{equation}
\begin{bmatrix}
n\\
k
\end{bmatrix}_q:=\frac{[n]_q!}{[k]_q![n-k]_q!}\quad \text{ for }n\geq k\geq 0,\quad \text{ and }\quad  
\begin{bmatrix}
n\\
k
\end{bmatrix}_q:=0\quad \text{ for }n< k.
\end{equation}
Recall the well-known recursion
\begin{equation} \label{eq:qbin_recursion}
\begin{bmatrix}
n\\
k
\end{bmatrix}_q=q^k\begin{bmatrix}
n-1\\
k
\end{bmatrix}_q+\begin{bmatrix}
n-1\\
k-1
\end{bmatrix}_q=\begin{bmatrix}
n-1\\
k
\end{bmatrix}_q+q^{n-k}\begin{bmatrix}
n-1\\
k-1
\end{bmatrix}_q.
\end{equation}

Recall also the standard notation for the $q$-\emph{rising factorial}
\begin{equation}
(a;q)_s:=(1-a)(1-qa)(1-q^2a)\cdots (1-q^{s-1}a).
\end{equation}

It is well-known (cf. \cite[Theorem~7.21.2]{stanleybook}) that (recall that $h_0=1$)
\begin{equation} \label{eq:h_q_binomial}
h_k[[n]_q]=\frac{(q^{n};q)_k}{(q;q)_k}=\begin{bmatrix}
k+n-1\\
k
\end{bmatrix}_q\quad \text{ for } n\geq 1\text{ and }k\geq 0,
\end{equation}
and (recall that $e_0=1$)
\begin{equation} \label{eq:e_q_binomial}
e_k[[n]_q]=q^{\binom{k}{2}}\begin{bmatrix}
n\\
k
\end{bmatrix}_q\quad \text{ for all } n, k\geq 0.
\end{equation}
Also (cf. \cite[Corollary~7.21.3]{stanleybook})
\begin{equation} \label{eq:h_q_prspec}
h_k\left[\frac{1}{1-q}\right]= \frac{1}{(q;q)_k}=\prod_{i=1}^k\frac{1}{1-q^i}\quad \text{ for } k\geq 0,
\end{equation}
and
\begin{equation} \label{eq:e_q_prspec}
e_k\left[\frac{1}{1-q}\right]= \frac{q^{\binom{k-1}{2}}}{(q;q)_k}=q^{\binom{k-1}{2}}\prod_{i=1}^k\frac{1}{1-q^i}\quad \text{ for } k\geq 0.
\end{equation}

\subsection{Useful identities}

In this section we collect some results from the literature that we are going to use later in the text.

\bigskip

The symmetric functions $E_{nk}$ were introduced in \cite{garsiahaglundqtcatalan} by means of the following expansion:
\begin{equation} \label{eq:def_Enk}
e_n\left[X\frac{1-z}{1-q}\right]=\sum_{k=1}^n \frac{(z;q)_k}{(q;q)_k}E_{nk}.
\end{equation}

Notice that setting $z=q^j$ in \eqref{eq:def_Enk} we get
\begin{equation} \label{eq:en_q_sum_Enk}
e_n\left[X\frac{1-q^j}{1-q}\right]=\sum_{k=1}^n \frac{(q^j;q)_k}{(q;q)_k}E_{nk} =\sum_{k=1}^n \begin{bmatrix}
k+j-1\\
k
\end{bmatrix}_qE_{nk} .
\end{equation}
In particular, for $j=1$, we get
\begin{equation} \label{eq:en_sum_Enk}
e_n=E_{n1}+E_{n2}+\cdots +E_{nn}.
\end{equation}
The following identity is \cite[Proposition~2.2]{garsiahaglundqtcatalan}:
\begin{equation} \label{eq:garsia_haglund_eval}
\widetilde{H}_\mu[(1-t)(1-q^j)]=(1-q^j)\Pi_\mu h_j[(1-t)B_\mu].
\end{equation}
So, using \eqref{eq:Mac_Cauchy} with $Y=[j]_q=\frac{1-q^j}{1-q}$, we get
\begin{align} \label{eq:qn_q_Macexp}
e_n\left[X\frac{1-q^j}{1-q}\right] & =\sum_{\mu\vdash n}\frac{\widetilde{H}_\mu[X]\widetilde{H}_\mu[(1-t)(1-q^j)]}{w_\mu}\\
\notag & =(1-q^j)\sum_{\mu\vdash n}\frac{\Pi_\mu \widetilde{H}_\mu[X]h_j[(1-t)B_\mu]}{w_\mu}.
\end{align}

\bigskip

For $\mu\vdash n$, Macdonald proved (see \cite{macdonald} p. 362) that
\begin{equation} \label{eq:Mac_hook_coeff_ss}
\langle \widetilde{H}_{\mu},s_{(n-r,1^r)}\rangle=e_r[B_{\mu}-1],
\end{equation}
so that, since by Pieri rule $e_rh_{n-r}=s_{(n-r,1^r)}+s_{(n-r+1,1^{r-1})}$,
\begin{equation} \label{eq:Mac_hook_coeff}
\langle \widetilde{H}_{\mu},e_rh_{n-r}\rangle=e_r[B_{\mu}].
\end{equation}

The following well-known identity is an easy consequence of \eqref{eq:Mac_hook_coeff}.
\begin{lemma} \label{lem:Mac_hook_coeff}
For any symmetric function $f\in \Lambda^{(n)}$,
\begin{equation} \label{eq:lem_e_h_Delta}
\langle \Delta_{e_{d}}f,h_n \rangle = \langle f,e_dh_{n-d} \rangle.
\end{equation}
\end{lemma}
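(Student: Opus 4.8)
The plan is to reduce to the case $f = \widetilde{H}_\mu$ by linearity. Since $\{\widetilde{H}_\mu\}_{\mu \vdash n}$ is a basis of $\Lambda^{(n)}$ and both sides of \eqref{eq:lem_e_h_Delta} are linear in $f$, it suffices to verify the identity when $f = \widetilde{H}_\mu$ for an arbitrary $\mu \vdash n$.

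For the left-hand side, I would use the definition of the delta operator, namely $\Delta_{e_d}\widetilde{H}_\mu = e_d[B_\mu]\widetilde{H}_\mu$, to pull the scalar $e_d[B_\mu]$ out of the Hall pairing:
\[
\langle \Delta_{e_d}\widetilde{H}_\mu, h_n\rangle = e_d[B_\mu]\,\langle \widetilde{H}_\mu, h_n\rangle.
\]
Now $\langle \widetilde{H}_\mu, h_n\rangle = 1$: this is precisely \eqref{eq:Mac_hook_coeff} applied with $r=0$, since $e_0 h_n = h_n$ and $e_0[B_\mu] = 1$. Hence the left-hand side equals $e_d[B_\mu]$.

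For the right-hand side, $\langle \widetilde{H}_\mu, e_d h_{n-d}\rangle = e_d[B_\mu]$ is exactly \eqref{eq:Mac_hook_coeff} with $r = d$. So both sides equal $e_d[B_\mu]$, and the lemma follows. There is no real obstacle here — the only point requiring a moment's care is noting that \eqref{eq:Mac_hook_coeff} is valid for the full range $0 \le r \le n$ (in particular at $r=0$, which is where the normalization $\langle \widetilde{H}_\mu, h_n\rangle = 1$ comes from), and that $f$ being homogeneous of degree $n$ is what allows the expansion in the $\widetilde{H}_\mu$ with $\mu \vdash n$.
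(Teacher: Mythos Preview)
Your proof is correct and follows essentially the same approach as the paper: reduce by linearity to the Macdonald basis, apply the definition of $\Delta_{e_d}$ as an eigenoperator, and invoke \eqref{eq:Mac_hook_coeff} at $r=0$ and $r=d$. The paper's proof is the same chain of equalities, written in a single line.
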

\begin{proof}
Checking it on the Macdonald basis elements $\widetilde{H}_\mu\in \Lambda^{(n)}$, using \eqref{eq:Mac_hook_coeff}, we get
$$
\langle \Delta_{e_{d}}\widetilde{H}_\mu,h_n \rangle=e_d[B_\mu] \langle \widetilde{H}_\mu,h_n \rangle =e_d[B_\mu] =\langle \widetilde{H}_\mu,e_dh_{n-d} \rangle.
$$
\end{proof}
The following lemma is due to Haglund. 
\begin{lemma}[Corollary 2 in \cite{haglundschroeder}]
	For positive integers $d,n$ and any symmetric function $f\in \Lambda^{(n)}$,
	\begin{equation}
		\langle \Delta_{e_{d-1}}e_n,f \rangle = \langle \Delta_{\omega f}e_d,h_d \rangle.
	\end{equation}
\end{lemma}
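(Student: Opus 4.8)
The plan is to verify the identity on the Macdonald basis $\{\widetilde{H}_\mu\}_{\mu\vdash n}$, since both sides are linear in $f\in\Lambda^{(n)}$ and the $\widetilde{H}_\mu$ span that space. So I would set $f=\widetilde{H}_\mu$ for a fixed $\mu\vdash n$ and compute both sides explicitly. On the left, $\Delta_{e_{d-1}}e_n$ is a fixed symmetric function independent of $f$, so $\langle \Delta_{e_{d-1}}e_n,\widetilde{H}_\mu\rangle$ is just the coefficient (suitably normalized) of $\widetilde{H}_\mu$ in the Macdonald expansion of $\Delta_{e_{d-1}}e_n$; by the definition of $\Delta$ and $\nabla$ and the known expansion $e_n=\sum_\nu \frac{\widetilde{H}_\nu \operatorname{MB}_\nu \Pi_\nu}{w_\nu}$-type formula (more precisely using $e_n\left[\frac{X}{M}\right]=\sum_\mu\frac{\widetilde{H}_\mu}{w_\mu}$, i.e. \eqref{eq:Mac_Cauchy}, after applying $\phi$), one gets $\langle\Delta_{e_{d-1}}e_n,\widetilde{H}_\mu\rangle_{*}$ or $\langle\cdot,\cdot\rangle$ in closed form involving $e_{d-1}[B_\mu]$, $\Pi_\mu$, $T_\mu$, $M$.

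On the right, $\langle \Delta_{\omega\widetilde{H}_\mu}e_d,h_d\rangle$: here $\omega\widetilde{H}_\mu$ is no longer a Macdonald polynomial, so I would instead expand $e_d=\sum_{\lambda\vdash d}\frac{\widetilde{H}_\lambda \cdots}{w_\lambda}$ in the Macdonald basis of $\Lambda^{(d)}$ and use $\Delta_{\omega\widetilde{H}_\mu}\widetilde{H}_\lambda=(\omega\widetilde{H}_\mu)[B_\lambda]\widetilde{H}_\lambda$. By Lemma~\ref{lem:Mac_hook_coeff} (with $d\to 0$, i.e. the trivial case, or directly $\langle\widetilde{H}_\lambda,h_d\rangle=1$ from \eqref{eq:Mac_hook_coeff} with $r=0$), pairing against $h_d$ just evaluates to the sum of the coefficients. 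The key simplification is that $(\omega\widetilde{H}_\mu)[B_\lambda]=\widetilde{H}_\mu[-\epsilon B_\lambda]$ by \eqref{eq:minusepsilon}, and then I would invoke Macdonald--Koornwinder reciprocity \eqref{eq:Macdonald_reciprocity} to swap the roles of $\mu$ and $\lambda$: $\frac{\widetilde{H}_\mu[MB_\lambda]}{\Pi_\mu}=\frac{\widetilde{H}_\lambda[MB_\mu]}{\Pi_\lambda}$. This turns the sum over $\lambda\vdash d$ into something that, after recognizing the resulting sum as a plethystic evaluation of $e_d$ at an alphabet built from $B_\mu$, collapses to an expression matching the left-hand side.

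Concretely, I expect the right side to become (up to the $\Pi_\mu,w_\mu$ bookkeeping) a sum $\sum_{\lambda\vdash d}\frac{\widetilde{H}_\lambda[MB_\mu]}{w_\lambda}(\text{sign/}\Pi\text{ factors})$, which by \eqref{eq:Mac_Cauchy} (applied with the alphabet $MB_\mu$, divided by $M$) is $e_d[B_\mu]$ up to corrections — and one must carefully track that we actually get $e_{d-1}[B_\mu]$ or a telescoping combination, consistent with $\Delta_{e_{d-1}}=\Delta_{e_d}$-type relations \eqref{eq:deltaprime} in the appropriate degree. The identity \eqref{eq:glenn_formula} of Garsia--Haiman--Tesler is the cleanest tool here: writing $\langle f,\widetilde{H}_\mu[X+1]\rangle_*$ as $\nabla^{-1}\tau_{-\epsilon}f|_{X=D_\mu}$ lets one handle the $\omega$ and the shift simultaneously, and $\widetilde{H}_\mu[X+1]$ is exactly what \eqref{eq:Mac_hook_coeff} and related expansions produce when pairing with $e_dh_{n-d}$-type functions.

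The main obstacle will be the bookkeeping of the plethystic/sign factors ($\epsilon$, $M$, $\Pi_\mu$, $T_\mu$, $w_\mu$) and getting the degree shift by one exactly right — i.e. confirming that reciprocity produces $e_{d-1}$ and not $e_d$ on the nose, which is presumably why the statement pairs $\Delta_{e_{d-1}}e_n$ with $\Delta_{\omega f}e_d$ rather than a homogeneous-looking pair. A secondary subtlety is that $\omega f$ for $f\in\Lambda^{(n)}$ lives in $\Lambda^{(n)}$ but we evaluate $\Delta_{\omega f}$ on $e_d\in\Lambda^{(d)}$ with $d\le n$ possibly different from $n$, so one should check the statement is really only asserting something about the scalar-valued pairing and that all $\Delta$ operators act on the degree where their argument naturally makes sense via $B_\lambda$. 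I would double-check the edge cases $d=1$ and $d=n$ by hand to make sure the normalization is consistent before writing up the general reciprocity computation.
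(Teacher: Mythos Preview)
The paper does not actually prove this lemma; it merely cites it as Corollary~2 of \cite{haglundschroeder}. So there is no ``paper's own proof'' to compare against, and your task reduces to: does your outline lead to a valid argument?

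Your overall strategy---expand $e_d$ in the Macdonald basis, use $\langle\widetilde{H}_\lambda,h_d\rangle=1$, then apply reciprocity \eqref{eq:Macdonald_reciprocity} to swap $\mu$ and $\lambda$---is exactly right and does go through. But you are vague precisely at the crux, namely how the shift from $d$ to $d-1$ appears, and your mention of ``telescoping'' and of \eqref{eq:glenn_formula} are red herrings. Here is the clean way to see it. Writing $(\omega f)[B_\lambda]=\langle f,e_n[XB_\lambda]\rangle$ (dual Cauchy) and expanding $e_n[XB_\lambda]$ via \eqref{eq:Mac_Cauchy}, after reciprocity the right-hand side becomes
\[
\sum_{\mu\vdash n}\frac{\langle f,\widetilde{H}_\mu\rangle\,\Pi_\mu}{w_\mu}\sum_{\lambda\vdash d}\frac{M\,B_\lambda\,\widetilde{H}_\lambda[MB_\mu]}{w_\lambda}.
\]
The inner sum is evaluated exactly by \eqref{eq:e_h_expansion} with $k=1$ (since $B_\lambda=e_1[B_\lambda]$): one gets $M\,h_1[B_\mu]\,e_{d-1}[B_\mu]=M B_\mu\,e_{d-1}[B_\mu]$. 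Substituting back gives
\[
\sum_{\mu\vdash n}\frac{M B_\mu \Pi_\mu\,e_{d-1}[B_\mu]}{w_\mu}\langle f,\widetilde{H}_\mu\rangle=\langle \Delta_{e_{d-1}}e_n,f\rangle
\]
by \eqref{eq:en_expansion}. So the degree shift is not a telescoping phenomenon or an application of \eqref{eq:deltaprime}; it comes directly from the factor $B_\lambda$ in the Macdonald expansion of $e_d$, which via \eqref{eq:e_h_expansion} produces $e_{d-1}$ rather than $e_d$. Once you see this, there is no remaining bookkeeping obstacle, and the identity \eqref{eq:glenn_formula} is not needed at all.
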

The following theorem is due to Haglund.
\begin{theorem}[Theorem 2.11 in \cite{haglundschroeder}] \label{thm:Haglund_formula}
For $n,k\in \mathbb{N}$ with $1\leq k\leq n$,
$$
\langle \nabla E_{n,k},h_n\rangle=\chi(n=k).
$$
In addition, if $m>0$ and $\lambda \vdash m$,
\begin{equation}\langle \Delta_{s_\lambda}\nabla E_{n,k},h_n\rangle =t^{n-k}\langle \Delta_{h_{n-k}}e_m\left[ X\frac{1-q^k}{1-q}\right], s_{\lambda'}\rangle,
\end{equation}
or equivalently
\begin{equation}
\langle \Delta_{s_\lambda}\nabla E_{n,k},h_n\rangle =t^{n-k}\sum_{\mu\vdash m} \frac{(1-q^k)h_k[(1-t)B_\mu]h_{n-k}[B_\mu]\Pi_{\mu} \widetilde{K}_{\lambda' \mu}}{w_{\mu}} .
\end{equation}
\end{theorem}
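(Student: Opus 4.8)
The plan is to strip off the two easy assertions and reduce the remaining one to a reciprocity identity. First, the last (``equivalently'') formula is obtained from the middle one by inserting the Macdonald--Cauchy expansion \eqref{eq:qn_q_Macexp} of $e_m[X\tfrac{1-q^k}{1-q}]$: the operator $\Delta_{h_{n-k}}$ multiplies the $\mu$-summand by $h_{n-k}[B_\mu]$, and $\langle\widetilde{H}_\mu,s_{\lambda'}\rangle=\widetilde{K}_{\lambda'\mu}$, so $t^{n-k}\langle\Delta_{h_{n-k}}e_m[X\tfrac{1-q^k}{1-q}],s_{\lambda'}\rangle$ is literally the displayed sum over $\mu\vdash m$. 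Second, the scalar statement $\langle\nabla E_{n,k},h_n\rangle=\chi(n=k)$ is the degenerate case $m=0$, $\lambda=\emptyset$ of the middle formula: there $\Delta_{s_\emptyset}=\mathrm{id}$, $e_0[X\tfrac{1-q^k}{1-q}]=1$, $\langle\Delta_{h_{n-k}}1,1\rangle=h_{n-k}[B_\emptyset]=h_{n-k}[0]=\chi(n=k)$, and the prefactor $t^{n-k}$ equals $1$ exactly when $n=k$. So it remains to prove the middle formula for $m>0$.

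To eliminate $E_{n,k}$ I would use its defining expansion \eqref{eq:def_Enk}: multiplying the desired identity by $\tfrac{(z;q)_k}{(q;q)_k}$ and summing over $k$, the left-hand side becomes $\langle\Delta_{s_\lambda}\nabla e_n[X\tfrac{1-z}{1-q}],h_n\rangle$; both sides are polynomials of degree $\le n$ in $z$, and since $\tfrac{(q^j;q)_k}{(q;q)_k}=\begin{bmatrix}k+j-1\\k\end{bmatrix}_q$ by \eqref{eq:en_q_sum_Enk}, the identity is equivalent to its specializations at $z=q^0,q^1,\dots,q^n$, that is, to
\begin{equation}\label{eq:sketchdagger}
\langle\Delta_{s_\lambda}\nabla e_n[X\tfrac{1-q^j}{1-q}],h_n\rangle=\sum_{k=1}^{n}\begin{bmatrix}k+j-1\\k\end{bmatrix}_q t^{\,n-k}\langle\Delta_{h_{n-k}}e_m[X\tfrac{1-q^k}{1-q}],s_{\lambda'}\rangle
\end{equation}
for $0\le j\le n$ (the case $j=0$ being trivial). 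On the left of \eqref{eq:sketchdagger} I would apply \eqref{eq:qn_q_Macexp}, use $\langle\widetilde{H}_\mu,h_n\rangle=1$ (a special case of \eqref{eq:Mac_hook_coeff}), and then \eqref{eq:garsia_haglund_eval} together with the observation that $(1-t)(1-q^j)=MB_{(j)}$ for the one-row partition $(j)$, obtaining $\sum_{\mu\vdash n}w_\mu^{-1}\,s_\lambda[B_\mu]\,T_\mu\,\widetilde{H}_\mu[MB_{(j)}]$; expanding the right-hand side of \eqref{eq:sketchdagger} through \eqref{eq:qn_q_Macexp} in the same way turns it into a weighted sum over $\nu\vdash m$.

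The heart of the proof --- and the step I expect to be the main obstacle --- is to identify these two sums, which amounts to changing the Macdonald ``size'' from $n$ to $m$ while keeping track of all the deformation weights at once. I would carry this out with Macdonald--Koornwinder reciprocity \eqref{eq:Macdonald_reciprocity} and the Garsia--Haiman--Tesler evaluation \eqref{eq:glenn_formula}, together with the Pieri data \eqref{eq:def_cmunu}--\eqref{eq:def_dmunu}: the $\Delta_{s_\lambda}$-eigenvalue $s_\lambda[B_\mu]$ and the $\nabla$-eigenvalue $T_\mu$ are unwound by iterating the one-box Pieri recursion \eqref{eq:cmunu_recursion} and \eqref{eq:Pieri_sum1}; the single-row factor $\widetilde{H}_\mu[MB_{(j)}]$ produces the $q$-binomials $\begin{bmatrix}k+j-1\\k\end{bmatrix}_q$; and the Pieri step that lowers the Macdonald degree from $n$ to $m$ deletes $n-k$ boxes, which is exactly where the operator $\Delta_{h_{n-k}}$ and the power $t^{n-k}$ enter. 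Equivalently, this whole passage is packaged by Haglund's summation formula --- the $\ell=0$ case of the summation formula proved in Section~3 --- which gives a closed form for $\sum_{\gamma\vdash m}w_\gamma^{-1}\widetilde{H}_\gamma[X]\,h_k[(1-t)B_\gamma]$; inserting it into the right-hand side of \eqref{eq:sketchdagger} and matching coefficients against the left-hand side yields \eqref{eq:sketchdagger}. Everything else --- the reduction to \eqref{eq:sketchdagger} and the rewriting of $e_n[X\tfrac{1-q^j}{1-q}]$ --- is routine bookkeeping within the Macdonald toolkit of Section~1.
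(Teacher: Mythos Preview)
The paper does not prove this theorem at all: it is quoted from \cite{haglundschroeder} and used as an input. So there is no ``paper's proof'' to compare against; I can only comment on whether your sketch stands on its own.

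Your two preliminary reductions are fine. The equivalence of the last two displays is exactly the Macdonald--Cauchy expansion \eqref{eq:qn_q_Macexp}, and the $m=0$ degenerate case does recover $\langle\nabla E_{n,k},h_n\rangle=\chi(n=k)$. The interpolation idea --- summing against $(z;q)_k/(q;q)_k$ and specialising at $z=q^j$ --- is also legitimate, since the $(z;q)_k$ are linearly independent in $z$.

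The problem is the ``heart of the proof''. You name the right toolbox (reciprocity, the Garsia--Haiman--Tesler evaluation, Haglund's summation) but you do not show a single line of the computation, and some of what you say does not parse. Saying that $s_\lambda[B_\mu]$ and $T_\mu$ are ``unwound by iterating the one-box Pieri recursion \eqref{eq:cmunu_recursion} and \eqref{eq:Pieri_sum1}'' is not a meaningful step: $T_\mu$ is a monomial, and $s_\lambda[B_\mu]$ is not produced by Pieri data in any way you describe. Likewise, the $\ell=0$ summation formula (Theorem~\ref{thm:Haglund_summation}) evaluates $\sum_{\gamma\vdash m} w_\gamma^{-1}\widetilde H_\gamma[X]\,h_k[(1-t)B_\gamma]$; it is not at all clear how inserting it into the right-hand side of your \eqref{eq:sketchdagger} matches the left-hand side, which is a sum over $\mu\vdash n$ carrying the extra factors $T_\mu\, s_\lambda[B_\mu]$. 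The actual mechanism for passing from size $n$ to size $m$ is to write
\[
s_\lambda[B_\mu]=\sum_{\nu\vdash m}\frac{\widetilde K_{\lambda'\nu}}{w_\nu}\,\widetilde H_\nu[MB_\mu]
\]
(a consequence of Macdonald--Cauchy), swap the order of summation, and then apply reciprocity to $\widetilde H_\nu[MB_\mu]$; only after that does the inner $\mu$-sum become something one can hope to close. You have not indicated this, nor shown that what remains collapses to the claimed right-hand side. As written, the central step is a list of ingredients rather than an argument, and that is a genuine gap.
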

The following expansions are also well-known, and they can be deduced from Cauchy identities \eqref{eq:Mac_Cauchy}.
\begin{proposition} 
For $n\in \mathbb{N}$ we have
\begin{equation} \label{eq:en_expansion}
e_n[X]=e_n\left[\frac{XM}{M}\right]=\sum_{\mu\vdash n}\frac{M B_\mu \Pi_{\mu} \widetilde{H}_\mu[X]}{w_\mu}.
\end{equation}
Moreover, for all $k\in \mathbb{N}$ with $0\leq k\leq n$, we have
\begin{equation} \label{eq:e_h_expansion}
h_k\left[\frac{X}{M}\right] e_{n-k}\left[\frac{X}{M}\right] =\sum_{\mu\vdash n}\frac{e_k[B_\mu]\widetilde{H}_\mu[X]}{w_\mu}.
\end{equation}
\end{proposition}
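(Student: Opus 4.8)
The plan is to deduce both expansions from the Macdonald Cauchy identity \eqref{eq:Mac_Cauchy}, which encodes the orthogonality \eqref{eq:H_orthogonality} of the $\widetilde{H}_\mu$ with respect to the star scalar product.

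For \eqref{eq:en_expansion} I would simply specialize $Y=M=(1-q)(1-t)$ in \eqref{eq:Mac_Cauchy}. Since $XM/M=X$, the left-hand side becomes $e_n[X]$, so
\[
e_n[X]=\sum_{\mu\vdash n}\frac{\widetilde{H}_\mu[X]\,\widetilde{H}_\mu[M]}{w_\mu},
\]
and it only remains to evaluate $\widetilde{H}_\mu[M]=\widetilde{H}_\mu[(1-t)(1-q)]$. This is precisely the case $j=1$ of \eqref{eq:garsia_haglund_eval}: using that $h_1[(1-t)B_\mu]=(1-t)B_\mu$ (a degree-one plethysm) and that $B_\mu=e_1[B_\mu]$ by \eqref{eq:Bmu_Tmu}, we get $\widetilde{H}_\mu[M]=(1-q)(1-t)B_\mu\Pi_\mu=MB_\mu\Pi_\mu$. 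Substituting this back yields \eqref{eq:en_expansion}.

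For \eqref{eq:e_h_expansion} I would instead extract the Macdonald coefficients of the left-hand side directly via the star scalar product. Writing $h_k[X/M]\,e_{n-k}[X/M]=\sum_{\mu\vdash n}c_\mu\widetilde{H}_\mu[X]$ and pairing with $\widetilde{H}_\mu$, the orthogonality \eqref{eq:H_orthogonality} gives $c_\mu=\frac{1}{w_\mu}\langle h_k[X/M]e_{n-k}[X/M],\widetilde{H}_\mu\rangle_*$. Since $\phi\colon f[X]\mapsto f[MX]$ is an algebra automorphism of $\Lambda_{\mathbb{Q}(q,t)}$, so is $\phi^{-1}f=f^*=f[X/M]$, whence $h_k[X/M]\,e_{n-k}[X/M]=\phi^{-1}(h_ke_{n-k})$. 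Using $\langle f,g\rangle_*=\langle \omega\phi f,g\rangle$ we then obtain
\[
\big\langle \phi^{-1}(h_ke_{n-k}),\widetilde{H}_\mu\big\rangle_*=\big\langle \omega(h_ke_{n-k}),\widetilde{H}_\mu\big\rangle=\big\langle e_kh_{n-k},\widetilde{H}_\mu\big\rangle=e_k[B_\mu],
\]
the last step being \eqref{eq:Mac_hook_coeff}. Hence $c_\mu=e_k[B_\mu]/w_\mu$, which is \eqref{eq:e_h_expansion}.

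There is no genuine obstacle here; the whole argument is bookkeeping with the plethystic substitution $X\mapsto MX$ and the interplay of $\phi$, $\omega$ and the two scalar products. The only points that need care are that $\phi$ and $\phi^{-1}$ are ring homomorphisms, so they distribute over the product $h_k e_{n-k}$, and that the $j=1$ instance of \eqref{eq:garsia_haglund_eval} collapses cleanly because $h_1$ is linear.
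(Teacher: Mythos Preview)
Your argument is correct and matches the paper's indication that both expansions ``can be deduced from Cauchy identities \eqref{eq:Mac_Cauchy}'': for \eqref{eq:en_expansion} you specialize $Y=M$ directly, and for \eqref{eq:e_h_expansion} you use the star-orthogonality \eqref{eq:H_orthogonality}, which is the dual form of the same Cauchy identity. The paper gives no further details, so your write-up is in fact more explicit than what appears there.
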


\section{Combinatorics of decorated Dyck paths}

\subsection{Basic definitions}

We will use the notation $\mathcal D_n$ to denote the set of Dyck paths of size $n$, i.e. the set of lattice paths starting from $(0,0)$ and ending at $(n,n)$, using only north and east steps and staying weakly above the diagonal $x=y$, also called the \emph{main diagonal}. Each Dyck path $D\in \mathcal D_n$ can be described uniquely by a sequence $w_1(D)w_2(D)\cdots w_n(D)$ of $n$ integers called the \emph{area word} of $D$, where $w_i(D)$ is the number of whole squares on the $i$-th row between $D$ and the main diagonal. A sequence $w_1w_2\cdots w_n$ of $n$ nonnegative integers is an area word if and only if $w_1=0$ and $w_{i+1}\leq w_i+1$ for $i=1,...,n-1$.  

We distinguish two types of vertical steps of a path $D\in \mathcal D_n$: a \emph{peak} is a vertical step followed by a horizontal step, and a \emph{rise} is a vertical step followed by another vertical step. Given a subset of all the vertical steps of $D$, we decorate its peaks with the symbol $\circ$ and its rises with the symbol $\ast$. By $\mathcal D_n^{(a,b)}$ we denote the set of Dyck paths of size $n$ with $a$ decorated peaks and $b$ decorated rises. See Figure~\ref{fig:example_deco_Dyck} for an example.

\begin{figure}[h!] 
	\begin{tikzpicture}
	\draw[gray](0,0)grid(5,5);
	\draw[gray](0,0)--(5,5);
	\draw[blue, opacity=.4,ultra thick](0,0)--(0,2)--(1,2)--(1,3)--(3,3)|-(5,5);
	\draw (-.5,1.4) circle (3pt) (2.5,4.5) circle (3pt) ;
	\draw (2.5,3.5) node {$\ast$};
	\end{tikzpicture}
	\caption{Example of an element in $\mathcal D_5^{(2,1)}$}
	\label{fig:example_deco_Dyck}
\end{figure}

\begin{remark}
	Note that if we call \emph{fall} a horizontal step that is followed by a another horizontal step, then there is a natural way of mapping rises into falls bijectively. Indeed the endpoint of a rise is a point where $D$ vertically crosses a certain diagonal parallel to the main diagonal. Since the path must end at the main diagonal, it must cross the same diagonal horizontally with a fall at least once. We map our rise in the first of such falls: this clearly yields a bijective map (see Figure~\ref{rises vs falls}). Therefore we might equivalently decorate falls instead of the corresponding rises. 
\end{remark}

\begin{figure}[h!]
	\centering
	\begin{tikzpicture}[rotate=0, scale=0.6]
	\draw[gray] (0,0) grid (11,11)(0,0) to (11,11)(-1,1) to (10,12);
	\draw[blue, ultra thick, opacity=0.4] (0,0) to (0,2) to (1,2) to (1,3) to (2,3) to (2,6) to (4,6) to (4,9) to (5,9) to (5,10) to (7,10) |-(11,11);
	\draw[ultra thick] (2,3) to (2,4) (8,11) to (9,11) ;
	\draw[];
	\fill[pattern=north west lines, pattern color=gray] (2,4) rectangle (4,5) (8,11) rectangle (9,9);
	\draw  (1.5,3.5) node {$\ast$};
	\end{tikzpicture} 
	\caption{Correspondence of rises and falls}
	\label{rises vs falls}
\end{figure}

\subsection{The statistics $\mathsf{area}$, $\mathsf{dinv}$, $\mathsf{bounce}$ and $\mathsf{bounce}'$.} 

We introduce here four statistics on the set $\mathcal D_n^{(a,b)}$, all of which are generalisations of the usual statistics on undecorated Dyck paths. The $\area$ and $\dinv$ statistics are the ``unlabelled'' versions of the statistics defined on labelled decorated Dyck paths in \cite{haglundremmelwilson} (cf. also \cite{zabrocki}). The $\mathsf{bounce}$ and $\mathsf{bounce}'$ are new.

\medskip

Take $D\in \mathcal{D}_n^{(a,b)}$, and let $w_1(D)w_2(D)\cdots w_n(D)$ be its area word. 

Let $\mathsf{Rise}(D)\subseteq \{1,2,\dots,n-1\}$ be the set of indices such that $i\in \mathsf{Rise}(D)$ if the $i$-th vertical step of $D$ is a decorated rise. We define the \emph{area} of $D$ as
\begin{equation} 
\mathsf{area}(D)= \sum_{i=1} ^n w_i(D)- \sum_{j\in \mathsf{Rise(D)}} w_{j+1}(D).
\end{equation}
For a more visual definition, the area is the number of whole squares that lie between the path and the main diagonal, except for the ones in the rows containing the vertical steps following a decorated rise.

Equivalently, we could define the area of $D$ as the number of whole squares that lie between the path and the main diagonal, except the ones in the columns containing falls corresponding to decorated rises. For example, the path in Figure~\ref{statdef} has area equal to $6$ (grey in the picture). 

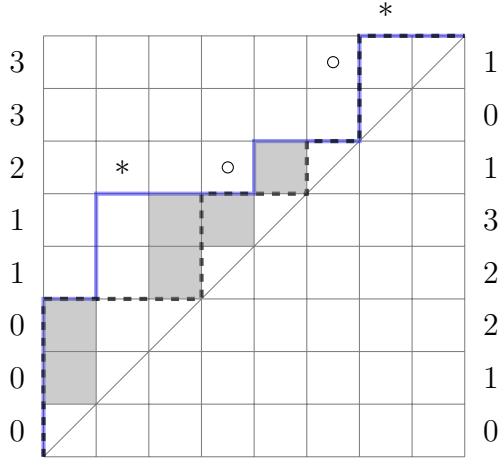
\begin{figure}
\centering
\begin{tikzpicture}[scale=.7]
			\draw (8.5,.5) node {0} (8.5,1.5) node {1} (8.5,2.5) node {2} (8.5,3.5) node {2} (8.5,4.5) node {3} (8.5,5.5) node {1} (8.5,6.5) node {0} (8.5,7.5) node {1};
			\draw (-.5,.5) node {0} (-.5,1.5) node {0} (-.5,2.5) node {0} (-.5,3.5) node {1} (-.5,4.5) node {1} (-.5,5.5) node {2} (-.5,6.5) node {3} (-.5,7.5) node {3};
			\draw[gray](0,0) grid (8,8) (0,0)--(8,8);
			\draw[blue, ultra thick, opacity=0.4](0,0)--(0,3)--(1,3)--(1,5)--(4,5)|-(6,6)|-(8,8);
			\draw  
			(3.5,5.5) circle (3pt)
			(1.5,5.5) node {$\ast$}
			(6.5,8.5) node {$\ast$}
			(5.5,7.5) circle (3pt) ;
			\fill[pattern=north west lines, pattern color=gray];
			\draw[ultra thick, dashed, opacity=.7](0,0)|-(3,3)|-(5,5)|-(6,6)|-(8,8);
			\fill[gray, opacity=.4](0,1) rectangle (1,3) (2,3) rectangle (3,5)rectangle (4,4) (4,5) rectangle(5,6);
\end{tikzpicture}
\caption{$D\in \mathcal {D}_{8}^{(2,2)}$, its first bounce word (left) and its area word (right).}\label{statdef}
\end{figure}

Similarly let $\mathsf{Peak}(D)\subseteq \{1,2,\dots,n\}$ be the set of indices such that $i\in \mathsf{Peak}(D)$ if the $i$-th vertical step of $D$ is a decorated peak. We define the \emph{dinv} of $D$, denoted $\mathsf{dinv}(D)$, to be the number of pairs $(i,j)\in \{1,...,n\}\times \{1,...,n\}$ with $i<j$ such that either \begin{enumerate}
	\item $w_i(D)=w_j(D)$ and $i\not \in \mathsf{Peak}(D)$
\end{enumerate}
or, \begin{enumerate}[resume]
	\item  $w_i(D)=w_j(D)+1$ and $j\not \in \mathsf{Peak}(D)$.
\end{enumerate} We refer to the first and second kind of pairs as \emph{primary} and \emph{secondary dinv}, respectively. For example, the path in Figure \ref{statdef} has dinv equal to 5: 4 primary and 1 secondary.

The definitions of our two bounce statistics both start from the usual definition of the bounce of undecorated Dyck paths, but they are modified (in two different ways) by the decorations on the peaks. 

\medskip

The following definition was suggested by a conversation with Alessandro Iraci \cite{iraci}. The \emph{first bounce path} of a decorated Dyck path $D\in \mathcal{D}_n^{(a,b)}$ starts in $(0,0)$ and travels north until it encounters the beginning of an east step of $D$, then it turns east until it hits the main diagonal, then it turns north again, and so on. Thus it continues until it arrives at $(n,n)$. We label the vertical steps of the first bounce path as follows: all the steps before the first ``bounce", i.e. the first time that the bounce path changes direction from north to east, are labelled with a $0$; next the vertical steps are labelled with a $1$, again until the path ``bounces"; then with a $2$, and so on. The labels of the bounce path from bottom to top create a sequence of integers $b_1(D)b_2(D)\cdots b_n(D)$ which we call the \emph{first bounce word}. We define the \emph{first bounce} of $D$ as 
\begin{equation}
\mathsf{bounce}(D):=\sum_{i=1}^nb_i(D)-\sum_{j\in \mathsf{Peak}(D) }b_j(D)=\sum_{i\in \{1,...,n\}\setminus \mathsf{Peak(D)}}b_i(D) .
\end{equation}
The path in Figure~\ref{statdef} has bounce equal to 5.

We now describe how to compute our \emph{second bounce} statistic of a decorated Dyck path $D\in \mathcal{D}_n^{(a,b)}$. Delete all the horizontal steps that directly follow decorated peaks. When deleting the horizontal steps, move the rest of the path one square to the left, together with part of the main diagonal so that the number of squares between the path and the main diagonal on each row remains the same.  In this way we end up with a path from $(0,0)$ to $(n-a,n)$ and some line segments that form the new \emph{main diagonal}. This is what is called the corresponding \emph{leaning stack} in \cite{haglundremmelwilson}. See Figure~\ref{fig:ex_second_bounce} for an example. 

Now we draw the \emph{second bounce path} in this new object as described before: start from $(0,0)$, bounce at the beginning of horizontal steps of the path and against the main diagonal. Then label the vertical steps as before to get a \emph{second bounce word} $b'_1(D)b_2'(D)\cdots b_n'(D)$. The \emph{second bounce} of $D$ is then defined as
\begin{equation}
\mathsf{bounce}'(D):=\sum_{i=1}^nb'_i(D).
\end{equation}
In the example $D\in \mathcal{D}_{11}^{(2,0)}$ of Figure~\ref{fig:ex_second_bounce} its second bounce word is $00111111122$, so that its second bounce is $\mathsf{bounce}'(D)=11$.

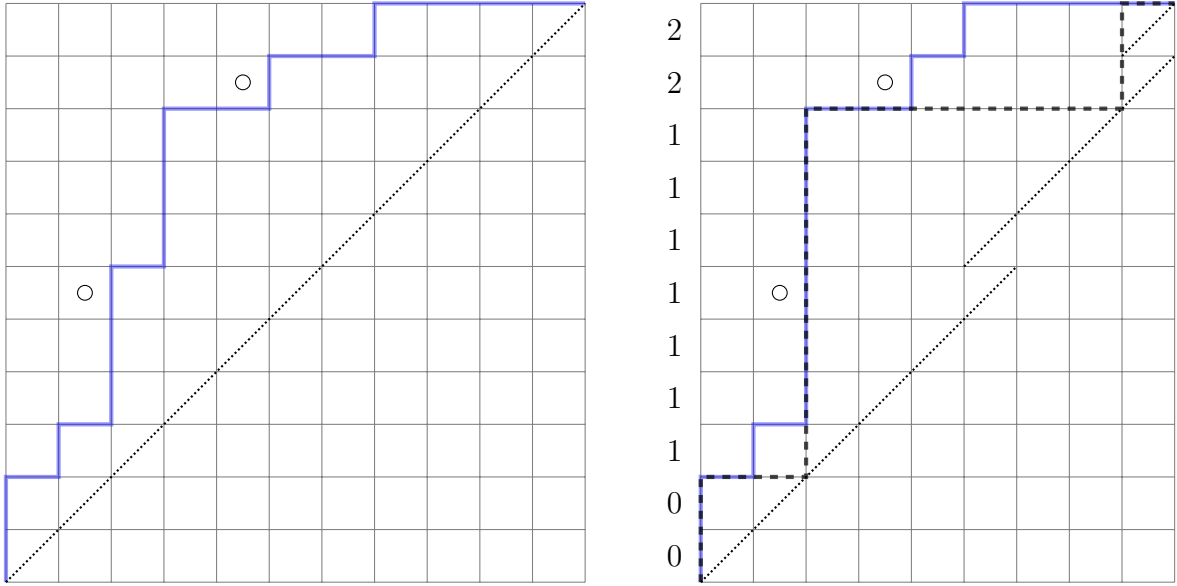
\begin{figure}[h]
	\centering
	\begin{minipage}{.55\textwidth}
		\centering
		\begin{tikzpicture}[scale=0.7]
		\draw[gray] (0,0) grid (11,11);
		\draw[blue, ultra thick, opacity=0.4] (0,0) to (0,2) to (1,2) to (1,3) to (2,3) to (2,6) to (3,6) to (3,9) to (5,9) to (5,10) to (7,10) |-(11,11);
		\draw[thick,densely dotted] (0,0) to (11,11);
		\draw (1.5, 5.5) circle (4pt) (4.5,9.5) circle (4pt);
		\end{tikzpicture} 
		
	\end{minipage}%
	\begin{minipage}{.45\textwidth}
		\centering
		\begin{tikzpicture}[scale=0.7]
		\draw[gray] (0,0) grid (9,11);
		\draw[blue, ultra thick, opacity=0.4] (0,0) to (0,2) to (1,2) to (1,3) to (2,3) to (2,6) to (2,9) to (4,9) to (4,10) to (5,10) |-(9,11);
		\draw (1.5, 5.5) circle (4pt) (3.5,9.5) circle (4pt);
		\draw[ultra thick, dashed,opacity=0.7](0,0)|-(2,2)|-(8,9)|-(9,11);
		\draw[thick,densely dotted] (0,0) to (6,6) (5,6) to (9,10) (8,10) to (9,11);
		\draw (-.5,.5) node {0} (-.5,1.5) node {0}(-.5,2.5) node {1}(-.5,3.5) node {1}(-.5,4.5) node {1}(-.5,5.5) node {1}(-.5,6.5) node {1}(-.5,7.5) node {1}(-.5,8.5) node {1}(-.5,9.5) node {2}(-.5,10.5) node {2} ;
		\end{tikzpicture} 
	\end{minipage}
	\caption{Construction of the second bounce path}
	\label{fig:ex_second_bounce}
\end{figure}

\begin{remark}
	Since a peak is a vertical step followed by a horizontal step, one could think of these two steps together as a diagonal step. So $\mathcal D_n^{(a,b)}$ can be interpreted as the set of \emph{decorated Schr\"oder paths} with $a$ diagonal steps and $b$ decorated rises. The definitions of $bounce$ and $dinv$ given in \cite[Chapter~4]{haglundbook} coincide with our definitions of $\mathsf{bounce'}$  and $\mathsf{dinv}$. When $b=0$ the definition of the $area$ in \cite[Chapter~4]{haglundbook} and our $\mathsf{area}$ also coincide.
\end{remark}

Let us conclude this section by introducing and motivating some notation. For each of the three statistics that depend on the decorations on the peaks, i.e. $\mathsf{dinv}, \mathsf{bounce}$ and $\mathsf{bounce'}$, there is one specific peak that, when decorated, does not alter this statistic. Often, when dealing with one of these statistics, it will be useful to consider not the whole set of decorated Dyck paths, but rather the set of decorated Dyck paths where this specific peak is not decorated. We will use the following notations:
\begin{center}
	\begin{itemize}
		\item $ \widetilde {\mathcal D}_n^{(a,b)} \subseteq {\mathcal D}_n^{(a,b)}$  is the subset where the rightmost highest peak is never decorated;
		\item $\widehat {\mathcal D}_n^{(a,b)}\subseteq {\mathcal D}_n^{(a,b)}$ is the subset where the first peak, i.e. the peak in the leftmost column, is never decorated; 
		\item $ \overline {\mathcal D}_n^{(a,b)}\subseteq {\mathcal D}_n^{(a,b)}$ is the subset where the last peak, i.e. the peak in the highest row is never decorated. 
	\end{itemize}
\end{center}

\subsection{Haglund's $\zeta$ (sweep) map}

The first bounce behaves well under Haglund's $\zeta$ map.

\begin{theorem} \label{thm:zeta_map}
	There exists a bijective map $$\zeta: \widetilde{\mathcal D}_n^{(a,b)} \rightarrow \widehat{\mathcal D}_n^{(b,a)} $$ such that for all $D\in \widetilde{\mathcal D}_n^{(a,b)}$ \begin{align*}
		\mathsf{area}(D)&=\mathsf{bounce}(\zeta(D))\\ \mathsf{dinv}(D)&=\mathsf{area}(\zeta(D)).
	\end{align*}
\end{theorem}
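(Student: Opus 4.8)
The plan is to reduce the statement to Haglund's classical $\zeta$ map on undecorated Dyck paths together with two combinatorial lemmas describing how $\zeta$ acts on rises and peaks. Recall (see \cite[Chapter~3]{haglundbook}) that $\zeta$ is a bijection $\mathcal D_n\to\mathcal D_n$ for which the first bounce word of $\zeta(D)$ is the non-decreasing rearrangement of the area word $w_1(D)\cdots w_n(D)$ (so $\bounce(\zeta(D))=\area(D)$ is manifest), and which also satisfies $\dinv(D)=\area(\zeta(D))$ when no decorations are present. Given $D\in\widetilde{\mathcal D}_n^{(a,b)}$, I would let $\zeta(D)$ be the classical image of the underlying Dyck path, decorated by moving each decorated rise of $D$ to a decorated peak of $\zeta(D)$ and each decorated peak of $D$ to a decorated rise of $\zeta(D)$ according to the correspondences below; equivalently one can give a direct level-by-level construction of $\zeta$ on decorated paths, which amounts to the same bookkeeping.

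The heart of the matter is two structural facts about the classical $\zeta$. \emph{(i)} $\zeta$ induces a bijection from the rises of $D$ (indices $i$ with $w_{i+1}(D)=w_i(D)+1$) onto the peaks of $\zeta(D)$ other than the one in the leftmost column, which pairs a rise $i$ having $w_{i+1}(D)=k$ with a peak $j$ of $\zeta(D)$ having first-bounce-word value $b_j(\zeta(D))=k$; since the amount removed from $\area$ by decorating rise $i$ is $w_{i+1}(D)$ and the amount removed from $\bounce$ by decorating peak $j$ is $b_j(\zeta(D))$, the contribution-matching is then automatic, and the content of (i) is the level-by-level count $\#\{\text{peaks of }\zeta(D)\text{ with bounce label }k\}=\#\{\text{ascents from level }k-1\text{ to level }k\text{ in the area word of }D\}$ for each $k\geq 1$ (for $k=0$ the only such peak of $\zeta(D)$ is the leftmost-column one, matching the fact that no rise has $w_{i+1}=0$). \emph{(ii)} Dually, $\zeta$ induces a bijection from the peaks of $D$ other than the rightmost highest one onto the rises of $\zeta(D)$, matching the number of $\dinv$-pairs destroyed by decorating a peak $i$ of $D$ — namely $\#\{j>i:w_j(D)=w_i(D)\}+\#\{j<i:w_j(D)=w_i(D)+1\}$, a quantity independent of the other decorations — with the entry $w_{i'+1}(\zeta(D))$ removed from $\area(\zeta(D))$ by decorating the paired rise $i'$. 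I would extract both bijections from the explicit description of $\zeta$ (grouping the letters of the area word of $D$ by value), which makes the bounce decomposition of $\zeta(D)$ transparent and lets one read off which rows of $\zeta(D)$ are peaks and with which bounce label. Note the two excluded peaks are exactly the ones that can be decorated without changing the relevant statistic — the leftmost-column peak of $\zeta(D)$ sits atop the first bounce run, hence has bounce label $0$; the rightmost highest peak of $D$ destroys no $\dinv$-pair — so the exclusions are precisely those built into the definitions of $\widehat{\mathcal D}_n^{(b,a)}$ and $\widetilde{\mathcal D}_n^{(a,b)}$.

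Granting (i) and (ii), the theorem is formal. The classical $\zeta$ already gives $\sum_i w_i(D)=\sum_i b_i(\zeta(D))$ and $\dinv_{\mathrm{und}}(D)=\sum_i w_i(\zeta(D))$. Passing to the decorated statistics, $\area(D)$ is $\sum_i w_i(D)$ minus $w_{i+1}(D)$ over the $b$ decorated rises of $D$, which by (i) is exactly what is subtracted from $\sum_i b_i(\zeta(D))$ to form $\bounce(\zeta(D))$; likewise $\dinv(D)$ is $\dinv_{\mathrm{und}}(D)$ minus the pairs destroyed by the $a$ decorated peaks of $D$, which by (ii) matches the passage from $\sum_i w_i(\zeta(D))$ to $\area(\zeta(D))$. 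The decoration transfer produces exactly $b$ decorated peaks and $a$ decorated rises in $\zeta(D)$, none at the leftmost column by (i), so $\zeta(D)\in\widehat{\mathcal D}_n^{(b,a)}$; and $\zeta$ is bijective because the classical $\zeta$ is and the correspondences in (i)--(ii) are invertible, so decorations can be transported back.

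The main obstacle is proving (i) and (ii), and within them the numerical matching rather than the bare existence of the rise--peak bijections. Lemma (i) is the easier one: once the level-preserving bijection is produced, its contribution-matching is trivial, so only the level-by-level count needs to be established from the structure of $\zeta$. Lemma (ii) is the delicate point, because it also requires a handle on the \emph{area word of $\zeta(D)$} (not just its bounce word): one must show that the primary-plus-secondary $\dinv$-count killed by a peak of $D$ equals an area entry of $\zeta(D)$. I expect to handle this by tracking a single diagonal level through $\zeta$, or by an induction on $n$ that peels off the first bounce step of $\zeta(D)$ (equivalently, the zeros of the area word of $D$), since $\zeta$ interacts cleanly with this decomposition and the excluded "rightmost highest peak" is exactly the obstruction one removes at each stage; alternatively one can combine (i) applied to $\zeta(D)$ with the classical formula for $\area(\zeta(D))$. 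Once this per-level accounting is in place, everything else is routine bookkeeping.
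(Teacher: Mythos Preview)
Your proposal is correct and follows essentially the same route as the paper. The paper's proof is precisely the concrete execution of your outline: it takes Haglund's classical $\zeta$ on the underlying path, transports decorated rises of $D$ to decorated peaks of $\zeta(D)$ (your lemma (i), verified by noting that a rise $w_{j+1}=w_j+1=i$ produces, during the level-$i$ scan, a valley and hence a peak with bounce label $i$), and transports decorated peaks of $D$ to decorated \emph{falls} of $\zeta(D)$ rather than to rises. Since the paper already sets up the rise--fall bijection and the equivalence of the two area formulations, this is the same as your lemma (ii) up to that dictionary.

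The one practical difference is that working with falls makes the verification of (ii) more direct than the induction you sketch: in the level-by-level construction of $\zeta$, the decorated peak at position $j$ with $w_j=i-1$ literally \emph{is} the horizontal step $h$ drawn when the scan reads that letter, and the squares in the column under $h$ split into those below the bounce path (one for each later letter equal to $i-1$, i.e.\ the lost primary dinv) and those above it (one for each earlier letter equal to $i$, i.e.\ the lost secondary dinv). So the numerical matching in (ii) falls out in one line from the construction, with no need to peel off bounce steps or to locate the corresponding rise of $\zeta(D)$ separately.
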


\begin{proof}
	
	The map is Haglund's $\zeta$ map on undecorated Dyck paths (Theorem 3.15 in \cite{haglundbook}), we just have to specify what happens to the decorations. 
	
	Take $D\in\widetilde{\mathcal D}_n^{(a,b)}$ and rearrange its area word in ascending order. This new word, call it $u$, will be the first bounce word of $\zeta(D)$. We construct $\zeta(D)$ as follows. First draw the first bounce path corresponding to $u$. The first vertical stretch and last horizontal stretch of $\zeta(D)$ are fixed by this path. For the section of the path  between the peaks of the first bounce path we apply the following procedure: place a pen on the top of the $i$-th peak of the first bounce path and scan the area word of $D$ from left to right. Every time we encounter a letter equal to $i-1$ we draw an east step and when we encounter a letter equal to $i$ we draw a north step. By construction of the first bounce path, we end up with our pen on top of the $(i+1)$-th peak of the bounce path. Note that in an area word a letter equal to $i\neq 0$ cannot appear unless it is preceded by a letter equal to $i-1$. This means that starting from the $i$'th peak, we always start with a horizontal step which explains why $u$ is indeed the first bounce word of $\zeta(D)$.
	
	Let us now deal with the decorated rises. Let $j$ be such that the $j$-th vertical step of $D$ is a decorated rise. It follows that  $w_{j+1}(D)=w_{j}(D)+1$. Take $i$ such that $w_{j}(D)=i-1$. While scanning the area word to construct the path between the $i$-th and $(i+1)$-th peak of the first bounce path, we will encounter $w_{j}(D)=i-1$, directly followed by $w_{j+1}(D)=i$. This will correspond to a horizontal step followed by a vertical step in $\zeta(D)$, sometimes called a \emph{valley}. Decorate the first peak following this valley. This peak must be in the section of the path between the $i$-th and $(i+1)$-th peak of the first bounce path so the label of the vertical step of the first bounce path contained in the same line must be equal to $i$. It now follows from the definition of the statistics that $\mathsf{area}$ gets sent into $\mathsf{bounce}$. Note that the first peak never gets decorated in this way, since it is not preceded by a valley.

	Finally, we determine where to send the decorated peaks. The most natural thing to do is to send decorated peaks into decorated falls. Let $j$ be the index of a decorated peak. It follows that $w_{j+1}(D)\leq w_j(D)$. Take once more $i$ such that $w_{j}(D)=i-1$. When scanning the area word to construct the path between the $i$-th and $(i+1)$-th peak of the bounce path, we will encounter $w_j=i-1$ which corresponds to a horizontal step, which we call $h$. Since $w_{j+1}(D)\leq i-1$ we must encounter a letter equal to $i-1$ before we might encounter a letter equal to $i$. It follows that our scanning continues with the encounter of a letter equal to $i-1$ or that we encounter no more letters equal to $i-1$ or $i$. In the first case it is clear that $h$ is followed by a horizontal step. In the second case and when $i$ is not the last peak of the bounce path then $h$ is also followed by a horizontal step since the next portion of the path must start with a horizontal step. Only when the $i$-th peak is the last peak of the bounce word and $w_j(D)$ is the last occurrence of $i-1$ in the area word, $h$ is not followed by a horizontal step. But this means that the $j$-th vertical step was the rightmost highest peak of $D$ and so $D\not \in \widetilde{\mathcal D}_n^{(a,b)}$.

	Thus, $h$ is always a fall, which we decorate. So in the area of $\zeta(D)$ the squares directly under $h$ and above the main diagonal are not counted. We distinguish two types of squares under $h$, those that are under the first bounce path of $\zeta(D)$ and those above it. The number of squares of the first kind equals the number of letters equal to $i-1$ that occur in the area word after its $j$-th letter. The number of squares of the second kind is equal to the number of letters equal to $i$ that occur in the area word before its $j$-th letter. Since $j\in \mathsf{Peak}(D)$ the pairs formed by $j$ and the indices of these letters are exactly the pairs that are not counted as dinv. The pairs that do contribute to the dinv correspond to the squares under undecorated horizontal steps. One concludes that $\mathsf{dinv}$ gets sent into $\mathsf{area}$. 
	
	One readily checks that this map is invertible: the bounce path of $\zeta(D)$ provides the letters of the area word of $D$, and the path the interlacing of the letters. The decorations of $D$ can be obtained applying the argument above backwards.  
	
	\begin{figure*}[h]
		\begin{minipage}{.5\textwidth}
			\centering
			\begin{tikzpicture}[scale=.7]
			\draw (8.5,.5) node {0} (8.5,1.5) node {1} (8.5,2.5) node {2} (8.5,3.5) node {2} (8.5,4.5) node {3} (8.5,5.5) node {1} (8.5,6.5) node {0} (8.5,7.5) node {1};
			\draw[gray](0,0) grid (8,8) (0,0)--(8,8);
			\draw[blue, ultra thick, opacity=0.4](0,0)--(0,3)--(1,3)--(1,5)--(4,5)|-(6,6)|-(8,8);
			\draw (-.5,.5) node {$\ast$} 
			(3.5,5.5) circle (3pt)
			(.5,3.5) node {$\ast$}
			(5.5,6.5) node {$\ast$}
			(5.5,7.5) circle (3pt) ;
			\fill[pattern=north west lines, pattern color=gray](0,1) rectangle (1,2) (1,4) rectangle (4,5)(6,7) rectangle (7,8);
			\end{tikzpicture}
			\caption{$D\in \widetilde{\mathcal D}_8^{(2,3)}$ and its area word.}
		\end{minipage}%
		\begin{minipage}{.5 \textwidth}
			\centering
			\begin{tikzpicture}[scale=.7]
			\draw[gray](0,0) grid (8,8) (0,0)--(8,8);
			\draw[blue, ultra thick, opacity=0.4](0,0)|-(1,2)|-(2,4)|-(3,5)|-(7,7)|-(8,8);
			\draw[ultra thick, dashed,opacity=0.7](0,0)|-(2,2)|-(5,5)|-(7,7)|-(8,8);
			\draw (8.5,.5) node {0} (8.5,1.5) node {0} (8.5,2.5) node {1} (8.5,3.5) node {1} (8.5,4.5) node {1} (8.5,5.5) node {2} (8.5,6.5) node {2} (8.5,7.5) node {3};
			\draw (.5,3.5) circle (3pt) (1.5,4.5) circle (3pt) (3.5, 7.5) node {$\ast$} (4.5,7.5) node {$\ast$}(6.5,7.5) circle (3pt);
			\fill[pattern=north west lines, pattern color=gray] (3,4) rectangle (4,7) (4,5) rectangle (5,7) ; 
			\end{tikzpicture}
			\caption{$\zeta(D)\in \widetilde{\mathcal D}_8^{(3,2)}$ and its bounce word.}
			
		\end{minipage}
		
	\end{figure*}
	
\end{proof}
\subsection{The $\psi$ map exchanging peaks and falls} 

We define a map that explains an interesting symmetry.

\begin{theorem} \label{thm:psi_map}
	There exists a bijective map $$\psi: \overline{\mathcal D}_n^{(a,b)} \rightarrow  \overline{\mathcal D}_n^{(b,a)}$$ such that for all $D\in \mathcal D_n^{(a,b)}$ \begin{align*}
		\mathsf{area}(D)&=\mathsf{area} (\psi(D)) \\ \mathsf{bounce'}(D)&=\mathsf{bounce'}(\psi(D)).
	\end{align*}
\end{theorem}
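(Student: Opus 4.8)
The plan is to construct the map $\psi$ explicitly by describing how it transforms the combinatorial object built from $D$ when computing $\mathsf{bounce}'$. Recall that $\mathsf{bounce}'(D)$ is computed by first forming the leaning stack: one deletes the horizontal step following each decorated peak, shifting the remaining path (and part of the diagonal) left, so that a decorated peak really behaves like a diagonal step. Thus both $\mathsf{area}$ and $\mathsf{bounce}'$ depend only on this leaning stack together with the positions of the decorated rises (equivalently, decorated falls). The key observation is that a leaning stack with $a$ diagonal steps and $b$ decorated falls can be read as a decorated Schröder-type path, and for such paths there is a natural duality $\psi$ exchanging the roles of the $a$ diagonal steps and the $b$ decorated falls, while leaving the ``bounce geometry'' and the area invariant.

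Concretely, first I would set up the following normal form: given $D\in\overline{\mathcal D}_n^{(a,b)}$, pass to its leaning stack $L$, which is a lattice path from $(0,0)$ to $(n-a,n)$ together with a decorated subset of its falls (those coming from decorated rises of $D$), staying weakly above the shifted diagonal. The diagonal steps (= former decorated peaks) occupy $a$ of the rows; the decorated falls occupy $b$ of the columns. I would then draw the second bounce path on $L$ and record, for each bounce block, two pieces of data: its height (number of north steps) and the amount of area it contributes, plus the location of diagonal steps and decorated falls relative to the bounce corners. The map $\psi$ will transpose this data across the main diagonal of the bounce decomposition: each ``vertical bounce block meets a diagonal step'' configuration is traded for a ``horizontal stretch meets a decorated fall'' configuration and vice versa, in the spirit of the classical $\zeta$/transpose symmetry for Schröder paths in \cite[Chapter~4]{haglundbook}. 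One then checks that the resulting object is again a valid leaning stack with $b$ diagonal steps and $a$ decorated falls, hence corresponds (after un-shifting) to an element $\psi(D)\in\overline{\mathcal D}_n^{(b,a)}$; the constraint that the last peak is never decorated is exactly what guarantees the top of the path behaves correctly under the transposition, which is why $\psi$ is defined on $\overline{\mathcal D}_n^{(a,b)}$ rather than all of $\mathcal D_n^{(a,b)}$.

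After the construction, the verification splits into three routine-but-careful steps: (i) $\psi$ is well-defined, i.e. the transposed data really assembles into a lattice path weakly above the diagonal with the right endpoints and the right number of diagonal steps and decorated falls; (ii) $\mathsf{area}(D)=\mathsf{area}(\psi(D))$, which follows because area is the total number of cells between path and diagonal in the leaning stack minus those in columns of decorated falls, and the transposition matches cells below the bounce path in $D$ with cells below the bounce path in $\psi(D)$ while swapping the excluded rows/columns symmetrically; (iii) $\mathsf{bounce}'(D)=\mathsf{bounce}'(\psi(D))$, which holds because the multiset of bounce-block heights is preserved under the transposition by design. Finally, involutivity (hence bijectivity) is immediate once one observes that applying the construction twice returns the original data, since transposing twice is the identity.

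The main obstacle I expect is step (i), precisely the bookkeeping of the shifted diagonal and the interaction of diagonal steps with the bounce corners: when a decorated peak (diagonal step) sits exactly at a bounce of the second bounce path, one must decide unambiguously which bounce block ``absorbs'' it, and the transposed picture must make a consistent dual choice. Getting this right is what forces the restriction to $\overline{\mathcal D}_n^{(a,b)}$ and what makes the definition of $\mathsf{bounce}'$ via the leaning stack (rather than a direct recursive description) essential. Everything else — the area and bounce equalities — is then a matter of matching cells across the transposition, which is routine once the normal form is pinned down.
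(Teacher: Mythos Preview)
Your proposal is a plan rather than a proof, and the central construction—the ``transposition across the main diagonal of the bounce decomposition''—is never actually defined. You invoke the $\zeta$/sweep map as an analogy, but $\zeta$-type maps \emph{exchange} statistics (sending $(\mathsf{dinv},\mathsf{area})$ to $(\mathsf{area},\mathsf{bounce})$), whereas here you need a map that \emph{preserves} both $\mathsf{area}$ and $\mathsf{bounce}'$ simultaneously. A genuine transposition of the leaning stack would swap north and east steps, hence swap bounce-block heights with widths, so your claim that ``the multiset of bounce-block heights is preserved under the transposition by design'' is not supported by the intuition you give. Moreover, in the leaning stack the objects you want to interchange are of different types: decorated peaks become \emph{diagonal} (NE) steps, while decorated falls remain \emph{horizontal} steps with a marking; there is no obvious reflection that swaps these while fixing the bounce path. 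You correctly anticipate that step~(i) is the obstacle, but as written there is no candidate map to check.

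The paper's construction is entirely different and much more local. It first defines an elementary move $\psi_0$ that converts a single decorated fall into a single decorated peak: from the endpoint $x$ of the decorated fall, drop south to the main diagonal, then go west until hitting the endpoint $y$ of a north step; delete the decorated east step at $x$ and insert an east step just after $y$, decorating the peak thus created. One checks directly that this local surgery does not alter the second bounce path (so $\mathsf{bounce}'$ is preserved) and that the number of area cells lost under the deleted step exactly equals the number gained by the shift (so $\mathsf{area}$ is preserved). The full map $\psi$ then applies $\psi_0$ to all decorated falls (left to right) and $\psi_0^{-1}$ to all original decorated peaks (top to bottom); the specified order is needed so that the portions of the path being shifted never contain decorated falls whose area contribution would change. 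The restriction to $\overline{\mathcal D}_n^{(a,b)}$ arises for a concrete reason you did not identify: $\psi_0^{-1}$ applied to a decoration on the peak in the highest row would produce a decorated horizontal step \emph{after} the end of the path, which is not a fall.
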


\begin{proof}Let us first look at a simpler map, $\psi_0$, that transforms one decorated fall into a decorated peak, conserving the said statistics. Call the endpoint of the decorated fall $x$. We travel southward from $x$ until we hit the main diagonal then travel west until we hit the endpoint of a north step of the path. Call this point $y$. Now we modify the portion of the path between $x$ and $y$ by deleting the decorated east step and by adding an east step right after the point $y$. See Figure~\ref{fig:psi0}. Since $y$ was the endpoint of a north step and we added an east step after $y$, we have created a peak which we decorate. 
	
	\begin{figure}[h!]
		\centering
		\begin{minipage}{.5 \textwidth}
			\centering
			\begin{tikzpicture}
			\filldraw (2,4) circle (1.5pt) (0,2) circle (1.5pt);
			\draw (2.2,4.2)node {$x$} (-.2,2.2) node {$y$};
			\draw[gray] grid (0,0) grid (5,5) (0,0)--(5,5);
			\draw[blue, opacity=.4,ultra thick](0,0)--(0,3)-|(1,4)--(3,4)|-(5,5);
			\draw(1.5,4.5) node {$\ast$};
			\fill[pattern=north west lines, pattern color=gray](1,2) rectangle (2,4);
			\draw[thick, dashed] (2,4)--(2,2)--(0,2);
			\end{tikzpicture}
		\end{minipage}%
		\begin{minipage}{.5\textwidth}
			\centering
			\begin{tikzpicture}
			\draw (2.2,4.2)node {$x$} (-.2,2.2) node {$y$};
			\draw[gray] grid (0,0) grid (5,5) (0,0)--(5,5);
			\filldraw (2,4) circle (1.5pt) (0,2) circle (1.5pt);
			\draw[thick, dashed] (2,4)--(2,2)--(0,2);
			\draw[blue, opacity=.4,ultra thick](0,0)--(0,2)--(1,2)|-(2,3)|-(3,4)|-(5,5);
			\draw (-.5,1.5) circle (3pt);
			\end{tikzpicture}
		\end{minipage}
		\caption{The map $\psi_0$.} \label{fig:psi0}
	\end{figure}
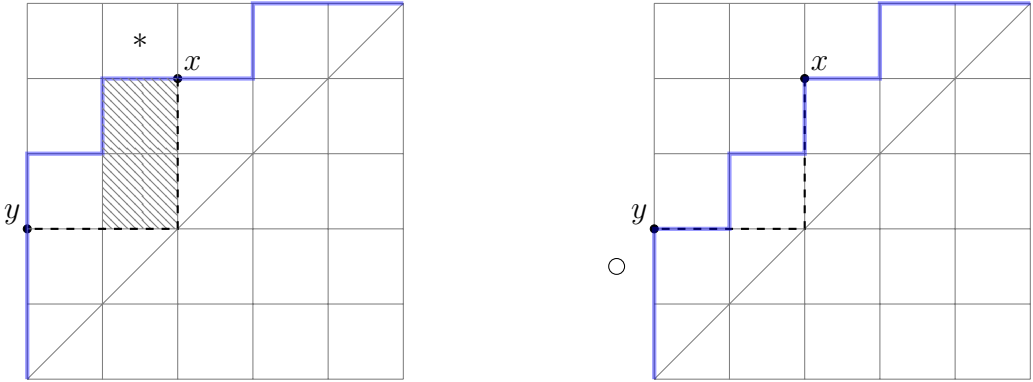
	
	Since $x$ is the endpoint of a fall, it is never on the main diagonal. It follows that the image is still a Dyck path. Furthermore, the area has not been altered since the number of squares beneath the path are exactly diminished by the number of squares under the decorated fall. Finally, recalling the definition of $\mathsf{bounce'}$, it is easy to see that the second bounce path is not altered, so that this statistic is not altered either. Clearly, $\psi_0$ is invertible. 
	
	Notice that $\psi_0^{-1}$ would not work with a decoration on the peak in the highest row: the added horizontal step would be the last of the path, so not a fall. This explains why we defined $\psi$ on $\overline {\mathcal D}_n^{(a,b)}$. 
	
	The general map $\psi$ relies on $\psi_0$ as follows: apply $\psi_0$ to all the decorated falls and $\psi_0^{-1}$ to all the decorated peaks, one by one. However, we must be careful, because the order in which we transform the decorations matters. We use the following order:
	\begin{enumerate}
		\item apply $\psi_0$, from left to right, to all decorated falls. We end up with a path all of whose decorations are on peaks;
		\item apply $\psi_0^{-1}$, from top to bottom, to all the decorated peaks that did not come from decorated falls in step (1). 
	\end{enumerate}

See Figure~\ref{fig:psi_definition} for an example.
	
	Notice that if we do not respect our rule, then the area of the final path might not be the same. Indeed $\psi_0$ requires pushing a portion of the path to the right. If there happens to be a decorated fall in this portion, then one decreases the number of squares under this step and so increases the overall area. Similarly, when applying $\psi_0^{-1}$, a portion of the path gets pushed to the left, so when there is a decorated fall in this portion the overall area decreases. These two situations are always avoided by respecting the given order. Since every step of $\psi$ preserves $\area$ and $\bounce'$, so does $\psi$. 
	
	With an obvious abuse of notation, it is clear that $\psi \circ \psi: \overline{\mathcal D}_n^{(a,b)} \to \overline{\mathcal D}_n^{(a,b)}$ is the identity map, in other words, $\psi$ is an involution, so it is bijective.

	\begin{figure}[h!]
		\centering
		\begin{minipage}{.5 \textwidth}
			\centering
			\begin{tikzpicture}[scale=.7]
			\draw[gray](0,0) grid (8,8) (0,0)--(8,8);
			\draw[blue, ultra thick, opacity=0.4](0,0)--(0,4)--(2,4)--(2,5)--(4,5)|-(5,6)|-(6,7)|-(8,8);
			\draw (2.5,5.5) node {$\ast$}
			(3.5,5.5) circle (3pt) 
			(.5,4.5) node {$\ast$} 
			(4.5,6.5) circle (3pt)
			(1.5,4.5) circle (3pt) ;
			\fill[pattern=north west lines, pattern color=gray](0,1) rectangle (1,4) (2,3) rectangle (3,5);
			\end{tikzpicture}	
		\end{minipage}%
		\begin{minipage}{.5 \textwidth}
			\centering
			\begin{tikzpicture}[scale=.7]
			\draw[gray](0,0) grid (8,8) (0,0)--(8,8);
			\draw[blue, ultra thick, opacity=0.4](0,0)|-(1,1)--(1,3)--(2,3)|-(3,4)|-(4,5)|-(5,6)|-(6,7)|-(8,8);
			\draw [ultra thick] (-.5,.5) circle (3pt) (1.5,3.5) circle (3pt);
			\draw (2.5,4.5) circle (3pt)
			(4.5,6.5) circle (3pt) 
			(3.5,5.5) circle (3pt)  ;
			\fill[pattern=north west lines, pattern color=gray](0,1);
			\end{tikzpicture}		
		\end{minipage}
		\vspace{.5cm}
		
		\begin{minipage}{\textwidth}
			\centering
			\begin{tikzpicture}[scale=.7]
			\draw[gray](0,0) grid (8,8) (0,0)--(8,8);
			\draw[blue, ultra thick, opacity=0.4](0,0)|-(1,1)--(1,3)--(2,3)|-(3,4)|-(8,8);
			\draw [ultra thick] (-.5,.5) circle (3pt) (1.5,3.5) circle (3pt)(4.5,8.5) node {$\boldsymbol{\ast}$} (5.5,8.5) node {$\boldsymbol{\ast}$} (6.5,8.5) node {$\boldsymbol{\ast}$} ;
			\draw  ;
			\fill[pattern=north west lines, pattern color=gray] (5,5)|-(6,6)|-(7,7)|-(8,8)--(4,8)--(4,5)--(5,5);
			\end{tikzpicture}	
		\end{minipage}
		\caption{The two steps in the definition of the map $\psi$}
		\label{fig:psi_definition}
	\end{figure}
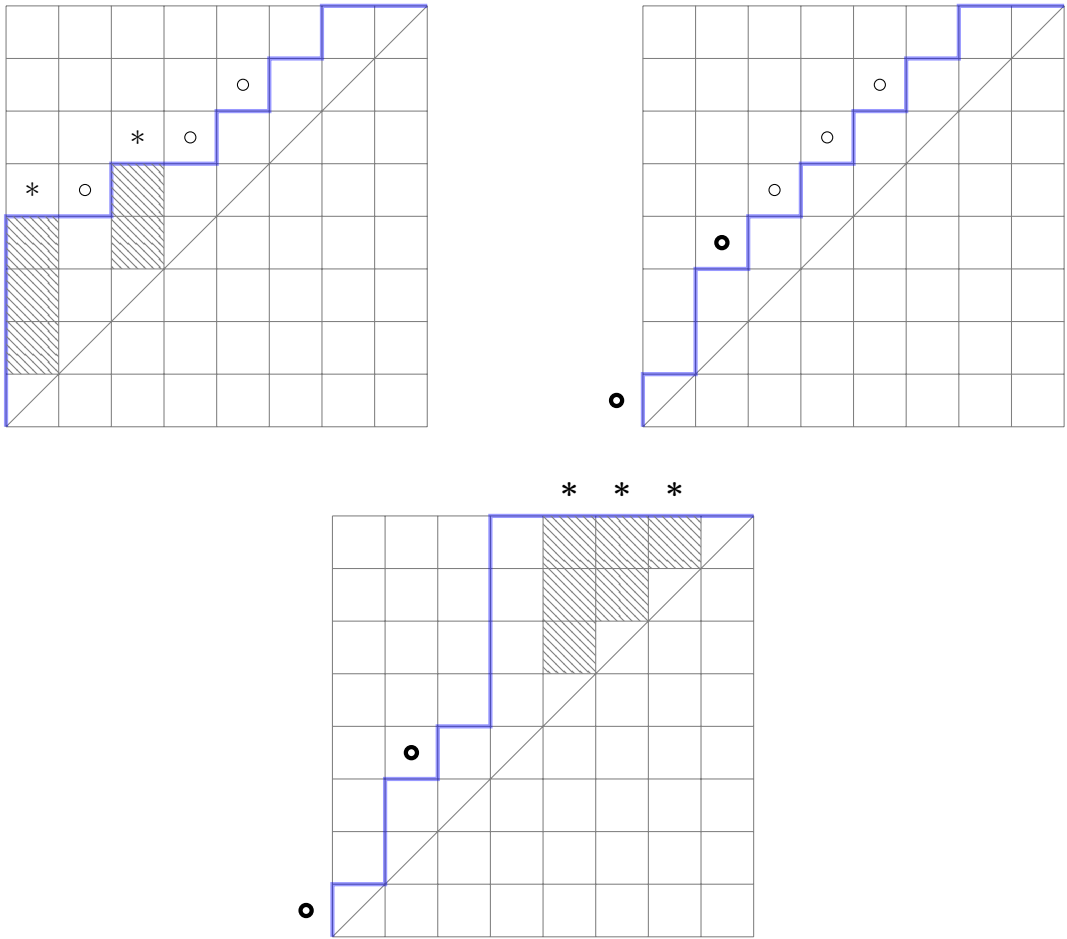
\end{proof}

\subsection{Combinatorial recursions} Let $S\subseteq \mathcal D_{n}^{(a,b)}$. We introduce some notation: 
\begin{align*}
	&\mathbf D S =\mathbf D S(q,t):= \sum_{D\in S}q^{\dinv(D)}t^{\area(D)} \\
	&\mathbf B S = \mathbf B S(q,t):= \sum_{D\in S}q^{\area(D)}t^{\bounce(D)}\\ 
	&\mathbf B' S = \mathbf B' S(q,t):= \sum_{D\in S}q^{\area(D)}t^{\bounce'(D)}.
\end{align*} In this section, we study the polynomials of this kind by breaking them into pieces and finding a recursive way to construct these pieces. 

\subsubsection{First bounce} 
Let $a,b,n,k\in \mathbb N\cup\{0\}$, with $n\geq k\geq 1$. We introduce two new sets 
\begin{equation}
\widehat{R}^{(a,b)}_{n,k}\subseteq \widehat{S}^{(a,b)}_{n,k} \subseteq \widehat{\mathcal D}_{n}^{(a,b)}:
\end{equation}
\begin{itemize}\item a path in $ \widehat{\mathcal D}_{n}^{(a,b)}$ is a path in $ \widehat{S}^{(a,b)}_{n,k}$ if its first bounce word contains exactly $k$ zeroes ; 
	\item a path in $ \widehat{S}_{n,k}^{(a,b)}$ is a path in $ \widehat{R}^{(a,b)}_{n,k}$ if its first $k$ columns do not contain any decorated fall.   
\end{itemize}

Since $\widehat{\mathcal D}_n^{(a,b)}=\sqcup_{k=1}^n \widehat{S}_{n,k}^{(a,b)}$, we have
\begin{equation}
\mathbf{B}\widehat{\mathcal D}_n^{(a,b)} = \sum_{k=1}^n \mathbf{B}\widehat{S}^{(a,b)}_{n,k} .
\end{equation}

We prove two theorems about recursions for the polynomials $\mathbf B \widehat{R}_{n,k}^{(a,b)}$ and $\mathbf B \widehat{S}_{n,k}^{(a,b)}$.

\begin{theorem} \label{thm:reco_first_bounce_S_R}
	
	Let $a,b,n,k\in \mathbb N\cup\{0\}$, $n\geq k\geq 1$. Then 
	\begin{align} 
	\mathbf B \widehat{S}_{n,n}^{(a,b)} & = \delta_{a,0} q^{\binom{n-b}{2}}\begin{bmatrix}
	n-1\\
	b
	\end{bmatrix}_q  ,
	\end{align}
	and for $n>k$
	\begin{equation}
	\mathbf B \widehat{S}_{n,k}^{(a,b)}=\sum_{s=0}^{\min(k-1,b)}{k\brack s}_q \mathbf B \widehat{R}_{n-s,k-s}^{(a,b-s)}.
	\end{equation}
\end{theorem}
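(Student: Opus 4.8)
The plan is to analyze the structure of a path $D\in\widehat{S}_{n,k}^{(a,b)}$ near the bottom, in terms of its first bounce path. By definition, the first bounce path of $D$ rises $k$ squares before its first bounce, so the first $k$ rows of $D$ have bounce label $0$ and the portion of $D$ above the $k$-th row begins with a vertical step at height $k$ (in area-word terms, $w_i(D)\le i-1$ for $i\le k$ and $w_{k+1}(D)$ can be anything). First I would treat the base case $n=k$: here the entire bounce word is $0^n$, so $\mathsf{bounce}(D)=0$ for every path, forcing the $t$-part to be trivial; the condition that the bounce word is all zeros means $D$ stays as far from the diagonal as possible in the relevant sense, i.e. $D$ is the path $N^nE^n$ (the staircase touching only at the ends)… more precisely, ``first bounce word has $n$ zeros'' forces the bounce path to go straight up then straight across, which happens exactly when $D$ has its single peak at the top, i.e. $D = N^n E^n$. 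On this single path there are no falls, so $a=0$ is forced (hence $\delta_{a,0}$), and the $b$ decorated rises must be placed among the $n-1$ rises; each decorated rise in row $j$ removes $w_{j+1}$ from the area count, and a direct computation of $\sum q^{\mathsf{area}}$ over the $\binom{n-1}{b}$ choices of decorated rises on $N^nE^n$ gives $q^{\binom{n-b}{2}}{n-1\brack b}_q$ — this is the standard $q$-binomial identity $e_{n-1-b}[[?]_q]$-type evaluation, and I would verify it by the transfer-matrix/induction-on-$n$ argument or by recognizing it as $q^{\binom{n-b}{2}}$ times the generating function for the remaining free area.

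For the recursive case $n>k$, the main idea is to look at the first column of $D$. Since $D\in\widehat{S}_{n,k}^{(a,b)}$, the first peak is undecorated and the bounce path rises exactly $k$ steps first. The key structural observation is the standard ``first-column removal'' for the bounce statistic on Dyck (Schröder) paths, adapted to our decorations: the first column of $D$ is determined by which of the first $k$ rows contain the east steps that the bounce path bounces off of, together with whether certain rises/falls in those rows are decorated. Concretely, I would peel off the first column and the initial vertical run, reducing to a path of size $n-s$ whose first bounce word has $k-s$ zeros, where $s$ counts how many of the $b$ decorated rises get ``absorbed'' into the first column during the removal. The factor ${k\brack s}_q$ arises as the generating function $e_s[[k]_q]/q^{\binom{s}{2}}$… more precisely ${k\brack s}_q$ counts, with the appropriate $q$-weight coming from $\mathsf{area}$, the ways to interleave the $s$ absorbed decorated rises among the $k$ available positions in the first column; the remaining path lies in $\widehat{R}_{n-s,k-s}^{(a,b-s)}$ precisely because, after removal, its first $k-s$ columns are guaranteed to contain no decorated fall (the falls corresponding to the removed decorated rises have been stripped away, and no new ones are created in those columns). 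I would make the bijection explicit: given a path in $\widehat{R}_{n-s,k-s}^{(a,b-s)}$ and a choice of $s$ positions (weighted by ${k\brack s}_q$), reinsert a first column of the appropriate width together with $s$ decorated rises distributed among the first $k$ rows, and check that $\mathsf{area}$ increases by exactly the $q$-exponent recorded by the $q$-binomial and that $\mathsf{bounce}$ is unchanged (the first $k$ rows contribute $0$ to $\mathsf{bounce}$ both before and after, and all higher bounce labels are preserved).

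The main obstacle I expect is pinning down precisely which decorated rises in the first $k$ rows may be ``absorbed'' versus which must be excluded, and correspondingly why the target set is $\widehat{R}$ (first $k-s$ columns free of decorated falls) rather than $\widehat{S}$: this is exactly the bookkeeping that makes the two theorems (this one and the companion recursion for $\mathbf{B}\widehat{R}_{n,k}^{(a,b)}$) fit together, so I would need to be careful that a decorated rise in one of the first $k$ rows whose corresponding fall (under the rise$\leftrightarrow$fall bijection of the Remark following Figure~\ref{rises vs falls}) lands in a later column is \emph{not} absorbed, while one whose fall lands within the first column (equivalently, one that is ``internal'' to the initial leaning stack) \emph{is} absorbed and contributes to the $s$-count. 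Establishing that this dichotomy is well-defined and that the resulting map is a bijection preserving the bistatistic — in particular that no decorated fall is ever created in the first $k-s$ columns of the reduced path, which is what the defining condition of $\widehat{R}$ demands — is the technical heart of the argument; the $q$-binomial bookkeeping and the $n=k$ base case are then routine given the evaluations \eqref{eq:h_q_binomial} and \eqref{eq:e_q_binomial}.
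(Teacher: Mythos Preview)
Your overall bijective strategy is correct and matches the paper's, but you have made it harder than necessary by insisting on tracking decorated \emph{rises}. The paper works entirely in the fall picture, which is the natural one here because the set $\widehat{R}_{n,k}^{(a,b)}$ is \emph{defined} by a condition on decorated falls in the first $k$ columns. Concretely: given $P'\in\widehat{S}_{n,k}^{(a,b)}$, let $s$ be the number of decorated falls among its first $k$ columns; delete those $s$ east steps and the bottom $s$ north steps. What remains is a path of size $n-s$ starting with $k-s$ north steps whose first $k-s$ columns are, by construction, free of decorated falls---i.e.\ an element of $\widehat{R}_{n-s,k-s}^{(a,b-s)}$. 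The inverse inserts $s$ decorated falls into an interleaving with the existing $k-s$ horizontal steps (the path is guaranteed to have a further horizontal step after column $k-s$ because its bounce word has $k-s$ zeros, so ending the interleaving with a decorated fall is legal), and prepends $s$ north steps; the factor $\genfrac{[}{]}{0pt}{}{k}{s}_q$ records the area picked up by this interleaving. No appeal to the rise$\leftrightarrow$fall correspondence is needed, and the ``which rises get absorbed'' dichotomy you worry about simply does not arise.

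Two smaller corrections. In your base case you write ``there are no falls, so $a=0$ is forced'': on $N^nE^n$ there are $n-1$ falls, and $a$ counts decorated \emph{peaks}, not falls; the correct reason $a=0$ is that the unique peak is the first one, which is excluded in $\widehat{S}$. Second, your repeated phrase ``first column'' should be ``first $k$ columns'' throughout---this is not a typo but the source of your difficulty, since working column-by-column does not give the size reduction by $s$ that the recursion requires.
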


\begin{proof}

First of all, consider the case $k=n$. The paths in $\widehat{S}_{n,n}^{(a,b)}$ are the paths that consist of $n$ north steps followed by $n$ east steps and so the bounce must be zero. There can be no decorations of the first and only peak of these paths so $a$ must equal zero. Of the $n$ consecutive east steps, $b$ are decorated falls. The last east step must be undecorated since it is not a fall. Choose an interlacing between the remaining $n-1-b$ undecorated and $b$ decorated east steps. The area is given by $q^{n-b\choose 2} {n-1\brack b}_q. $

	Take a path in $P \in\widehat{R}_{n-s,k-s}^{(a,b-s)}$. The first $k-s$ columns on $P$ do not contain any decorated falls. We will add $s$ falls to these first columns and prepend $s$ vertical steps to the $P$, thus obtaining a  path $P'$ in $\widehat{S}_{n,k}^{(a,b)}$ (as in Figure \ref{fig R and S}). We obtain all the possible paths of $\widehat{S}_{n,k}^{(a,b)}$  by summing over all the possible values of $s$. The bounce remains unchanged since one obtains the first bounce word of $P'$ by prepending $s$ zeroes to the first bounce word of $P$ and the decorated peaks stay in the same position. Choose an interlacing between the $k-s$ first horizontal steps of $P$ and the $s$ falls that are to be added. Reading the interlacing left to right, we prepend the decorated falls to the following undecorated horizontal steps. We may also end the interlacing with a decorated fall since $P$ has $k-s$ zeroes in its first bounce word and so the portion of $P$ in the first $k-s$ columns is followed by a horizontal step. The squares under the decorated falls do not contribute to the area but every time an undecorated horizontal step precedes a decorated fall in the interlacing, one square of area is added. This explains the factor ${k-s+s\brack s}_q={k\brack s}_q$ and proves the proposition.

	\begin{figure}[h!]
		\centering
		\begin{minipage}{.3\textwidth}
		\centering
			\begin{tikzpicture}[scale=.7]
				\draw[gray](3,3) grid (10,10) (3,3)--(10,10);
				\draw[ultra thick, dashed,opacity=0.7](3,3)|-(6,6)|-(9,9)|-(10,10);
				\draw[blue, ultra thick, opacity=.4](3,3)|-(4,6)|-(5,7)|-(6,8)--(6,9)--(8,9)|-(10,10);
				\draw(2.5,5.5) node{$\circ$} (6.5,9.5) node {$\ast$};
				\fill[gray, opacity=.4] (3,4) rectangle (4,6)(4,5) rectangle (5,7)(5,6) rectangle (6,8);
			\end{tikzpicture}
		\end{minipage}%
		\begin{minipage}{.7 \textwidth}
		\centering	
		\begin{tikzpicture}[scale=.7]
		\draw[gray, opacity=.5] (0,0) grid (10,10) (3,3) grid (10,10) (0,0)--(10,10);
		\draw[thick, opacity=.7](2.9,2.9) rectangle (10.1,10.1); 
		\draw[blue, ultra thick, opacity=.4](6,9)--(8,9)|-(10,10) (0,3)--(0,6) (1,6)-|(2,7) -|(3,8)-|(4,9);
		\draw[ultra thick, red, opacity=.9](0,0)--(0,3)(0,6)--(1,6)(4,9)--(6,9);
		\draw (.5,6.5) node {$\ast$} (4.5,9.5) node {$\ast$} (5.5,9.5) node {$\ast$} (8.5,10.5) node {$\ast$} (-.5, 5.5) node {$\circ$} (6.5,9.5) node {$\ast$} ;
		\fill[pattern=north west lines, pattern color=gray](0,1) rectangle (1,6)(4,5) rectangle (5,9)rectangle (6,6) (8,9) rectangle (9,10);
		\fill[gray, opacity=.4] (1,2) rectangle (2,4)(2,3) rectangle (3,5)(3,4) rectangle (4,6);
		\draw[ultra thick, dashed,opacity=0.7](0.1,0)|-(6,5.9)|-(9,9)|-(10,10);
		\end{tikzpicture}
		\end{minipage}

		\caption{Illustration of the proof of Theorem~\ref{thm:reco_first_bounce_R_S}} \label{fig R and S}
	\end{figure}

\end{proof}

\begin{theorem} \label{thm:reco_first_bounce_R_S}
	Let $a,b,n,k\in \mathbb N\cup\{0\}$, with $n\geq k\geq 1$. Then   
	\begin{equation}
	\mathbf{B}\widehat{R}^{(a,b)}_{n,n} =\delta_{0,a}\delta_{0,b}
	q^{n\choose 2},
	\end{equation} 
	and for $1\leq k <n$ 
	\begin{align}
		\mathbf{B}\widehat{R}^{(a,b)}_{n,k}  & =
		t^{n-k-a}
		\sum_{i=0}^a   q^{{k\choose 2}+{i+1\choose 2}}
		\sum_{h=1}^{n-k} 
		{h-1 \brack i}_q 
		{h+k-i-1\brack h}_q\\
	\notag	&\quad \times \left(\mathbf{B}\widehat{S}^{(a-i,b)}_{n-k,h} +
		\mathbf{B}\widehat{S}^{(a-i-1,b)}_{n-k,h}  \right) .
	\end{align}
\end{theorem}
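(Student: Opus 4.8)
The plan is to mirror the structure of the proof of Theorem~\ref{thm:reco_first_bounce_S_R}, constructing a path in $\widehat{R}_{n,k}^{(a,b)}$ (for $1\leq k<n$) by decomposing it along its first bounce. First I would deal with the base case $k=n$: such a path has a single peak, which cannot be decorated (so $a=0$), and no falls (so $b=0$); it is the staircase-like path consisting of $n$ norths then $n$ easts, contributing $q^{\binom n2}$ via the area. This gives $\mathbf B\widehat R_{n,n}^{(a,b)}=\delta_{0,a}\delta_{0,b}q^{\binom n2}$.

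For $1\leq k<n$, a path $D\in\widehat R_{n,k}^{(a,b)}$ has first bounce word starting with exactly $k$ zeroes, so the first bounce path rises $k$ steps, goes east some number $h\geq 1$ of steps, hits the diagonal, and then the portion of $D$ strictly above the first bounce's first corner is itself (after the obvious translation) a path whose first bounce word has exactly $h$ zeroes, i.e.\ an element of $\widehat S_{n-k,h}^{(a',b)}$ for some $a'\leq a$. The decorated rises are untouched by this decomposition (hence the parameter $b$ is unchanged), and the $n-k$ rows above the first bounce corner contribute their bounce labels $1,2,\dots$ shifted down by one relative to $D$, whence the global factor $t^{n-k-a}$ accounting for the area-type correction coming from the $n-k$ rows lying above the first bounce path's initial east run minus the decorated peaks among them (this is where the $-a$ appears: each of the $a$ decorated peaks sits in one of those rows and does not contribute to $\bounce$). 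The term $\mathbf B\widehat S^{(a-i,b)}_{n-k,h}+\mathbf B\widehat S^{(a-i-1,b)}_{n-k,h}$ arises because the last peak before the first bounce corner — the one in row $k$ — may or may not be decorated; if it is, the upper path carries one fewer decorated peak, which is exactly the standard $\Delta_{e_k}=\Delta_{e_{k-1}}'+\Delta_{e_k}'$-type split realized combinatorially.

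The remaining bookkeeping is the $q$-weight of the first $k$ columns and the first $h$ columns' worth of interaction. In the first $k$ columns we must place $i$ of the $a$ decorated peaks (those lying in the triangle below the first bounce path's initial vertical run), and the area contributed there together with the choices of which rows among the first $k$ rows the path turns east gives the factors $q^{\binom k2}$, $q^{\binom{i+1}2}$ and the $q$-binomials ${h-1\brack i}_q{h+k-i-1\brack h}_q$: one $q$-binomial records the interlacing of the $h$ east steps of $D$ in those rows with the path's internal structure, the other records the placement of the $i$ decorated peaks among the available slots, with the $q^{\binom{i+1}2}$ coming from the staircase of squares the decorated peaks cut off. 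I would verify these exponents by directly counting squares in a picture analogous to Figure~\ref{fig R and S}, peeling off rows from the bottom. Summing over $i=0,\dots,a$ and $h=1,\dots,n-k$ then yields the claimed identity.

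The main obstacle I anticipate is pinning down the precise $q$-exponents and which $q$-binomials appear: the interplay between the area lost to decorated peaks (which cut off a staircase of cells, producing the $q^{\binom{i+1}2}$ and forcing a $q$-binomial rather than a plain binomial), the area from the first bounce triangle ($q^{\binom k2}$), and the interlacing of the $h$ east steps with the decorated peaks in the first $k$ columns must all be accounted for without double counting. Getting the ranges right — in particular that $h$ ranges over $1,\dots,n-k$ and that the decorated peak in column $k$ is handled by the two-term sum rather than being absorbed into $i$ — is the delicate point; everything else is a routine, if careful, cell count following the template already established for $\mathbf B\widehat S_{n,k}^{(a,b)}$.
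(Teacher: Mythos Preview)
Your overall strategy---decompose $D\in\widehat R_{n,k}^{(a,b)}$ along its first bounce into a lower piece $P(D)$ and an upper path in $\widehat S^{(\cdot,b)}_{n-k,h}$, then count area contributions---is exactly the paper's approach, and your identification of the $t^{n-k-a}$ factor and the $q^{\binom k2}$ triangle is correct. However, two of your geometric attributions are off and would derail the cell count if you followed them literally.

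First, the $i$ decorated peaks are not ``in the triangle below the first bounce path's initial vertical run.'' The path $P(D)$ must begin with $k$ vertical steps (that is what forces $k$ zeros in the bounce word), so everything interesting in $P(D)$ happens in the rectangle from $(0,k)$ to $(k,k+h)$, \emph{above} the first bounce corner. It is in those $h$ rows that you interleave $i$ decorated peaks with the remaining vertical steps, and the $q$-binomial $\begin{bmatrix}h-1\\ i\end{bmatrix}_q$ together with $q^{\binom{i+1}2}$ records the area above the bounce path created by that interleaving. The second $q$-binomial $\begin{bmatrix}h+k-i-1\\ h\end{bmatrix}_q$ then comes from interleaving the remaining $k-i-1$ horizontal steps (one horizontal step is forced immediately after the initial $k$ verticals) with the $h$ vertical steps.

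Second, and more substantively, the two-term split $\mathbf B\widehat S^{(a-i,b)}_{n-k,h}+\mathbf B\widehat S^{(a-i-1,b)}_{n-k,h}$ is not about ``the last peak before the first bounce corner---the one in row $k$.'' The peak in row $k$ is the \emph{first} peak of $D'$ and is never decorated by the hat condition. The boundary issue is at the \emph{top} of $P(D)$, row $k+h$: when you place the $i$ decorated peaks you must exclude the top row (hence $\begin{bmatrix}h-1\\ i\end{bmatrix}_q$ rather than $\begin{bmatrix}h\\ i\end{bmatrix}_q$), because a decorated peak there would have its horizontal step belonging to the upper path rather than to $P(D)$, making the decomposition ambiguous. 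To recover those missing paths you add a second term in which the top row of $P(D)$ is \emph{always} decorated and the upper path correspondingly carries one fewer decorated peak, i.e.\ lies in $\widehat S^{(a-i-1,b)}_{n-k,h}$. Once you relocate these two pieces of the argument, the cell count goes through exactly as you outline.
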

\begin{proof} First suppose that $k=n$, that is, the first bounce word contains $k$ zeroes so the path must go north for $n$ steps and then east for $n$ steps. It follows that the first bounce equals zero.  It is clear that for $\widehat{R}^{(a,b)}_{n,n}$ to be nonempty $a$ and $b$ have to equal $0$ since we cannot decorate the first (and only) peak and the first $k=n$ columns may not contain a decorated fall. So there is only one path in $ \widehat{R}^{(a,b)}_{n,n}$ and its area is equal to $n\choose 2$. 
	
	Now suppose that $k<n$. We describe a procedure that uniquely describes an element of $\widehat{R}^{(a,b)}_{n,k}$.

	Start with an element $D$ of $\widehat{S}^{(a-i,b)}_{n-k,h}$ for $h\in {1,...,n-k}$. We will extend this path to obtain an element $D'$ of $\widehat{R}^{(a,b)}_{n,k}$, keeping track of the statistics during the process. We refer to Figure \ref{first bounce recursion} for a visual aid. Consider an $n\times n$ grid. Place $D$ in the top right corner of this grid such that $D$ goes from $(k,k)$ to $(n,n)$. Since the number of $0$'s in the first bounce word of $D$ equals $h$ we know that $D$ starts with $h$ vertical steps, followed by a horizontal step.  Delete these $h$ vertical steps. To obtain a path $D'$ of size $n$ we need to complete $D$ with a path from $(0,0)$ to $(k,k+h)$ which we will call $P(D)$. Note that $D'$ must have $k$ zeroes in its first bounce word, so $P(D)$ must start with $k$ vertical steps followed by a horizontal step. Hence the path that is still to be determined goes from $(1,k)$ to $(k,k+h)$. Note that we have $h\geq 1$ since $n<k$. 
	\begin{figure}[h!]
		\centering
		\begin{minipage}{0.4 \textwidth}
			\centering
			\begin{tikzpicture}[scale=0.9]
			\draw[ultra thick, dashed, opacity=.6](3,3)|-(5,5)|-(8,8)|-(10,10) ;
			\draw[] (3,3) rectangle (10,10); 
			\draw[ultra thick, blue, opacity=0.4](3,3) |-(3.5,5) |-(5.5,8)|-(10,10);  \draw (3.25,7.75)circle (2.5pt);
			\draw  (4.5, 8.25) node {$\ast$};
			\end{tikzpicture}
			
		\end{minipage}%
		\begin{minipage}{0.6 \textwidth}
			\centering
			\begin{tikzpicture}[scale=0.9]
			\draw (0,0) rectangle (10,10);
			\draw (0,0)-- (10,10);
			\draw[ultra thick, dashed, opacity=.6](0,0) |-(3,3)|-(5,5)|-(8,8)|-(10,10) ;
			\draw[thick] (2.9,2.9) rectangle (10,10); 
			\draw[ultra thick, blue, opacity=0.4](0,0) |-(.5,3)|-(2,3.5)|-(2.5,4)|- (3.5,5) |-(5.5,8)|-(10,10);
			\draw (.25, 3.25) circle (2.5pt) (3.25,7.75) circle (2.5pt) ;
			\draw (4.5, 8.25) node {$\ast$};
			\end{tikzpicture}
		\end{minipage}
		\caption{Theorem~\ref{thm:reco_first_bounce_R_S}: $D$ on the left and $D'$ on the right.}\label{first bounce recursion}
	\end{figure}
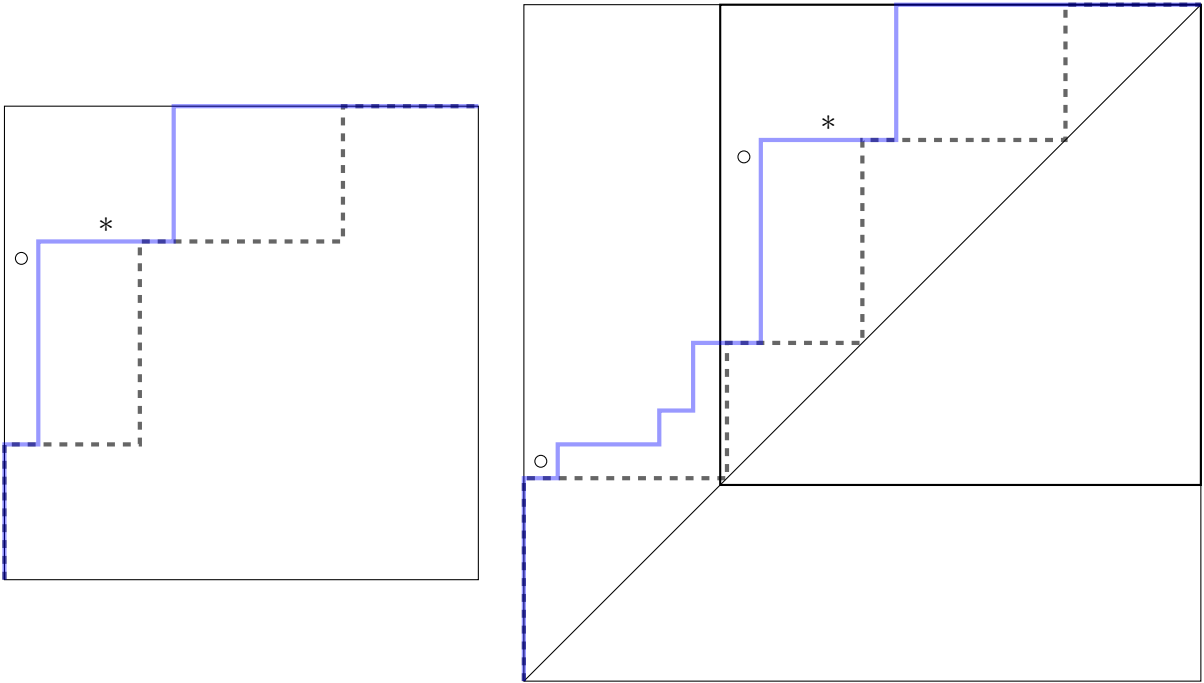
	
	At this point we can already determine the first bounce of $D'$. We have 
	\begin{equation}	
	\bounce(D')=\bounce(D)+(n-k-a).
	\end{equation} 
	Indeed its first bounce word is formed by $k$ $0$'s followed by the letters of the bounce word of $D$, all augmented by $1$. So $n-k$ letters are augmented by one, but $a$ of them will not contribute to the bounce as they will correspond to decorated peaks (all the decorated peaks are above the line $y=k$, as the first peak cannot be decorated). This explains the factor $t^{n-k-a}$.

	The area of $D$ will be the sum of the area of $D'$ and the area beneath the path $P(D)$. We construct $P(D)$ in two steps. We already observed that $P(D)$ must start with $k$ vertical steps. We construct the path from $(k,k+h)$ to $(0,k)$ keeping track of the area underneath it and above the main diagonal. We set 
	\begin{center}
		\begin{tabular}{l}
			$i$ := number of decorated peaks in $P(D)$. 
		\end{tabular}
	\end{center} The different steps are illustrated in Figure \ref{creating P(D)}.
	
	\begin{enumerate}
		\item We start by creating $i$ decorated peaks. Consider a peak as a vertical step followed by a horizontal step. \emph{We require that the top row does not contain a decorated peak.} So choose an interlacing of $i$ peaks and $h-i-1$ vertical steps. The area that lies above the bounce path and under this portion of the path is reflected by the terms $$q^{i+1\choose 2}{h-i-1+i\brack i}_q=q^{i+1\choose 2}{h-1\brack i}_q. $$

		\item Now, we have placed all the vertical steps and $i$ horizontal steps. So there are $k-i$ horizontal steps left to place. We know that the first $k$ vertical steps of $P(D)$ must be followed by a horizontal step. Imposing that we begin by a horizontal step, this leaves $k-i-1$ horizontal steps and $h$ vertical ones to interlace. The area above the bounce path that is created is counted by $${h+k-s-i-1\brack h}_q.$$ The area under the bounce path is equal to $q^{k\choose 2}$.
	\end{enumerate}
	
	Depending on the choices made in the construction of $P(D)$ we have now obtained a unique path in $\widehat{R}^{(a,b)}_{n,k}$, and kept track of its statistics. However, not all the paths of $\widehat{R}^{(a,b)}_{n,k}$ are obtained by this procedure. Indeed, in step (1), we required that the top row of $P(D)$ does not contain a decorated peak. We did this in order to be able to differentiate between the situation when the horizontal step following the decorated peak is part of $P(D)$ and the situation where it isn't. We solve this by adding another term, now starting from a path in $\widehat{S}_{n-k,h}^{(a-i-1,b)}$ and considering the last row of $P(D)$ to be always decorated (as opposed to never decorated before). The argument for the area stays exactly the same, as a decorated peak does not influence the area. For the bounce, the argument does not change either: $D$ has $a-i-1$ decorated peaks, we place $i$ decorated peaks in step (1) and consider the last row of $P(D)$ to contain a decorated peak, which totals to $a$ decorated peaks and so the bounce still goes up by $n-k-a$. 
	
	Summing over all possible values of $h,s$ and $i$, we obtain the result. 
	
	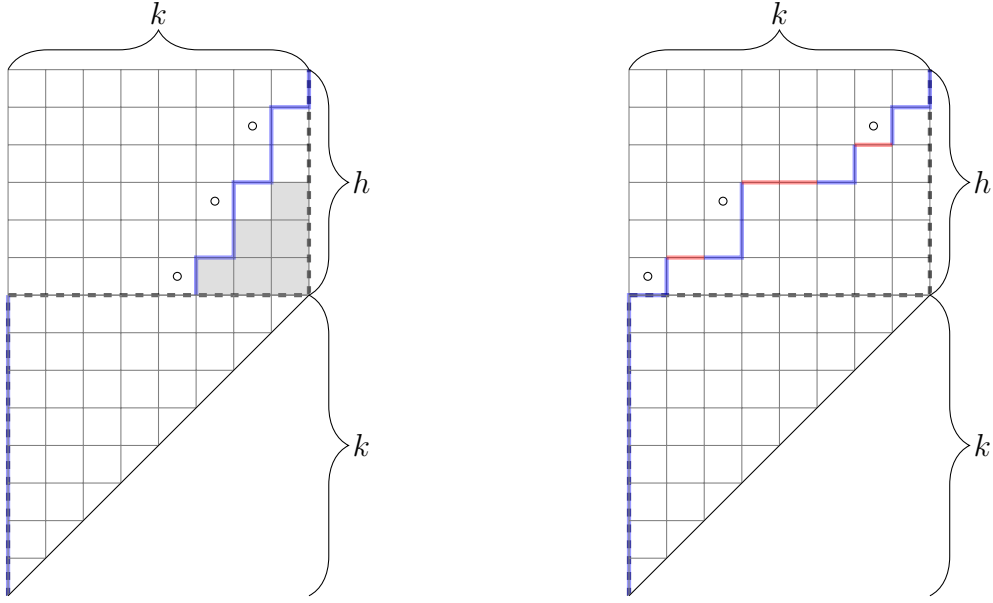
\begin{figure}
		\centering
		\begin{minipage}{0.5\textwidth}
			\centering
			\begin{tikzpicture}[scale=.5]
			\draw[gray] (0,0) grid (8,6);
			
			\draw[ultra thick, dashed, opacity=.6] (0,-8)--(0,0)-|(8,6);
			\draw (9.4,3) node {$h$} (4,7.5) node {$k$} (9.4,-4) node {$k$};
			\draw[ultra thick, blue, opacity=.4](8,6)--(8,5)--(7,5)--(7,3)--(6,3)--(6,1)--(5,1)--(5,0) (0,-8)|-(0,0);
			
			\filldraw[gray, opacity=.25](5,0)--(8,0)--(8,3)--(7,3)--(7,2)--(6,2)--(6,1)--(5,1);
			\draw(4.5,.5) circle (3pt) (5.5,2.5) circle (3pt) (6.5,4.5) circle (3pt) ;
			\draw [gray](0,-8) grid (8,0) ;
			\filldraw[white](-.2,-8.2)--(8.2,0.2)--(8.2,-8.2)--(-.2,-8.2) ;
			\draw (0,-8)--(8,0);
			\draw [decorate,decoration={brace, mirror,amplitude=15pt},xshift=0pt,yshift=0pt](8,0)--(8,6);
			\draw [decorate,decoration={brace,amplitude=15pt},xshift=0pt,yshift=0pt](0,6)--(8,6);\draw[decorate,decoration={brace,amplitude=15pt},xshift=0pt,yshift=0pt](8,0)--(8,-8);
			\end{tikzpicture}  
		\end{minipage}%
		\begin{minipage}{.5 \textwidth}
			\centering
			\begin{tikzpicture}[scale=.5]
			\draw[gray] (0,0) grid (8,6);
			\draw[ultra thick, dashed, opacity=.6] (0,-8)--(0,0)-|(8,6);
			\draw (9.4,3) node {$h$} (4,7.5) node {$k$} (9.4,-4) node {$k$};
			\draw[ultra thick, blue, opacity=.4](0,-8)|-(1,0);
			\filldraw[gray, opacity=.3];
			\draw [gray](0,-8) grid (8,0) ;
			\draw[ultra thick, blue, opacity=.4](8,6)--(8,5)--(7,5)--(7,4)(6,4)--(6,3)--(5,3) (3,3)|-(2,1)(1,1)--(1,0);
			\draw[ultra thick, red, opacity=.4](7,4)--(6,4)(5,3)--(4,3);
			\draw[ultra thick, red, opacity=.4](1,1)--(2,1)(3,3)--(4,3);
			\filldraw[white](-.2,-8.2)--(8.2,0.2)--(8.2,-8.2)--(-.2,-8.2) ;
			\draw (0,-8)--(8,0);
			\draw(.5,.5) circle (3pt) (2.5,2.5) circle (3pt) (6.5,4.5) circle (3pt) ;
			\draw [decorate,decoration={brace, mirror,amplitude=15pt},xshift=0pt,yshift=0pt](8,0)--(8,6);
			\draw[decorate,decoration={brace,amplitude=15pt},xshift=0pt,yshift=0pt](8,0)--(8,-8);
			\draw [decorate,decoration={brace,amplitude=15pt},xshift=0pt,yshift=0pt](0,6)--(8,6);
			
			\end{tikzpicture} 
		\end{minipage} 
		\caption{Creating $P(D)$, step 1 and 2}\label{creating P(D)}
	\end{figure}
\end{proof}

\medskip

Let $a,b,n,k\in \mathbb N\cup\{0\}$, with $n\geq k\geq 1$. We introduce two new sets  
\begin{equation}
\widetilde{R}_{n,k}^{(a,b)}\subseteq \widetilde{S}_{n,k}^{(a,b)}\subseteq \widetilde{\mathcal D}_{n}^{(a,b)}:
\end{equation}
	\begin{itemize}\item a path in $ \widetilde{\mathcal D}_{n}^{(a,b)}$ is a path in $ \widetilde{S}_{n,k}^{(a,b)}$ if its area word contains exactly $k$ zeroes;   
		\item a path in $\widetilde{S}_{n,k}^{(a,b)}$ is a path in $ \widetilde{R}^{(a,b)}_{n,k}$ if the peaks whose starting point lies on the main diagonal are not decorated. 
	\end{itemize}
Since $\widetilde{\mathcal D}_n^{(a,b)} = \sqcup_{k=1}^n  \widetilde{S}^{(a,b)}_{n,k}$, we have
\begin{equation}
\mathbf{B}\widetilde{\mathcal D}_n^{(a,b)} = \sum_{k=1}^n \mathbf{B}\widetilde{S}^{(a,b)}_{n,k} .
\end{equation}

Using the $\zeta$ map, we can deduce the following results for the polynomials $\mathbf{D}\widetilde{R}_{n,k}^{(a,b)}$ and $\mathbf{D}\widetilde{S}_{n,k}^{(a,b)}$.

The next proposition follows immediately from the construction of $\zeta$.
\begin{proposition} \label{prop:zeta_k}
Let $a,b,n,k\in \mathbb N\cup\{0\}$, with $n\geq k\geq 1$. Then the $\zeta$ map defined in the proof of Theorem~\ref{thm:zeta_map}, suitably restricted, gives a bijection from $\widetilde{S}_{n,k}^{(a,b)}$ to $\widehat{S}_{n,k}^{(b,a)}$ and from $\widetilde{R}_{n,k}^{(a,b)}$ to $\widehat{R}_{n,k}^{(b,a)}$.
\end{proposition}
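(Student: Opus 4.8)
The plan is to read off, directly from the explicit construction of $\zeta$ given in the proof of Theorem~\ref{thm:zeta_map}, how the bijection behaves on the distinguished subsets, using only two bookkeeping facts together with the disjoint decompositions $\widetilde{\mathcal D}_n^{(a,b)}=\bigsqcup_{k=1}^n\widetilde S_{n,k}^{(a,b)}$ and $\widehat{\mathcal D}_n^{(b,a)}=\bigsqcup_{k=1}^n\widehat S_{n,k}^{(b,a)}$. Once one checks that $\zeta$ carries each piece $\widetilde S_{n,k}^{(a,b)}$ \emph{into} $\widehat S_{n,k}^{(b,a)}$ (and similarly for the $R$'s), the fact that $\zeta$ is already known to be a bijection between the two ambient sets forces it to restrict to a bijection on each matching piece: injectivity is inherited, and surjectivity follows since the pieces are pairwise disjoint and indexed by the same parameter $k$ (if $D'\in\widehat S_{n,k}^{(b,a)}$ then $D'=\zeta(D)$ for a unique $D$, which lies in some $\widetilde S_{n,k'}^{(a,b)}$, forcing $\zeta(D)\in\widehat S_{n,k'}^{(b,a)}$ and hence $k'=k$).

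For the $S$-pieces I would use that, in the construction, the first bounce word of $\zeta(D)$ is exactly the area word of $D$ sorted in weakly increasing order, so it has the same number of zeroes as the area word of $D$; hence $D\in\widetilde S_{n,k}^{(a,b)}$ iff $\zeta(D)\in\widehat S_{n,k}^{(b,a)}$. For the $R$-pieces, note that a peak of $D$ whose starting point lies on the main diagonal is a vertical step $j$ that is a peak with $w_j(D)=0$. In the construction, a decorated peak $j$ of $D$ with $w_j(D)=i-1$ is sent to the decorated fall obtained as the horizontal step drawn, while scanning the area word, during the construction of the portion of $\zeta(D)$ lying between the $i$-th and $(i+1)$-th peaks of its first bounce path; moreover every decorated fall of $\zeta(D)$ arises this way. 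When the first bounce word of $\zeta(D)$ has exactly $k$ zeroes, the first such portion (between bounce peaks $1$ and $2$) runs from $x=0$ to $x=k$, hence occupies precisely columns $1,\dots,k$, while every subsequent portion lies in columns $>k$. Therefore a decorated fall of $\zeta(D)$ lies in the first $k$ columns if and only if it comes from a decorated peak $j$ with $i=1$, i.e. with $w_j(D)=0$. So $D$ has no decorated peak starting on the diagonal iff $\zeta(D)$ has no decorated fall in its first $k$ columns, i.e. $D\in\widetilde R_{n,k}^{(a,b)}$ iff $\zeta(D)\in\widehat R_{n,k}^{(b,a)}$; together with the $S$-statement and the inclusions $\widetilde R\subseteq\widetilde S$, $\widehat R\subseteq\widehat S$, this gives the second bijection.

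The only step needing genuine care is this column count: under the hypothesis that the first bounce word of $\zeta(D)$ has exactly $k$ zeroes, the first section of its bounce path occupies exactly columns $1,\dots,k$. This is precisely where that hypothesis enters, and it reduces to the remark that the $k$ zeroes of $D$'s area word produce exactly the $k$ east steps drawn in that first section, moving the $x$-coordinate from $0$ to $k$, after which all remaining east steps of $\zeta(D)$ lie in columns $>k$. Everything else is immediate from the description of $\zeta$, which is why the proposition follows at once from that construction.
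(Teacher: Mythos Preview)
Your argument is correct and is exactly the detailed unpacking of the paper's one-line assertion that the proposition ``follows immediately from the construction of $\zeta$.'' The two observations you isolate --- that the bounce word of $\zeta(D)$ is the sorted area word of $D$ (so zero-counts match, giving the $S$-statement), and that decorated peaks with $w_j(D)=0$ correspond precisely to decorated falls in columns $1,\dots,k$ of $\zeta(D)$ (giving the $R$-statement) --- are precisely what the construction makes evident, and your partition argument for why restriction of a bijection to matching pieces is again a bijection is the right formal wrapper.
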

The following corollary is an immediate consequence of Proposition~\ref{prop:zeta_k} and Theorem~\ref{thm:zeta_map}.
\begin{corollary} \label{cor:zeta_map}
Let $a,b,n,k\in \mathbb N\cup\{0\}$, with $n\geq k\geq 1$. Then
\begin{equation}
\mathbf{D}\widetilde{R}_{n,k}^{(a,b)}=\mathbf{B}\widehat{R}_{n,k}^{(b,a)}\quad \text{ and }
\quad \mathbf{D}\widetilde{S}_{n,k}^{(a,b)}=\mathbf{B}\widehat{S}_{n,k}^{(b,a)}.
\end{equation}
In particular $\mathbf{D}\widetilde{R}_{n,k}^{(a,b)}$ and $\mathbf{D}\widetilde{S}_{n,k}^{(a,b)}$ satisfy the same recursion as $\mathbf{B}\widehat{R}_{n,k}^{(b,a)}$ and $\mathbf{B}\widehat{S}_{n,k}^{(b,a)}$ do in Theorem~\ref{thm:reco_first_bounce_R_S}.
\end{corollary}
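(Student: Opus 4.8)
The plan is to unfold the definitions of the two $q,t$-enumerators and push them through the bijection $\zeta$, since $\zeta$ simultaneously matches up the relevant index sets (Proposition~\ref{prop:zeta_k}) and transports the bistatistics (Theorem~\ref{thm:zeta_map}). There is essentially no computation: both identities are a change of summation variable.

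First I would record that, by Proposition~\ref{prop:zeta_k}, the restriction of $\zeta$ is a bijection $\widetilde{S}_{n,k}^{(a,b)}\to\widehat{S}_{n,k}^{(b,a)}$, and likewise a bijection $\widetilde{R}_{n,k}^{(a,b)}\to\widehat{R}_{n,k}^{(b,a)}$; in particular every $D'\in\widehat{S}_{n,k}^{(b,a)}$ is uniquely $D'=\zeta(D)$ for some $D\in\widetilde{S}_{n,k}^{(a,b)}$, and similarly for the $R$-sets. Then I would invoke Theorem~\ref{thm:zeta_map}, which gives $\area(D)=\bounce(\zeta(D))$ and $\dinv(D)=\area(\zeta(D))$ for every such $D$. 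Substituting these into the definition of $\mathbf D$ and reindexing the sum by $D'=\zeta(D)$ yields
\begin{align*}
\mathbf{D}\widetilde{S}_{n,k}^{(a,b)} &= \sum_{D\in\widetilde{S}_{n,k}^{(a,b)}} q^{\dinv(D)}t^{\area(D)} = \sum_{D\in\widetilde{S}_{n,k}^{(a,b)}} q^{\area(\zeta(D))}t^{\bounce(\zeta(D))}\\
&= \sum_{D'\in\widehat{S}_{n,k}^{(b,a)}} q^{\area(D')}t^{\bounce(D')} = \mathbf{B}\widehat{S}_{n,k}^{(b,a)},
\end{align*}
and the identity $\mathbf{D}\widetilde{R}_{n,k}^{(a,b)}=\mathbf{B}\widehat{R}_{n,k}^{(b,a)}$ follows verbatim using the $R$-version of the bijection.

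For the final sentence I would simply substitute these two identities into Theorem~\ref{thm:reco_first_bounce_R_S}: take that statement with $(a,b)$ replaced by $(b,a)$, then replace every occurrence of $\mathbf{B}\widehat{R}$ and $\mathbf{B}\widehat{S}$ (with their respective sub- and superscripts) by the corresponding $\mathbf{D}\widetilde{R}$ and $\mathbf{D}\widetilde{S}$ with the two superscript entries interchanged. This produces a recursion for $\mathbf{D}\widetilde{R}_{n,k}^{(a,b)}$ of literally the same shape — base case $\mathbf{D}\widetilde{R}_{n,n}^{(a,b)}=\delta_{0,a}\delta_{0,b}q^{\binom{n}{2}}$ and inductive step both — so nothing new needs to be verified.

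The only point that requires care (and it is bookkeeping rather than a genuine obstacle) is keeping the parameter swap $(a,b)\leftrightarrow(b,a)$ coherent across the tilde-side and hat-side families, and making sure the sub/superscript conventions used by Proposition~\ref{prop:zeta_k} are exactly the ones $\zeta$ respects: "area word has $k$ zeroes'' corresponds under $\zeta$ to "first bounce word has $k$ zeroes'', and "diagonal peaks undecorated'' corresponds to "no decorated fall in the first $k$ columns''. Since Proposition~\ref{prop:zeta_k} is precisely the assertion that $\zeta$ sends $\widetilde{S}_{n,k}^{(a,b)}$ and $\widetilde{R}_{n,k}^{(a,b)}$ to $\widehat{S}_{n,k}^{(b,a)}$ and $\widehat{R}_{n,k}^{(b,a)}$, once that is in hand the corollary, including the "in particular'' clause, is immediate.
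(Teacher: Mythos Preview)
Your proof is correct and follows exactly the approach the paper intends: the paper's own proof is the single sentence ``immediate consequence of Proposition~\ref{prop:zeta_k} and Theorem~\ref{thm:zeta_map}'', and you have simply written out the reindexing and the substitution into Theorem~\ref{thm:reco_first_bounce_R_S} explicitly. There is nothing missing.
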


We can now write down a recursion for the polynomials $\mathbf B \widehat{S}_{n,k}^{(a,b)} $.
\begin{theorem} \label{thm:reco1_first_bounce}
	Let $a,b,n,k\in \mathbb N\cup\{0\}$, with $n\geq k\geq 1$. Then
	\begin{align} 
	\mathbf B \widehat{S}_{n,n}^{(a,b)}  & = \delta_{a,0} q^{\binom{n-b}{2}}\begin{bmatrix}
	n-1\\
	b
	\end{bmatrix}_q  ,
	\end{align}
	and for $n>k\geq 1$
	\begin{align}
	\mathbf B \widehat{S}_{n,k}^{(a,b)} & = t^{n-k-a} \sum_{s=0}^{\min(k,b)}
	\sum_{i=0}^a    
	\sum_{h=1}^{n-k} 
	q^{{k-s\choose 2}+{i+1\choose 2}} {k\brack s}_q
	{h-1 \brack i}_q 
	{h+k-s-i-1\brack h}_q\\
	\notag	&\quad \times \left(\mathbf{B}\widehat{S}^{(a-i,b-s)}_{n-k,h} +
	\mathbf{B}\widehat{S}^{(a-i-1,b-s)}_{n-k,h} \right),
	\end{align}
	with initial conditions
	\begin{align}
	\mathbf{B}\widehat{S}_{0,k}^{(a,b)}  & =  \mathbf{B}\widehat{S}_{n,0}^{(a,b)}  =0.
	\end{align}
\end{theorem}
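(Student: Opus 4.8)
The plan is to derive Theorem~\ref{thm:reco1_first_bounce} purely formally, by substituting the recursion of Theorem~\ref{thm:reco_first_bounce_R_S} into the recursion of Theorem~\ref{thm:reco_first_bounce_S_R}; no new combinatorial construction is needed, only careful bookkeeping of the indices.

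First I would dispose of the boundary case $k=n$: the value claimed for $\mathbf{B}\widehat{S}_{n,n}^{(a,b)}$ is exactly the one already established in Theorem~\ref{thm:reco_first_bounce_S_R}, so there is nothing to prove. For $n>k\geq 1$ I would start from the expansion $\mathbf{B}\widehat{S}_{n,k}^{(a,b)}=\sum_{s=0}^{\min(k-1,b)}{k\brack s}_q\,\mathbf{B}\widehat{R}_{n-s,k-s}^{(a,b-s)}$ of Theorem~\ref{thm:reco_first_bounce_S_R}. The crucial observation is that every index $s$ appearing here satisfies $0\leq s\leq k-1$, hence $1\leq k-s<n-s$; this guarantees that each $\mathbf{B}\widehat{R}_{n-s,k-s}^{(a,b-s)}$ falls strictly inside the range $1\leq k'<n'$ of Theorem~\ref{thm:reco_first_bounce_R_S}, never into its base case $k'=n'$. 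I would then plug in the formula of Theorem~\ref{thm:reco_first_bounce_R_S} with $(n',k',a',b')=(n-s,k-s,a,b-s)$, use $(n-s)-(k-s)=n-k$ to collapse the $t$-exponent to $t^{n-k-a}$ and the inner summation range to $h=1,\dots,n-k$, rewrite ${h+(k-s)-i-1\brack h}_q$ as ${h+k-s-i-1\brack h}_q$, and merge the $q$-power and $q$-binomial prefactors coming from the two recursions. This produces precisely the triple sum written in the statement, except that the outer index ranges over $0\leq s\leq\min(k-1,b)$ rather than $0\leq s\leq\min(k,b)$.

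Reconciling these two ranges is the one step that requires a remark, and it is the place where it would be easiest to slip. The discrepancy is the term $s=k$, which occurs only when $k\leq b$; I would observe that it contributes nothing, since for $s=k$ the factor ${h+k-s-i-1\brack h}_q={h-i-1\brack h}_q$ vanishes for all $i\geq 0$ and $h\geq 1$ because $h-i-1<h$. Hence the $s=k$ summand is identically zero, the upper limit may be raised to $\min(k,b)$ at no cost, and the stated formula follows. Finally I would record the initial conditions: $\widehat{S}_{0,k}^{(a,b)}=\emptyset$ for $k\geq 1$ because the empty path has empty first bounce word (it contains no zero at all), and $\widehat{S}_{n,0}^{(a,b)}=\emptyset$ for $n\geq 1$ because the first bounce word of any nonempty Dyck path begins with a $0$; both $q,t$-enumerators therefore vanish. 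Since the whole argument is a substitution together with this convention check, the \emph{hard part} is nothing more than keeping the indices $s,i,h$ straight.
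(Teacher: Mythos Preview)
Your proposal is correct and follows essentially the same approach as the paper, which simply states that the recursive step ``follows immediately by combining Theorem~\ref{thm:reco_first_bounce_S_R} and Theorem~\ref{thm:reco_first_bounce_R_S}'' and that the initial conditions are easy to check. You have supplied the explicit bookkeeping that the paper omits, including the observation that the $s=k$ term vanishes (so the upper limit $\min(k-1,b)$ can be harmlessly written as $\min(k,b)$), which the paper does not mention at all.
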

\begin{proof}
The initial conditions are easy to check. The recursive step follows immediately by combining Theorem~\ref{thm:reco_first_bounce_S_R} and Theorem~\ref{thm:reco_first_bounce_R_S}.
\end{proof}

The following corollary follows immediately from Theorem~\ref{thm:reco1_first_bounce} and Proposition~\ref{prop:zeta_k}. This recursion for $\mathbf{D}\widetilde{S}_{n,k}^{(a,b)}$ is essentially due to Wilson (cf. \cite[Proposition~5.3.1.1]{wilsonPhD}).
\begin{corollary}[Wilson]
The polynomials $\mathbf{D}\widetilde{S}_{n,k}^{(b,a)}$ satisfy the same recursion as the polynomials $\mathbf{B}\widehat{S}_{n,k}^{(a,b)}$ in Theorem~\ref{thm:reco1_first_bounce}.
\end{corollary}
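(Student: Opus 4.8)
The plan is to deduce the statement directly from the identification of the two families of $q,t$-enumerators furnished by the $\zeta$ map, with no further combinatorics required. First I would invoke Proposition~\ref{prop:zeta_k}, which asserts that the $\zeta$ map of Theorem~\ref{thm:zeta_map}, suitably restricted, is a bijection $\widetilde{S}_{n,k}^{(a,b)}\to\widehat{S}_{n,k}^{(b,a)}$. Combining this with the statistic-exchange property of $\zeta$ recorded in Theorem~\ref{thm:zeta_map} (namely $\mathsf{dinv}\mapsto\mathsf{area}$, $\mathsf{area}\mapsto\mathsf{bounce}$, together with the swap of the numbers of decorated rises and decorated peaks), one obtains the polynomial identity
\[
\mathbf{D}\widetilde{S}_{n,k}^{(a,b)}=\mathbf{B}\widehat{S}_{n,k}^{(b,a)},
\]
which is exactly Corollary~\ref{cor:zeta_map}.

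Next I would substitute $(a,b)\mapsto(b,a)$ in this identity, obtaining $\mathbf{D}\widetilde{S}_{n,k}^{(b,a)}=\mathbf{B}\widehat{S}_{n,k}^{(a,b)}$, and then simply read off the recursion of Theorem~\ref{thm:reco1_first_bounce} for the right-hand side. After this substitution, the base case $\mathbf{B}\widehat{S}_{n,n}^{(a,b)}=\delta_{a,0}q^{\binom{n-b}{2}}\begin{bmatrix}n-1\\b\end{bmatrix}_q$ and every term of the recursive step turn into statements about $\mathbf{D}\widetilde{S}$ with the two superscript parameters interchanged; for instance the summand $\mathbf{B}\widehat{S}^{(a-i,b-s)}_{n-k,h}$ becomes $\mathbf{D}\widetilde{S}^{(b-s,a-i)}_{n-k,h}$. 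The claim then follows verbatim. The only point deserving attention is the bookkeeping: one must apply the superscript swap $(a,b)\leftrightarrow(b,a)$ consistently to every occurrence, on both sides of each equation and inside all the sums, which is legitimate precisely because Corollary~\ref{cor:zeta_map} is an identity of polynomials valid for all admissible $(a,b,n,k)$.

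I do not expect any genuine obstacle here: the substance has already been carried out in Theorem~\ref{thm:zeta_map}, Proposition~\ref{prop:zeta_k}, and Theorem~\ref{thm:reco1_first_bounce}. If anything, the ``hard part'' is purely notational, namely stating ``the same recursion'' precisely, i.e.\ making explicit that it is the recursion obtained from Theorem~\ref{thm:reco1_first_bounce} by replacing each $\mathbf{B}\widehat{S}_{\bullet}^{(x,y)}$ with $\mathbf{D}\widetilde{S}_{\bullet}^{(y,x)}$. As an independent sanity check one can observe that the resulting recursion coincides with Wilson's Proposition~5.3.1.1 in \cite{wilsonPhD}, which is the form in which this recursion already appears in the literature.
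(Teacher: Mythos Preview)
Your proposal is correct and matches the paper's own proof, which simply says the corollary follows immediately from Theorem~\ref{thm:reco1_first_bounce} and Proposition~\ref{prop:zeta_k}. Your write-up is just a more explicit unpacking of that one-line justification.
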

We can now write down a recursion for the polynomials $\mathbf B \widehat{R}_{n,k}^{(a,b)}  $.
\begin{theorem} \label{thm:reco_R_first_bounce}
Let $a,b,n,k\in \mathbb N\cup\{0\}$, with $n\geq k\geq 1$. Then
\begin{align}
\notag	\mathbf{B}\widehat{R}^{(a,b)}_{n,k}  & =
	t^{n-k-a}
	\sum_{i=0}^a   q^{{k\choose 2}+{i+1\choose 2}}
	\sum_{h=1}^{n-k} \sum_{s=0}^{\min(h,b)} {h \brack s}_q
	{h-1 \brack i}_q 
	{h+k-i-1\brack h}_q\\
	\notag	&\quad \times \left(\mathbf{B}\widehat{R}^{(a-i,b-s)}_{n-k-s,h-s} +\mathbf{B}\widehat{R}^{(a-i-1,b-s)}_{n-k-s,h-s} \right) \\
	 & =
	t^{n-k-a}
	q^{{k\choose 2}+{a\choose 2}}
	{k \brack a}_q 
	{n-a-1\brack n-k-a}_q 	q^{\binom{n-k-b}{2}}{n-k-1 \brack b}_q\\
	& \quad +
	t^{n-k-a}
	\sum_{i=0}^a   q^{{k\choose 2}+{i\choose 2}}
	\sum_{h=1}^{n-k-1} \sum_{s=0}^{\min(h,b)} {h \brack s}_q
	{k \brack i}_q 
	{h+k-i-1\brack h-i}_q \mathbf{B}\widehat{R}^{(a-i,b-s)}_{n-k-s,h-s} 
\end{align}
with initial conditions
\begin{equation}
\mathbf{B}\widehat{R}^{(a,b)}_{n,n} =\delta_{0,a}\delta_{0,b}
q^{n\choose 2}.
\end{equation}
\end{theorem}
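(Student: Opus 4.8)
The plan is to obtain both displayed recursions by composing the two combinatorial recursions already in hand — Theorem~\ref{thm:reco_first_bounce_R_S}, which writes $\mathbf B\widehat R^{(a,b)}_{n,k}$ as a double sum over $h$ and $i$ of the quantities $\mathbf B\widehat S^{(a-i,b)}_{n-k,h}$ and $\mathbf B\widehat S^{(a-i-1,b)}_{n-k,h}$, and Theorem~\ref{thm:reco_first_bounce_S_R}, which writes each $\mathbf B\widehat S^{(a',b)}_{n-k,h}$ either as $\sum_s{h\brack s}_q\,\mathbf B\widehat R^{(a',b-s)}_{n-k-s,h-s}$ (when $h<n-k$, the extra term $s=h$ being harmless) or, in the terminal case $h=n-k$, as the closed form $\delta_{a',0}\,q^{\binom{n-k-b}{2}}{n-k-1\brack b}_q$. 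Substituting the latter into the former and collecting the factor $q^{\binom{i+1}{2}}=q^{\binom{i}{2}}q^{i}$ yields the first displayed recursion; this is the exact analogue, with $\widehat R$ and $\widehat S$ interchanged, of the proof of Theorem~\ref{thm:reco1_first_bounce}, and the stated initial condition is simply the one recorded in Theorem~\ref{thm:reco_first_bounce_R_S}.

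To pass to the second displayed recursion I would first isolate the terminal summand $h=n-k$. There the closed form above forces $i=a$, $s=b$ in the $\mathbf B\widehat R^{(a-i,b-s)}$ part and $i=a-1$, $s=b$ in the $\mathbf B\widehat R^{(a-i-1,b-s)}$ part, and after pulling out $t^{n-k-a}q^{\binom{k}{2}+\binom{a}{2}}q^{\binom{n-k-b}{2}}{n-k-1\brack b}_q$ the remaining factor is $q^{a}{n-k-1\brack a}_q{n-a-1\brack n-k}_q+{n-k-1\brack a-1}_q{n-a\brack n-k}_q$. This is the left-hand side, at $h=n-k$ and $i=a$, of the $q$-binomial identity
\begin{equation}\label{eq:qbinid-plan}
q^{\,i}{h-1\brack i}_q{h+k-i-1\brack h}_q+{h-1\brack i-1}_q{h+k-i\brack h}_q={k\brack i}_q{h+k-i-1\brack h-i}_q ,
\end{equation}
so it collapses to ${k\brack a}_q{n-a-1\brack n-k-a}_q$ and reproduces the asserted boundary term. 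For the remaining range $1\le h\le n-k-1$ I would reindex $i\mapsto i+1$ in the $\mathbf B\widehat R^{(a-i-1,b-s)}$ summand so that both summands become $\mathbf B\widehat R^{(a-i,b-s)}_{n-k-s,h-s}$, now with $i$ running over $\{0,\dots,a+1\}$ (the term $i=a+1$ vanishes since $\mathbf B\widehat R^{(-1,\cdot)}_{\cdot,\cdot}=0$). The coefficient of $\mathbf B\widehat R^{(a-i,b-s)}_{n-k-s,h-s}$ then becomes $t^{n-k-a}q^{\binom{k}{2}+\binom{i}{2}}{h\brack s}_q$ times the left-hand side of \eqref{eq:qbinid-plan}, and a second application of \eqref{eq:qbinid-plan} rewrites it as $t^{n-k-a}q^{\binom{k}{2}+\binom{i}{2}}{h\brack s}_q{k\brack i}_q{h+k-i-1\brack h-i}_q$, which is exactly the summand of the recursive part of the second formula. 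Adding the terminal term and this range gives the claimed identity.

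The only genuine computation, and the step I expect to be the main obstacle, is identity \eqref{eq:qbinid-plan}; everything else is the $q$-binomial bookkeeping already familiar from the proofs of Theorems~\ref{thm:reco_first_bounce_S_R}, \ref{thm:reco_first_bounce_R_S} and \ref{thm:reco1_first_bounce}. Identity \eqref{eq:qbinid-plan} is a $q$-analogue of $\binom{k}{i}\binom{h+k-i-1}{h-i}=\binom{h-1}{i}\binom{h+k-i-1}{h}+\binom{h-1}{i-1}\binom{h+k-i}{h}$; I would prove it by induction on $h$, expanding the $q$-binomials ${h+k-i-1\brack h}_q$, ${h+k-i\brack h}_q$ and ${h-1\brack i}_q$ by the $q$-Pascal recursion \eqref{eq:qbin_recursion} and matching coefficients, or alternatively — after rewriting the $q$-binomials in the $h$-free form ${h+k-i-1\brack k-i-1}_q$, ${h+k-i\brack k-i}_q$, ${h-1\brack i}_q$ — deriving it from a $q$-Chu--Vandermonde convolution.
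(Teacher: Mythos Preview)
Your approach is exactly the paper's: the first equality comes from composing Theorem~\ref{thm:reco_first_bounce_R_S} with Theorem~\ref{thm:reco_first_bounce_S_R}, and the second from rearranging terms and applying your identity \eqref{eq:qbinid-plan}, which is precisely Lemma~\ref{lem:elementary4} (with $i$ in place of the paper's $s$). The paper proves that lemma in its appendix by a direct computation with the $q$-Pascal rule \eqref{eq:qbin_recursion}, so your proposed induction would go through without difficulty.
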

\begin{proof}
For the first equality, just combine Theorem~\ref{thm:reco_first_bounce_R_S} with Theorem~\ref{thm:reco_first_bounce_S_R}.

For the second equality, rearrange the terms and use the following elementary lemma, whose proof is in the appendix of this article.
\begin{lemma} \label{lem:elementary4}
	For $h\geq 1$ and $k,s\geq 0$
	\begin{equation}
	q^{s}\begin{bmatrix}
	h-1\\
	s\\
	\end{bmatrix}_q \begin{bmatrix}
	h+k-s-1\\
	h\\
	\end{bmatrix}_q +\begin{bmatrix}
	h-1\\
	s-1\\
	\end{bmatrix}_q \begin{bmatrix}
	h+k-s\\
	h\\
	\end{bmatrix}_q= \begin{bmatrix}
	k\\
	s\\
	\end{bmatrix}_q \begin{bmatrix}
	h+k-s-1\\
	h-s\\
	\end{bmatrix}_q.
	\end{equation}
\end{lemma}
\end{proof}
The following corollary follows immediately from Theorem~\ref{thm:reco_R_first_bounce} and Proposition~\ref{prop:zeta_k}. 
\begin{corollary}
	The polynomials $\mathbf{D}\widetilde{R}_{n,k}^{(b,a)}$ satisfy the same recursion as the polynomials $\mathbf{B}\widehat{R}_{n,k}^{(a,b)}$ in Theorem~\ref{thm:reco_R_first_bounce}.
\end{corollary}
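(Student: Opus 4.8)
The plan is to deduce the statement directly from the bijective results already in place, with no new combinatorics needed; this is the exact analogue of the corollaries following Theorem~\ref{thm:reco1_first_bounce}. First I would recall Corollary~\ref{cor:zeta_map}, which, after swapping the roles of $a$ and $b$, gives the term-by-term identity $\mathbf{D}\widetilde{R}_{n,k}^{(b,a)} = \mathbf{B}\widehat{R}_{n,k}^{(a,b)}$ for all admissible $a,b,n,k$. Recall that this identity itself rests on Proposition~\ref{prop:zeta_k} (the $\zeta$ map restricts to a bijection $\widetilde{R}_{n,k}^{(b,a)} \to \widehat{R}_{n,k}^{(a,b)}$) together with Theorem~\ref{thm:zeta_map} (the $\zeta$ map exchanges the bistatistics $(\dinv,\area)$ and $(\area,\bounce)$ while swapping the numbers of decorated peaks and decorated rises), so that $q^{\dinv(D)}t^{\area(D)}=q^{\area(\zeta(D))}t^{\bounce(\zeta(D))}$ for each path $D$.

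Next I would simply transport the recursion of Theorem~\ref{thm:reco_R_first_bounce} across this equality. That theorem expresses $\mathbf{B}\widehat{R}_{n,k}^{(a,b)}$ (with boundary value $\delta_{0,a}\delta_{0,b}q^{\binom{n}{2}}$ at $k=n$) as an explicit $q,t$-linear combination of polynomials of the form $\mathbf{B}\widehat{R}_{n-k-s,h-s}^{(a-i,b-s)}$ and $\mathbf{B}\widehat{R}_{n-k-s,h-s}^{(a-i-1,b-s)}$. Substituting $\mathbf{B}\widehat{R}_{n,k}^{(a,b)}=\mathbf{D}\widetilde{R}_{n,k}^{(b,a)}$ on the left and $\mathbf{B}\widehat{R}_{n-k-s,h-s}^{(a-i,b-s)}=\mathbf{D}\widetilde{R}_{n-k-s,h-s}^{(b-s,a-i)}$ in every summand on the right produces verbatim the same recursion for the family $\{\mathbf{D}\widetilde{R}_{n,k}^{(b,a)}\}$; the boundary term is unchanged because $\delta_{0,a}\delta_{0,b}$ is symmetric under $a\leftrightarrow b$. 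This is precisely the assertion of the corollary.

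The only thing requiring attention --- and it is the \emph{main obstacle} only in a bookkeeping sense --- is to keep the transposition $(a,b)\leftrightarrow(b,a)$ applied consistently throughout the recursion. One checks that the decrements appearing in Theorem~\ref{thm:reco_R_first_bounce}, namely $a\mapsto a-i$ (removing decorated peaks) and $b\mapsto b-s$ (removing decorated falls), together with the index shifts $h\mapsto h-s$ and $n-k\mapsto n-k-s$, are compatible with the $\zeta$-image description of Proposition~\ref{prop:zeta_k}: this holds because $\zeta$ preserves the parameter $k$, which counts the zeroes of the first bounce word on the $\widehat{R}$ side and the zeroes of the area word on the $\widetilde{R}$ side. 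No estimates or delicate arguments enter; the corollary is immediate once Proposition~\ref{prop:zeta_k}, Theorem~\ref{thm:zeta_map} and Theorem~\ref{thm:reco_R_first_bounce} are granted.
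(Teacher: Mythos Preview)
Your proposal is correct and follows essentially the same approach as the paper: the paper's proof simply states that the corollary follows immediately from Theorem~\ref{thm:reco_R_first_bounce} and Proposition~\ref{prop:zeta_k}, and your argument is just an explicit unpacking of that sentence via the identity $\mathbf{D}\widetilde{R}_{n,k}^{(b,a)} = \mathbf{B}\widehat{R}_{n,k}^{(a,b)}$ of Corollary~\ref{cor:zeta_map}.
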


\subsubsection{Second bounce} 

Let $a,b,n,k,i\in \mathbb N\cup\{0\}$, with $n\geq k\geq i\geq 1$. We introduce two sets  
\begin{equation}
\overline{ T}_{n,k,i}^{(a,b)}\subseteq \overline{ T}_{n,k}^{(a,b)}\subseteq \overline{\mathcal D}_n^{(a,b)}:
\end{equation}
	\begin{itemize}\item a path in $ \overline{\mathcal D}_{n}^{(a,b)}$ is a path in $\overline{ T}_{n,k}^{(a,b)}$ if its second bounce word contains exactly $k$ zeroes;   
	\item a path in $\overline{ T}_{n,k}^{(a,b)}$ is a path in $\overline{ T}_{n,k,i}^{(a,b)}$ if among the $k$ first vertical steps of $D$, $i$ are decorated rises. 
\end{itemize}
Observe that $\widetilde{\mathcal D}_{n}^{(a,b)}=\sqcup_{k=1}^{n}\overline{ T}_{n,k}^{(a,b)}$ and $\overline{ T}_{n,k}^{(a,b)}=\sqcup_{i=0}^{\min(b,k-1)}\overline{ T}_{n,k,i}^{(a,b)}$, so that

\begin{align}
\mathbf{B'}\overline{T}^{(a,b)}_{n,k}  & =\sum_{i=0}^{\min(b,k-1)} \mathbf{B'}\overline{T}^{(a,b)}_{n,k,i} ,\quad \text{ and }\\ \mathbf{B'}\overline{\mathcal D}_n^{(a,b)}  & = \sum_{k=1}^n \mathbf{B'}\overline{T}^{(a,b)}_{n,k} = \sum_{k=1}^n\sum_{i=0}^{\min(b,k-1)} \mathbf{B'}\overline{T}^{(a,b)}_{n,k,i} . 
\end{align}

The following proposition is an immediate consequence of the definition of the map $\psi$ in Theorem~\ref{thm:psi_map}.
\begin{proposition} \label{prop:psi_k}
The map $\psi$ defined in the proof of Theorem~\ref{thm:psi_map} restricts to a bijection between $\overline{T}^{(a,b)}_{n,k}$ and $\overline{T}^{(b,a)}_{n,k}$.
\end{proposition}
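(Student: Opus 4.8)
The plan is to read the statement off from the construction of $\psi$ given in the proof of Theorem~\ref{thm:psi_map}, together with one small sharpening of an observation made there. Recall that $\psi$ is obtained by composing finitely many applications of the elementary move $\psi_0$, which turns a single decorated fall into a single decorated peak, and of its inverse $\psi_0^{-1}$, performed in a prescribed order. In that proof it was checked that each such elementary move leaves the \emph{second bounce path} unchanged, which is exactly how the invariance of $\mathsf{bounce}'$ was derived. The first step of the proof is to note that this gives slightly more: since the second bounce word $b_1'(D)\cdots b_n'(D)$ is read off directly from the second bounce path, it is itself unchanged by each elementary move, and hence by $\psi$; in particular the number $k$ of zeroes occurring in it is preserved by $\psi$.

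The second step combines this with the conclusion of Theorem~\ref{thm:psi_map} itself, namely that $\psi$ maps $\overline{\mathcal D}_n^{(a,b)}$ to $\overline{\mathcal D}_n^{(b,a)}$, i.e. it interchanges the number of decorated peaks with the number of decorated rises (equivalently, decorated falls). Thus, if $D\in\overline{T}_{n,k}^{(a,b)}$ — so that $D$ has $a$ decorated peaks, $b$ decorated rises, and exactly $k$ zeroes in its second bounce word — then $\psi(D)$ has $b$ decorated peaks, $a$ decorated rises and, by the first step, again exactly $k$ zeroes in its second bounce word, so $\psi(D)\in\overline{T}_{n,k}^{(b,a)}$. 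Hence $\psi(\overline{T}_{n,k}^{(a,b)})\subseteq\overline{T}_{n,k}^{(b,a)}$, and the symmetric statement (applied with $a$ and $b$ exchanged) gives $\psi(\overline{T}_{n,k}^{(b,a)})\subseteq\overline{T}_{n,k}^{(a,b)}$.

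Since $\psi\circ\psi$ is the identity map, as noted at the end of the proof of Theorem~\ref{thm:psi_map}, these two inclusions are realized by mutually inverse maps, and therefore $\psi$ restricts to a bijection $\overline{T}_{n,k}^{(a,b)}\to\overline{T}_{n,k}^{(b,a)}$, which is the assertion. The only point that needs a bit of care — and the one I would expect to carry whatever small content the argument has — is the first step: one must confirm that each elementary move $\psi_0^{\pm 1}$ preserves the whole second bounce word rather than merely its sum $\mathsf{bounce}'$, since the sets $\overline{T}_{n,k}^{(a,b)}$ are cut out by the finer datum of how many zeroes that word contains. This is, however, already implicit in the argument of Theorem~\ref{thm:psi_map}, where $\psi_0$ is seen not to alter the second bounce path: the horizontal step removed by $\psi_0$ and the one it inserts are precisely the ones discarded when passing to the associated leaning stack, and the accompanying slide preserves, row by row, the number of cells between the path and the (new) main diagonal, so the sequence of labels produced by the bounce path on that leaning stack is the same before and after.
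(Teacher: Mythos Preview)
Your argument is correct and matches the paper's approach: the paper simply states that the proposition is an immediate consequence of the construction of $\psi$, and you have spelled out exactly the relevant observation, namely that each elementary move $\psi_0^{\pm1}$ leaves the second bounce path (hence the full second bounce word, and in particular its number of zeroes) unchanged. Together with $\psi$ being an involution swapping decorated peaks and rises, this is all that is needed.
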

The next corollary follows immediately from Proposition~\ref{prop:psi_k} and Theorem~\ref{thm:psi_map}
\begin{corollary} \label{cor:comb_symmetry}
Let $a,b,n,k\in \mathbb N\cup\{0\}$, with $n\geq k \geq 1$. We have
\begin{equation}
\mathbf{B'}\overline{T}^{(a,b)}_{n,k} = \mathbf{B'}\overline{T}^{(b,a)}_{n,k} ,
\end{equation}
so that
\begin{equation}
\mathbf{B'}\overline{D}^{(a,b)}_{n} = \mathbf{B'}\overline{D}^{(b,a)}_{n} .
\end{equation}
\end{corollary}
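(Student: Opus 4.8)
The plan is to deduce both displayed identities directly from the bijection $\psi$ of Theorem~\ref{thm:psi_map} together with Proposition~\ref{prop:psi_k}. Recall that for any $S\subseteq \mathcal D_n^{(a,b)}$ one has $\mathbf{B'}S = \sum_{D\in S} q^{\mathsf{area}(D)}t^{\mathsf{bounce'}(D)}$. First I would observe that, by Theorem~\ref{thm:psi_map}, the map $\psi$ preserves both $\mathsf{area}$ and $\mathsf{bounce'}$, hence it preserves the monomial $q^{\mathsf{area}(D)}t^{\mathsf{bounce'}(D)}$ attached to each path; and by Proposition~\ref{prop:psi_k} it restricts to a bijection $\overline{T}^{(a,b)}_{n,k}\to\overline{T}^{(b,a)}_{n,k}$. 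Combining these, the sum defining $\mathbf{B'}\overline{T}^{(a,b)}_{n,k}$ is carried term by term onto the sum defining $\mathbf{B'}\overline{T}^{(b,a)}_{n,k}$, which yields the first identity $\mathbf{B'}\overline{T}^{(a,b)}_{n,k}=\mathbf{B'}\overline{T}^{(b,a)}_{n,k}$.

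For the second identity I would simply sum the first one over $k$. Using the disjoint decompositions $\overline{\mathcal D}_n^{(a,b)}=\bigsqcup_{k=1}^{n}\overline{T}^{(a,b)}_{n,k}$ and $\overline{\mathcal D}_n^{(b,a)}=\bigsqcup_{k=1}^{n}\overline{T}^{(b,a)}_{n,k}$ recorded just above the corollary, one gets $\mathbf{B'}\overline{\mathcal D}_n^{(a,b)}=\sum_{k=1}^n\mathbf{B'}\overline{T}^{(a,b)}_{n,k}=\sum_{k=1}^n\mathbf{B'}\overline{T}^{(b,a)}_{n,k}=\mathbf{B'}\overline{\mathcal D}_n^{(b,a)}$, which is the asserted symmetry of $\mathbf{B'}$.

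There is essentially no remaining obstacle here: the real work sits in the construction of $\psi$ (Theorem~\ref{thm:psi_map}) and in Proposition~\ref{prop:psi_k}, which asserts that $\psi$ respects the decomposition by the number $k$ of zeros in the second bounce word. The one point I would spell out, if anything, is why that last fact holds: the elementary move $\psi_0$ leaves the entire second bounce path — hence the second bounce word, and in particular its number of zeros — unchanged, and the same is true of $\psi_0^{-1}$; since $\psi$ is built by iterating $\psi_0$ on the decorated falls and $\psi_0^{-1}$ on the decorated peaks, it too preserves $k$. In the final write-up I would simply cite Proposition~\ref{prop:psi_k} and Theorem~\ref{thm:psi_map} for these two inputs and keep the argument to the two short paragraphs above.
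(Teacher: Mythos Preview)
Your proposal is correct and follows exactly the paper's approach: the corollary is stated there as an immediate consequence of Proposition~\ref{prop:psi_k} and Theorem~\ref{thm:psi_map}, with the second identity obtained by summing over $k$ using the decomposition $\overline{\mathcal D}_n^{(a,b)}=\bigsqcup_{k}\overline{T}^{(a,b)}_{n,k}$.
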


The following theorem provides a recursion for the polynomials $\mathbf{B'}\overline{T}_{n,k,i}^{(a,b)}$.

\begin{theorem} \label{thm:reco_old_bounce}
	Let $a,b,n,k,i\in \mathbb N$ with $n\geq 1$ and $n\geq k\geq 1$  then 
	\begin{equation*} 
	\mathbf{B'}\overline{T}_{n,n,i}^{(a,b)}=\delta_{i,b} q^{n-a-b \choose 2}{n-b-1\brack a}_q{n-1\brack b}_q,
	\end{equation*}
		and for $n>k\geq 1$
	\begin{equation*}
		\mathbf{B'}\overline{T}_{n,k,i}^{(a,b)} =t^{n-k} \sum_{p=1}^k\sum_{j=0}^{n-k}\sum_{f=0}^{\min(b-i,j)}  q^{p-i \choose 2}{k-i\brack p-i}_q{k-1\brack i}_q{p-1+j-f\brack j-f}_q \mathbf{B'} \overline{T}_{n-k,j,f}^{(a-k+p,b-i)} 
	\end{equation*}

	with the initial conditions \begin{align*}
		\mathbf{B'}\overline{T}_{n,0,i}^{(a,b)} &= \mathbf{B'}\overline{T}_{0,k,i}^{(a,b)}= 0.
	\end{align*} 
	\end{theorem}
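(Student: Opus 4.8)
The plan is to mimic, in the setting of the second bounce, the two‑step decomposition that was used for the first bounce: separate the structure according to how many zeros appear in the second bounce word (that is the parameter $k$), and then peel off the bottom part of the path, namely the portion living in the first $k$ vertical steps and the first $k+$(something) columns of the leaning stack. Concretely, take $D\in\overline{T}_{n,k,i}^{(a,b)}$ and consider its associated leaning stack (delete the horizontal steps after decorated peaks, as in the definition of $\mathsf{bounce}'$). The second bounce path bounces off the diagonal after exactly $k$ vertical steps, so the first ``bounce block'' consists of these $k$ north steps — among which $i$ are decorated rises — followed by a run of east steps; let $p$ be the number of those east steps (equivalently the number of columns of the stack in which the bounce path's first horizontal run lies). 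Removing this first bounce block leaves a decorated Dyck path of size $n-k$ sitting in the upper‑right corner, whose own second bounce path starts afresh; this smaller path is an element of $\overline{T}_{n-k,j,f}^{(a-k+p,b-i)}$ for appropriate $j$ (number of zeros in its second bounce word) and $f$ (decorated rises among its first $j$ vertical steps). The $a-k+p$ comes from the fact that the decorated peaks of $D$ not inside the first bounce block persist, and those inside it are governed by $p$ (each of the $k-p$ ``extra'' north steps in the first block that is {\it not} matched to one of the $p$ east steps forces a non‑decorated‑peak vertical step, while... — more carefully, $k$ vertical steps in the first block, $p$ east steps, and the peaks among them are exactly the ones dictating how many of the $a$ decorations sit below).

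Once this bijective decomposition is in place, I would read off the weight contributions one factor at a time, exactly as in the proof of Theorem~\ref{thm:reco_first_bounce_R_S}. The factor $t^{n-k}$ accounts for the $n-k$ vertical steps above the first diagonal bounce, each of whose second bounce labels is raised by $1$ relative to the smaller path (here, unlike the first bounce case, {\it every} such step contributes because $\mathsf{bounce}'$ sums over {\it all} rows — this is why the exponent is $n-k$ rather than $n-k-a$). The $q$‑binomials come from the choices made in reconstructing the first block and the bridging columns: ${k-1\brack i}_q$ counts the interlacings of the $i$ decorated rises among the first $k$ vertical steps (with the bottom step never a decorated rise, forced since $w_1=0$, hence $k-1$ and not $k$), $q^{\binom{p-i}{2}}{k-i\brack p-i}_q = e_{p-i}[[k-i]_q]$ counts the placement/area of the relevant peaks in the first block, and ${p-1+j-f\brack j-f}_q$ counts the interlacing of the first $j-f$ ``active'' vertical steps of the small path against the $p-1$ remaining horizontal steps of the bridging region (the $-1$ because one horizontal step is forced right after the first $k$ vertical steps, and the $-f$ because the $f$ decorated rises in the small path's first block carry their following step along and do not create area in this region). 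The base case $k=n$ is the leaning stack consisting of $n$ north steps then $n-a$ east steps (after peak‑deletions): the bounce word is all zeros so $\mathsf{bounce}'=0$, forcing $i=b$; the area count then splits into choosing the interlacing of $a$ decorated peaks among the top $n-b-1$ non‑decorated‑rise rows and of $b$ decorated rises among the $n-1$ rows above the bottom, giving $q^{\binom{n-a-b}{2}}{n-b-1\brack a}_q{n-1\brack b}_q$; the vanishing initial conditions $\overline{T}_{n,0,i}=\overline{T}_{0,k,i}=0$ are immediate since $k\ge1$ and there is no path of size $0$ with a positive bounce block.

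The main obstacle I expect is bookkeeping the decorated peaks correctly through the peak‑deletion operation that defines the leaning stack, in combination with the requirement $D\in\overline{\mathcal D}_n^{(a,b)}$ (last peak never decorated). Because $\mathsf{bounce}'$ is computed on the stack rather than on $D$ itself, a decorated peak of $D$ that lies inside the first bounce block both removes a horizontal step {\it before} the bounce path is drawn and ceases to be ``visible'' to the bounce; this is precisely what shifts $a$ to $a-k+p$, and getting the inequality ranges ($p$ from $1$ to $k$, and $j,f$ ranges) exactly right — so that the $\min(b-i,j)$ cap on $f$ and the $\min(b,k-1)$ cap on $i$ are consistent — is the delicate part. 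The safe way to do this is to first prove the bijection at the level of {\it undecorated} leaning stacks (this is essentially Haglund's classical bounce recursion for Schröder paths, cf.\ \cite[Chapter~4]{haglundbook} and the Remark preceding Section~2.3), then layer the decorated rises on (contributing ${k-1\brack i}_q$ and ${p-1+j-f\brack j-f}_q$ and shifting $b\mapsto b-i$, exactly as decorated falls were layered on in Theorem~\ref{thm:reco_first_bounce_S_R}), and finally layer the decorated peaks on, using that they are invisible to both $\mathsf{area}$ and $\mathsf{bounce}'$ but do consume horizontal steps when they fall inside the first block — which is what produces the $a-k+p$ shift without any extra $q$‑power. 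With the undecorated case as a skeleton, the decorated refinements are routine interlacing counts of the type already carried out several times in this section.
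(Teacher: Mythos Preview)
Your approach is essentially the paper's: a bijective decomposition of $D\in\overline{T}_{n,k,i}^{(a,b)}$ into a ``first bounce block'' of height $k$ and a smaller path in $\overline{T}_{n-k,j,f}^{(a-k+p,b-i)}$, with the three $q$-binomials tracking the interlacing choices and $t^{n-k}$ the shift in $\mathsf{bounce}'$. The paper runs the bijection in the constructive direction (building the large path up from the small one), but the content is the same, and your identification of $p$ as the length of the second bounce path's first horizontal run in the leaning stack is correct.

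Two points in your write-up need fixing. First, the decorated-peak count: in the original path (before passing to the stack) the first block contains $k$ vertical steps together with $k-p$ horizontal steps that get deleted when forming the stack --- these deleted steps are precisely the horizontal steps following the $k-p$ decorated peaks in the block. So $k-p$ of the $a$ peak-decorations live in the block and $a-(k-p)=a-k+p$ survive to the small path; your sentence ``each of the $k-p$ extra north steps \ldots\ forces a non-decorated-peak vertical step'' has this inverted. Second, the reason for ${k-1\brack i}_q$ rather than ${k\brack i}_q$ is not that the bottom step cannot be a decorated rise (it can; $w_1=0$ does not prevent row $1$ from being a rise), but that the \emph{top} one cannot: the $k$-th vertical step is followed by the first bridging east step, so it is a peak, not a rise, and hence is never among the $i$ decorated rises.
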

\begin{proof}
	In this proof, when we refer to a \emph{peak}, we mean the vertical step \emph{and} the horizontal step that follows it, as this simplifies the argument.
	
	We describe a procedure to obtain a unique path in $\overline{T}_{n,k,i}^{(a,b)}$, keeping track of the statistics $\bounce'$ and $\area$. See Figure~\ref{fig:second_bounce_recursion} for a visual aid.
	
	We start with the case $k<n$. Consider a path $D$ in $\overline{T}_{n-k,j,f}^{(a-k+p,b-i)}$. Call $P(D)$ the portion of $D$ that starts at its lower left corner and ends at the endpoint of its $j$-th vertical step. Since the second bounce word of $D$ contains $j$ zeroes, $P(D)$ consists of vertical steps and decorated peaks and is followed by a horizontal step.  Furthermore, $P(D)$ has $f$ decorated rises. We extend $D$ to a path of size $n$. Place $D$ in the top right corner of an $n\times n$ grid.
	\begin{enumerate}
		\item Let $p$ be an integer between $1$ and $k$. We add $p$ horizontal steps to $P(D)$ as follows. Start by deleting the $f$ vertical steps of $P(D)$ that follow a decorated rise, but keep the decorations. Next, prepend one horizontal step to the path. Then choose an interlacing between the $j-f$ vertical steps of the path and the $p-1$ remaining horizontal steps. Each time a vertical step precedes a horizontal step the area under $P(D)$ and above the main diagonal and the line $y=k$ goes up by 1. So the terms of 
		$${j-f+p-1\brack j-f}_q$$ represent the added area in this zone. Conclude by putting back the $f$ vertical steps after the decorated rises. This does not modify the area as the squares in the rows following decorated rises do not contribute to the area. The resulting $P(D)$ starts at $(k-p,k)$ and ends at the same point as before. 
		\item Next, we construct a path from $(0,0)$ to $(k-p,p-i+k-p)=(k-p,k-i)$. We do this by choosing a sequence of $k-p$ decorated peaks (recall what peak means here) and $p-i$ vertical steps. The squares east of this path and west of the main diagonal contributing to the area can be seen as of two kinds: those east and those west of the line $x=k-p$. Clearly there are $p-i\choose 2$ squares of the first kind. The number of squares of the second kind depends on the interlacing of the peaks and vertical steps: every time that a vertical step precedes a peak, a square of area is created. This explains the factor $${k-p+p-i\brack p-i}_q={k-i\brack p-i}_q.$$
		\item Finally, we add $i$ decorated rises. The path from $(0,0)$ to $(k-p,k-i)$ that we constructed contains $k-p$ peaks and $p-i$ vertical steps. The highest of them cannot be a decorated rise, since it has to be followed by a horizontal step (i.e. the first one of $P(D)$). So we choose an interlacing between $k-i-1$ vertical steps of the existing path and $i$ decorated rises. The decorated rises are prepended to the existing vertical step that follows in the interlacing. This operation does not add area west of the line $x=k-p$ but does add on square of area east of this line every time a decorated rise precedes a vertical step that is not a decorated rise. Thus, we have a term $${k-i-1+i\brack i}_q={k-1\brack i}_q.$$
	\end{enumerate}
	Depending on the choice of the mentioned interlacings, we obtain a unique path $D'$ of size $n$. Its second bounce word contains $k$ zeroes (indeed, the first horizontal step that is not part of a decorated peak occurs after the $k$-th row), and $i$ of the first $k$ vertical steps are decorated rises. So $D'\in \overline{T}_{n,k,i}^{(a,b)}$. We already kept track of the area during the construction and it is not difficult to see that $$\bounce'(D')=\bounce'(D)+n-k.$$ Indeed the second bounce word of $D'$ is $k$ zeroes followed by the second bounce word of $D$ whose letters are all augmented by 1. 
	
	Summing over the possible values of $p$, $f$ and $j$, we obtain the right equation. 
	
	Now consider the case when $k=n$. Since there is only one ``bounce", it is clear that $i=b$, hence the factor $\delta_{i,b}$. The rest of the reasoning is quite similar to the previous one except for step (2). Indeed since $k=n$ the last step in this interlacing may not be a decorated peak. So we fix one undecorated vertical step at the end and obtain a term $${k-p+p-i-1\brack p-i-1}_q={k-i-1 \brack p-i-1}_q.$$ The result easily follows.
	
	The initial conditions of this recursion are easy to check.

	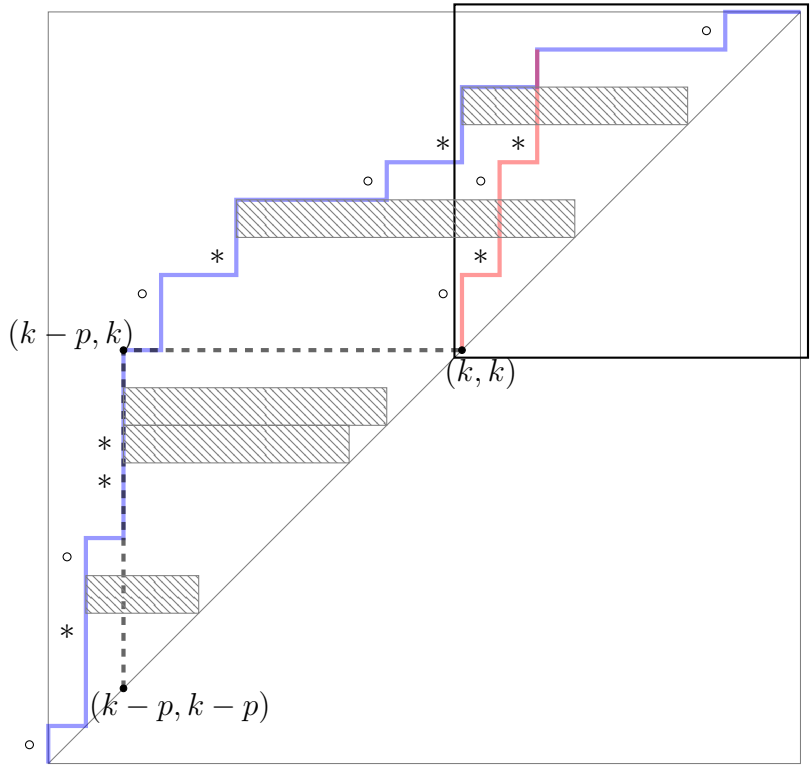
\begin{figure}[h!]
		\centering
		\begin{tikzpicture}[scale=.5]
		\draw[gray](0,0)--(20,20) (0,0) rectangle (20,20);
		\draw[ultra thick, blue, opacity=.4](0,0)|-(1,1)--(1,6)--(2,6)|-(3,11)--(3,13)--(5,13)--(5,15)--(9,15)--(9,16)--(11,16)|-(13,18)|-(18,19)|-(20,20);
		\draw (-.5,.5) circle (3pt)(.5,5.5) circle (3pt) (2.5,12.5) circle (3pt) (8.5,15.5) circle (3pt) (17.5,19.5) circle (3pt);
		\draw (.5,3.5) node {$\ast$} (1.5,7.5) node {$\ast$} (1.5,8.5) node {$\ast$} (4.5,13.5) node {$\ast$} (11.5,13.5) node {$\ast$} (12.5,16.5) node {$\ast$} (10.5,16.5) node {$\ast$} ;
		\draw[ultra thick, dashed,opacity=.6](2,2)|-(11,11); 
		\draw[ultra thick, red, opacity=.4](11,11)--(11,13)--(12,13)--(12,16)--(13,16)--(13,19);
		\draw (10.5,12.5) circle (3pt) (11.5,15.5) circle (3pt);
		\draw[thick] (10.8,10.8) rectangle (20.2,20.2);
		\filldraw[gray,pattern=north west lines, pattern color=gray] (1,4) rectangle (4,5) (2,8) rectangle (8,9) (2,9) rectangle (9,10) (5,14) rectangle (14,15) (11,17) rectangle (17,18);
		\filldraw (2,11) circle (2.5pt) (2,2) circle (2.5 pt) (11,11) circle (2.5pt);
		\draw (.6,11.4) node {$(k-p,k)$} (3.5,1.5) node {$(k-p,k-p)$} (11.5,10.4) node{$(k,k)$} ;
		\end{tikzpicture}
		\caption{Second bounce recursion}\label{fig:second_bounce_recursion}
	\end{figure}
\end{proof}

\begin{remark} \label{rmk:no_restrictions}
We define new sets of decorated Dyck paths
\begin{equation}
R^{(a,b)}_{n,k}\subseteq S^{(a,b)}_{n,k} \subseteq \mathcal{D}_{n}^{(a,b)}
\end{equation}
by setting
\begin{equation}
R^{(a,b)}_{n,k}:=\widehat{R}^{(a,b)}_{n,k}\sqcup \widehat{R}^{(a,b-1)}_{n,k},\quad \text{ and }\quad  S^{(a,b)}_{n,k}:=\widehat{S}^{(a,b)}_{n,k}\sqcup \widehat{S}^{(a,b-1)}_{n,k}.
\end{equation}
We can interpret $S^{(a,b)}_{n,k}$ as the set of decorated Dyck paths of size $n$, $k$ $0$'s in the first bounce word, $b$ decorated falls and $a$ decorated peaks where we allow the decoration of the last horizontal step as it were a fall (of course this decoration does not touch the statistics). So $\widehat{S}^{(a,b-1)}_{n,k}$ can be identified with those where the last horizontal step gets decorated. Similarly for $R^{(a,b)}_{n,k}$.

In a similar way we can define 
\begin{equation}
{R'}^{(a,b)}_{n,k} \subseteq {S'}^{(a,b)}_{n,k}  \subseteq \mathcal{D}_{n}^{(a,b)}
\end{equation}
by setting
\begin{equation}
{R'}^{(a,b)}_{n,k} :=\widetilde{R}^{(a,b)}_{n,k}\sqcup \widetilde{R}^{(a-1,b)}_{n,k},\quad \text{ and }\quad  {S'}^{(a,b)}_{n,k} :=\widetilde{S}^{(a,b)}_{n,k}\sqcup \widetilde{S}^{(a-1,b)}_{n,k}.
\end{equation}
Similarly, we can interpret ${S'}^{(a,b)}_{n,k}$ as the set of decorated Dyck paths of size $n$, $k$ $0$'s in the area word, $b$ decorated rises, $a$ decorated peaks, where we allow also the rightmost highest peak to be decorated (of course this decoration does not touch the statistics). So $\widetilde{S}^{(a,b-1)}_{n,k}$ can be identified with those where the rightmost highest peak gets decorated. Similarly for ${R'}^{(a,b)}_{n,k}$

Finally, we can do the same with 
\begin{equation}
{T}^{(a,b)}_{n,k,i} \subseteq {T}^{(a,b)}_{n,k}  \subseteq \mathcal{D}_{n}^{(a,b)}
\end{equation}
by setting
\begin{equation}
{T}^{(a,b)}_{n,k,i} :=\overline{T}^{(a,b)}_{n,k,i}\sqcup \overline{T}^{(a-1,b)}_{n,k,i},\quad \text{ and }\quad  {T}^{(a,b)}_{n,k} :=\overline{T}^{(a,b)}_{n,k}\sqcup \overline{T}^{(a-1,b)}_{n,k}.
\end{equation}
\end{remark}

\section{A summation formula}

This section is devoted to prove the following theorem.
\begin{theorem}
For $m,k\geq 1$ and $\ell\geq 0$, we have
\begin{equation} \label{eq:mastereq}
 \sum_{\gamma\vdash m}\frac{\widetilde{H}_\gamma[X]}{w_\gamma} h_k[(1-t)B_\gamma]e_\ell[B_\gamma]=\qquad \qquad \qquad \qquad  \qquad \qquad  \qquad \qquad 
\end{equation}
\begin{align*} 
 =\sum_{j=0}^{\ell} t^{\ell-j}\sum_{s=0}^{k}q^{\binom{s}{2}} \begin{bmatrix}
s+j\\
s
\end{bmatrix}_q \begin{bmatrix}
k+j-1\\
s+j-1
\end{bmatrix}_qh_{s+j}\left[\frac{X}{1-q}\right] h_{\ell-j}\left[\frac{X}{M}\right] e_{m-s-\ell}\left[\frac{X}{M}\right].
\end{align*}
\end{theorem}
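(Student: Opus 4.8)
The plan is to reduce the identity to the computation of a delta operator acting on a simple product, and then to an elementary $q$-binomial identity. Both sides lie in $\Lambda^{(m)}$, so by orthogonality \eqref{eq:H_orthogonality} the statement is equivalent to saying that the pairing $\langle\,\cdot\,,\widetilde H_\mu\rangle_*$ of each side agrees for every $\mu\vdash m$; the left-hand side pairs to $h_k[(1-t)B_\mu]e_\ell[B_\mu]$. Equivalently — and this is the form I would actually use — since $\sum_{\gamma\vdash m}\widetilde H_\gamma/w_\gamma=e_m[X/M]$ (the $k=0$ instance of \eqref{eq:e_h_expansion}) and $\Delta_f\Delta_g=\Delta_{fg}$ on the $\widetilde H$-basis, the left-hand side equals
\[
\Delta_{h_k[(1-t)X]}\,\Delta_{e_\ell}\,e_m\!\left[\tfrac{X}{M}\right]=\Delta_{h_k[(1-t)X]}\!\left(h_\ell\!\left[\tfrac{X}{M}\right]e_{m-\ell}\!\left[\tfrac{X}{M}\right]\right),
\]
the last equality being \eqref{eq:e_h_expansion} once more. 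So the theorem is exactly the explicit evaluation of $\Delta_{h_k[(1-t)X]}$ applied to the product $h_\ell[X/M]\,e_{m-\ell}[X/M]$, with the right-hand side as the claimed closed form.

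To compute this I would expand all factors on the right in the Macdonald basis and recombine through the Pieri coefficients of Section~1: the $X/M$-parts expand by \eqref{eq:e_h_expansion}, and for the factor $h_{s+j}[X/(1-q)]=h_{s+j}[(1-t)X/M]$ I would use \eqref{eq:qn_q_Macexp}--\eqref{eq:garsia_haglund_eval}, or — to avoid carrying the $\Pi_\mu$ that \eqref{eq:qn_q_Macexp} introduces — introduce an auxiliary variable $z$, apply \eqref{eq:def_Enk}, and specialize as in \eqref{eq:en_q_sum_Enk}. Pairing with $\widetilde H_\mu$ then turns the $\widetilde H_\mu$-coefficient into a sum over intermediate partitions of products of the coefficients $c^{(r)}_{\mu\nu},d^{(r)}_{\mu\nu}$ of \eqref{eq:def_cmunu}--\eqref{eq:def_dmunu}, which I would collapse using \eqref{eq:cmunu_recursion}, \eqref{eq:rel_cmunu_dmunu} and \eqref{eq:Pieri_sum1}. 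What remains is then a purely combinatorial statement: writing $q^{\binom s2}\begin{bmatrix}s+j\\s\end{bmatrix}_q=e_s[[s+j]_q]$ and $\begin{bmatrix}k+j-1\\s+j-1\end{bmatrix}_q=h_{k-s}[[s+j]_q]$ (cf. \eqref{eq:e_q_binomial}, \eqref{eq:h_q_binomial}), the double sum over $(j,s)$ weighted by $t^{\ell-j}$ must recombine — after separating the $(1-t)$ and the $e_\ell$ out of $h_k[(1-t)B_\mu]e_\ell[B_\mu]$ via the elementary plethystic expansions of $h_i[(1-t)Y]$ and of $e_\ell[Y]$ — into the compact right-hand side.

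The hard part will be precisely this final $q$-binomial collapse: identifying and verifying the right summation identity (of the flavour of Lemma~\ref{lem:elementary4} or the recursions \eqref{eq:qbin_recursion}) that turns the nested sums produced by the Pieri recombination into the stated closed form, together with keeping the plethystic bookkeeping straight ($\epsilon$-signs, the $(1-t)$ and $1/M$ substitutions, and the interaction of $\omega$ with $h_k[(1-t)X]$). A sensible way to organize the proof is to first treat the case $\ell=0$ — which should reduce to the known formula of Haglund that the theorem generalizes, coming from \eqref{eq:qn_q_Macexp} and the $q,t$-Schr\"oder circle of ideas — and then obtain the general case either by an analogous but longer manipulation or by an induction on $\ell$, peeling off the $e_\ell$-factor using \eqref{eq:e_h_expansion} and a $q$-binomial recursion.
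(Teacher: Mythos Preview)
Your reformulation of the left-hand side as $\Delta_{h_k[(1-t)X]}\bigl(h_\ell[X/M]\,e_{m-\ell}[X/M]\bigr)$ is correct and useful, but what follows is a plan rather than a proof, and it does not isolate the mechanism that actually makes the computation close. The Pieri coefficients $c^{(r)}_{\mu\nu},d^{(r)}_{\mu\nu}$ of \eqref{eq:def_cmunu}--\eqref{eq:def_dmunu} govern multiplication by $e_r[X/M]$ (equivalently the action of $h_r^\perp$); they do \emph{not} directly handle multiplication by $h_r[X/(1-q)]=h_r[(1-t)X/M]$, which is the factor appearing on the right-hand side. So the step ``pairing with $\widetilde H_\mu$ turns the coefficient into a sum of products of $c^{(r)},d^{(r)}$'' is not available as stated, and the ``final $q$-binomial collapse'' you anticipate is therefore not a purely combinatorial cleanup but is really where all the work lies. (The Pieri machinery you cite is used in the paper only in Section~5, for the proof of Theorem~\ref{thm:schroeder_identity}, not here.)

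The paper's proof takes a quite different route, whose central idea you do not mention: the Garsia--Haiman--Tesler identity \eqref{eq:glenn_formula}. One looks for an $f\in\Lambda$ with $\nabla^{-1}\tau_{-\epsilon}f\big|_{X=D_\mu}=h_k[(1-t)B_\mu]\,e_\ell[B_\mu]$; then, exactly as in Haglund's $\ell=0$ argument, the left-hand side equals $\sum_{d}e_{m-d}[X/M]\,(f[X])_d$. The obvious choice $\nabla^{-1}\tau_{-\epsilon}f=h_k[(X+1)/(1-q)]\,e_\ell[(X+1)/M]$ reduces the problem to computing $\nabla\bigl(h_i[X/(1-q)]\,e_j[X/M]\bigr)$, which the paper does in closed form (Proposition~\ref{prop:key_identity}) using \eqref{eq:GHS_2_6} together with the separately proved special case $\ell=m$ (equation \eqref{eq:basic_summation_2}), \emph{not} $\ell=0$. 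After applying $\tau_\epsilon$ and extracting homogeneous pieces, the resulting coefficients are then simplified via an elementary $q$-identity (Lemma~\ref{lem:elementary2}). If you want to salvage your approach, the missing input is precisely a Macdonald expansion of the triple product $h_{s+j}[X/(1-q)]\,h_{\ell-j}[X/M]\,e_{m-s-\ell}[X/M]$, and the cleanest known way to get at it is via \eqref{eq:GHS_2_6} and \eqref{eq:glenn_formula} --- i.e.\ the paper's argument.
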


\subsection{Two special cases}

We start by establishing two special cases of this formula: the cases $\ell=0$ and $\ell=m$.

The case $\ell=0$ is implicit in the work of Haglund (e.g. see equation (2.38) in \cite{haglundschroeder}). Though we don't need this special case in our argument, we outline here its proof (referring to \cite{haglundschroeder} for some details) since it shows the strategy that we will use also in the general case. 

\begin{theorem}[Haglund] \label{thm:Haglund_summation}
For $m,k\geq 1$ we have
\begin{equation} \label{eq:Haglund_summation}
	\sum_{\mu\vdash m} \frac{\widetilde{H}_\mu[X]}{w_\mu}h_k[(1-t)B_\mu] = \sum_{s=1}^{m}
	q^{\binom{s}{2}} 
	\begin{bmatrix}
	k-1\\
	s-1\\
	\end{bmatrix}_q h_s \left[ \frac{X}{1-q}\right] e_{m-s}\left[ \frac{X}{M}\right].
\end{equation}	
\end{theorem}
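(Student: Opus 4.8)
The plan is to prove the identity by expanding the left-hand side in the Macdonald basis using the Cauchy-type formulae collected in Section~1, and then extracting coefficients against $\widetilde{H}_\gamma$ via the orthogonality \eqref{eq:H_orthogonality} and the evaluation \eqref{eq:garsia_haglund_eval}. Concretely, I would first recall from \eqref{eq:qn_q_Macexp} that
\[
e_m\!\left[X\frac{1-q^j}{1-q}\right]=(1-q^j)\sum_{\mu\vdash m}\frac{\Pi_\mu\widetilde{H}_\mu[X]\,h_j[(1-t)B_\mu]}{w_\mu},
\]
so that the sums $\sum_{\mu\vdash m}\widetilde{H}_\mu[X]h_j[(1-t)B_\mu]/w_\mu$ are, after dividing by $(1-q^j)\Pi_\mu$, essentially the coefficients of a known symmetric function. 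Since the left-hand side of \eqref{eq:Haglund_summation} only involves a single factor $h_k[(1-t)B_\mu]$ (no $e_\ell[B_\mu]$), this is the $\ell=0$ instance, and the strategy is cleaner: pair both sides against an arbitrary $\widetilde{H}_\gamma$ using $\langle\,\cdot\,,\widetilde{H}_\gamma\rangle_*$.

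The key computational step is to evaluate the right-hand side's scalar product. Taking $\langle\,\cdot\,,\widetilde{H}_\gamma\rangle_*$ of $h_s[X/(1-q)]\,e_{m-s}[X/M]$: for the $e_{m-s}[X/M]$ factor one uses that $e_{m-s}[X/M]=e_{m-s}^*$ and the adjointness \eqref{eq:hperp_estar_adjoint}, which turns multiplication by $e_{m-s}^*$ into $h_{m-s}^\perp$ acting the other way; alternatively, and more directly, I would use formula \eqref{eq:glenn_formula}, $\langle f[X],\widetilde{H}_\gamma[X+1]\rangle_*=\nabla^{-1}\tau_{-\epsilon}f[X]\big|_{X=D_\gamma}$, together with the expansion \eqref{eq:e_h_expansion} of $h_k[X/M]e_{n-k}[X/M]$ in the Macdonald basis. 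The plan is to reduce everything to a statement about evaluating $h_s$ and $e_{m-s}$ plethystically at the alphabet $D_\gamma=MB_\gamma-1$ (using $h_s[X/(1-q)]$ evaluated appropriately), and then recognize the resulting finite $q$-sum. The identity \eqref{eq:garsia_haglund_eval}, $\widetilde{H}_\mu[(1-t)(1-q^j)]=(1-q^j)\Pi_\mu h_j[(1-t)B_\mu]$, is the bridge: it lets one replace $h_j[(1-t)B_\mu]$ by a genuine plethystic evaluation of $\widetilde{H}_\mu$, at which point the $q$-binomial coefficients $\begin{bmatrix}k-1\\s-1\end{bmatrix}_q$ will emerge from expanding $e_m[X(1-q^j)/(1-q)]$ via \eqref{eq:en_q_sum_Enk} and comparing with the $E_{m,k}$ basis, using $h_s[1/(1-q)]=1/(q;q)_s$ from \eqref{eq:h_q_prspec} and $e_s[1/(1-q)]$ from \eqref{eq:e_q_prspec}.

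The cleanest route is probably this: define both sides as symmetric functions in $X$, and show they have the same coefficient against each $\widetilde{H}_\gamma/w_\gamma$ after applying the pairing $\langle\widetilde{H}_\gamma,-\rangle_*$. The left side pairs to $w_\gamma\cdot h_k[(1-t)B_\gamma]/w_\gamma=h_k[(1-t)B_\gamma]$ by \eqref{eq:H_orthogonality}. For the right side, I expand $h_s[X/(1-q)]$ and $e_{m-s}[X/M]$ and compute the pairing cell-by-cell; the term $e_{m-s}[X/M]=e_{m-s}^*$ pairs with $\widetilde{H}_\gamma$ giving $e_{m-s}[B_\gamma]$-type quantities via \eqref{eq:hperp_estar_adjoint} (the operator $e_s^\perp$ dual to multiplication by $h_s^*$, and vice versa). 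So the whole identity collapses to the numerical $q$-series identity
\[
h_k[(1-t)B_\gamma]=\sum_{s=1}^{m}q^{\binom{s}{2}}\begin{bmatrix}k-1\\s-1\end{bmatrix}_q\,\big(\text{appropriate evaluation involving }B_\gamma\big),
\]
which I would verify by specializing $B_\gamma$ to a single power-sum alphabet (equivalently, checking it for the alphabet $[n]_q$ for all $n$, since those separate symmetric functions) and invoking \eqref{eq:h_q_binomial} and \eqref{eq:e_q_binomial}. The main obstacle I anticipate is bookkeeping the plethystic substitutions correctly — in particular keeping straight the three alphabets $X$, $X/(1-q)$, $X/M$ and the roles of $\omega$, $\epsilon$, and $\phi$ — rather than any deep structural difficulty; the identity is essentially a repackaging of Macdonald--Koornwinder reciprocity \eqref{eq:Macdonald_reciprocity} together with the $E_{n,k}$ expansion \eqref{eq:def_Enk}, and once the pairing is set up the $q$-binomial identity that remains is elementary (reducible to \eqref{eq:qbin_recursion}).
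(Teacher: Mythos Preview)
Your proposal names the right toolbox but does not assemble an argument; in particular, your ``cleanest route'' has a real gap. Pairing the left-hand side with $\widetilde{H}_\gamma$ under $\langle\,\cdot\,,\cdot\,\rangle_*$ correctly yields $h_k[(1-t)B_\gamma]$ by \eqref{eq:H_orthogonality}. But on the right-hand side you need $\langle h_s[X/(1-q)]\,e_{m-s}[X/M],\widetilde{H}_\gamma\rangle_*$, and you leave this as ``appropriate evaluation involving $B_\gamma$''. Using only the adjointness \eqref{eq:hperp_estar_adjoint}, this pairing becomes $\langle h_s[X/(1-q)],\,h_{m-s}^{\perp}\widetilde{H}_\gamma\rangle_* = \sum_{\nu\subset_{m-s}\gamma} c_{\gamma\nu}^{(m-s)}\langle h_s[X/(1-q)],\widetilde{H}_\nu\rangle_*$, which is a sum over Pieri coefficients with no evident closed form --- so the step where the $q$-binomials are supposed to emerge is missing. (What actually closes this is the Macdonald expansion of $h_s[X/(1-q)]e_{m-s}[X/M]$ given in \eqref{eq:GHS_2_6}; with that in hand the required residual identity is the $q$-binomial inversion $\sum_{s\geq r} q^{\binom{s}{2}}\genfrac{[}{]}{0pt}{}{k-1}{s-1}_q\genfrac{[}{]}{0pt}{}{s-1}{r-1}_q q^{\binom{r}{2}+r-sr}(-1)^{s-r}=\delta_{r,k}$, but you never invoke \eqref{eq:GHS_2_6}.) Your fallback, ``check for the alphabet $[n]_q$'', also does not work as stated: the identity must hold for every $\gamma\vdash m$ with $m$ fixed, and only $\gamma=(m)$ gives $B_\gamma=[m]_q$ --- a single specialization, not a separating family.

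The paper takes a different route. Rather than pairing coefficient-by-coefficient, it writes the left side $g[X]$ in the form $g[X]=\sum_{r\ge 0} e_r[X/M]\,(f[X])_r$ for an unknown $f$. By \eqref{eq:hperp_estar_adjoint} and \eqref{eq:glenn_formula}, this forces $\nabla^{-1}\tau_{-\epsilon}f[X]\big|_{X=D_\mu}=h_k[(1-t)B_\mu]$, so one may take $\nabla^{-1}\tau_{-\epsilon}f[X]=h_k\!\left[\dfrac{X+1}{1-q}\right]$ (since $D_\mu+1=MB_\mu$). Then $f=\tau_\epsilon\nabla h_k[(X+1)/(1-q)]$, and Haglund's computation gives $(f[X])_d = q^{\binom{d}{2}}\genfrac{[}{]}{0pt}{}{k-1}{d-1}_q h_d[X/(1-q)]$, from which \eqref{eq:Haglund_summation} is read off immediately. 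The point is that the paper \emph{constructs} $f$ rather than trying to verify the identity against each $\widetilde{H}_\gamma$; this avoids precisely the pairing you left unspecified.
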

\begin{proof}
Let us set
\begin{equation}
g[X]:=\sum_{\mu\vdash m} \frac{ \widetilde{H}_\mu[X]}{w_\mu}h_k[(1-t)B_{\mu}].
\end{equation}
The idea is to use the expansion 
\begin{equation}
\sum_{\mu\vdash m} \frac{ \widetilde{H}_\mu[X]}{w_\mu}h_k[(1-t)B_{\mu}]=g[X]= \sum_{\mu\vdash m} \frac{ \widetilde{H}_\mu[X]}{w_\mu} \langle g[X],\widetilde{H}_\mu[X]\rangle_*.
\end{equation}
We suppose that $g[X]$ is of the form 
$$
g[X]=\sum_{r\geq 0}e_r^* f[X]
$$
for some $f[X]\in \Lambda$, so that
\begin{align*}
\langle g[X],\widetilde{H}_\mu[X]\rangle_* & = \langle \sum_{r\geq 0}e_r^*f[X],\widetilde{H}_\mu[X]\rangle_*\\ 
\text{(using \eqref{eq:hperp_estar_adjoint})}& = \langle f[X], \sum_{r\geq 0}h_r^\perp \widetilde{H}_\mu[X]\rangle_*\\ 
& = \langle f[X], \widetilde{H}_\mu[X+1]\rangle_*.
\end{align*}
Then we can use the identity \eqref{eq:glenn_formula} to get
\begin{equation}
\langle g[X],\widetilde{H}_\mu[X]\rangle_* = \langle f[X], \widetilde{H}_\mu[X+1]\rangle_*=  \left.\nabla^{-1}\tau_{-\epsilon} f[X]\right|_{X=D_\mu}.
\end{equation}
So we look for an $f[X]$ such that
\begin{equation}
\left.\nabla^{-1}\tau_{-\epsilon}f[X]\right|_{X=D_\mu}=h_k[(1-t)B_{\mu}].
\end{equation}
Clearly, we can use
\begin{equation}
\nabla^{-1}\tau_{-\epsilon}f[X]  =  h_{k}\left[\frac{X+1}{1-q}\right],
\end{equation}
so that
\begin{equation}
f[X]  =  \tau_{\epsilon}\nabla h_{k}\left[\frac{X+1}{1-q}\right].
\end{equation}
This is precisely equation (2.76) in \cite{haglundschroeder}. 

Now notice that
\begin{equation}
\sum_{\mu\vdash m} \frac{\widetilde{H}_\mu[X]}{w_\mu}\langle g[X],\widetilde{H}_\mu[X]\rangle_*=(g[X])_m,
\end{equation}
where $(g[X])_d$ denotes the homogeneous component of $g[X]$ of degree $d$.

Equation (2.83) in \cite{haglundschroeder} shows that for all $d\geq 0$
\begin{equation}
(f[X])_d  = h_{d}\left[\frac{X}{1-q}\right]q^{\binom{d}{2}} \begin{bmatrix}
k-1\\
k-d\\
\end{bmatrix}_q,
\end{equation}
so that
\begin{align*}
\sum_{\mu\vdash m} \frac{ \widetilde{H}_\mu[X]}{w_\mu}h_k[(1-t)B_{\mu}] & = \sum_{\mu\vdash m} \frac{ \widetilde{H}_\mu[X]}{w_\mu} \langle g[X],\widetilde{H}_\mu[X]\rangle_*\\
& =(g[X])_m\\
& =\sum_{d=0}^{m}e_{m-d}\left[\frac{X}{M}\right](f[X])_d\\
& =\sum_{d=0}^{m}e_{m-d}\left[\frac{X}{M}\right]h_{d}\left[\frac{X}{1-q}\right]q^{\binom{d}{2}} \begin{bmatrix}
k-1\\
k-d\\
\end{bmatrix}_q\\
& =\sum_{d=0}^{m}q^{\binom{d}{2}} \begin{bmatrix}
k-1\\
d-1\\
\end{bmatrix}_qh_{d}\left[\frac{X}{1-q}\right]e_{m-d}\left[\frac{X}{M}\right],
\end{align*}
as we wanted.
\end{proof}

We now prove the case $\ell=m$ (recall \eqref{eq:Bmu_Tmu}), as we need it to establish the general case.
\begin{theorem}
For $m\geq 1$ and $k\geq 1$ we have
\begin{align} \label{eq:basic_summation_2}
\sum_{\mu\vdash m} \frac{T_\mu \widetilde{H}_\mu[X]}{w_\mu}h_k[(1-t)B_\mu]  & =  \sum_{j=1}^m \begin{bmatrix}
k+j-1\\
j-1
\end{bmatrix}_q h_j\left[\frac{X}{1-q}\right] h_{m-j}\left[\frac{tX}{M}\right] .
\end{align}	
\end{theorem}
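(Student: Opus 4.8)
The plan is to mimic the proof of Theorem~\ref{thm:Haglund_summation}, inserting the extra eigenvalue $T_\mu$ at the appropriate point. Set
\[
g[X]:=\sum_{\mu\vdash m}\frac{T_\mu\,\widetilde{H}_\mu[X]}{w_\mu}\,h_k[(1-t)B_\mu],
\]
and look for a symmetric function of the form $\sum_{r\geq 0}e_r^*f$, with $f\in\Lambda$, whose star scalar product with $\widetilde{H}_\mu$ equals $T_\mu\,h_k[(1-t)B_\mu]$ for every $\mu\vdash m$; then, exactly as in the proof of Theorem~\ref{thm:Haglund_summation}, $g[X]$ is the homogeneous degree-$m$ component of $\sum_{r\geq 0}e_r^*f$, namely $\sum_{d=0}^{m}e_{m-d}\!\left[\frac{X}{M}\right](f[X])_d$. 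By \eqref{eq:hperp_estar_adjoint} and \eqref{eq:glenn_formula} the requirement on $f$ reads $\nabla^{-1}\tau_{-\epsilon}f[X]\big|_{X=D_\mu}=T_\mu\,h_k[(1-t)B_\mu]$. Since $D_\mu+1=MB_\mu$, $\tfrac{M}{1-q}=1-t$, and $T_\mu=e_m[B_\mu]$ by \eqref{eq:Bmu_Tmu}, one may take
\[
\nabla^{-1}\tau_{-\epsilon}f[X]=e_m\!\left[\frac{X+1}{M}\right]h_k\!\left[\frac{X+1}{1-q}\right],
\]
because evaluating the right-hand side at $X=D_\mu$ produces $e_m[B_\mu]\,h_k[(1-t)B_\mu]=T_\mu\,h_k[(1-t)B_\mu]$. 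Hence $f=\tau_\epsilon\nabla\!\left(e_m[\tfrac{X+1}{M}]\,h_k[\tfrac{X+1}{1-q}]\right)$, a legitimate element of $\Lambda$ since $e_m[\tfrac{X+1}{M}]h_k[\tfrac{X+1}{1-q}]$ is a symmetric function of $X$ of bounded degree over $\mathbb{Q}(q,t)$.

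Everything then comes down to the homogeneous components $(f[X])_d$. Using \eqref{eq:e_h_sum_alphabets} to write $e_m[\tfrac{X+1}{M}]=\sum_a e_{m-a}[\tfrac1M]\,e_a[\tfrac{X}{M}]$ and $h_k[\tfrac{X+1}{1-q}]=\sum_i h_{k-i}[\tfrac1{1-q}]\,h_i[\tfrac{X}{1-q}]$, the task reduces to computing $\nabla\!\left(e_a[\tfrac{X}{M}]\,h_i[\tfrac{X}{1-q}]\right)$: one expands $e_a[\tfrac{X}{M}]h_i[\tfrac{X}{1-q}]$ in the Macdonald basis using the Cauchy identities \eqref{eq:Mac_Cauchy} and \eqref{eq:e_h_expansion} (and the adjointness manipulations behind them), applies $\nabla$ eigenvalue-by-eigenvalue, and reassembles. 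The scalar factors $e_{m-a}[\tfrac1M]$ and $h_{k-i}[\tfrac1{1-q}]$ are handled by the principal-specialization formulas \eqref{eq:h_q_prspec}--\eqref{eq:e_q_prspec} and the $q$-binomial evaluations \eqref{eq:h_q_binomial}--\eqref{eq:e_q_binomial}, and repeated application of the Pascal-type recursion \eqref{eq:qbin_recursion} collapses the emerging sums into the single coefficient $\begin{bmatrix}k+j-1\\j-1\end{bmatrix}_q$. The final passage from terms of the form $e_{m-s}[\tfrac{X}{M}]$ to the $h_{m-j}[\tfrac{tX}{M}]$ appearing in the statement uses the plethystic identity $\tfrac{X}{1-q}+\tfrac{tX}{M}=\tfrac{X}{M}$ together with \eqref{eq:e_h_sum_alphabets}.

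The main obstacle is exactly this last computation: $\nabla$ does not interact simply with products, so evaluating $\nabla$ on the products $e_a[\tfrac{X}{M}]h_i[\tfrac{X}{1-q}]$ --- equivalently, pinning down the homogeneous pieces of $f$ --- demands careful bookkeeping of Macdonald-basis and Pieri/hook coefficients (invoking Macdonald reciprocity \eqref{eq:Macdonald_reciprocity} where needed) followed by a somewhat delicate $q$-binomial identity. Equivalently, one could observe that $g=\nabla\bigl(\sum_{\mu\vdash m}\tfrac{\widetilde{H}_\mu[X]}{w_\mu}h_k[(1-t)B_\mu]\bigr)$, which by Theorem~\ref{thm:Haglund_summation} is $\nabla$ applied to $\sum_{s=1}^{m}q^{\binom{s}{2}}\begin{bmatrix}k-1\\s-1\end{bmatrix}_q h_s[\tfrac{X}{1-q}]\,e_{m-s}[\tfrac{X}{M}]$; this recasts the problem as the same computation of $\nabla$ on $h_s[\tfrac{X}{1-q}]e_{m-s}[\tfrac{X}{M}]$-type products, now all of degree $m$, where $\nabla=\Delta_{e_m}$.
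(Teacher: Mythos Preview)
Your strategy is conceptually sound, but it is only an outline: the actual proof is deferred to the step ``compute $\nabla\bigl(h_i[\tfrac{X}{1-q}]\,e_a[\tfrac{X}{M}]\bigr)$'', which you do not carry out. You describe this as ``a somewhat delicate $q$-binomial identity'' obtained by expanding in the Macdonald basis, applying $\nabla$, and reassembling, but none of those manipulations are written down, and they constitute the entire content of the theorem. Worse, in the paper's logical structure this very computation (Proposition~\ref{prop:key_identity}) is derived \emph{from} the theorem you are trying to prove, via \eqref{eq:basic_summation_2}; so you cannot lean on that result without circularity, and must produce an independent argument for the action of $\nabla$ on $h_i[\tfrac{X}{1-q}]\,e_j[\tfrac{X}{M}]$.

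The paper's own proof takes a completely different and much shorter route, avoiding the $\tau_{-\epsilon}$/$\nabla^{-1}$ machinery entirely. It computes $\nabla e_m\bigl[X\tfrac{1-q^k}{1-q}\bigr]$ in two ways: first, by expanding into the $E_{m,j}$ via \eqref{eq:en_q_sum_Enk} and invoking Haglund's closed form $\nabla E_{m,j}=t^{m-j}(1-q^j)\,\mathbf{\Pi}\bigl(h_{m-j}[\tfrac{X}{M}]\,h_j[\tfrac{X}{1-q}]\bigr)$; second, by the Macdonald expansion \eqref{eq:qn_q_Macexp}, which yields $(1-q^k)\,\mathbf{\Pi}\sum_{\mu\vdash m}\tfrac{T_\mu\widetilde{H}_\mu}{w_\mu}h_k[(1-t)B_\mu]$. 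Equating the two and cancelling the invertible operator $\mathbf{\Pi}$ and the factor $(1-q^k)$ gives the theorem in one stroke. What this buys is that the hard computation of $\nabla$ on $h_i[\tfrac{X}{1-q}]\,e_j[\tfrac{X}{M}]$ is replaced by Haglund's already-established formula for $\nabla E_{m,j}$; your approach would instead have to redo that work from scratch.
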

\begin{proof}
Using \eqref{eq:en_q_sum_Enk}, we have
\begin{equation}
\nabla e_m\left[X\frac{1-q^k}{1-q}\right]=\sum_{j=1}^m \begin{bmatrix}
k+j-1\\
j
\end{bmatrix}_q\nabla E_{mj} .
\end{equation}
Also, Haglund showed in equation (7.86) of \cite{haglundbook} that
\begin{equation}
\nabla E_{mj}=t^{m-j}(1-q^j)\mathbf{\Pi} \left(h_{m-j}\left[\frac{X}{M}\right] h_{j}\left[\frac{X}{1-q}\right]\right),
\end{equation}
where $\mathbf{\Pi}$ is the linear operator defined by
\begin{equation}
\mathbf{\Pi} \widetilde{H}_\mu[X]=\Pi_\mu \widetilde{H}_\mu[X]\qquad \text{for all }\mu.
\end{equation}
So we get
\begin{align} \label{eq:enq_aux1}
\notag \nabla e_n\left[X\frac{1-q^k}{1-q}\right] & =\sum_{k=1}^n \begin{bmatrix}
k+j-1\\
j
\end{bmatrix}_qt^{m-j}(1-q^j)\mathbf{\Pi} \left(h_{m-j}\left[\frac{X}{M}\right] h_{j}\left[\frac{X}{1-q}\right]\right)\\ 
& =(1-q^k)\sum_{k=1}^n \begin{bmatrix}
k+j-1\\
j-1
\end{bmatrix}_qt^{m-j}\mathbf{\Pi} \left(h_{m-j}\left[\frac{X}{M}\right] h_{j}\left[\frac{X}{1-q}\right]\right)  ,
\end{align}
where we used the obvious
\begin{equation}
(1-q^j)\begin{bmatrix}
k+j-1\\
j
\end{bmatrix}_q=(1-q^k)\begin{bmatrix}
k+j-1\\
j-1
\end{bmatrix}_q.
\end{equation}

But, using \eqref{eq:qn_q_Macexp}, we also have
\begin{align} \label{eq:enq_aux2}
\notag \nabla e_m\left[X\frac{1-q^k}{1-q}\right] & =(1-q^k)\sum_{\mu\vdash m}\frac{\Pi_\mu T_\mu \widetilde{H}_\mu[X]h_k[(1-t)B_\mu]}{w_\mu}\\
& =(1-q^k)\mathbf{\Pi}\sum_{\mu\vdash m}\frac{T_\mu \widetilde{H}_\mu[X]h_k[(1-t)B_\mu]}{w_\mu}.
\end{align}
Comparing \eqref{eq:enq_aux1} and \eqref{eq:enq_aux2}, and noticing that the operator $\mathbf{\Pi}$ is invertible, we get the result.
\end{proof}

\subsection{A key identity}

Now we want to prove the following key identity.
\begin{proposition} \label{prop:key_identity}
For $j\geq 0$ and $i\geq 1$ we have
\begin{equation} \label{eq:key_identity}
\nabla \left( h_i\left[\frac{X}{1-q}\right] e_j\left[\frac{X}{M}\right] \right) = 
	\sum_{s =1}^{i+j} t^{i+j-s}q^{\binom{i}{2}} \begin{bmatrix}
	s\\
	i
	\end{bmatrix}_q  h_s\left[\frac{X}{1-q}\right] h_{i+j-s}\left[\frac{X}{M}\right].
\end{equation}
\end{proposition}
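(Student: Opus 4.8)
The plan is to \emph{invert} Haglund's summation formula \eqref{eq:Haglund_summation}, apply $\nabla$ term by term using the special case \eqref{eq:basic_summation_2}, and then collapse everything with one $q$-binomial identity. Write $m=i+j$ throughout, and for $k\geq 1$ set
\[
L_k:=\sum_{\mu\vdash m}\frac{\widetilde{H}_\mu[X]}{w_\mu}\,h_k[(1-t)B_\mu].
\]
By Theorem~\ref{thm:Haglund_summation} we have $L_k=\sum_{s=1}^m q^{\binom{s}{2}}\begin{bmatrix}k-1\\s-1\end{bmatrix}_q h_s\!\left[\frac{X}{1-q}\right]e_{m-s}\!\left[\frac{X}{M}\right]$, while, since $\nabla\widetilde{H}_\mu=T_\mu\widetilde{H}_\mu$, formula \eqref{eq:basic_summation_2} gives $\nabla L_k=\sum_{s=1}^m \begin{bmatrix}k+s-1\\s-1\end{bmatrix}_q h_s\!\left[\frac{X}{1-q}\right]h_{m-s}\!\left[\frac{tX}{M}\right]$.

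First I would invert the expansion of $L_k$. The map $(b_s)\mapsto\big(a_k=\sum_s\begin{bmatrix}k-1\\s-1\end{bmatrix}_q b_s\big)$ is the shifted $q$-Pascal transform, with inverse the classical $q$-binomial inversion $b_k=\sum_s(-1)^{k-s}q^{\binom{k-s}{2}}\begin{bmatrix}k-1\\s-1\end{bmatrix}_q a_s$. Applying this with $a_k=L_k$ and $b_s=q^{\binom{s}{2}}h_s[X/(1-q)]e_{m-s}[X/M]$ yields
\[
h_i\!\left[\frac{X}{1-q}\right]e_{m-i}\!\left[\frac{X}{M}\right]=q^{-\binom{i}{2}}\sum_{k=1}^{i}(-1)^{i-k}q^{\binom{i-k}{2}}\begin{bmatrix}i-1\\k-1\end{bmatrix}_q L_k .
\]

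Next I would apply $\nabla$, substitute the formula for $\nabla L_k$, and use $h_{m-s}[tX/M]=t^{m-s}h_{m-s}[X/M]$; after interchanging the two sums this reduces the claim (together with $q^{-\binom{i}{2}}q^{i(i-1)}=q^{\binom{i}{2}}$ and $m-s=i+j-s$) to the purely combinatorial identity
\[
\sum_{k=1}^{i}(-1)^{i-k}q^{\binom{i-k}{2}}\begin{bmatrix}i-1\\k-1\end{bmatrix}_q\begin{bmatrix}k+s-1\\s-1\end{bmatrix}_q=q^{i(i-1)}\begin{bmatrix}s\\i\end{bmatrix}_q\qquad(s\geq 1).
\]
To prove it, use $\begin{bmatrix}k+s-1\\s-1\end{bmatrix}_q=\begin{bmatrix}k+s-1\\k\end{bmatrix}_q=h_k[1+q+\cdots+q^{s-1}]$ from \eqref{eq:h_q_binomial}; then, after the substitution $l=i-k$ and the $q$-binomial theorem $\sum_l(-1)^lq^{\binom{l}{2}}\begin{bmatrix}i-1\\l\end{bmatrix}_q z^l=\prod_{r=0}^{i-2}(1-zq^r)$, the left-hand side is the coefficient of $z^i$ in $\dfrac{\prod_{r=0}^{i-2}(1-zq^r)}{\prod_{r=0}^{s-1}(1-zq^r)}$. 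For $s\geq i-1$ this rational function equals $\prod_{r=i-1}^{s-1}(1-zq^r)^{-1}=\sum_k q^{k(i-1)}h_k[1+q+\cdots+q^{s-i}]\,z^k$, whose $z^i$-coefficient is $q^{i(i-1)}h_i[1+\cdots+q^{s-i}]=q^{i(i-1)}\begin{bmatrix}s\\i\end{bmatrix}_q$; for $s<i-1$ it is a polynomial in $z$ of degree $i-1-s<i$, and both sides vanish.

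The routine parts are the two $q$-binomial manipulations; the main obstacle is keeping the triangular-inversion bookkeeping straight and separately checking the degenerate range $s\leq i-1$ of the identity. One should also record the edge case $j=0$ (i.e.\ $m=i$), where the formula collapses to $\nabla\, h_i[X/(1-q)]=q^{\binom{i}{2}}h_i[X/(1-q)]$, in agreement with the general computation.
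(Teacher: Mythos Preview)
Your proof is correct and follows essentially the same route as the paper: expand $h_i[X/(1-q)]\,e_j[X/M]$ as a linear combination of the $L_k$'s (the paper quotes this expansion from \cite[Proposition~2.6]{garsiahicksstout}, while you recover the same coefficients by $q$-binomial inversion of Haglund's formula~\eqref{eq:Haglund_summation}), apply $\nabla$ using \eqref{eq:basic_summation_2}, and then collapse with what is exactly the $a=0$ case of Lemma~\ref{lem:elementary1}. Your generating-function argument for that $q$-identity is a clean alternative to the inductive proof in the appendix.
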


In order to prove this proposition, we need a proposition and a lemma.

The following proposition is \cite[Proposition~2.6]{garsiahicksstout}.
\begin{proposition} 
For $i\geq 1$ and $j\geq 0$ we have
\begin{equation} \label{eq:GHS_2_6}
h_i\left[\frac{X}{1-q}\right] e_j\left[\frac{X}{M}\right]  = \sum_{\mu\vdash i+j} \frac{\widetilde{H}_\mu[X]}{w_\mu} \sum_{r=1}^i \begin{bmatrix}
	i-1\\
	r-1
\end{bmatrix}_q q^{\binom{r}{2}+r-ir}(-1)^{i-r}h_r[(1-t)B_\mu].
\end{equation}
\end{proposition}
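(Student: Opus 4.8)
The plan is to obtain \eqref{eq:GHS_2_6} by \emph{inverting} Haglund's summation formula \eqref{eq:Haglund_summation}. Put $m:=i+j\ge 1$. For each $k\ge 1$, equation \eqref{eq:Haglund_summation} (with this value of $m$) reads
\[
A_k\ :=\ \sum_{\mu\vdash m}\frac{\widetilde{H}_{\mu}[X]}{w_\mu}\,h_k[(1-t)B_\mu]\ =\ \sum_{s=1}^{m}\mathsf{M}_{ks}\,P_s,\qquad
\mathsf{M}_{ks}:=q^{\binom{s}{2}}\begin{bmatrix}k-1\\ s-1\end{bmatrix}_q,\qquad P_s:=h_s\!\left[\tfrac{X}{1-q}\right]e_{m-s}\!\left[\tfrac{X}{M}\right].
\]
The matrix $\mathsf{M}=(\mathsf{M}_{ks})_{1\le k,s\le m}$ is lower triangular, since $\begin{bmatrix}k-1\\ s-1\end{bmatrix}_q=0$ for $s>k$, and its diagonal entries $\mathsf{M}_{kk}=q^{\binom{k}{2}}$ are invertible in $\mathbb{Q}(q)$; hence $\mathsf{M}$ is invertible. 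Consequently the relations $A_k=\sum_s\mathsf{M}_{ks}P_s$ force, by linear algebra alone, $P_i=\sum_{k=1}^{i}(\mathsf{M}^{-1})_{ik}A_k$ for every $i\in\{1,\dots,m\}$. Since $P_i=h_i[X/(1-q)]e_j[X/M]$ (recall $m-i=j$) and $\sum_k(\mathsf{M}^{-1})_{ik}A_k=\sum_{\mu\vdash m}\frac{\widetilde{H}_{\mu}[X]}{w_\mu}\sum_k(\mathsf{M}^{-1})_{ik}\,h_k[(1-t)B_\mu]$, this is already of the shape of \eqref{eq:GHS_2_6}: it only remains to compute $\mathsf{M}^{-1}$ and to recognize its entries.

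To do this, factor $\mathsf{M}=\mathsf{B}\,\mathsf{D}$ with $\mathsf{B}_{ks}=\begin{bmatrix}k-1\\ s-1\end{bmatrix}_q$ and $\mathsf{D}=\operatorname{diag}(q^{\binom{s}{2}})$, so that $\mathsf{M}^{-1}=\mathsf{D}^{-1}\mathsf{B}^{-1}$. By the classical $q$-binomial inversion, $(\mathsf{B}^{-1})_{sk}=(-1)^{s-k}q^{\binom{s-k}{2}}\begin{bmatrix}s-1\\ k-1\end{bmatrix}_q$; equivalently this is the orthogonality $\sum_{c}(-1)^{c-b}q^{\binom{c-b}{2}}\begin{bmatrix}a\\ c\end{bmatrix}_q\begin{bmatrix}c\\ b\end{bmatrix}_q=\delta_{a,b}$, which one checks from the subset‑of‑a‑subset identity $\begin{bmatrix}a\\ c\end{bmatrix}_q\begin{bmatrix}c\\ b\end{bmatrix}_q=\begin{bmatrix}a\\ b\end{bmatrix}_q\begin{bmatrix}a-b\\ c-b\end{bmatrix}_q$ together with $\sum_{d}(-1)^{d}q^{\binom{d}{2}}\begin{bmatrix}N\\ d\end{bmatrix}_q=\prod_{i=0}^{N-1}(1-q^{i})=\chi(N=0)$ (i.e.\ $\sum_d(-1)^d e_d$ evaluated at the alphabet $[N]_q$, using \eqref{eq:e_q_binomial}). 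Hence
\[
(\mathsf{M}^{-1})_{ik}\ =\ q^{-\binom{i}{2}}(-1)^{i-k}q^{\binom{i-k}{2}}\begin{bmatrix}i-1\\ k-1\end{bmatrix}_q\ =\ (-1)^{i-k}q^{\binom{i-k}{2}-\binom{i}{2}}\begin{bmatrix}i-1\\ k-1\end{bmatrix}_q .
\]

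It only remains to rewrite the exponent using the elementary identity $\binom{i-k}{2}-\binom{i}{2}=\binom{k}{2}+k-ik$ (both sides equal $\binom{k+1}{2}-ik$). This turns $(\mathsf{M}^{-1})_{ik}$ into $(-1)^{i-k}q^{\binom{k}{2}+k-ik}\begin{bmatrix}i-1\\ k-1\end{bmatrix}_q$, which is exactly (with $r=k$) the coefficient occurring in \eqref{eq:GHS_2_6}; substituting into $P_i=\sum_{k=1}^{i}(\mathsf{M}^{-1})_{ik}A_k$ finishes the proof. If one prefers not to mention matrix inverses, the same computation runs ``forward'': start from the right‑hand side of \eqref{eq:GHS_2_6}, exchange the $r$‑sum with the $\mu$‑sum, replace each $\sum_{\mu\vdash m}\frac{\widetilde{H}_{\mu}[X]}{w_\mu}h_r[(1-t)B_\mu]$ by the right‑hand side of \eqref{eq:Haglund_summation}, exchange sums again, and collapse the inner sum over $r$ via the same $q$-binomial orthogonality and exponent identity. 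I do not expect a genuine obstacle: the one conceptual step is to view \eqref{eq:Haglund_summation} as a triangular change of basis between $\{P_s\}_{s=1}^{m}$ and $\{A_k\}_{k=1}^{m}$, after which everything reduces to the standard $q$-binomial inversion and a one‑line exponent check.
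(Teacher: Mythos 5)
Your argument is correct, but it is worth pointing out that the paper does not actually prove this proposition: it is quoted verbatim as Proposition~2.6 of Garsia--Hicks--Stout \cite{garsiahicksstout}, where it is established by plethystic manipulations (essentially the star-scalar-product/$\nabla^{-1}\tau_{-\epsilon}$ technique that the present paper uses for its own, more general summation formula). Your route is genuinely different and self-contained relative to this paper's toolkit: you read Haglund's summation formula \eqref{eq:Haglund_summation} (Theorem~\ref{thm:Haglund_summation}), for the fixed degree $m=i+j$ and $k=1,\dots,m$, as a lower-triangular change of basis $A_k=\sum_s \mathsf{M}_{ks}P_s$ with invertible diagonal $q^{\binom{k}{2}}$, and invert it by the standard $q$-binomial orthogonality $\sum_d(-1)^dq^{\binom{d}{2}}\begin{bmatrix}N\\ d\end{bmatrix}_q=\chi(N=0)$ together with the exponent identity $\binom{i-k}{2}-\binom{i}{2}=\binom{k}{2}+k-ik$; all of these steps check out, and no linear independence of the $P_s$ is needed since $P=\mathsf{M}^{-1}A$ follows formally. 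There is also no circularity: Theorem~\ref{thm:Haglund_summation} is proved (in \cite{haglundschroeder}, and sketched in the paper) without reference to \eqref{eq:GHS_2_6}. What each approach buys: the citation/plethystic route gives the identity directly from the Macdonald operator machinery and generalizes (it is the template for the paper's main summation formula), while your inversion argument is more elementary granted Haglund's formula, and it makes transparent that \eqref{eq:GHS_2_6} and \eqref{eq:Haglund_summation} are equivalent data, each obtained from the other by triangular $q$-binomial inversion.
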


The following elementary lemma is proved in the appendix of the present article.
\begin{lemma} \label{lem:elementary1}
For $a\geq 0$, $i\geq 1$ and $s\geq 0$ we have
\begin{equation} \label{eq:first_qlemma}
\sum_{r=1}^{i}\begin{bmatrix}
i-1\\
r-1
\end{bmatrix}_q \begin{bmatrix}
r+s+a-1\\
s-1
\end{bmatrix}_q q^{\binom{r}{2}+r-ir}(-1)^{i-r} =q^{\binom{i}{2}+(i-1)a} \begin{bmatrix}
s+a\\
i+a
\end{bmatrix}_q .
\end{equation}
\end{lemma}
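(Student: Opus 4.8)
The plan is to collapse \eqref{eq:first_qlemma} to a single $q$-Vandermonde--type evaluation and then to prove that by a short generating function computation. If $s=0$ the statement is trivial (every $q$-binomial $\begin{bmatrix}r+a-1\\-1\end{bmatrix}_q$ on the left vanishes, while $\begin{bmatrix}a\\i+a\end{bmatrix}_q=0$ on the right since $i\geq1$), so we may assume $s\geq1$. The first step is to reindex the sum by $j:=i-r$, so that $r=i-j$ and $j$ runs over $\{0,1,\dots,i-1\}$. An elementary computation, expanding $\binom{i-j}{2}$, shows that the $q$-exponent collapses,
\[
\binom{r}{2}+r-ir=\binom{i-j}{2}+(i-j)-i(i-j)=\binom{j}{2}-\binom{i}{2},
\]
and since $2\binom{i}{2}+(i-1)a=(i-1)(i+a)$, the claim \eqref{eq:first_qlemma} becomes equivalent, after multiplying both sides by $q^{\binom{i}{2}}$, to the identity
\[
S:=\sum_{j=0}^{n}(-1)^{j}q^{\binom{j}{2}}\begin{bmatrix}n\\j\end{bmatrix}_q\begin{bmatrix}N-j\\m\end{bmatrix}_q=q^{(i-1)(i+a)}\begin{bmatrix}s+a\\i+a\end{bmatrix}_q,
\]
where $n:=i-1$, $m:=s-1\geq0$ and $N:=s+a+n$.

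To prove this identity I would pass to generating functions in an auxiliary variable $z$. Using \eqref{eq:h_q_binomial} in the form $\begin{bmatrix}N-j\\m\end{bmatrix}_q=\begin{bmatrix}N-j\\N-j-m\end{bmatrix}_q=h_{N-j-m}[[m+1]_q]$ (note $N-j-m=a+n-j+1\geq1$ for $0\leq j\leq n$), together with the standard expansion $\sum_{k\geq0}h_k[[m+1]_q]z^k=\prod_{t=0}^{m}(1-q^{t}z)^{-1}=(z;q)_{m+1}^{-1}$, one can write $\begin{bmatrix}N-j\\m\end{bmatrix}_q=[z^{N-j-m}]\,(z;q)_{m+1}^{-1}$. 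Substituting this into $S$, pulling the factor $z^{j}$ outside the coefficient extraction, and summing over $j$ by the $q$-binomial theorem $(z;q)_{n}=\sum_{j\geq0}(-1)^{j}q^{\binom{j}{2}}\begin{bmatrix}n\\j\end{bmatrix}_qz^{j}$ (which follows from \eqref{eq:e_q_binomial}), this leaves the clean expression
\[
S=[z^{N-m}]\ \frac{(z;q)_{n}}{(z;q)_{m+1}},\qquad N-m=i+a.
\]

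The last step is to expand this rational function, distinguishing two cases. If $i\leq s$, i.e.\ $n\leq m$, then $(z;q)_{n}$ divides $(z;q)_{m+1}$ and $\dfrac{(z;q)_{n}}{(z;q)_{m+1}}=\dfrac{1}{(q^{n}z;q)_{m+1-n}}=\sum_{k\geq0}h_{k}[[s-i+1]_q]\,q^{nk}z^{k}$; extracting the coefficient of $z^{i+a}$ and using \eqref{eq:h_q_binomial} once more gives $q^{n(i+a)}h_{i+a}[[s-i+1]_q]=q^{(i-1)(i+a)}\begin{bmatrix}s+a\\i+a\end{bmatrix}_q$, which is the desired right-hand side. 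If $i>s$, then $\dfrac{(z;q)_{n}}{(z;q)_{m+1}}=\prod_{t=m+1}^{n-1}(1-q^{t}z)$ is a polynomial of degree $n-1-m=i-s-1<i+a$, so the coefficient of $z^{i+a}$ is $0$; and this agrees with the right-hand side, since $\begin{bmatrix}s+a\\i+a\end{bmatrix}_q=0$ whenever $i+a>s+a$. This completes the argument. The only genuinely delicate points are the exponent collapse in the first step and keeping careful track of the summation ranges in the two cases; everything else is a routine manipulation of $q$-series.
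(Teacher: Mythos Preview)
Your proof is correct and takes a genuinely different route from the paper's. After the same initial observation that $s=0$ is trivial, the paper reindexes by $j=r-1$ (not $j=i-r$) and then applies the $q$-Pascal rule to $\begin{bmatrix}i-1\\j\end{bmatrix}_q$ to derive the recurrence
\[
f(s,i,a)=-q^{-1}f(s,i-1,a)+q^{1-i}f(s,i-1,a+1),
\]
which it verifies for the right-hand side by a direct computation using \eqref{eq:qbin_recursion}; the argument is a straightforward induction on $i$. Your approach instead recognizes the reindexed sum as a $q$-Vandermonde convolution and evaluates it in one stroke by reading off a coefficient of $(z;q)_{n}/(z;q)_{m+1}$. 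The exponent collapse $\binom{r}{2}+r-ir=\binom{j}{2}-\binom{i}{2}$ (with $j=i-r$) and the identification $2\binom{i}{2}+(i-1)a=(i-1)(i+a)$ are both correct, the coefficient extraction $[z^{N-j-m}]F(z)=[z^{N-m}]\bigl(z^{j}F(z)\bigr)$ is legitimate since $N-j-m=a+i-j\geq a+1>0$ for all $j\leq n$, and the two cases $n\leq m$ and $n>m$ cover everything cleanly. Your argument is more conceptual and immediately explains why the answer vanishes when $i>s$ (the generating function becomes a polynomial of too small a degree), whereas the paper's inductive proof is more elementary in that it uses nothing beyond \eqref{eq:qbin_recursion}, at the cost of a less transparent mechanism.
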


\begin{proof}[Proof of Proposition~\ref{prop:key_identity}]
We have
\begin{equation*}
\nabla \left( h_i\left[\frac{X}{1-q}\right] e_j\left[\frac{X}{M}\right]\right)= \qquad\qquad\qquad\qquad \qquad\qquad
\end{equation*}
\begin{align*}
\text{(using \eqref{eq:GHS_2_6})}	& = \nabla \sum_{\mu\vdash i+j} \frac{\widetilde{H}_\mu[X]}{w_\mu} \sum_{r=1}^i \begin{bmatrix}
	i-1\\
	r-1
	\end{bmatrix}_q q^{\binom{r}{2}+r-ir}(-1)^{i-r}h_r[(1-t)B_\mu]\\
	& = \sum_{r=1}^i \begin{bmatrix}
	i-1\\
	r-1
	\end{bmatrix}_q q^{\binom{r}{2}+r-ir}(-1)^{i-r} \sum_{\mu\vdash i+j} \frac{T_\mu \widetilde{H}_\mu[X]}{w_\mu} h_r[(1-t)B_\mu]\\
\text{(using \eqref{eq:basic_summation_2})}	& = \sum_{r=1}^i \begin{bmatrix}
	i-1\\
	r-1
	\end{bmatrix}_q q^{\binom{r}{2}+r-ir}(-1)^{i-r}   \sum_{s =1}^{i+j} \begin{bmatrix}
	r+s-1\\
	s-1
	\end{bmatrix}_q h_s\left[\frac{X}{1-q}\right] h_{i+j-s}\left[\frac{tX}{M}\right]  \\
	& = \sum_{s =1}^{i+j} \left(\sum_{r=1}^i \begin{bmatrix}
	i-1\\
	r-1
	\end{bmatrix}_q q^{\binom{r}{2}+r-ir}(-1)^{i-r}   \begin{bmatrix}
	r+s-1\\
	s-1
	\end{bmatrix}_q \right) h_s\left[\frac{X}{1-q}\right] h_{i+j-s}\left[\frac{tX}{M}\right]  \\
	\text{(using \eqref{eq:first_qlemma})}	& = \sum_{s =1}^{i+j} q^{\binom{i}{2}} \begin{bmatrix}
	s\\
	i
	\end{bmatrix}_q  h_s\left[\frac{X}{1-q}\right] h_{i+j-s}\left[\frac{tX}{M}\right] \\
	& = \sum_{s =1}^{i+j} t^{i+j-s}q^{\binom{i}{2}} \begin{bmatrix}
	s\\
	i
	\end{bmatrix}_q  h_s\left[\frac{X}{1-q}\right] h_{i+j-s}\left[\frac{X}{M}\right]  .
\end{align*}
\end{proof}

\subsection{The main argument}

The main argument follows the main strategy of the proof of Theorem~\ref{thm:Haglund_summation}: we look for an $f[X]\in \Lambda$ such that
\begin{equation}
\left.\nabla^{-1}\tau_{-\epsilon}f[X]\right|_{X=D_\mu}=h_k[(1-t)B_{\mu}]e_a[B_\mu],
\end{equation}
so that
\begin{equation}
\sum_{\mu\vdash m} \frac{ \widetilde{H}_\mu[MB_\gamma]}{w_\mu}h_k[(1-t)B_{\mu}]e_{\ell}[B_\mu]=\sum_{d=0}^{m}e_{m-d}\left[\frac{X}{M}\right](f[X])_d.
\end{equation}

Clearly, we can use
\begin{align*}
\nabla^{-1} \tau_{-\epsilon} f[X] & =  h_{k}\left[\frac{X+1}{1-q}\right] e_{\ell}\left[\frac{X+1}{M}\right]\\
\text{(using \eqref{eq:e_h_sum_alphabets})}& = \sum_{i=0}^{k} \sum_{j=0}^{\ell}h_{k-i}\left[\frac{1}{1-q}\right]e_{\ell-j}\left[\frac{1}{M}\right]h_{i}\left[\frac{X}{1-q}\right] e_{j}\left[\frac{X}{M}\right]
\end{align*}
so
\begin{align*}
\tau_{-\epsilon}f[X] & = \sum_{i=0}^{k} \sum_{j=0}^{\ell}h_{k-i}\left[\frac{1}{1-q}\right]e_{\ell-j}\left[\frac{1}{M}\right]\nabla \left( h_{i}\left[\frac{X}{1-q}\right] e_{j}\left[\frac{X}{M}\right]\right)\\
\text{(using \eqref{eq:key_identity})}& =  \sum_{j=0}^{\ell}h_{k}\left[\frac{1}{1-q}\right]e_{\ell-j}\left[\frac{1}{M}\right]
h_{j}\left[\frac{X}{M}\right]+\\
& + \sum_{i=1}^{k} \sum_{j=0}^{\ell}h_{k-i}\left[\frac{1}{1-q}\right]e_{\ell-j}\left[\frac{1}{M}\right]
\sum_{s =1}^{i+j}t^{i+j-s} q^{\binom{i}{2}} \begin{bmatrix}
s\\
i
\end{bmatrix}_q  h_s\left[\frac{X}{1-q}\right] h_{i+j-s}\left[\frac{X}{M}\right]\\
\text{(using \eqref{eq:e_h_sum_alphabets})} & =  h_{k}\left[\frac{1}{1-q}\right]  h_{\ell}\left[\frac{X-\epsilon}{M}\right]+\\
& + \sum_{i=1}^{k} \sum_{j=0}^{\ell}h_{k-i}\left[\frac{1}{1-q}\right]e_{\ell-j}\left[\frac{1}{M}\right]
\sum_{s =1}^{i+j}t^{i+j-s} q^{\binom{i}{2}} \begin{bmatrix}
s\\
i
\end{bmatrix}_q  h_s\left[\frac{X}{1-q}\right] h_{i+j-s}\left[\frac{X}{M}\right]
\end{align*}
and therefore
\begin{align*}
f[X]
& =  h_{k}\left[\frac{1}{1-q}\right]  h_{\ell}\left[\frac{X+\epsilon -\epsilon}{M}\right]+\\
& + \sum_{i=1}^{k} \sum_{j=0}^{\ell}h_{k-i}\left[\frac{1}{1-q}\right]e_{\ell-j}\left[\frac{1}{M}\right]
\sum_{s =1}^{i+j}t^{i+j-s} q^{\binom{i}{2}} \begin{bmatrix}
s\\
i
\end{bmatrix}_q  h_s\left[\frac{X+\epsilon}{1-q}\right] h_{i+j-s}\left[\frac{X+\epsilon}{M}\right]\\
& =  h_{k}\left[\frac{1}{1-q}\right]  h_{\ell}\left[\frac{X}{M}\right]+ \sum_{i=1}^{k} \sum_{j=0}^{\ell}h_{k-i}\left[\frac{1}{1-q}\right]e_{\ell-j}\left[\frac{1}{M}\right]
\sum_{s =1}^{i+j}t^{i+j-s} q^{\binom{i}{2}} \begin{bmatrix}
s\\
i
\end{bmatrix}_q\\
& \qquad \times \sum_{b=0}^s(-1)^{s-b} h_{s-b}\left[\frac{1}{1-q}\right] \sum_{c=0}^{i+j-s}(-1)^{i+j-s-c} h_{i+j-s-c}\left[\frac{1}{M}\right] h_b\left[\frac{X}{1-q}\right] h_{c}\left[\frac{X}{M}\right],
\end{align*}
where in the last equality we used \eqref{eq:e_h_sum_alphabets}.

From this formula, it is easy to extract the homogeneous component of $f[X]$ of degree $d\geq 0$:
\begin{align*}
(f[X])_d & = \chi(d=\ell) h_{k}\left[\frac{1}{1-q}\right]  h_{\ell}\left[\frac{X}{M}\right]\\
& + \sum_{b=0}^{\min(k+\ell,d)} 
\sum_{i=\max(1,d-\ell)}^{k} 
\sum_{j=\max(0,d-i)}^{\ell}
\sum_{s =\max(1,b)}^{i+j-d+b} 
h_{k-i}\left[\frac{1}{1-q}\right]e_{\ell-j}\left[\frac{1}{M}\right]\\
& \qquad \times t^{i+j-s} q^{\binom{i}{2}} \begin{bmatrix}
s\\
i
\end{bmatrix}_q h_{s-b}\left[\frac{1}{1-q}\right] (-1)^{i+j-d} h_{i+j-s-d+b}\left[\frac{1}{M}\right] h_b\left[\frac{X}{1-q}\right] h_{d-b}\left[\frac{X}{M}\right],
\end{align*}
where $\chi(\mathcal{P})=1$ if the statement $\mathcal{P}$ is true, while $\chi(\mathcal{P})=0$ if $\mathcal{P}$ is false.

Finally
\begin{equation*}
\sum_{d=0}^{\min(m,k+\ell)}e_{m-d}\left[\frac{X}{M}\right](f[X])_d=\qquad \qquad \qquad \qquad \qquad \qquad \qquad \qquad \qquad \qquad \qquad \qquad 
\end{equation*}
\begin{align}
& = e_{m-\ell}\left[\frac{X}{M}\right]h_k\left[\frac{1}{1-q}\right] h_{\ell}\left[\frac{X}{M}\right]+\\
\notag & + \sum_{d=0}^{\min(m,k+\ell)}\sum_{b=0}^{\min(k+\ell,d)} 
\sum_{i=\max(0,d-\ell)}^{k} 
\sum_{j=\max(0,d-i)}^{\ell}
\sum_{s =\max(1,b)}^{i+j-d+b} 
h_{k-i}\left[\frac{1}{1-q}\right]e_{\ell-j}\left[\frac{1}{M}\right]\\
\notag & \qquad \times t^{i+j-s} q^{\binom{i}{2}} \begin{bmatrix}
s\\
i
\end{bmatrix}_q h_{s-b}\left[\frac{1}{1-q}\right] (-1)^{i+j-d} h_{i+j-s-d+b}\left[\frac{1}{M}\right] \\
\notag & \qquad \times e_{m-d}\left[\frac{X}{M}\right]h_b\left[\frac{X}{1-q}\right] h_{d-b}\left[\frac{X}{M}\right].
\end{align}
If we set
\begin{align*}
g(\ell,b,d,k)& := 
\sum_{i=\max(1,d-\ell)}^{k} 
\sum_{j=\max(0,d-i)}^{\ell}
\sum_{s =\max(1,b)}^{i+j-d+b} 
h_{k-i}\left[\frac{1}{1-q}\right]e_{\ell-j}\left[\frac{1}{M}\right]\\
& \qquad \times t^{i+j-s} q^{\binom{i}{2}} \begin{bmatrix}
s\\
i
\end{bmatrix}_q h_{s-b}\left[\frac{1}{1-q}\right] (-1)^{i+j-d} h_{i+j-s-d+b}\left[\frac{1}{M}\right],
\end{align*}
then we proved the identity
\begin{equation} \label{eq:crucial_sum_lemma}
\sum_{\mu\vdash m} \frac{ \widetilde{H}_\mu[X]}{w_\mu}h_k[(1-t)B_{\mu}]e_\ell[B_\mu]=\qquad\qquad\qquad\qquad\qquad \qquad\qquad\qquad\qquad
\end{equation}
\begin{align*}
& = h_k\left[\frac{1}{1-q}\right] e_{m-\ell}\left[\frac{X}{M}\right] h_{\ell}\left[\frac{X}{M}\right]+\\
& + \sum_{d=0}^{\min(m,k+\ell)}\sum_{b=0}^{\min(k+\ell,d)} g(\ell,b,d,k) e_{m-d}\left[\frac{X}{M}\right]h_b\left[\frac{X}{1-q}\right] h_{d-b}\left[\frac{X}{M}\right] .
\end{align*} 
Now we can rewrite this as
\begin{align*}
&  h_k\left[\frac{1}{1-q}\right] e_{m-\ell}\left[\frac{X}{M}\right] h_{\ell}\left[\frac{X}{M}\right]+\\
& + \sum_{d=0}^{\min(m,k+\ell)}\sum_{b=0}^{\min(k+\ell,d)} g(\ell,b,d,k) e_{m-d}\left[\frac{X}{M}\right]h_b\left[\frac{X}{1-q}\right] h_{d-b}\left[\frac{X}{M}\right]\\
& =  h_k\left[\frac{1}{1-q}\right] e_{m-\ell}\left[\frac{X}{M}\right] h_{\ell}\left[\frac{X}{M}\right]+\\
& + \sum_{s=0}^{\min(m-\ell,k)}\sum_{j=0}^{\min(k+\ell-s,\ell)} g(\ell,s+j,\ell+s,k) e_{m-s-\ell}\left[\frac{X}{M}\right]h_{s+j}\left[\frac{X}{1-q}\right] h_{\ell-j}\left[\frac{X}{M}\right] .
\end{align*}

\subsection{Simplifying coefficients}

Now we want to simplify the coefficients. 

We have
\begin{equation}
g(\ell,s+j,\ell+s,k)= \qquad \qquad \qquad \qquad \qquad \qquad \qquad \qquad \qquad \qquad 
\end{equation}
\begin{align*}
\qquad  & =\sum_{\tilde{i}=\max(1,s)}^{k} \sum_{\tilde{j}=\max(0,\ell+s-\tilde{i})}^{\ell} \sum_{\tilde{s}=\max(1,s+j)}^{\tilde{i}+\tilde{j}-\ell+j}h_{k-\tilde{i}} \left[\frac{1}{1-q}\right]e_{\ell-\tilde{j}} \left[\frac{1}{M}\right]\\
& \qquad \times t^{\tilde{i}+\tilde{j}-\tilde{s}} q^{\binom{\tilde{i}}{2}} \begin{bmatrix}
\tilde{s}\\
\tilde{i}
\end{bmatrix}_q h_{\tilde{s}-s-j} \left[\frac{1}{1-q}\right](-1)^{\tilde{i}+\tilde{j}-s-\ell} h_{\tilde{i}+\tilde{j}-\tilde{s}-\ell+j} \left[\frac{1}{M}\right]\\
& =\sum_{\tilde{i}=\max(1,s)}^{k} \sum_{\tilde{s}= \max(1,s+j)}^{\tilde{i}+j}\sum_{\tilde{j}=\max(0,\ell+\tilde{s}-j-\tilde{i})}^{\ell} h_{k-\tilde{i}} \left[\frac{1}{1-q}\right]e_{\ell-\tilde{j}} \left[\frac{1}{M}\right]\\
& \qquad \times t^{\tilde{i}+\tilde{j}-\tilde{s}} q^{\binom{\tilde{i}}{2}} \begin{bmatrix}
\tilde{s}\\
\tilde{i}
\end{bmatrix}_q h_{\tilde{s}-s-j} \left[\frac{1}{1-q}\right](-1)^{\tilde{i}+\tilde{j}-s-\ell} h_{\tilde{i}+\tilde{j}-\tilde{s}-\ell+j} \left[\frac{1}{M}\right]\\
& =\sum_{\tilde{i}=\max(1,s)}^{k} \sum_{\tilde{s}= \max(1,s+j) }^{\tilde{i}+j}h_{k-\tilde{i}} \left[\frac{1}{1-q}\right]q^{\binom{\tilde{i}}{2}} \begin{bmatrix}
\tilde{s}\\
\tilde{i}
\end{bmatrix}_q e_{\tilde{s}-s-j} \left[-\frac{1}{1-q}\right] t^{\ell-j}\\
& \qquad \times \sum_{\tilde{j}=\max(0,\ell+\tilde{s}-j-\tilde{i})}^{\ell} e_{\ell-\tilde{j}} \left[\frac{1}{M}\right]  e_{\tilde{i}+\tilde{j}-\tilde{s}-\ell+j} \left[\frac{-t}{M}\right]\\
& =t^{\ell-j}\sum_{\tilde{i}=\max(1,s)}^{k} \sum_{\tilde{s}= \max(1,s+j) }^{\tilde{i}+j}h_{k-\tilde{i}} \left[\frac{1}{1-q}\right]q^{\binom{\tilde{i}}{2}} \begin{bmatrix}
\tilde{s}\\
\tilde{i}
\end{bmatrix}_q e_{\tilde{s}-s-j} \left[-\frac{1}{1-q}\right] e_{\tilde{i}+j-\tilde{s}} \left[\frac{1}{1-q}\right] \\
& =t^{\ell-j}\sum_{a=\max(1-s,0)}^{k-s} \sum_{\tilde{s}=\max(1,s+j) }^{s+a+j}h_{k-s-a} \left[\frac{1}{1-q}\right]q^{\binom{s+a}{2}} \begin{bmatrix}
\tilde{s}\\
s+a
\end{bmatrix}_q e_{\tilde{s}-s-j} \left[-\frac{1}{1-q}\right]  e_{s+a+j-\tilde{s}} \left[\frac{1}{1-q}\right] \\
& =t^{\ell-j}\sum_{a=\max(1-s,0)}^{k-s} \sum_{b= \max(1-s-j,1) }^{a}h_{k-s-a} \left[\frac{1}{1-q}\right]q^{\binom{s+a}{2}} \begin{bmatrix}
s+j+b\\
s+a
\end{bmatrix}_q e_{b} \left[-\frac{1}{1-q}\right]  e_{a-b} \left[\frac{1}{1-q}\right]\\
& =t^{\ell-j}\sum_{a=0}^{k-s} \sum_{b= 0 }^{a}h_{k-s-a} \left[\frac{1}{1-q}\right]q^{\binom{s+a}{2}} \begin{bmatrix}
s+j+b\\
s+a
\end{bmatrix}_q e_{b} \left[-\frac{1}{1-q}\right]  e_{a-b} \left[\frac{1}{1-q}\right] \\
& \qquad -\chi(s=0)t^{\ell-j}h_k\left[\frac{1}{1-q}\right] .
\end{align*}

At this point we need the following elementary lemma, which we prove in the appendix of the present article.
\begin{lemma} \label{lem:elementary2}
For $k,s,j\geq 0$ with $(k,s,j)\neq (0,0,0)$, we have
\begin{equation*}
\begin{bmatrix}
s+j\\
s
\end{bmatrix}_q \begin{bmatrix}
k+j-1\\
k-s
\end{bmatrix}_q =\qquad \qquad \qquad \qquad \qquad \qquad \qquad \qquad \qquad \qquad \qquad \qquad 
\end{equation*}
\begin{equation} \label{eq:qlemma3}
=\sum_{a=0}^{k-s} \sum_{b= 0 }^{a}h_{k-s-a} \left[\frac{1}{1-q}\right]q^{\binom{a}{2}+s\cdot a} \begin{bmatrix}
s+j+b\\
s+a
\end{bmatrix}_q e_{b} \left[-\frac{1}{1-q}\right]  e_{a-b} \left[\frac{1}{1-q}\right].
\end{equation}
\end{lemma}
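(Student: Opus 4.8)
The plan is to strip the plethystic notation down to an honest $q$-series statement, isolate a single $q$-binomial summation as the crux, prove that by generating functions, and then reassemble using the addition formula \eqref{eq:e_h_sum_alphabets}. Recall that $h_n\!\left[\frac{1}{1-q}\right]=\frac1{(q;q)_n}$ and $e_n\!\left[\frac{1}{1-q}\right]=\frac{q^{\binom n2}}{(q;q)_n}$ by \eqref{eq:h_q_prspec}--\eqref{eq:e_q_prspec}, that $e_n[-X]=(-1)^nh_n[X]$ and $h_n[-X]=(-1)^ne_n[X]$ (standard plethystic identities), and that $\begin{bmatrix}k+j-1\\ k-s\end{bmatrix}_q=h_{k-s}[[s+j]_q]$ by \eqref{eq:h_q_binomial} when $s+j\ge 1$. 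Write $m:=k-s$ and peel off the factor $h_{m-a}\!\left[\frac{1}{1-q}\right]$: the right-hand side of \eqref{eq:qlemma3} is $\sum_{a=0}^{m}h_{m-a}\!\left[\frac{1}{1-q}\right]q^{\binom a2+sa}\,\Sigma_a$, where
\[
\Sigma_a:=\sum_{b=0}^{a}\begin{bmatrix}s+j+b\\ s+a\end{bmatrix}_q e_b\!\left[-\tfrac1{1-q}\right]e_{a-b}\!\left[\tfrac1{1-q}\right].
\]

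The heart of the argument is the closed form $\Sigma_a=\begin{bmatrix}s+j\\ s\end{bmatrix}_q\dfrac{(-1)^aq^{ja}}{(q;q)_a}$ for all $a\ge 0$. To prove it, substitute the evaluations above, use $\frac1{(q;q)_b(q;q)_{a-b}}=\frac1{(q;q)_a}\begin{bmatrix}a\\ b\end{bmatrix}_q$, and set $c=a-b$; the claim becomes
\[
\sum_{c=0}^{a}(-1)^cq^{\binom c2}\begin{bmatrix}a\\ c\end{bmatrix}_q\begin{bmatrix}s+j+a-c\\ s+a\end{bmatrix}_q=q^{ja}\begin{bmatrix}s+j\\ s\end{bmatrix}_q .
\]
I would fix $a,s$ and verify this for all $j\ge 0$ by comparing the generating functions $\sum_{j\ge0}(\,\cdot\,)z^j$ of the two sides. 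The right side gives $\sum_{j}\begin{bmatrix}s+j\\ j\end{bmatrix}_q(q^az)^j=\frac1{(q^az;q)_{s+1}}$ by \eqref{eq:h_q_binomial} (the ``negative'' $q$-binomial series). For the left side, interchange the finite $c$-sum with the $j$-sum; using $\sum_{p\ge0}\begin{bmatrix}(s+a)+p\\ p\end{bmatrix}_q z^p=\frac1{(z;q)_{s+a+1}}$ and the terminating $q$-binomial theorem $\sum_{c}(-1)^cq^{\binom c2}\begin{bmatrix}a\\ c\end{bmatrix}_q z^c=(z;q)_a$ (which is \eqref{eq:e_q_binomial} read off from $\prod_{i=0}^{a-1}(1-q^iz)$), one obtains $\frac{(z;q)_a}{(z;q)_{s+a+1}}$, and this equals $\frac1{(q^az;q)_{s+1}}$ because $(z;q)_{s+a+1}=(z;q)_a\,(q^az;q)_{s+1}$. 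Since all coefficients are polynomials in $q$, equality of the two generating functions yields the displayed identity for every $j$, hence the closed form for $\Sigma_a$.

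Finally, reassemble. Plugging the closed form back in and using $e_a\!\left[\frac1{1-q}\right]=\frac{q^{\binom a2}}{(q;q)_a}$ gives $q^{\binom a2+sa}\Sigma_a=\begin{bmatrix}s+j\\ s\end{bmatrix}_q(-1)^aq^{(s+j)a}e_a\!\left[\tfrac1{1-q}\right]=\begin{bmatrix}s+j\\ s\end{bmatrix}_q h_a\!\left[-\tfrac{q^{s+j}}{1-q}\right]$, so the right-hand side of \eqref{eq:qlemma3} equals $\begin{bmatrix}s+j\\ s\end{bmatrix}_q\sum_{a=0}^{m}h_{m-a}\!\left[\tfrac1{1-q}\right]h_a\!\left[-\tfrac{q^{s+j}}{1-q}\right]=\begin{bmatrix}s+j\\ s\end{bmatrix}_q h_{m}\!\left[\tfrac{1-q^{s+j}}{1-q}\right]$ by \eqref{eq:e_h_sum_alphabets}, and $h_{m}\!\left[\frac{1-q^{s+j}}{1-q}\right]=h_{k-s}[[s+j]_q]=\begin{bmatrix}k+j-1\\ k-s\end{bmatrix}_q$ by \eqref{eq:h_q_binomial}, which is the left-hand side. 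The hypothesis $(k,s,j)\ne(0,0,0)$ enters only here: for $s+j\ge 1$ the last identification is \eqref{eq:h_q_binomial} verbatim, while the remaining cases force $s=j=0$, $k\ge 1$, where both sides are simply $0$. The one genuinely non-routine step is the closed form for $\Sigma_a$; but the generating-function argument sketched above reduces it to two standard incarnations of the $q$-binomial theorem, in the same spirit as Lemma~\ref{lem:elementary1}.
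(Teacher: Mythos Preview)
Your proof is correct and takes a genuinely different route from the paper's. The paper proceeds by triple induction on $k$, $s$, $j$: it manipulates the right-hand side $f(k,s,j)$ (splitting the $q$-binomial $\begin{bmatrix}s+j+b\\s+a\end{bmatrix}_q$ via \eqref{eq:qbin_recursion} and reindexing) to derive the recursion
\[
f(k,s,j)=f(k,s,j-1)+q^{j}f(k-1,s-1,j)+q^{j+s-1}f(k-1,s,j),
\]
and then checks that $\begin{bmatrix}s+j\\s\end{bmatrix}_q\begin{bmatrix}k+j-1\\k-s\end{bmatrix}_q$ satisfies the same recursion and initial conditions. Your argument instead evaluates the inner $b$-sum in closed form (your $\Sigma_a$) via a generating-function identity in $j$---essentially a $q$-Vandermonde--type cancellation, $(z;q)_a/(z;q)_{s+a+1}=1/(q^az;q)_{s+1}$---and then recognizes the remaining $a$-sum as $h_{k-s}\!\left[\frac{1-q^{s+j}}{1-q}\right]$ via the addition formula \eqref{eq:e_h_sum_alphabets}. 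This is more conceptual and explains \emph{why} the identity holds (the right-hand side is a disguised plethystic evaluation of $h_{k-s}$), whereas the paper's induction is more elementary but opaque. Either approach is short; yours has the advantage of isolating a reusable inner identity. One small remark: the exponent in the paper's \eqref{eq:e_q_prspec} is a typo (it should read $q^{\binom{k}{2}}$, as you use), so your citation of that equation is morally correct even if the displayed formula in the paper is off by that exponent.
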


So, using the obvious
\begin{equation}
\binom{s+a}{2}=\binom{s}{2}+\binom{a}{2}+s\cdot a,
\end{equation} 
and Lemma~\ref{lem:elementary2}, we finally get
\begin{equation}
g(\ell,s+j,\ell+s,k)=t^{\ell-j} q^{\binom{s}{2}} \begin{bmatrix}
s+j\\
s
\end{bmatrix}_q \begin{bmatrix}
k+j-1\\
k-s
\end{bmatrix}_q-\chi(s=0)t^{\ell-j}h_k\left[\frac{1}{1-q}\right].
\end{equation}
Therefore
\begin{align*}
	&  h_k\left[\frac{1}{1-q}\right] e_{m-\ell}\left[\frac{X}{M}\right] h_{\ell}\left[\frac{X}{M}\right]+\\
	& + \sum_{s=-\ell}^{\min(m-\ell,k)}\sum_{j=-s}^{\min(k+\ell-s,\ell)} g(\ell,s+j,\ell+s,k) e_{m-s-\ell}\left[\frac{X}{M}\right]h_{s+j}\left[\frac{X}{1-q}\right] h_{\ell-j}\left[\frac{X}{M}\right] \\
	& = h_k\left[\frac{1}{1-q}\right] e_{m-\ell}\left[\frac{X}{M}\right] h_{\ell}\left[\frac{X}{M}\right]+\\
	& \qquad +  \sum_{j=0}^{\ell} t^{\ell-j}\sum_{s=0}^{k}q^{\binom{s}{2}} \begin{bmatrix}
	s+j\\
	s
	\end{bmatrix}_q \begin{bmatrix}
	k+j-1\\
	s+j-1
	\end{bmatrix}_qh_{s+j}\left[\frac{X}{1-q}\right] h_{\ell-j}\left[\frac{X}{M}\right] e_{m-s-\ell}\left[\frac{X}{M}\right] \\
	 & \qquad -\sum_{j=0}^{\ell}t^{\ell-j}h_k\left[\frac{1}{1-q}\right]h_{j}\left[\frac{X}{1-q}\right] h_{\ell-j}\left[\frac{X}{M}\right] e_{m-\ell}\left[\frac{X}{M}\right]\\
	 & = h_k\left[\frac{1}{1-q}\right] e_{m-\ell}\left[\frac{X}{M}\right] h_{\ell}\left[\frac{X}{M}\right]+\\
	 & \qquad +  \sum_{j=0}^{\ell} t^{\ell-j}\sum_{s=0}^{k}q^{\binom{s}{2}} \begin{bmatrix}
	 s+j\\
	 s
	 \end{bmatrix}_q \begin{bmatrix}
	 k+j-1\\
	 s+j-1
	 \end{bmatrix}_qh_{s+j}\left[\frac{X}{1-q}\right] h_{\ell-j}\left[\frac{X}{M}\right] e_{m-s-\ell}\left[\frac{X}{M}\right] \\
	 & \qquad - h_k\left[\frac{1}{1-q}\right]  h_{\ell}\left[X\left(\frac{1}{1-q}+\frac{t}{M}\right)\right] e_{m-\ell}\left[\frac{X}{M}\right]\\
	 & = \sum_{j=0}^{\ell} t^{\ell-j}\sum_{s=0}^{k}q^{\binom{s}{2}} \begin{bmatrix}
	 s+j\\
	 s
	 \end{bmatrix}_q \begin{bmatrix}
	 k+j-1\\
	 s+j-1
	 \end{bmatrix}_qh_{s+j}\left[\frac{X}{1-q}\right] h_{\ell-j}\left[\frac{X}{M}\right] e_{m-s-\ell}\left[\frac{X}{M}\right] ,
\end{align*} 
as we wanted.

\section{Three families of plethystic formulae}

We introduce here three (pairs of) families of polynomials.

\medskip

\emph{By convention, all our polynomials are equal to $0$ if any of their indices is negative.}

\medskip

We set 
\begin{equation}
\widetilde{F}_{n,k}^{(d,\ell)}:=\langle \Delta_{h_{\ell}} \Delta_{e_{n-\ell-d-1}}'E_{n-\ell,k},e_{n-\ell}\rangle.
\end{equation}
and
\begin{align} \label{eq:rel_F_Ftilde}
\notag F_{n,k}^{(d,\ell)} & :=\langle \Delta_{h_{\ell}} \Delta_{e_{n-\ell-d}}E_{n-\ell,k},e_{n-\ell}\rangle\\
\text{(using \eqref{eq:deltaprime})}& =\widetilde{F}_{n,k}^{(d,\ell)}+\widetilde{F}_{n,k}^{(d-1,\ell)}.
\end{align}
Observe that, using Theorem~\ref{thm:Haglund_formula} and Lemma~\ref{lem:Mac_hook_coeff}, we have
\begin{align}
\notag F_{n,k}^{(d,\ell)} & =\langle \Delta_{h_{\ell}} \Delta_{e_{n-d-\ell}}E_{n-\ell,k},e_{n-\ell}\rangle\\ 
& =t^{n-\ell-k}\left\langle \Delta_{h_{n-\ell-k}} e_{n-d}\left[X\frac{1-q^k}{1-q}\right] ,e_{\ell}h_{n-d-\ell}\right\rangle \\
\notag & =t^{n-\ell-k}\left\langle \Delta_{h_{n-\ell-k}}\Delta_{e_{\ell}} e_{n-d}\left[X\frac{1-q^k}{1-q}\right] ,h_{n-d}\right\rangle .
\end{align}
Also, using \eqref{eq:rel_F_Ftilde}, we have 
\begin{align} \label{eq:rel_Ftilde_F}
\notag \widetilde{F}_{n,k}^{(d,\ell)} & =\langle \Delta_{h_{\ell}} \Delta_{e_{n-d-\ell-1}}'E_{n-\ell,k},e_{n-\ell}\rangle\\ 
& =\sum_{u\geq 1}(-1)^{u-1}\langle \Delta_{h_{\ell}} \Delta_{e_{n-(d+u)-\ell}}E_{n-\ell,k},e_{n-\ell}\rangle \\
\notag & =\sum_{u\geq 1}(-1)^{u-1} F_{n,k}^{(d+u,\ell)}.
\end{align}

We set 
\begin{equation}
\widetilde{G}_{n,k}^{(d,\ell)} := t^{n-\ell}q^{\binom{k}{2}}\left\langle  \Delta_{e_{n-\ell}}\Delta_{e_{n-d-1}}' e_{n}\left[X\frac{1-q^k}{1-q}\right] ,h_{n}\right\rangle
\end{equation}
and
\begin{align} \label{eq:rel_G_Gtilde}
\notag G_{n,k}^{(d,\ell)} &:= t^{n-\ell}q^{\binom{k}{2}}\left\langle  \Delta_{e_{n-\ell}}\Delta_{e_{n-d}} e_{n}\left[X\frac{1-q^k}{1-q}\right] ,h_{n}\right\rangle\\
\text{(using \eqref{eq:deltaprime})} & =\widetilde{G}_{n,k}^{(d,\ell)}+\widetilde{G}_{n,k}^{(d-1,\ell)}.
\end{align}
Observe that, using Lemma~\ref{lem:Mac_hook_coeff}, we have
\begin{align*}
G_{n,k}^{(d,\ell)} & := t^{n-\ell}q^{\binom{k}{2}}\left\langle  \Delta_{e_{n-\ell}}\Delta_{e_{n-d}} e_{n}\left[X\frac{1-q^k}{1-q}\right] ,h_{n}\right\rangle\\ 
& = t^{n-\ell}q^{\binom{k}{2}}\left\langle  \Delta_{e_{n-\ell}} e_{n}\left[X\frac{1-q^k}{1-q}\right] ,e_{n-d}h_{d}\right\rangle .
\end{align*}
Also, using \eqref{eq:rel_G_Gtilde}, we have
\begin{align} \label{eq:rel_Gtilde_G}
\notag \widetilde{G}_{n,k}^{(d,\ell)} & = t^{n-\ell}q^{\binom{k}{2}}\left\langle  \Delta_{e_{n-\ell}}\Delta_{e_{n-d-1}}' e_{n}\left[X\frac{1-q^k}{1-q}\right] ,h_{n}\right\rangle\\ 
& =\sum_{u\geq 1}(-1)^{u-1}t^{n-\ell}q^{\binom{k}{2}}\left\langle  \Delta_{e_{n-\ell}}\Delta_{e_{n-(d+u)}} e_{n}\left[X\frac{1-q^k}{1-q}\right] ,h_{n}\right\rangle\\
\notag  & =\sum_{u\geq 1}(-1)^{u-1}G_{n,k}^{(d+u,\ell)}.
\end{align}

We set
\begin{align}
\overline{H}_{n,n,i}^{(d,\ell)} & :=\delta_{i,\ell} q^{\binom{n-\ell-d}{2}}\begin{bmatrix}
n-1\\
\ell
\end{bmatrix}_q\begin{bmatrix}
n-\ell-1\\
d
\end{bmatrix}_q
\end{align}
and for $1\leq k<n$
\begin{align} \label{eq:rel_Htilde_Ftilde}
\notag \overline{H}_{n,k,i}^{(d,\ell)} & :=  t^{n-k}\sum_{s=0}^{k-i}\sum_{h=0}^{n-k}q^{\binom{s}{2}} \begin{bmatrix}
k-i\\
s
\end{bmatrix}_q \begin{bmatrix}
k-1\\
i
\end{bmatrix}_q \begin{bmatrix}
s+i-1+h\\
h
\end{bmatrix}_q \widetilde{F}_{n-k,h}^{(d-k+i+s,\ell-i)}.
\end{align}
Similarly
\begin{align}
H_{n,n,i}^{(d,\ell)} & :=\overline{H}_{n,n,i}^{(d,\ell)}+\overline{H}_{n,n,i}^{(d-1,\ell)}=\delta_{i,\ell} q^{\binom{n-\ell-d}{2}}\begin{bmatrix}
	n-1\\
	\ell
	\end{bmatrix}_q\begin{bmatrix}
	n-\ell\\
	d
	\end{bmatrix}_q
\end{align}
and for $1\leq k<n$
\begin{align}   \label{eq:rel_H_F}
H_{n,k,i}^{(d,\ell)} & := \overline{H}_{n,k,i}^{(d,\ell)}+\overline{H}_{n,k,i}^{(d-1,\ell)}\\
\text{(using \eqref{eq:rel_F_Ftilde})}& =  t^{n-k}\sum_{s=0}^{k-i}\sum_{h=0}^{n-k}q^{\binom{s}{2}} \begin{bmatrix}
k-i\\
s
\end{bmatrix}_q \begin{bmatrix}
k-1\\
i
\end{bmatrix}_q \begin{bmatrix}
s+i-1+h\\
h
\end{bmatrix}_q F_{n-k,h}^{(d-k+i+s,\ell-i)}.
\end{align}
Notice that, using the definitions, we have
\begin{align} \label{eq:rel_Htilde_H}
\overline{H}_{n,k,i}^{(d,\ell)} & =\sum_{u\geq 1}(-1)^{u-1}H_{n,k,i}^{(d+u,\ell)}.
\end{align}

\subsection{Recursive relations among the families}

In this section we establish some recursive relations among our families.

\begin{theorem}
	For $k,\ell,d\geq 0$, $n\geq k+\ell$ and $n\geq d$, we have
	\begin{align} \label{eq:Fnn_formula}
	F_{n,n}^{(d,\ell)} & = \delta_{\ell,0} q^{\binom{n-d}{2}}\begin{bmatrix}
	n\\
	d
	\end{bmatrix}_q  ,
	\end{align}
	and for $n>k$
	\begin{align} \label{eq:rel_F_G}
	F_{n,k}^{(d,\ell)} & =\sum_{s=1}^{\min(k,n-d)} 
	\begin{bmatrix}
	k\\
	s\\
	\end{bmatrix}_q G_{n-k,s}^{(d-k+s,\ell)}.
	\end{align}
	Similarly
	\begin{align} \label{eq:Ftildenn_formula}
	\widetilde{F}_{n,n}^{(d,\ell)} & = \delta_{\ell,0} q^{\binom{n-d}{2}}\begin{bmatrix}
	n-1\\
	d
	\end{bmatrix}_q  ,
	\end{align}
	and for $n>k$
	\begin{align} \label{eq:rel_Ftilde_Gtilde}
	\widetilde{F}_{n,k}^{(d,\ell)} & =\sum_{s=1}^{\min(k,n-d)} 
	\begin{bmatrix}
	k\\
	s\\
	\end{bmatrix}_q \widetilde{G}_{n-k,s}^{(d-k+s,\ell)}.
	\end{align}
\end{theorem}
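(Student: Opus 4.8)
The plan is to establish the four formulas in the stated order, deriving the $\widetilde F$/$\widetilde G$ statements from the $F$/$G$ ones at the end. I use throughout the two rewritings recorded just above the statement,
\[
F_{n,k}^{(d,\ell)}=t^{\,n-\ell-k}\,\langle\Delta_{h_{n-\ell-k}}\Delta_{e_{\ell}}\,e_{n-d}[X\tfrac{1-q^{k}}{1-q}],\,h_{n-d}\rangle,\qquad
G_{n,k}^{(d,\ell)}=t^{\,n-\ell}q^{\binom{k}{2}}\langle\Delta_{e_{n-\ell}}\Delta_{e_{n-d}}\,e_{n}[X\tfrac{1-q^{k}}{1-q}],\,h_{n}\rangle ,
\]
and the elementary fact that $\langle f,h_{N}\rangle=f[1]$ for $f\in\Lambda^{(N)}$, so that $\langle e_{r}[X[n]_{q}],h_{r}\rangle=e_{r}[[n]_{q}]=q^{\binom{r}{2}}\binom{n}{r}_{q}$ by \eqref{eq:e_q_binomial}.

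\emph{Base cases.} For \eqref{eq:Fnn_formula} I would put $k=n$ in the rewriting of $F$: then $\Delta_{h_{n-\ell-n}}=\Delta_{h_{-\ell}}$ is $0$ when $\ell>0$ (because $h_{-\ell}=0$), while for $\ell=0$ the operators $\Delta_{h_{0}}$, $\Delta_{e_{0}}$ and the prefactor $t^{-\ell}$ are all trivial, so $F_{n,n}^{(d,\ell)}=\delta_{\ell,0}\,\langle e_{n-d}[X[n]_{q}],h_{n-d}\rangle=\delta_{\ell,0}\,q^{\binom{n-d}{2}}\binom{n}{d}_{q}$. For \eqref{eq:Ftildenn_formula} I would invoke the relation $\widetilde F_{n,n}^{(d-1,\ell)}=F_{n,n}^{(d,\ell)}-\widetilde F_{n,n}^{(d,\ell)}$ coming from \eqref{eq:rel_F_Ftilde} and run a descending induction on $d$ (anchored by the vanishing of both sides for $d\ge n$): the proposed value $\delta_{\ell,0}q^{\binom{n-d}{2}}\binom{n-1}{d}_{q}$ reproduces itself under this step because $\binom{n}{d}_{q}-\binom{n-1}{d}_{q}=q^{\,n-d}\binom{n-1}{d-1}_{q}$, which is \eqref{eq:qbin_recursion}.

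\emph{The recursion \eqref{eq:rel_F_G}, for $n>k$.} This is the heart of the matter. Starting from the rewriting of $F_{n,k}^{(d,\ell)}$, I would expand $e_{n-d}[X\tfrac{1-q^{k}}{1-q}]$ via \eqref{eq:qn_q_Macexp}, so that
\[
\Delta_{e_{\ell}}\,e_{n-d}[X\tfrac{1-q^{k}}{1-q}]=(1-q^{k})\,\mathbf\Pi\Bigl(\,\sum_{\mu\vdash n-d}\frac{\widetilde H_{\mu}[X]}{w_{\mu}}\,h_{k}[(1-t)B_{\mu}]\,e_{\ell}[B_{\mu}]\Bigr),
\]
and now the inner sum is precisely the left-hand side of the summation formula \eqref{eq:mastereq} with $m=n-d$. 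I would substitute its right-hand side, re-expand each summand $h_{s+j}[\tfrac{X}{1-q}]\,h_{\ell-j}[\tfrac{X}{M}]\,e_{n-d-s-\ell}[\tfrac{X}{M}]$ in the Macdonald basis via \eqref{eq:e_h_expansion}, reapply $\mathbf\Pi$ and $\Delta_{h_{n-\ell-k}}$ (both diagonal on $\{\widetilde H_{\mu}\}$), and pair with $h_{n-d}$, using $\langle\widetilde H_{\mu},e_{r}h_{|\mu|-r}\rangle=e_{r}[B_{\mu}]$ from \eqref{eq:Mac_hook_coeff} to evaluate the resulting Macdonald coefficients. What is left is a double sum over the summation-formula index $j$ and a new index $s\in\{1,\dots,\min(k,n-d)\}$; carrying out the $j$-sum with $q$-binomial identities of the appendix type (in the spirit of Lemmas~\ref{lem:elementary2} and \ref{lem:elementary4}) collapses the accumulated weight down to $\binom{k}{s}_{q}$ and leaves, term by term, exactly the parallel expansion of $t^{\,n-k-\ell}q^{\binom{s}{2}}\langle\Delta_{e_{n-k-\ell}}e_{n-k}[X\tfrac{1-q^{s}}{1-q}],e_{n-d-s}h_{d-k+s}\rangle=G_{n-k,s}^{(d-k+s,\ell)}$; the exponent $n-d-s-\ell$ on the $e$-factor in \eqref{eq:mastereq} is precisely what forces the parameter change $n\mapsto n-k$, $d\mapsto d-k+s$ on the $G$-side. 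Summing over $s$ gives \eqref{eq:rel_F_G}. Finally, \eqref{eq:rel_Ftilde_Gtilde} follows by taking the alternating sum $\sum_{u\ge1}(-1)^{u-1}$ of \eqref{eq:rel_F_G} with $d$ replaced by $d+u$ and applying \eqref{eq:rel_Ftilde_F} and \eqref{eq:rel_Gtilde_G}; since the right-hand side of \eqref{eq:rel_F_G} depends on $d$ only through the shift $d\mapsto d-k+s$, the two alternating sums match coefficientwise in $s$.

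The main obstacle is the invocation of \eqref{eq:mastereq}: $F_{n,k}^{(d,\ell)}$ is born as a Macdonald-basis sum over partitions of $n-d$, whereas the $G_{n-k,s}^{(\,\cdot\,,\,\cdot\,)}$ live over partitions of $n-k$, so no term-by-term identification is available. Trading the sum over $\mu\vdash n-d$ for the closed combination of $h[\tfrac{X}{1-q}]$, $h[\tfrac{X}{M}]$, $e[\tfrac{X}{M}]$ supplied by \eqref{eq:mastereq} — which can then be re-read as a sum over partitions of $n-k$ — is exactly what makes the reduction go through. The second, genuinely fiddly point is the $q$-binomial bookkeeping: forcing the double sum produced by \eqref{eq:mastereq} to collapse so that precisely $\binom{k}{s}_{q}$ survives, with the correct powers of $q$ and $t$ and the correct index shift. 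Both are mechanical once the relevant $q$-identities are isolated, but that is where the real labour lies.
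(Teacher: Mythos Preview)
Your treatment of the base cases \eqref{eq:Fnn_formula} and \eqref{eq:Ftildenn_formula}, and your deduction of \eqref{eq:rel_Ftilde_Gtilde} from \eqref{eq:rel_F_G} via the alternating sums \eqref{eq:rel_Ftilde_F}, \eqref{eq:rel_Gtilde_G}, match the paper's argument essentially verbatim.

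Where you diverge is in the proof of \eqref{eq:rel_F_G}, and here there is both a genuine difference in strategy and a real gap. The paper does \emph{not} apply the full summation formula \eqref{eq:mastereq}. Instead, after writing
\[
F_{n,k}^{(d,\ell)}=t^{n-\ell-k}(1-q^k)\sum_{\gamma\vdash n-d}\frac{\Pi_\gamma}{w_\gamma}\,h_k[(1-t)B_\gamma]\,h_{n-\ell-k}[B_\gamma]\,e_\ell[B_\gamma],
\]
it uses \eqref{eq:e_h_expansion} to replace $h_{n-\ell-k}[B_\gamma]\,e_\ell[B_\gamma]$ by $\sum_{\nu\vdash n-k}e_{n-k-\ell}[B_\nu]\widetilde H_\nu[MB_\gamma]/w_\nu$, and then applies Macdonald reciprocity \eqref{eq:Macdonald_reciprocity} to swap $\gamma$ and $\nu$. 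After this swap the inner sum is over $\gamma\vdash n-d$ of $h_k[(1-t)B_\gamma]\widetilde H_\gamma[MB_\nu]/w_\gamma$, which is exactly the \emph{simple} Haglund summation \eqref{eq:Haglund_summation} (the $\ell=0$ special case of \eqref{eq:mastereq}). That gives a single sum over $s$ with coefficient $q^{\binom{s}{2}}\binom{k-1}{s-1}_q$; one line of rewriting $(1-q^k)\binom{k-1}{s-1}_q=(1-q^s)\binom{k}{s}_q$ then reassembles each term as $G_{n-k,s}^{(d-k+s,\ell)}$ via \eqref{eq:qn_q_Macexp}. No double sum, no $q$-binomial bookkeeping.

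Your route instead feeds the full $e_\ell$ into \eqref{eq:mastereq}, producing terms $h_{s+j}[X/(1-q)]\,h_{\ell-j}[X/M]\,e_{n-d-s-\ell}[X/M]$ to which you then want to apply $\mathbf\Pi$ and $\Delta_{h_{n-\ell-k}}$. These operators are diagonal on the Macdonald basis, so you need the Macdonald expansion of that \emph{triple} product; \eqref{eq:e_h_expansion} handles the last two factors, but multiplying the result by $h_{s+j}[X/(1-q)]$ is not covered by any identity in the paper and is genuinely nontrivial (it is essentially a Pieri rule of a different flavour). Your claim that the resulting double $(j,s)$-sum then ``collapses to $\binom{k}{s}_q$'' via appendix-style identities is unsupported: none of Lemmas~\ref{lem:elementary1}--\ref{lem:elementary4} performs such a collapse, and you have not exhibited the identity that would. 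This is the missing idea. The fix is the one the paper uses: trade the factor $h_{n-\ell-k}[B_\gamma]e_\ell[B_\gamma]$ for a sum over $\nu\vdash n-k$ \emph{before} summing over $\gamma$, use reciprocity to move to the correct partition size, and then only the elementary case \eqref{eq:Haglund_summation} is needed.
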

\begin{proof}
Notice that
\begin{equation}
F_{n,n}^{(d,\ell)}=\langle \Delta_{h_{\ell}} \Delta_{e_{n-\ell-d}}E_{n-\ell,n},e_{n-\ell}\rangle
\end{equation}
so clearly it is equal to $0$ when $\ell\neq 0$. For $\ell=0$ we get
\begin{align}
F_{n,n}^{(d,0)} & =\langle \Delta_{e_{n-d}}E_{n ,n},e_{n }\rangle\\
\text{(using \eqref{eq:Mac_hook_coeff})} & =\langle \Delta_{e_{n-d}}\nabla E_{n ,n},h_{n }\rangle.
\end{align}
So using \cite[Proposition~2.3]{haglundschroeder}, we get
\begin{align}
F_{n,n}^{(d,0)} & =e_{n-d}[[n]_q]\\
\text{(using \eqref{eq:e_q_binomial})} & = q^{\binom{n-d}{2}}\begin{bmatrix}
n\\
d
\end{bmatrix}_q.
\end{align}
If we set
\begin{equation}
f(n,d):=q^{\binom{n-d}{2}}\begin{bmatrix}
n-1\\
d
\end{bmatrix}_q,
\end{equation}
then
\begin{align}
f(n,d)+f(n,d-1)& =q^{\binom{n-d}{2}}\begin{bmatrix}
n-1\\
d
\end{bmatrix}_q+q^{\binom{n-d+1}{2}}\begin{bmatrix}
n-1\\
d-1
\end{bmatrix}_q\\
 & =q^{\binom{n-d}{2}}\left(\begin{bmatrix}
n-1\\
d
\end{bmatrix}_q+q^{n-d}\begin{bmatrix}
n-1\\
d-1
\end{bmatrix}_q\right)\\
& = q^{\binom{n-d}{2}}\begin{bmatrix}
n\\
d
\end{bmatrix}_q,
\end{align}
so that
\begin{align}
q^{\binom{n-d}{2}}\begin{bmatrix}
n-1\\
d
\end{bmatrix}_q & =f(n,d)\\
 & =\sum_{u\geq 1}(-1)^{u-1}q^{\binom{n-(d+u)}{2}}\begin{bmatrix}
n\\
d+u
\end{bmatrix}_q\\
 & =\sum_{u\geq 1}(-1)^{u-1}F_{n,n}^{(d,0)}.
\end{align}
	
Therefore \eqref{eq:Ftildenn_formula} follows from \eqref{eq:Fnn_formula} using \eqref{eq:rel_Ftilde_F}. 

Similarly, \eqref{eq:rel_Ftilde_Gtilde} follows from \eqref{eq:rel_F_G} using \eqref{eq:rel_Ftilde_F} and \eqref{eq:rel_Gtilde_G}. Hence it remains to prove only \eqref{eq:rel_F_G}.

Using \eqref{eq:qn_q_Macexp} and Lemma~\ref{lem:Mac_hook_coeff}, we have
\begin{align*}
F_{n,k}^{(d,\ell)} & = t^{n-\ell-k}\left\langle \Delta_{h_{n-\ell-k}}\Delta_{e_{\ell}} e_{n-d}\left[X\frac{1-q^k}{1-q}\right] ,h_{n-d}\right\rangle \\
& =t^{n-\ell-k} (1-q^k)\sum_{\gamma\vdash n-d}\frac{\Pi_\gamma}{w_\gamma} h_k[(1-t)B_\gamma] h_{n-\ell-k}[B_\gamma]e_{\ell}[B_\gamma]
\end{align*}
\begin{align*}
\text{(using \eqref{eq:e_h_expansion})}& =t^{n-\ell-k} (1-q^k)\sum_{\gamma\vdash n-d}\frac{\Pi_\gamma}{w_\gamma} h_k[(1-t)B_\gamma] \sum_{\nu\vdash n-k} e_{n-k-\ell}[B_\nu]\frac{\widetilde{H}_\nu[MB_\gamma]}{w_\nu} \\
\text{(using \eqref{eq:Macdonald_reciprocity})}& =t^{n-\ell-k} (1-q^k)\sum_{\nu\vdash n-k} e_{n-k-\ell}[B_\nu] \frac{\Pi_\nu}{w_\nu}\sum_{\gamma\vdash n-d} h_k[(1-t)B_\gamma]  \frac{\widetilde{H}_\gamma[MB_\nu]}{w_\gamma}\\
\text{(using \eqref{eq:Haglund_summation})}& =t^{n-\ell-k} (1-q^k)\sum_{\nu\vdash n-k} e_{n-k-\ell}[B_\nu] \frac{\Pi_\nu}{w_\nu}\\
& \qquad \times \sum_{s=1}^{\min(k,n-d)}
q^{\binom{s}{2}} 
\begin{bmatrix}
k-1\\
s-1\\
\end{bmatrix}_q e_{n-d-s}[B_\nu]h_s[(1-t)B_\nu]\\
& =t^{n-\ell-k}  \sum_{s=1}^{\min(k,n-d)}
q^{\binom{s}{2}} 
\begin{bmatrix}
k\\
s\\
\end{bmatrix}_q\\
& \qquad \times (1-q^s)\sum_{\nu\vdash n-k}  \frac{\Pi_\nu}{w_\nu} e_{n-k-\ell}[B_\nu] e_{n-d-s}[B_\nu]h_s[(1-t)B_\nu] \\
\text{(using \eqref{eq:qn_q_Macexp})}& =t^{n-\ell-k}  \sum_{s=1}^{\min(k,n-d)}
q^{\binom{s}{2}} 
\begin{bmatrix}
k\\
s\\
\end{bmatrix}_q \left\langle  \Delta_{e_{n-k-\ell}}\Delta_{e_{n-d-s}} e_{n-k}\left[X\frac{1-q^s}{1-q}\right] ,h_{n-k}\right\rangle \\
& =  \sum_{s=1}^{\min(k,n-d)} 
\begin{bmatrix}
k\\
s\\
\end{bmatrix}_q   G_{n-k,s}^{(d-k+s,\ell)} .
\end{align*}	
\end{proof}

\begin{theorem} 
	For $k,\ell,d\geq 0$, $n\geq k+\ell$ and $n\geq d$, we have
	\begin{align} \label{eq:rel_G_F}
	G_{n,k}^{(d,\ell)}& =\sum_{h=1}^{n} \sum_{s=0}^{\ell} t^{n-\ell}q^{\binom{k}{2}+\binom{s}{2}}\begin{bmatrix}
	k\\
	s\\
	\end{bmatrix}_q \begin{bmatrix}
	h+k-s-1\\
	h-s\\
	\end{bmatrix}_q  F_{n,h}^{(d,\ell-s)}\\
\notag	& =\sum_{h=1}^{n}\sum_{s=0}^{\ell}t^{n-\ell}q^{\binom{k}{2}+\binom{s+1}{2}}\begin{bmatrix}
	h-1\\
	s\\
	\end{bmatrix}_q \begin{bmatrix}
	h+k-s-1\\
	h\\
	\end{bmatrix}_q(F_{n,h}^{(d,\ell-s)}+F_{n,h}^{(d,\ell-s-1)}),
	\end{align}
	and
	\begin{align} \label{eq:rel_Gtilde_Ftilde}
	\widetilde{G}_{n,k}^{(d,\ell)}& =\sum_{h=1}^{n} \sum_{s=0}^{\ell} t^{n-\ell}q^{\binom{k}{2}+\binom{s}{2}}\begin{bmatrix}
	k\\
	s\\
	\end{bmatrix}_q \begin{bmatrix}
	h+k-s-1\\
	h-s\\
	\end{bmatrix}_q  \widetilde{F}_{n,h}^{(d,\ell-s)}\\
\notag	& =\sum_{h=1}^{n}\sum_{s=0}^{\ell}t^{n-\ell}q^{\binom{k}{2}+\binom{s+1}{2}}\begin{bmatrix}
	h-1\\
	s\\
	\end{bmatrix}_q \begin{bmatrix}
	h+k-s-1\\
	h\\
	\end{bmatrix}_q(\widetilde{F}_{n,h}^{(d,\ell-s)}+\widetilde{F}_{n,h}^{(d,\ell-s-1)}).
	\end{align}
\end{theorem}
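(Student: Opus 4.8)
The plan is to run the argument behind the proof of \eqref{eq:rel_F_G} in reverse, replacing Haglund's special case \eqref{eq:Haglund_summation} by the full summation formula \eqref{eq:mastereq}. First I would reduce everything to the first equality in \eqref{eq:rel_G_F}. The second equality is then equivalent to it: collecting the double sum on the right of the second form by the quantity $F_{n,h}^{(d,\ell-r)}$ (which receives a contribution from the term $s=r$ and from the ``$F_{n,h}^{(d,\ell-s-1)}$'' part of the term $s=r-1$) produces the coefficient $t^{n-\ell}q^{\binom{k}{2}+\binom{r}{2}}\bigl(q^{r}\begin{bmatrix}h-1\\r\end{bmatrix}_q\begin{bmatrix}h+k-r-1\\h\end{bmatrix}_q+\begin{bmatrix}h-1\\r-1\end{bmatrix}_q\begin{bmatrix}h+k-r\\h\end{bmatrix}_q\bigr)$, which equals $t^{n-\ell}q^{\binom{k}{2}+\binom{r}{2}}\begin{bmatrix}k\\r\end{bmatrix}_q\begin{bmatrix}h+k-r-1\\h-r\end{bmatrix}_q$ by Lemma~\ref{lem:elementary4}. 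Likewise \eqref{eq:rel_Gtilde_Ftilde} follows from \eqref{eq:rel_G_F} together with the alternating-sum relations \eqref{eq:rel_Gtilde_G} and \eqref{eq:rel_Ftilde_F}, since the coefficients appearing in \eqref{eq:rel_G_F} do not involve $d$. The case $k=0$ is trivial (on the left $e_n[0]=0$, on the right the factor $\begin{bmatrix}0\\s\end{bmatrix}_q$ forces $s=0$ and then $\begin{bmatrix}h-1\\h\end{bmatrix}_q=0$), so I may assume $k\geq 1$.

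For the first equality I would expand $G$ in the Macdonald basis. Starting from the form $G_{n,k}^{(d,\ell)}=t^{n-\ell}q^{\binom{k}{2}}\langle\Delta_{e_{n-\ell}}e_n[X\tfrac{1-q^k}{1-q}],e_{n-d}h_d\rangle$ recorded after the definition, expanding $e_n[X\tfrac{1-q^k}{1-q}]$ by \eqref{eq:qn_q_Macexp}, applying $\Delta_{e_{n-\ell}}$, and using $\langle\widetilde{H}_\mu,e_{n-d}h_d\rangle=e_{n-d}[B_\mu]$ from \eqref{eq:Mac_hook_coeff}, gives $G_{n,k}^{(d,\ell)}=t^{n-\ell}q^{\binom{k}{2}}(1-q^k)\sum_{\mu\vdash n}\tfrac{\Pi_\mu}{w_\mu}h_k[(1-t)B_\mu]\,e_{n-\ell}[B_\mu]\,e_{n-d}[B_\mu]$. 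Next I rewrite $e_{n-d}[B_\mu]=\sum_{\gamma\vdash n-d}\widetilde{H}_\gamma[MB_\mu]/w_\gamma$ (a special case of \eqref{eq:e_h_expansion}, evaluated at $X=MB_\mu$), exchange the order of summation, and apply Macdonald reciprocity \eqref{eq:Macdonald_reciprocity} in the form $\Pi_\mu\widetilde{H}_\gamma[MB_\mu]=\Pi_\gamma\widetilde{H}_\mu[MB_\gamma]$. The inner sum over $\mu\vdash n$ then reads $\sum_{\mu\vdash n}\tfrac{\widetilde{H}_\mu[MB_\gamma]}{w_\mu}h_k[(1-t)B_\mu]e_{n-\ell}[B_\mu]$, which is exactly the left-hand side of \eqref{eq:mastereq} with $(m,k,\ell)$ replaced by $(n,k,n-\ell)$ and evaluated at $X=MB_\gamma$; feeding in its right-hand side (and using $h_a[MB_\gamma/(1-q)]=h_a[(1-t)B_\gamma]$, $h_a[MB_\gamma/M]=h_a[B_\gamma]$) turns $G_{n,k}^{(d,\ell)}$ into a sum over $\gamma\vdash n-d$, an index $j$ with $0\leq j\leq n-\ell$, and an index $s$ with $0\leq s\leq k$.

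To conclude, I would identify the $\gamma$-sum attached to each $(s,j)$ pair with a value of $F$. Running the analogous expansion on the $F$-side, i.e. starting from the form $F_{n,h}^{(d,\ell')}=t^{n-\ell'-h}\langle\Delta_{h_{n-\ell'-h}}\Delta_{e_{\ell'}}e_{n-d}[X\tfrac{1-q^h}{1-q}],h_{n-d}\rangle$ recorded after the definition of $F$, expanding via \eqref{eq:qn_q_Macexp} and using $\langle\widetilde{H}_\gamma,h_{n-d}\rangle=1$, gives $F_{n,h}^{(d,\ell')}=t^{n-\ell'-h}(1-q^h)\sum_{\gamma\vdash n-d}\tfrac{\Pi_\gamma}{w_\gamma}h_h[(1-t)B_\gamma]e_{\ell'}[B_\gamma]h_{n-\ell'-h}[B_\gamma]$; with $h=s+j$ and $\ell'=\ell-s$ this is precisely the $\gamma$-sum occurring in the $(s,j)$-term of the expansion of $G$, up to the explicit scalar $t^{n-\ell-j}(1-q^{s+j})$. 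After cancelling the $t$-powers and reindexing by $h=s+j$, the coefficient of $F_{n,h}^{(d,\ell-s)}$ becomes $t^{n-\ell}q^{\binom{k}{2}+\binom{s}{2}}\tfrac{[k]_q}{[h]_q}\begin{bmatrix}h\\s\end{bmatrix}_q\begin{bmatrix}k+h-s-1\\h-1\end{bmatrix}_q$, which equals $t^{n-\ell}q^{\binom{k}{2}+\binom{s}{2}}\begin{bmatrix}k\\s\end{bmatrix}_q\begin{bmatrix}h+k-s-1\\h-s\end{bmatrix}_q$ by an elementary identity of $q$-factorials; this is the first equality in \eqref{eq:rel_G_F}. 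I expect the only real difficulty to be bookkeeping: pinning down the substitution of parameters into \eqref{eq:mastereq}, keeping the reciprocity exchange straight, and checking that the degenerate boundary contributions (such as $s+j=0$, killed by the factor $\begin{bmatrix}k+j-1\\s+j-1\end{bmatrix}_q$ in \eqref{eq:mastereq}, or $s>\ell$, where $F_{n,h}^{(d,\ell-s)}=0$ by convention) are compatible with the stated ranges $1\leq h\leq n$ and $0\leq s\leq\ell$.
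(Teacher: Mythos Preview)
Your proposal is correct and follows essentially the same route as the paper: expand $G_{n,k}^{(d,\ell)}$ via \eqref{eq:qn_q_Macexp}, introduce the sum over $\gamma\vdash n-d$ through \eqref{eq:e_h_expansion} and reciprocity \eqref{eq:Macdonald_reciprocity}, apply the summation formula \eqref{eq:mastereq} to the inner sum, and then match the resulting $\gamma$-sums with the analogous expansion of $F_{n,h}^{(d,\ell-s)}$; the paper packages your ``elementary identity of $q$-factorials'' as Lemma~\ref{lem:elementary3}, and handles the second equality and the $\widetilde{G}$-version exactly as you do (via Lemma~\ref{lem:elementary4} and the alternating-sum relations \eqref{eq:rel_Ftilde_F}, \eqref{eq:rel_Gtilde_G}). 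The only cosmetic difference is that the paper compresses the reciprocity step and the introduction of $\gamma$ into a single displayed line, whereas you spell it out.
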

\begin{proof}
Clearly equations \eqref{eq:rel_Gtilde_Ftilde} follow from equations \eqref{eq:rel_G_F} using \eqref{eq:rel_F_Ftilde} and \eqref{eq:rel_G_Gtilde}. Hence we will prove only equations \eqref{eq:rel_G_F}.

Using \eqref{eq:qn_q_Macexp} and Lemma~\ref{lem:Mac_hook_coeff}, we have
	\begin{align*}
	G_{n,k}^{(d,\ell)} & =t^{n-\ell}q^{\binom{k}{2}}\left\langle  \Delta_{e_{n-\ell}}\Delta_{e_{n-d}} e_{n}\left[X\frac{1-q^k}{1-q}\right] ,h_{n}\right\rangle	
	\end{align*}
	\begin{align*}
	&= t^{n-\ell}q^{\binom{k}{2}}\sum_{\gamma\vdash n-d} \frac{\Pi_\gamma}{w_\gamma} (1-q^k)\sum_{\nu\vdash n} \frac{\widetilde{H}_\nu[MB_\gamma]}{w_\nu}h_k[(1-t)B_\nu]e_{n-\ell}[B_\nu]\\
	\text{(using \eqref{eq:mastereq})} & =\sum_{\gamma\vdash n-d} \frac{\Pi_\gamma}{w_\gamma} (1-q^k)\sum_{j=0}^{n-\ell} t^{n-\ell-j}\sum_{s=0}^{\ell}q^{\binom{k}{2}+\binom{s}{2}} \begin{bmatrix}
	 s+j\\
	 s
	 \end{bmatrix}_q \begin{bmatrix}
	 k+j-1\\
	 s+j-1
	 \end{bmatrix}_q\\
	  & \qquad \times h_{s+j}\left[(1-t)B_\gamma\right] h_{n-\ell-j}\left[B_\gamma\right] e_{\ell-s}\left[B_\gamma\right]\\
	  & =t^{n-\ell}\sum_{\gamma\vdash n-d} \frac{\Pi_\gamma}{w_\gamma} \sum_{j=0}^{n-\ell} t^{n-\ell-j}\sum_{s=0}^{\ell}q^{\binom{k}{2}+\binom{s}{2}} \begin{bmatrix}
	  k\\
	  s
	  \end{bmatrix}_q \begin{bmatrix}
	  k+j-1\\
	  j
	  \end{bmatrix}_q\\
	  & \qquad \times (1-q^{s+j}) h_{s+j}\left[(1-t)B_\gamma\right] h_{n-\ell-j}\left[B_\gamma\right] e_{\ell-s}\left[B_\gamma\right],
	\end{align*}
	where in the last equality we used the following elementary lemma, whose proof is in the appendix.
	\begin{lemma} \label{lem:elementary3}
		For all $k,s,j\geq 0$
		\begin{equation}
		\begin{bmatrix}
		k\\
		s
		\end{bmatrix}_q \begin{bmatrix}
		k+j-1\\
		j
		\end{bmatrix}_q (1-q^{s+j}) =(1-q^k) \begin{bmatrix}
		s+j\\
		s
		\end{bmatrix}_q \begin{bmatrix}
		k+j-1\\
		k-s
		\end{bmatrix}_q .
		\end{equation}
	\end{lemma}
	Rearranging the terms, and using \eqref{eq:qn_q_Macexp} and Lemma~\ref{lem:Mac_hook_coeff}, we get	
	\begin{align*}
	G_{n,k}^{(d,\ell)}&= \sum_{j=0}^{n-\ell} \sum_{s=0}^{\ell}t^{n-\ell}q^{\binom{k}{2}+\binom{s}{2}} \begin{bmatrix}
	k\\
	s
	\end{bmatrix}_q \begin{bmatrix}
	k+j-1\\
	j
	\end{bmatrix}_qt^{n-\ell-j}\\
	& \qquad \times \sum_{\gamma\vdash n-d} \frac{\Pi_\gamma}{w_\gamma}(1-q^{s+j}) h_{s+j}\left[(1-t)B_\gamma\right] h_{n-\ell-j}\left[B_\gamma\right] e_{\ell-s}\left[B_\gamma\right]\\
	& =\sum_{j=0}^{n-\ell} \sum_{s=0}^{\ell}t^{n-\ell} q^{\binom{k}{2}+\binom{s}{2}}\begin{bmatrix}
	k\\
	s\\
	\end{bmatrix}_q \begin{bmatrix}
	k+j-1\\
	j\\
	\end{bmatrix}_q  t^{n-\ell-j}\\
	& \qquad \times \left\langle \Delta_{h_{n-\ell-j}} \Delta_{e_{\ell-s}} e_{n-d}\left[X[s+j]_q\right],h_{n-d}\right\rangle\\
	& =\sum_{j=0}^{n-\ell} \sum_{s=0}^{\ell}t^{n-\ell} q^{\binom{k}{2}+\binom{s}{2}}\begin{bmatrix}
	k\\
	s\\
	\end{bmatrix}_q \begin{bmatrix}
	k+j-1\\
	j\\
	\end{bmatrix}_q  F_{n,s+j}^{(d,\ell-s)}\\
	& =\sum_{h=1}^{n} \sum_{s=0}^{\ell} t^{n-\ell}q^{\binom{k}{2}+\binom{s}{2}}\begin{bmatrix}
	k\\
	s\\
	\end{bmatrix}_q \begin{bmatrix}
	h+k-s-1\\
	h-s\\
	\end{bmatrix}_q  F_{n,h}^{(d,\ell-s)}.
	\end{align*}
For the last equality in \eqref{eq:rel_G_F} we can use Lemma~\ref{lem:elementary4}: we have
	\begin{align*}
	G_{n,k}^{(d,\ell)}& =\sum_{h=1}^{n} \sum_{s=0}^{\ell} t^{n-\ell}q^{\binom{k}{2}+\binom{s}{2}}\begin{bmatrix}
	k\\
	s\\
	\end{bmatrix}_q \begin{bmatrix}
	h+k-s-1\\
	h-s\\
	\end{bmatrix}_q  F_{n,h}^{(d,\ell-s)}\qquad \qquad \qquad \qquad 
		\end{align*}
		\begin{align*}
	& =\sum_{h=1}^{n} \sum_{s=0}^{\ell}t^{n-\ell}q^{\binom{k}{2}} \left(q^{\binom{s+1}{2}}\begin{bmatrix}
	h-1\\
	s\\
	\end{bmatrix}_q \begin{bmatrix}
	h+k-s-1\\
	h\\
	\end{bmatrix}_q +q^{\binom{s}{2}}\begin{bmatrix}
	h-1\\
	s-1\\
	\end{bmatrix}_q \begin{bmatrix}
	h+k-s\\
	h\\
	\end{bmatrix}_q\right)\\
	& \qquad \times F_{n,h}^{(d,\ell-s)}\\
	& =\sum_{h=1}^{n}\sum_{s=0}^{\ell}t^{n-\ell}q^{\binom{k}{2}+\binom{s+1}{2}}\begin{bmatrix}
	h-1\\
	s\\
	\end{bmatrix}_q \begin{bmatrix}
	h+k-s-1\\
	h\\
	\end{bmatrix}_q(F_{n,h}^{(d,\ell-s)}+F_{n,h}^{(d,\ell-s-1)}),
	\end{align*}
	as we wanted.
\end{proof}
\begin{theorem} \label{thm:rel_F_H}
	For $k,\ell,d\geq 0$, $n\geq k+\ell$ and $n\geq d$, we have
	\begin{equation} \label{eq:rel_F_and_H}
	F_{n,k}^{(d,\ell)}= \sum_{j=0}^{\min(n-k,\ell)}H_{n,k+j,j}^{(d,\ell)},
	\end{equation}
	and
	\begin{equation} \label{eq:rel_Ftilde_and_Hbar}
	\widetilde{F}_{n,k}^{(d,\ell)}= \sum_{j=0}^{\min(n-k,\ell)}\overline{H}_{n,k+j,j}^{(d,\ell)}.
	\end{equation}
\end{theorem}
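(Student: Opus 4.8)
The plan is to reduce \eqref{eq:rel_F_and_H} to an identity purely among $q$-binomial coefficients by unfolding both sides through the explicit formulas already established, and then to derive \eqref{eq:rel_Ftilde_and_Hbar} formally from \eqref{eq:rel_F_and_H} using the alternating-sum relations \eqref{eq:rel_Ftilde_F} and \eqref{eq:rel_Htilde_H}. First I would dispose of the boundary case $k=n$: here the only term on the right is $j=0$, so \eqref{eq:rel_F_and_H} reads $F_{n,n}^{(d,\ell)}=H_{n,n,0}^{(d,\ell)}$, which by \eqref{eq:Fnn_formula} and the definition of $H_{n,n,i}^{(d,\ell)}$ (with the $\delta_{i,\ell}$) holds exactly because both sides vanish unless $\ell=0$, in which case they both equal $q^{\binom{n-d}{2}}{n\brack d}_q$.

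For $k<n$ I would substitute the defining expression \eqref{eq:rel_H_F} for each $H_{n,k+j,j}^{(d,\ell)}$ into the right-hand side of \eqref{eq:rel_F_and_H}, producing a quadruple sum over $j$, the internal parameter $s$, the internal parameter $h$, and the index of $F$. The key point is that the superscripts appearing there, $F_{n-(k+j),h}^{(d-(k+j)+j+s,\ell-j)}=F_{n-k-j,h}^{(d-k+s,\ell-j)}$, together with the shift $n-k = (n-k-j)+j$, should be compared with the right-hand side of \eqref{eq:rel_F_G}, i.e. with the expansion of $F_{n,k}^{(d,\ell)}$ in terms of $G_{n-k,s}^{(d-k+s,\ell)}$, after the latter is itself expanded via \eqref{eq:rel_G_F} into $F_{n-k,h'}^{(d-k+s,\ell-s')}$'s. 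In other words, I expect that composing the two recursions $F\to G\to F$ yields exactly the same expansion of $F_{n,k}^{(d,\ell)}$ into lower $F$'s as does $\sum_j H_{n,k+j,j}^{(d,\ell)}$ after unfolding \eqref{eq:rel_H_F} and then \eqref{eq:rel_F_and_H} recursively. So the natural strategy is an induction on $n$ (or on $n-k$): assume \eqref{eq:rel_F_and_H} for all smaller size, rewrite $\sum_{j}H_{n,k+j,j}^{(d,\ell)}$ using \eqref{eq:rel_H_F} and the inductive hypothesis applied to the $F_{n-k-j,h}$ inside, and match the resulting coefficient of each $F_{n-k-\text{(stuff)},\bullet}^{(\bullet,\bullet)}$ against the coefficient coming from \eqref{eq:rel_F_G}–\eqref{eq:rel_G_F}. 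The matching of coefficients will come down to a $q$-binomial identity; I anticipate it is exactly of the shape handled by the Vandermonde-type summations already invoked (Lemma~\ref{lem:elementary4} and the identities \eqref{eq:qbin_recursion}, \eqref{eq:h_q_binomial}), possibly after one application of the $q$-Vandermonde convolution $\sum_s {a\brack s}_q{b\brack c-s}_q q^{s(b-c+s)}={a+b\brack c}_q$.

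Alternatively — and this may be cleaner — one can avoid the composed recursion and instead prove \eqref{eq:rel_Ftilde_and_Hbar} directly on the symmetric-function side: expand $\widetilde F_{n,k}^{(d,\ell)}=t^{n-\ell-k}\langle \Delta_{h_{n-\ell-k}}\Delta_{e_\ell}\Delta'_{e_{\bullet}}\,e_{n-d}[X\tfrac{1-q^k}{1-q}],h_{n-d}\rangle$ using \eqref{eq:qn_q_Macexp}, then apply the master summation formula \eqref{eq:mastereq} to the product $h_k[(1-t)B_\gamma]e_\ell[B_\gamma]$, and recognize the resulting terms — indexed by the summation variables $j$ and $s$ of \eqref{eq:mastereq} — as precisely $\overline H_{n,k+j,j}^{(d,\ell)}$ after using \eqref{eq:e_h_expansion} and Macdonald reciprocity \eqref{eq:Macdonald_reciprocity} to re-sum over a partition of size $n-k-j$. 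In this route the index $i=j$ of $\overline H$ is literally the outer summation index of \eqref{eq:mastereq}, the parameter $s$ of \eqref{eq:rel_Htilde_Ftilde} is the inner index, and the three $q$-binomial factors in \eqref{eq:rel_Htilde_Ftilde} are exactly $q^{\binom{s}{2}}{s+j\brack s}_q{k+j-1\brack s+j-1}_q$ reshuffled via Lemma~\ref{lem:elementary4} together with the extra ${k-1\brack i}_q$ and ${k-i\brack s}_q$ coming from the Pieri expansion \eqref{eq:e_q_binomial} applied to $e_\ell[B_\gamma]$.

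\textbf{Main obstacle.} The hard part will be the bookkeeping of the $q$-binomial coefficients: identifying, after applying \eqref{eq:mastereq} and then re-expanding $h_{n-\ell-j}[B_\gamma]e_{\ell-j}[B_\gamma]$ back into the $F$'s (equivalently, after composing the two recursions in the first route), which precise product of three Gaussian binomials survives, and checking it equals the coefficient in \eqref{eq:rel_Htilde_Ftilde}. I expect this to require exactly one nontrivial $q$-binomial summation beyond Lemma~\ref{lem:elementary4} — most likely the $q$-Vandermonde identity — and the cleanest presentation will fix all size parameters first, reduce to a single-variable $q$-sum, and cite that identity; the rest is the routine index-shift from the $\{j, s+j\}$ parametrization of \eqref{eq:mastereq} to the $\{i, s\}$ parametrization of $\overline H$. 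Finally, \eqref{eq:rel_F_and_H} itself follows from \eqref{eq:rel_Ftilde_and_Hbar} by the defining relations $F=\widetilde F^{(d)}+\widetilde F^{(d-1)}$ and $H=\overline H^{(d)}+\overline H^{(d-1)}$, or conversely, so proving either one suffices.
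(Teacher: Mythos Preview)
Your second (``alternatively'') approach is precisely the route the paper takes, and your instincts about the key tools --- the master formula \eqref{eq:mastereq}, Macdonald reciprocity \eqref{eq:Macdonald_reciprocity}, and the expansion \eqref{eq:e_h_expansion} --- are correct. A few corrections to the order of operations and to which lemma is needed:

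First, you cannot apply \eqref{eq:mastereq} directly to the $F$ side, because after writing $F_{n,k}^{(d,\ell)}$ via \eqref{eq:qn_q_Macexp} and Lemma~\ref{lem:Mac_hook_coeff} you get a sum over $\gamma\vdash n-d$ weighted by $\Pi_\gamma/w_\gamma$ (not $\widetilde H_\gamma[X]/w_\gamma$), and the summand carries an extra factor $h_{n-\ell-k}[B_\gamma]$. The paper first expands that extra factor as $\sum_{\mu\vdash n-\ell-k} T_\mu \widetilde H_\mu[MB_\gamma]/w_\mu$ via \eqref{eq:e_h_expansion}, then uses reciprocity \eqref{eq:Macdonald_reciprocity} to swap $\Pi_\gamma\widetilde H_\mu[MB_\gamma]$ for $\Pi_\mu\widetilde H_\gamma[MB_\mu]$; only now does the inner $\gamma$-sum have the shape of \eqref{eq:mastereq} with $X=MB_\mu$. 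So reciprocity comes \emph{before} the master formula, not after, and the auxiliary partition has size $n-\ell-k$, not $n-k-j$. The same manipulation is performed on the $H$ side (unfolding \eqref{eq:rel_H_F} and again using \eqref{eq:qn_q_Macexp}), and the two sides are then compared termwise in $\mu$.

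Second, the single $q$-binomial identity needed is not Lemma~\ref{lem:elementary4} nor $q$-Vandermonde, but Lemma~\ref{lem:elementary3}: it converts the factor $(1-q^{s+j})$ arising on the $H$ side into the $(1-q^k)$ on the $F$ side, while simultaneously reshuffling ${k\brack s}_q{k+j-1\brack j}_q$ into ${s+j\brack s}_q{k+j-1\brack k-s}_q$, exactly the binomials in \eqref{eq:mastereq}. Your ``main obstacle'' is therefore overstated: once both sides sit over the common $\mu$-sum, no further summation identity is required beyond this one trivial lemma.

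Finally, the paper splits off the degenerate case $n=k+\ell$ (where $n-\ell-k=0$ and the $\mu$-sum is empty) and computes both sides there directly as $q^{\binom{k-d}{2}}{k\brack d}_q{k+\ell-1\brack\ell}_q$; your boundary case $k=n$ is only a subcase of this. As you note, proving either \eqref{eq:rel_F_and_H} or \eqref{eq:rel_Ftilde_and_Hbar} suffices; the paper proves the former.
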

\begin{proof}
Clearly \eqref{eq:rel_Ftilde_and_Hbar} follows from \eqref{eq:rel_F_and_H}  using \eqref{eq:rel_F_Ftilde} and \eqref{eq:rel_G_Gtilde}. Hence we will prove only \eqref{eq:rel_F_and_H}.

For $n=k+\ell$, using \eqref{eq:qn_q_Macexp} and Lemma~\ref{lem:Mac_hook_coeff}, we have
\begin{align*}
F_{n,k}^{(d,\ell)} & =\langle \Delta_{h_{\ell}} \Delta_{e_{n-\ell-d}}E_{n-\ell,k},e_{n-\ell}\rangle\\
& =t^{n-\ell-k}\left\langle \Delta_{h_{n-\ell-k}}\Delta_{e_{\ell}} e_{n-d}\left[X\frac{1-q^k}{1-q}\right] ,h_{n-d}\right\rangle \\
& = t^{n-\ell-k}(1-q^k)\sum_{\gamma\vdash n-d}\frac{\Pi_\gamma}{w_\gamma}h_k[(1-t)B_\gamma]h_{n-\ell-k}[B_\gamma]e_\ell[B_\gamma]\\
& = (1-q^k)\sum_{\gamma\vdash n-d}\frac{\Pi_\gamma}{w_\gamma}h_k[(1-t)B_\gamma]e_\ell[B_\gamma]\\
\text{(using \eqref{eq:garsia_haglund_eval})}& = \sum_{\gamma\vdash n-d}\frac{\widetilde{H}_\gamma[M[k]_q]}{w_\gamma}e_\ell[B_\gamma]\\
\text{(using \eqref{eq:e_h_expansion})}& = h_\ell[[k]_q]e_{n-\ell-d}[[k]]_q\\
\text{(using \eqref{eq:h_q_binomial})}& = \begin{bmatrix}
k+\ell-1\\
\ell
\end{bmatrix}_qe_{k-d}[[k]]_q\\
\text{(using \eqref{eq:e_q_binomial})}& = q^{\binom{n-d}{2}}\begin{bmatrix}
k\\
d
\end{bmatrix}_q\begin{bmatrix}
k+\ell-1\\
\ell
\end{bmatrix}_q.
\end{align*}
On the other hand, using \eqref{eq:en_q_sum_Enk},
\begin{align*}
\sum_{j=0}^{\min(n-k,\ell)}H_{n,k+j,j}^{(d,\ell)}& = \sum_{j=0}^{\min(n-k,\ell)} t^{n-k-j}\sum_{s=0}^{k}q^{\binom{s}{2}} \begin{bmatrix}
k\\
s
\end{bmatrix}_q \begin{bmatrix}
k+j-1\\
j
\end{bmatrix}_q \qquad \qquad \qquad 
\end{align*}
\begin{align*}
& \qquad \times \left\langle \Delta_{h_{\ell-j}} \Delta_{e_{n-d-s-\ell}}e_{n-k-\ell}\left[X\frac{1-q^{s+j}}{1-q}\right],e_{n-k-\ell} \right\rangle\\
& = \sum_{j=0}^{\ell} t^{n-k-j}\sum_{s=0}^{k}q^{\binom{s}{2}} \begin{bmatrix}
k\\
s
\end{bmatrix}_q \begin{bmatrix}
k+j-1\\
j
\end{bmatrix}_q\\
& \qquad \times \left\langle \Delta_{h_{\ell-j}} \Delta_{e_{k-d-s}}e_{0}\left[X\frac{1-q^{s+j}}{1-q}\right],e_{0} \right\rangle\\
 & = q^{\binom{n-d}{2}}\begin{bmatrix}
 k\\
 d
 \end{bmatrix}_q\begin{bmatrix}
 k+\ell-1\\
 \ell
 \end{bmatrix}_q,
\end{align*}
as we wanted.
	
For $n>k+\ell$, using \eqref{eq:qn_q_Macexp} and Lemma~\ref{lem:Mac_hook_coeff}, we have
\begin{align*}
	F_{n,k}^{(d,\ell)} & =\langle \Delta_{h_{\ell}} \Delta_{e_{n-\ell-d}}E_{n-\ell,k},e_{n-\ell}\rangle\\
	& =t^{n-\ell-k}\left\langle \Delta_{h_{n-\ell-k}}\Delta_{e_{\ell}} e_{n-d}\left[X\frac{1-q^k}{1-q}\right] ,h_{n-d}\right\rangle \\
	& = t^{n-\ell-k}(1-q^k)\sum_{\gamma\vdash n-d}\frac{\Pi_\gamma}{w_\gamma}h_k[(1-t)B_\gamma]h_{n-\ell-k}[B_\gamma]e_\ell[B_\gamma]\\
	\text{(using \eqref{eq:e_h_expansion})}& = t^{n-\ell-k}(1-q^k)\sum_{\gamma\vdash n-d}\frac{\Pi_\gamma}{w_\gamma}h_k[(1-t)B_\gamma]\sum_{\mu\vdash n-\ell-k}T_\mu\frac{\widetilde{H}_\mu[MB_\gamma]}{w_\mu} e_\ell[B_\gamma]\\
	\text{(using \eqref{eq:Macdonald_reciprocity})}& = t^{n-\ell-k}\sum_{\mu\vdash n-\ell-k} T_\mu\frac{\Pi_\mu}{w_\mu} (1-q^k)\sum_{\gamma\vdash n-d}\frac{\widetilde{H}_\gamma[MB_\mu]}{w_\gamma} h_k[(1-t)B_\gamma]e_\ell[B_\gamma].
	\end{align*}
On the other hand, using \eqref{eq:en_q_sum_Enk},
	\begin{align*}
	\sum_{j=0}^{\min(n-k,\ell)}H_{n,k+j,j}^{(d,\ell)}& = \sum_{j=0}^{\min(n-k,\ell)} t^{n-k-j}\sum_{s=0}^{k}q^{\binom{s}{2}} \begin{bmatrix}
	k\\
	s
	\end{bmatrix}_q \begin{bmatrix}
	k+j-1\\
	j
	\end{bmatrix}_q\\
	& \qquad \times \left\langle \Delta_{h_{\ell-j}} \Delta_{e_{n-d-s-\ell}}e_{n-k-\ell}\left[X\frac{1-q^{s+j}}{1-q}\right],e_{n-k-\ell} \right\rangle\\
	\text{(using \eqref{eq:qn_q_Macexp})}& = t^{n-k-\ell} \sum_{j=0}^{\min(n-k,\ell)} t^{\ell-j}\sum_{s=0}^{k}q^{\binom{s}{2}} \begin{bmatrix}
	k\\
	s
	\end{bmatrix}_q \begin{bmatrix}
	k+j-1\\
	j
	\end{bmatrix}_q\\
	& \qquad \times (1-q^{s+j}) \sum_{\mu\vdash n-\ell-k} \frac{\Pi_\mu}{w_\mu}h_{s+j}[(1-t)B_\mu]T_\mu h_{\ell-j}[B_\mu] e_{n-d-s-\ell}[B_\mu] \\
\qquad 	& = t^{n-k-\ell} \sum_{\mu\vdash n-\ell-k} T_\mu \frac{\Pi_\mu}{w_\mu} \sum_{j=0}^{\min(n-k,\ell)} t^{\ell-j}\sum_{s=0}^{k}q^{\binom{s}{2}} \begin{bmatrix}
		k\\
		s
		\end{bmatrix}_q \begin{bmatrix}
		k+j-1\\
		j
		\end{bmatrix}_q\\
	& \qquad \times (1-q^{s+j})  h_{s+j}[(1-t)B_\mu] h_{\ell-j}[B_\mu] e_{n-d-s-\ell}[B_\mu]
	\end{align*}
	\begin{align*}
	\qquad \quad & = t^{n-k-\ell} \sum_{\mu\vdash n-\ell-k} T_\mu \frac{\Pi_\mu}{w_\mu}(1-q^k) \sum_{j=0}^{\min(n-k,\ell)} t^{\ell-j}\sum_{s=0}^{k}q^{\binom{s}{2}} \begin{bmatrix}
	s+j\\
	s
	\end{bmatrix}_q \begin{bmatrix}
	k+j-1\\
	k-s
	\end{bmatrix}_q\\
	& \qquad \times   h_{s+j}[(1-t)B_\mu] h_{\ell-j}[B_\mu] e_{n-d-s-\ell}[B_\mu],
	\end{align*}
	where in the last equality we used Lemma~\ref{lem:elementary3}.
	
Now comparing the result of the last two computations and using \eqref{eq:mastereq}, we complete our proof.
\end{proof}

\subsection{Two recursions for $F_{n,k}^{(d,\ell)}$ and $\widetilde{F}_{n,k}^{(d,\ell)}$}

We are now able to establish two recursions for $F_{n,k}^{(d,\ell)}$ and $\widetilde{F}_{n,k}^{(d,\ell)}$.

\begin{theorem} \label{thm:reco1_F_and_Ftilde}
	For $k,\ell,d\geq 0$, $n\geq k+\ell$ and $n\geq d$, the $F_{n,k}^{(d,\ell)}$ satisfy the following recursion: for $n\geq 1$
		\begin{align}
	F_{n,n}^{(d,\ell)} & =\delta_{\ell,0}  q^{\binom{n-d}{2}}\begin{bmatrix}
	n\\
	d
	\end{bmatrix}_q  ,
	\end{align}
	and, for $n\geq 1$ and $1\leq k<n$,
	\begin{align} \label{eq:reco1_F_F}
	F_{n,k}^{(d,\ell)} & = \sum_{s=0}^{k}\sum_{i=0}^{\ell}q^{\binom{s}{2}+\binom{i+1}{2}}t^{n-k-\ell}\begin{bmatrix}
	k\\
	s
	\end{bmatrix}_q\sum_{h=1}^{n-k} \begin{bmatrix}
	h-1\\
	i
	\end{bmatrix}_q \\
	\notag & \quad \times \begin{bmatrix}
	h+s-i-1\\
	h
	\end{bmatrix}_q (F_{n-k,h}^{(d-k+s,\ell-i)}+F_{n-k,h}^{(d-k+s,\ell-i-1)}),
	\end{align}
	with initial conditions
	\begin{align}
	F_{0,k}^{(d,\ell)} & = \delta_{k,0}\delta_{\ell,0}\delta_{d,0}, \qquad F_{n,0}^{(d,\ell)} = \delta_{n,0}\delta_{\ell,0}\delta_{d,0}.
	\end{align}
	
	Similarly, the $\widetilde{F}_{n,k}^{(d,\ell)}$ satisfy the following recursion: for $n\geq 1$
		\begin{align}
	\widetilde{F}_{n,n}^{(d,\ell)} & = \delta_{\ell,0}  q^{\binom{n-d}{2}}\begin{bmatrix}
	n-1\\
	d
	\end{bmatrix}_q  ,
	\end{align}
	and, for $n\geq 1$ and $1\leq k<n$,
	\begin{align} \label{eq:reco1_Ftilde_Ftilde}
	\widetilde{F}_{n,k}^{(d,\ell)} & = \sum_{s=0}^{k}\sum_{i=0}^{\ell}q^{\binom{s}{2}+\binom{i+1}{2}}t^{n-k-\ell}\begin{bmatrix}
	k\\
	s
	\end{bmatrix}_q\sum_{h=1}^{n-k} \begin{bmatrix}
	h-1\\
	i
	\end{bmatrix}_q \\
	\notag & \quad \times \begin{bmatrix}
	h+s-i-1\\
	h
	\end{bmatrix}_q (\widetilde{F}_{n-k,h}^{(d-k+s,\ell-i)}+\widetilde{F}_{n-k,h}^{(d-k+s,\ell-i-1)}),
	\end{align}
	with initial conditions
	\begin{align}
	\widetilde{F}_{0,k}^{(d,\ell)} & =  \widetilde{F}_{n,0}^{(d,\ell)} =0.
	\end{align}
\end{theorem}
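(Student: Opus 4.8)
The plan is to deduce both recursions almost immediately from the inter-family relations of the previous two theorems, which already couple $F$ with $G$ (and $\widetilde F$ with $\widetilde G$) in a way that closes up.

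I would first treat $F_{n,k}^{(d,\ell)}$. The boundary value $F_{n,n}^{(d,\ell)}$ is precisely \eqref{eq:Fnn_formula}, so nothing new is required there. For $1\le k<n$, start from \eqref{eq:rel_F_G}, which reads $F_{n,k}^{(d,\ell)}=\sum_{s}\begin{bmatrix}k\\s\end{bmatrix}_q\,G_{n-k,s}^{(d-k+s,\ell)}$, and substitute the second form of \eqref{eq:rel_G_F} (the one carrying the factor $q^{\binom{s+1}{2}}$ produced by Lemma~\ref{lem:elementary4}) with $(n-k,s,d-k+s,\ell)$ in place of $(n,k,d,\ell)$. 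Collecting the two $q$-powers $\binom{s}{2}$ and $\binom{i+1}{2}$ coming from \eqref{eq:rel_G_F}, together with the factor $t^{(n-k)-\ell}$, the resulting triple sum is exactly the right-hand side of \eqref{eq:reco1_F_F}, the only discrepancy being that the outer index $s$ a priori runs over $1\le s\le\min(k,n-d)$ rather than over all $0\le s\le k$.

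Hence the one genuine thing to check is that widening the $s$-range introduces only vanishing terms. The term $s=0$ carries $\begin{bmatrix}h+s-i-1\\h\end{bmatrix}_q=\begin{bmatrix}h-i-1\\h\end{bmatrix}_q=0$ since $h-i-1<h$; a term with $s>k$ carries $\begin{bmatrix}k\\s\end{bmatrix}_q=0$; and a term with $\min(k,n-d)<s\le k$ — which can occur only when $n-d<k$ — has $d-k+s>n-k$, so that in $F_{n-k,h}^{(d-k+s,\ell-i)}=\langle\Delta_{h_{\ell-i}}\Delta_{e_{(n-k)-(\ell-i)-(d-k+s)}}E_{\,\cdot\,,h},e_{\,\cdot\,}\rangle$ the subscript of the elementary symmetric function is strictly negative, and the term is $0$ by our conventions. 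Thus the two sums agree and \eqref{eq:reco1_F_F} follows. The initial conditions then come from unwinding $F_{n,k}^{(d,\ell)}=\langle\Delta_{h_\ell}\Delta_{e_{n-\ell-d}}E_{n-\ell,k},e_{n-\ell}\rangle$: for $n=0$ the pairing forces $\ell=0$, and then $E_{0,k}$ together with $\Delta_{e_{-d}}$ forces $k=d=0$; for $k=0$ one has $E_{n-\ell,0}=0$ unless $n=\ell$, in which case $\Delta_{h_\ell}$ acting on $\Lambda^{(0)}$ contributes $h_\ell[0]$, which is $0$ for $\ell>0$, so only $n=\ell=0$ survives and $\Delta_{e_{-d}}$ then forces $d=0$.

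For $\widetilde F_{n,k}^{(d,\ell)}$ I would rerun the very same computation with tildes: \eqref{eq:rel_Ftilde_Gtilde} takes the place of \eqref{eq:rel_F_G} and \eqref{eq:rel_Gtilde_Ftilde} the place of \eqref{eq:rel_G_F}, so the identical bookkeeping gives \eqref{eq:reco1_Ftilde_Ftilde}; the boundary value is \eqref{eq:Ftildenn_formula}; and the initial conditions $\widetilde F_{0,k}^{(d,\ell)}=\widetilde F_{n,0}^{(d,\ell)}=0$ follow from $\widetilde F_{n,k}^{(d,\ell)}=\langle\Delta_{h_\ell}\Delta_{e_{n-\ell-d-1}}'E_{n-\ell,k},e_{n-\ell}\rangle$ exactly as before, with the additional observation that $e_{n-\ell-d-1}=0$ whenever $n=\ell$ and $d\ge0$, so the last possibly-surviving term dies as well. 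The whole proof is routine bookkeeping; the only mildly delicate point is the range-widening verification just described.
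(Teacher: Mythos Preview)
Your proof is correct and follows exactly the paper's approach: the paper's argument is simply ``combining \eqref{eq:rel_F_G} and \eqref{eq:rel_G_F} we immediately get \eqref{eq:reco1_F_F}'' (and likewise with tildes), plus ``the remaining initial conditions are straightforward to check.'' You have merely supplied the bookkeeping the paper omits, in particular the verification that extending the outer sum to $0\le s\le k$ is harmless. One small remark: your vanishing argument for $s>n-d$ explicitly treats only $F_{n-k,h}^{(d-k+s,\ell-i)}$; the companion $F_{n-k,h}^{(d-k+s,\ell-i-1)}$ also vanishes, either because its $e$-subscript is likewise negative (when $i<\ell$) or because its $\ell$-index is $-1$ (when $i=\ell$), which is covered by the blanket convention on negative indices.
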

\begin{proof}
Combining \eqref{eq:rel_F_G} and \eqref{eq:rel_G_F} we immediately get \eqref{eq:reco1_F_F}. Similarly, combining \eqref{eq:rel_Ftilde_Gtilde} and \eqref{eq:rel_Gtilde_Ftilde} we immediately get \eqref{eq:reco1_Ftilde_Ftilde}. The remaining initial conditions are straightforward to check.
\end{proof}

\begin{theorem}
For $k,\ell,d\geq 0$, $n\geq k+\ell$ and $n\geq d$, the $F_{n,k}^{(d,\ell)}$ satisfy the following recursion: for $n\geq 1$
\begin{align}
F_{n,n}^{(d,\ell)} & =\delta_{\ell,0} q^{\binom{n-d}{2}}\begin{bmatrix}
n\\
d
\end{bmatrix}_q,
\end{align}
\begin{align*}
F_{n,k}^{(d,\ell)}
& = \chi(n=k+\ell) q^{\binom{k-d}{2}}\begin{bmatrix}
n-1\\
\ell
\end{bmatrix}_q\begin{bmatrix}
k\\
d
\end{bmatrix}_q\\
& \quad + \sum_{j=0}^{n-k}t^{n-k-j}\sum_{s=0}^{k}\sum_{h=1}^{n-k-j}q^{\binom{s}{2}} \begin{bmatrix}
k\\
s
\end{bmatrix}_q \begin{bmatrix}
k+j-1\\
j
\end{bmatrix}_q \begin{bmatrix}
s+j-1+h\\
h
\end{bmatrix}_q F_{n-k-j,h}^{(d-k+s,\ell-j)}.
\end{align*}
with initial conditions
\begin{align}
F_{0,k}^{(d,\ell)} & = \delta_{k,0}\delta_{\ell,0}\delta_{d,0}, \qquad F_{n,0}^{(d,\ell)} = \delta_{n,0}\delta_{\ell,0}\delta_{d,0}.
\end{align}
Similarly, the $\widetilde{F}_{n,k}^{(d,\ell)}$ satisfy the following recursion: for $n\geq 1$
\begin{align}
\widetilde{F}_{n,n}^{(d,\ell)} & = \delta_{\ell,0} q^{\binom{n-d}{2}}\begin{bmatrix}
n-1\\
d
\end{bmatrix}_q   ,
\end{align}
and, for $n\geq 1$ and $1\leq k<n$, 
\begin{align*}
\widetilde{F}_{n,k}^{(d,\ell)} & = \chi(n=k+\ell) q^{\binom{k-d}{2}}\begin{bmatrix}
n-1\\
\ell
\end{bmatrix}_q\begin{bmatrix}
k-1\\
d
\end{bmatrix}_q\\
 & \quad + \sum_{j=0}^{\min(n-k,\ell)}t^{n-k-j}\sum_{s=0}^{k}\sum_{h=0}^{n-k-j}q^{\binom{s}{2}} \begin{bmatrix}
k\\
s
\end{bmatrix}_q \begin{bmatrix}
k+j-1\\
j
\end{bmatrix}_q \begin{bmatrix}
s+j-1+h\\
h
\end{bmatrix}_q \widetilde{F}_{n-k-j,h}^{(d-k+s,\ell-j)},
\end{align*}
with initial conditions
\begin{align}
\widetilde{F}_{0,k}^{(d,\ell)} & = \widetilde{F}_{n,0}^{(d,\ell)} =0.
\end{align}
\end{theorem}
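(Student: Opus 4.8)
The plan is to obtain both recursions as immediate corollaries of Theorem~\ref{thm:rel_F_H}, by simply expanding the polynomials $H_{n,k,i}^{(d,\ell)}$ (resp. $\overline H_{n,k,i}^{(d,\ell)}$) through their definitions \eqref{eq:rel_H_F} and \eqref{eq:rel_Htilde_Ftilde}; no new symmetric function identity is needed. First I would dispose of the case $k=n$: the standing hypothesis $n\geq k+\ell$ forces $\ell=0$, and then $F_{n,n}^{(d,0)}=\delta_{\ell,0}q^{\binom{n-d}{2}}{n\brack d}_q$ is exactly \eqref{eq:Fnn_formula} (and likewise $\widetilde F_{n,n}^{(d,0)}$ is \eqref{eq:Ftildenn_formula}). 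For $1\leq k<n$ I would start from \eqref{eq:rel_F_and_H}, i.e. $F_{n,k}^{(d,\ell)}=\sum_{j=0}^{\min(n-k,\ell)}H_{n,k+j,j}^{(d,\ell)}$, and split the sum according to whether $k+j<n$ or $k+j=n$.

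The equality $k+j=n$ can occur, for some $j$ in the range of summation, only when $n=k+\ell$, and then only for $j=\ell$; the corresponding summand is the closed form $H_{n,n,\ell}^{(d,\ell)}=q^{\binom{n-\ell-d}{2}}{n-1\brack \ell}_q{n-\ell\brack d}_q$, which, using $n-\ell=k$, is precisely the $\chi(n=k+\ell)$ term in the statement. For every remaining index $j$ we have $k+j<n$, so $H_{n,k+j,j}^{(d,\ell)}$ is given by \eqref{eq:rel_H_F}; substituting $k'=k+j$ and $i=j$ there one reads off $k'-i=k$, $d-k'+i+s=d-k+s$, $\ell-i=\ell-j$ and $t^{n-k'}=t^{n-k-j}$, which turns that expression into exactly the inner triple sum appearing in the theorem. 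It then remains to check that the summation ranges match: extending $j$ from $\min(n-k,\ell)-1$ up to $n-k$ is harmless, because when $j>\ell$ the factor $F_{n-k-j,h}^{(d-k+s,\ell-j)}$ vanishes by the convention that a polynomial with a negative index is $0$, and when $j=n-k$ the inner sum $\sum_{h=1}^{n-k-j}$ is empty; similarly one may start $h$ at $1$, since for $k+j<n$ the term $h=0$ contributes $F_{n-k-j,0}^{(d-k+s,\ell-j)}=0$ (as $n-k-j\geq 1$). The claimed initial conditions $F_{0,k}^{(d,\ell)}=\delta_{k,0}\delta_{\ell,0}\delta_{d,0}$ and $F_{n,0}^{(d,\ell)}=\delta_{n,0}\delta_{\ell,0}\delta_{d,0}$ follow straight from the definition of $F_{n,k}^{(d,\ell)}$ and of $E_{n,k}$.

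The statement for $\widetilde F_{n,k}^{(d,\ell)}$ is proved in exactly the same way, replacing \eqref{eq:rel_F_and_H} by \eqref{eq:rel_Ftilde_and_Hbar}, \eqref{eq:rel_H_F} by \eqref{eq:rel_Htilde_Ftilde}, and the boundary term $H_{n,n,\ell}^{(d,\ell)}$ by $\overline H_{n,n,\ell}^{(d,\ell)}=q^{\binom{n-\ell-d}{2}}{n-1\brack \ell}_q{n-\ell-1\brack d}_q$, which produces the $\chi(n=k+\ell)$ term with ${k-1\brack d}_q$ in place of ${k\brack d}_q$; the initial conditions $\widetilde F_{0,k}^{(d,\ell)}=\widetilde F_{n,0}^{(d,\ell)}=0$ are again immediate. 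I expect the only delicate point to be this boundary bookkeeping — in particular recognizing that the unique summand with $k+j=n$ must be evaluated via the closed formula for $H_{n,n,i}^{(d,\ell)}$ (resp. $\overline H_{n,n,i}^{(d,\ell)}$) rather than via \eqref{eq:rel_H_F} — but there is no genuinely hard step: the theorem is essentially a transcription of Theorem~\ref{thm:rel_F_H} once the definitions are unfolded.
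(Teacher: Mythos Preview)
Your proposal is correct and follows exactly the approach the paper takes: the paper's own proof consists of the single sentence ``Combining Theorem~\ref{thm:rel_F_H} and the definitions of the polynomials $\overline{H}_{n,k,i}^{(d,\ell)}$ and $H_{n,k,i}^{(d,\ell)}$, the result follows.'' You have simply unpacked this, including the boundary bookkeeping for the $k+j=n$ term and the harmless adjustments of summation ranges, which the paper leaves implicit.
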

\begin{proof}
Combining Theorem~\ref{thm:rel_F_H} and the definitions of the polynomials $\overline{H}_{n,k,i}^{(d,\ell)}$ and $H_{n,k,i}^{(d,\ell)}$, the result follows.
\end{proof}

\subsection{A recursion for $G_{n,k}^{(d,\ell)}$ and $\widetilde{G}_{n,k}^{(d,\ell)}$}

In this section we establish a recursion for $G_{n,k}^{(d,\ell)}$ and $\widetilde{G}_{n,k}^{(d,\ell)}$.

\begin{theorem} \label{thm:recoG_and_Gtilde}
	For $k,\ell,d\geq 0$, $n\geq k+\ell$ and $n\geq d$, the $G_{n,k}^{(d,\ell)}$ satisfy the following recursion: 
	\begin{align} \label{eq:reco_G}
	G_{n,k}^{(d,\ell)} & = t^{n-\ell}q^{\binom{k}{2}+\binom{\ell}{2}}\begin{bmatrix}
	k\\
	\ell
	\end{bmatrix}_q \begin{bmatrix}
	n+k-\ell-1\\
	n-\ell
	\end{bmatrix}_q q^{\binom{n-d}{2}}\begin{bmatrix}
	n\\
	d
	\end{bmatrix}_q\\
	& \quad +\sum_{h=1}^{n-1}\sum_{s=0}^{\ell}\sum_{j=1}^{\min(n-d,h)}t^{n-\ell}q^{\binom{k}{2}+\binom{s}{2}}\begin{bmatrix}
	k\\
	s
	\end{bmatrix}_q \begin{bmatrix}
	h\\
	j
	\end{bmatrix}_q \\
	\notag & \quad \times \begin{bmatrix}
	h+k-s-1\\
	h-s
	\end{bmatrix}_q G_{n-h,j}^{(d-h+j,\ell-s)},
	\end{align}
	with initial conditions
	\begin{equation}
	G_{0,k}^{(d,\ell)}=\delta_{d,0}\delta_{\ell,0}q^{\binom{k}{2}}.
	\end{equation}
	
	Similarly
	\begin{align} \label{eq:reco_Gtilde}
	\widetilde{G}_{n,k}^{(d,\ell)} & = t^{n-\ell}q^{\binom{k}{2}+\binom{\ell}{2}}\begin{bmatrix}
	k\\
	\ell
	\end{bmatrix}_q \begin{bmatrix}
	n+k-\ell-1\\
	n-\ell
	\end{bmatrix}_q q^{\binom{n-d}{2}}\begin{bmatrix}
	n-1\\
	d
	\end{bmatrix}_q\\
	& \quad +\sum_{h=1}^{n-1}\sum_{s=0}^{\ell}\sum_{j=1}^{\min(n-d,h)}t^{n-\ell}q^{\binom{k}{2}+\binom{s}{2}}\begin{bmatrix}
	k\\
	s
	\end{bmatrix}_q \begin{bmatrix}
	h\\
	j
	\end{bmatrix}_q \\
	\notag & \quad \times \begin{bmatrix}
	h+k-s-1\\
	h-s
	\end{bmatrix}_q \widetilde{G}_{n-h,j}^{(d-h+j,\ell-s)},
	\end{align}
	with initial conditions
	\begin{equation}
	\widetilde{G}_{0,k}^{(d,\ell)}=\delta_{d,0}\delta_{\ell,0}q^{\binom{k}{2}}.
	\end{equation}
\end{theorem}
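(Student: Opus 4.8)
The plan is to obtain the recursion for $G_{n,k}^{(d,\ell)}$ by composing the two ``change of direction'' identities established above, in exactly the same spirit as the proof of Theorem~\ref{thm:reco1_F_and_Ftilde}: there the recursions for $F$ and $\widetilde F$ came from substituting \eqref{eq:rel_G_F} into \eqref{eq:rel_F_G}, and here I would instead substitute \eqref{eq:rel_F_G} into \eqref{eq:rel_G_F}. Concretely, I would start from
\[
G_{n,k}^{(d,\ell)} = \sum_{h=1}^{n} \sum_{s=0}^{\ell} t^{n-\ell} q^{\binom{k}{2}+\binom{s}{2}} \begin{bmatrix} k \\ s \end{bmatrix}_q \begin{bmatrix} h+k-s-1 \\ h-s \end{bmatrix}_q F_{n,h}^{(d,\ell-s)}
\]
and split the sum over $h$ into the single boundary term $h=n$ and the bulk terms $1\le h<n$.

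For the boundary term $h=n$ I would invoke the closed form \eqref{eq:Fnn_formula}, $F_{n,n}^{(d,\ell-s)}=\delta_{\ell-s,0}\,q^{\binom{n-d}{2}}\begin{bmatrix}n\\d\end{bmatrix}_q$; the Kronecker delta collapses the $s$-sum to $s=\ell$ and leaves exactly the first summand $t^{n-\ell}q^{\binom{k}{2}+\binom{\ell}{2}}\begin{bmatrix}k\\ \ell\end{bmatrix}_q\begin{bmatrix}n+k-\ell-1\\ n-\ell\end{bmatrix}_q q^{\binom{n-d}{2}}\begin{bmatrix}n\\d\end{bmatrix}_q$ of \eqref{eq:reco_G}. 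For each $h$ with $1\le h<n$ I would apply \eqref{eq:rel_F_G}, which is legitimate since $n>h$ (and since for the terms actually occurring one has $n\ge h+(\ell-s)$ automatically, so the hypotheses of that identity hold), rewriting $F_{n,h}^{(d,\ell-s)}=\sum_{j=1}^{\min(h,\,n-d)}\begin{bmatrix}h\\j\end{bmatrix}_q G_{n-h,j}^{(d-h+j,\,\ell-s)}$. Substituting and reorganizing the summations produces precisely the triple sum forming the second summand of \eqref{eq:reco_G}; no auxiliary $q$-binomial identity is needed at this step. The recursion for $\widetilde G_{n,k}^{(d,\ell)}$ then follows by the identical argument with \eqref{eq:rel_G_F}, \eqref{eq:rel_F_G} and \eqref{eq:Fnn_formula} replaced by their tilde counterparts \eqref{eq:rel_Gtilde_Ftilde}, \eqref{eq:rel_Ftilde_Gtilde} and \eqref{eq:Ftildenn_formula}; the only change is that the boundary term carries $\begin{bmatrix}n-1\\d\end{bmatrix}_q$ instead of $\begin{bmatrix}n\\d\end{bmatrix}_q$, matching \eqref{eq:reco_Gtilde}.

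Finally, the initial conditions $G_{0,k}^{(d,\ell)}=\widetilde G_{0,k}^{(d,\ell)}=\delta_{d,0}\delta_{\ell,0}q^{\binom{k}{2}}$ are immediate from the definitions: for $n=0$ one has $e_0\!\left[X\frac{1-q^k}{1-q}\right]=1$, the operator $\Delta_{e_{-\ell}}$ forces $\ell=0$ and $\Delta_{e_{-d}}$ (respectively $\Delta_{e_{-d-1}}'$, which on $\Lambda^{(0)}$ equals $\nabla=\mathrm{id}$ when $d=0$) forces $d=0$, leaving $t^{0}q^{\binom{k}{2}}\langle 1,h_0\rangle=q^{\binom{k}{2}}$.

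I do not expect a genuine obstacle here: all of the serious analytic content — the summation formula \eqref{eq:mastereq}, Macdonald--Koornwinder reciprocity, and the elementary $q$-binomial lemmas — has already been spent in proving \eqref{eq:rel_G_F} and \eqref{eq:rel_F_G}. The only point requiring attention is the bookkeeping when peeling off the boundary term $h=n$ (where the factor $\delta_{\ell-s,0}$ must be tracked correctly so that it selects $s=\ell$) and matching the index ranges in the bulk sum, in particular the cutoff $\min(n-d,h)$ on the $j$-summation inherited from \eqref{eq:rel_F_G}; both are purely mechanical.
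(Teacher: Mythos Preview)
Your proposal is correct and follows essentially the same route as the paper: start from \eqref{eq:rel_G_F}, split off the boundary term $h=n$ using \eqref{eq:Fnn_formula} (which collapses the $s$-sum to $s=\ell$), apply \eqref{eq:rel_F_G} to the remaining terms $1\le h<n$, and then repeat with the tilde versions for $\widetilde G$. The paper's proof is line-for-line the same argument, and your discussion of the initial conditions matches the paper's remark that they are ``easy to check''.
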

\begin{proof}
Using \eqref{eq:rel_G_F}, we have
\begin{align}
G_{n,k}^{(d,\ell)}& =\sum_{h=1}^{n} \sum_{s=0}^{\ell} t^{n-\ell}q^{\binom{k}{2}+\binom{s}{2}}\begin{bmatrix}
k\\
s\\
\end{bmatrix}_q \begin{bmatrix}
h+k-s-1\\
h-s\\
\end{bmatrix}_q  F_{n,h}^{(d,\ell-s)}\\
& = \sum_{s=0}^{\ell} t^{n-\ell}q^{\binom{k}{2}+\binom{s}{2}}\begin{bmatrix}
k\\
s\\
\end{bmatrix}_q \begin{bmatrix}
n+k-s-1\\
n-s\\
\end{bmatrix}_q  F_{n,n}^{(d,\ell-s)}\\
& \quad + \sum_{h=1}^{n-1} \sum_{s=0}^{\ell} t^{n-\ell}q^{\binom{k}{2}+\binom{s}{2}}\begin{bmatrix}
k\\
s\\
\end{bmatrix}_q \begin{bmatrix}
h+k-s-1\\
h-s\\
\end{bmatrix}_q  F_{n,h}^{(d,\ell-s)}\\
\text{(using \eqref{eq:Fnn_formula})}& = t^{n-\ell}q^{\binom{k}{2}+\binom{\ell}{2}}\begin{bmatrix}
k\\
\ell
\end{bmatrix}_q \begin{bmatrix}
n+k-\ell-1\\
n-\ell
\end{bmatrix}_q q^{\binom{n-d}{2}}\begin{bmatrix}
n\\
d
\end{bmatrix}_q\\
& \quad + \sum_{h=1}^{n-1} \sum_{s=0}^{\ell} t^{n-\ell}q^{\binom{k}{2}+\binom{s}{2}}\begin{bmatrix}
k\\
s\\
\end{bmatrix}_q \begin{bmatrix}
h+k-s-1\\
h-s\\
\end{bmatrix}_q  F_{n,h}^{(d,\ell-s)}
\end{align}
\begin{align}
\text{(using \eqref{eq:rel_F_G})}& = t^{n-\ell}q^{\binom{k}{2}+\binom{\ell}{2}}\begin{bmatrix}
k\\
\ell
\end{bmatrix}_q \begin{bmatrix}
n+k-\ell-1\\
n-\ell
\end{bmatrix}_q q^{\binom{n-d}{2}}\begin{bmatrix}
n\\
d
\end{bmatrix}_q\\
&  \quad +\sum_{h=1}^{n-1}\sum_{s=0}^{\ell}\sum_{j=1}^{\min(n-d,h)}t^{n-\ell}q^{\binom{k}{2}+\binom{s}{2}}\begin{bmatrix}
k\\
s
\end{bmatrix}_q \begin{bmatrix}
h\\
j
\end{bmatrix}_q \\
\notag & \quad \times \begin{bmatrix}
h+k-s-1\\
h-s
\end{bmatrix}_q \widetilde{G}_{n-h,j}^{(d-h+j,\ell-s)},
\end{align}
as we wanted.

For \eqref{eq:reco_Gtilde} we get the same argument using \eqref{eq:rel_Gtilde_Ftilde}, \eqref{eq:Ftildenn_formula} and then \eqref{eq:rel_Ftilde_Gtilde}.

The initial conditions are easy to check. This completes the proof.
\end{proof}

\subsection{A recursion for $H_{n,k,i}^{(d,\ell)}$ and $\overline{H}_{n,k,i}^{(d,\ell)}$}

In this section we establish a recursion for $H_{n,k,i}^{(d,\ell)}$ and $\overline{H}_{n,k,i}^{(d,\ell)}$.

\begin{theorem} \label{thm:reco_H_and_Htilde}
For $k,\ell,d\geq 0$, $n\geq k+\ell$ and $n\geq d$, the $H_{n,k,i}^{(d,\ell)}$ satisfy the following recursion:
\begin{align}
H_{n,n,i}^{(d,\ell)} & =\delta_{i,\ell} q^{\binom{n-\ell-d}{2}}\begin{bmatrix}
n-1\\
\ell
\end{bmatrix}_q\begin{bmatrix}
n-\ell\\
d
\end{bmatrix}_q,
\end{align}
and for $1\leq k<n$
\begin{align} \label{eq:reco_H}
 H_{n,k,i}^{(d,\ell)} & = t^{n-k}\sum_{s=0}^{k-i}\sum_{h=0}^{n-k}q^{\binom{s}{2}} \begin{bmatrix}
k-i\\
s
\end{bmatrix}_q \begin{bmatrix}
k-1\\
i
\end{bmatrix}_q \begin{bmatrix}
s+i-1+h\\
h
\end{bmatrix}_q \sum_{j=0}^{\min(n-k-h,\ell-i)}H_{n-k,h+j,j}^{(d-k+i+s,\ell-i)},
\end{align}
with initial conditions
\begin{equation}
H_{0,k,i}^{(d,\ell)}=H_{n,0,i}^{(d,\ell)}=0.
\end{equation}
Similarly
\begin{align}
\overline{H}_{n,n,i}^{(d,\ell)} & =\delta_{i,\ell} q^{\binom{n-\ell-d}{2}}\begin{bmatrix}
n-1\\
\ell
\end{bmatrix}_q\begin{bmatrix}
n-\ell-1\\
d
\end{bmatrix}_q,
\end{align}
and for $1\leq k<n$
\begin{align} \label{eq:reco_Hbar}
 \overline{H}_{n,k,i}^{(d,\ell)} & = t^{n-k}\sum_{s=0}^{k-i}\sum_{h=0}^{n-k}q^{\binom{s}{2}} \begin{bmatrix}
k-i\\
s
\end{bmatrix}_q \begin{bmatrix}
k-1\\
i
\end{bmatrix}_q \begin{bmatrix}
s+i-1+h\\
h
\end{bmatrix}_q \sum_{j=0}^{\min(n-k-h,\ell-i)}\overline{H}_{n-k,h+j,j}^{(d-k+i+s,\ell-i)},
\end{align}
with initial conditions
\begin{equation}
\overline{H}_{0,k,i}^{(d,\ell)}=\overline{H}_{n,0,i}^{(d,\ell)}=0.
\end{equation}
\end{theorem}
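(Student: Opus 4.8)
The plan is to read off both recursions directly from Theorem~\ref{thm:rel_F_H}, substituted into the defining formulas \eqref{eq:rel_Htilde_Ftilde} and \eqref{eq:rel_H_F}; no new computation is needed beyond bookkeeping with the index ranges.

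First I would dispose of the base cases. The claimed formulas for $H_{n,n,i}^{(d,\ell)}$ and $\overline{H}_{n,n,i}^{(d,\ell)}$ are verbatim the definitions given just above \eqref{eq:rel_Htilde_Ftilde} and \eqref{eq:rel_H_F}, so there is nothing to check. The initial conditions $\overline{H}_{0,k,i}^{(d,\ell)}=\overline{H}_{n,0,i}^{(d,\ell)}=0$, and likewise for $H$, follow from the convention that a polynomial vanishes as soon as one of its indices leaves the admissible range, using the vanishing of the relevant Gaussian binomials (for instance $\begin{bmatrix}-1\\ \ell\end{bmatrix}_q=0$).

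For the recursive step with $1\le k<n$, I would start from the definition
\[
\overline{H}_{n,k,i}^{(d,\ell)} = t^{n-k}\sum_{s=0}^{k-i}\sum_{h=0}^{n-k}q^{\binom{s}{2}} \begin{bmatrix} k-i\\ s\end{bmatrix}_q \begin{bmatrix} k-1\\ i\end{bmatrix}_q \begin{bmatrix} s+i-1+h\\ h\end{bmatrix}_q \widetilde{F}_{n-k,h}^{(d-k+i+s,\ell-i)}
\]
and insert into the last factor the expansion \eqref{eq:rel_Ftilde_and_Hbar} of Theorem~\ref{thm:rel_F_H}, applied with $(n-k,\,h,\,d-k+i+s,\,\ell-i)$ in place of $(n,k,d,\ell)$, namely
\[
\widetilde{F}_{n-k,h}^{(d-k+i+s,\ell-i)} = \sum_{j=0}^{\min(n-k-h,\,\ell-i)}\overline{H}_{n-k,h+j,j}^{(d-k+i+s,\ell-i)}.
\]
This substitution produces exactly \eqref{eq:reco_Hbar}. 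The same argument with \eqref{eq:rel_H_F} and \eqref{eq:rel_F_and_H} in place of \eqref{eq:rel_Htilde_Ftilde} and \eqref{eq:rel_Ftilde_and_Hbar} yields \eqref{eq:reco_H}.

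The one point that deserves care is that Theorem~\ref{thm:rel_F_H} is stated under the hypotheses $n\ge k+\ell$ and $n\ge d$; in the substitution above these become $n-k\ge h+(\ell-i)$ and $n\ge d+i+s$. Outside this range every symmetric function occurring vanishes by the conventions on negative indices, so \eqref{eq:rel_Ftilde_and_Hbar} continues to hold as a (trivial) identity and the argument goes through term by term. This is the main — and essentially the only — obstacle, and it amounts to routine verification.
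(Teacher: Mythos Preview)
Your proposal is correct and is essentially the paper's own argument: substitute the expansion of $F$ (resp.\ $\widetilde F$) from Theorem~\ref{thm:rel_F_H} into the defining formula \eqref{eq:rel_H_F} (resp.\ \eqref{eq:rel_Htilde_Ftilde}). The only cosmetic difference is that the paper proves \eqref{eq:reco_H} this way and then deduces \eqref{eq:reco_Hbar} from it via the alternating-sum relation \eqref{eq:rel_Htilde_H}, whereas you handle both cases directly; either route is fine.
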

\begin{proof}
Clearly \eqref{eq:reco_Hbar} follows from \eqref{eq:reco_H} using \eqref{eq:rel_Htilde_H}. Hence we will only prove \eqref{eq:reco_H}.

To prove \eqref{eq:reco_H}, just combine the definition \eqref{eq:rel_Htilde_H} of $H_{n,k,i}^{(d,\ell)}$ and \eqref{eq:rel_F_and_H}.

The initial conditions are easy to check.
\end{proof}
\begin{remark} \label{rem:rewriting_reco_H}
Using some change of variables, we can rewrite the recursions in Theorem~\ref{thm:reco_H_and_Htilde} in the following way:
\begin{align*} 
\notag \overline{H}_{n,k,i}^{(d,\ell)} & = t^{n-k}\sum_{s=0}^{k-i}\sum_{h=0}^{n-k}q^{\binom{s}{2}} \begin{bmatrix}
k-i\\
s
\end{bmatrix}_q \begin{bmatrix}
k-1\\
i
\end{bmatrix}_q \begin{bmatrix}
s+i-1+h\\
h
\end{bmatrix}_q \sum_{j=0}^{\min(n-k-h,\ell-i)}\overline{H}_{n-k,h+j,j}^{(d-k+i+s,\ell-i)}\\
\notag & = t^{n-k}\sum_{s=0}^{k-i}\sum_{j=0}^{\ell-i}\sum_{f=j}^{n-k+j}q^{\binom{s}{2}} \begin{bmatrix}
k-i\\
s
\end{bmatrix}_q \begin{bmatrix}
k-1\\
i
\end{bmatrix}_q \begin{bmatrix}
s+i-1+f-j\\
f-j
\end{bmatrix}_q \overline{H}_{n-k,f,j}^{(d-k+i+s,\ell-i)}\\
& = t^{n-k}\sum_{p=i}^{k}\sum_{j=0}^{\ell-i}\sum_{f=j}^{n-k+j}q^{\binom{p-i}{2}} \begin{bmatrix}
k-i\\
p-i
\end{bmatrix}_q \begin{bmatrix}
k-1\\
i
\end{bmatrix}_q \begin{bmatrix}
p-1+f-j\\
f-j
\end{bmatrix}_q \overline{H}_{n-k,f,j}^{(d-k+p,\ell-i)}.
\end{align*}
Similarly for $H_{n,k,i}^{(d,\ell)}$.
\end{remark}

\section{Another symmetric function identity}
The goal of this section is to prove the following theorem and deduce some of its consequences.

\begin{theorem} \label{thm:schroeder_identity}
	For all $a,b,k\in \mathbb{N}$, with $a\geq 1$, $b\geq 1$ and $1\leq k\leq a$, we have
	\begin{equation} \label{eq:Schroeder_identity}
	\langle \Delta_{e_a}'\Delta_{e_{a+b-k-1}}'e_{a+b},h_{a+b}\rangle = \langle \Delta_{h_k}\Delta_{e_{a-k}}' e_{a+b-k},e_{a+b-k}\rangle.
	\end{equation}
\end{theorem}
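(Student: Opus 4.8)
The plan is to prove \eqref{eq:Schroeder_identity} by expanding both sides in the Macdonald basis and matching them term-by-term using Macdonald--Koornwinder reciprocity \eqref{eq:Macdonald_reciprocity}, exactly in the spirit of the manipulations carried out in the proof of Theorem~\ref{thm:rel_F_H}. First I would rewrite the left-hand side using Lemma~\ref{lem:Mac_hook_coeff} to move one delta operator onto the second argument, obtaining
\[
\langle \Delta_{e_a}'\Delta_{e_{a+b-k-1}}'e_{a+b},h_{a+b}\rangle = \langle \Delta_{e_a}' e_{a+b},e_{a+b-k-1}\,\text{(something)}\rangle,
\]
and then expand $e_{a+b}$ in the Macdonald basis via \eqref{eq:en_expansion}. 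Applying $\Delta_{e_a}'$ and $\Delta_{e_{a+b-k-1}}'$ produces the factor $e_a[B_\mu-1]\,e_{a+b-k-1}[B_\mu-1]$ against $\frac{MB_\mu \Pi_\mu}{w_\mu}$, and pairing with $h_{a+b}$ gives a sum over $\mu\vdash a+b$ of a product of such plethystic evaluations. The right-hand side I would treat with the same toolkit: using \eqref{eq:lem_e_h_Delta} to absorb $\Delta_{e_{a-k}}'$ and then \eqref{eq:Mac_hook_coeff_ss} or the expansion \eqref{eq:e_h_expansion} to express $\langle \Delta_{h_k}\Delta_{e_{a-k}}' e_{a+b-k},e_{a+b-k}\rangle$ as a sum over $\nu\vdash a+b-k$ of $\frac{\Pi_\nu}{w_\nu}$ times $h_k[(1-t)B_\nu]$, $T_\nu$ (or $e_{|\nu|}[B_\nu]$), and another elementary symmetric evaluation.

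Second, the key mechanism linking the two sides is \emph{Macdonald--Koornwinder reciprocity} \eqref{eq:Macdonald_reciprocity}, which lets me trade a sum over partitions of $a+b$ for a sum over partitions of $a+b-k$ whenever one of the plethystic arguments has the form $\widetilde H_\gamma[MB_\nu]$. Concretely, after writing the left side with a hook-Schur evaluation $\langle\widetilde H_\mu,s_{(a+b-r,1^r)}\rangle = e_r[B_\mu-1]$ and introducing an auxiliary second sum via \eqref{eq:e_h_expansion}, I would be in a position analogous to the chain of equalities in the proof of Theorem~\ref{thm:rel_F_H}, where one sees $\sum_{\mu}\frac{\widetilde H_\mu[MB_\nu]}{w_\mu}h_k[(1-t)B_\mu]\,(\dots)$ and then invokes Haglund's summation \eqref{eq:Haglund_summation} (the $\ell=0$ case of the master formula) — or, if an extra $e_\ell[B_\mu]$ factor survives, the full master formula \eqref{eq:mastereq} — to collapse the inner sum. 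The delicate combinatorial bookkeeping is to check that the resulting $q$-binomial coefficients and $q$-powers on both sides agree; this is where Lemma~\ref{lem:elementary3} and the $q$-binomial recursion \eqref{eq:qbin_recursion} will be used, just as in the previous section.

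Alternatively — and this may be the cleaner route — I would exploit Haglund's Corollary~2 (the lemma stated just before Theorem~\ref{thm:Haglund_formula}), namely $\langle \Delta_{e_{d-1}}e_n,f\rangle = \langle \Delta_{\omega f}e_d,h_d\rangle$, applied with a judicious choice of $f$. Writing the left side of \eqref{eq:Schroeder_identity} as $\langle \Delta_{e_a}'\Delta_{e_{a+b-k-1}}' e_{a+b}, h_{a+b}\rangle$ and recognizing $\Delta_{e_{a+b-k-1}}' = \Delta_{e_{a+b-k}}$ on the relevant graded piece modulo lower terms (via \eqref{eq:deltaprime}), one can hope to peel off the $\Delta_{e_{a+b-k-1}}'$ using a pairing identity, producing $\Delta_{h_k}$ on the other side after applying $\omega$ to an elementary symmetric function; the surviving $\Delta_{e_a}'$ on $e_{a+b}$ should, after the degree shift, become $\Delta_{e_{a-k}}'$ on $e_{a+b-k}$. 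I would verify this by checking the identity on the Macdonald basis, where both operators act diagonally and the problem reduces to a polynomial identity in $B_\mu$, $q$, $t$.

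The main obstacle I anticipate is \emph{controlling the interaction between the two primed delta operators} on the left: $\Delta_{e_a}'$ and $\Delta_{e_{a+b-k-1}}'$ both act via $f[B_\mu-1]$, and the product $e_a[B_\mu-1]\,e_{a+b-k-1}[B_\mu-1]$ does not factor through a single elementary symmetric function, so a naive application of a pairing lemma will not directly reduce the number of operators. Resolving this will likely require expanding one of the two products using \eqref{eq:e_h_sum_alphabets} (writing $e_{a+b-k-1}[B_\mu-1]$ as a sum $\sum_j (-1)^{a+b-k-1-j} e_j[B_\mu]$) and then recombining, so that after reciprocity and the master summation formula the alternating signs telescope — precisely the role played by \eqref{eq:rel_Ftilde_F} and the manipulations around it in earlier sections. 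Getting the index ranges and the vanishing of boundary terms right (using the convention $e_k=h_k=0$ for $k<0$ and $s_\mu[1-u]$ via \eqref{eq:s_plethystic_eval}) will be the fiddly part, but no genuinely new idea beyond the techniques already deployed in Sections~3 and~4 should be needed.
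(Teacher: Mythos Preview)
Your proposal identifies the right opening moves---expand in the Macdonald basis, use reciprocity \eqref{eq:Macdonald_reciprocity}, introduce an auxiliary sum via \eqref{eq:e_h_expansion}---but the key tool you plan to invoke does not apply here, and the genuinely new idea is missing. You intend to reach an inner sum of the shape $\sum_\mu \frac{\widetilde H_\mu[MB_\nu]}{w_\mu}\,h_k[(1-t)B_\mu](\cdots)$ and collapse it with Haglund's summation \eqref{eq:Haglund_summation} or the master formula \eqref{eq:mastereq}. But no factor $h_k[(1-t)B_\mu]$ ever arises in this problem: the operator on the right of \eqref{eq:Schroeder_identity} is $\Delta_{h_k}$, with eigenvalue $h_k[B_\gamma]$, not $h_k[(1-t)B_\gamma]$; the latter comes only from the $E_{n,k}$ expansion via \eqref{eq:garsia_haglund_eval}, which plays no role here. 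So the summation machinery of Section~3 is simply not available, and your telescoping plan around \eqref{eq:rel_Ftilde_F} has nothing to bite on. Your alternative route through Haglund's lemma $\langle \Delta_{e_{d-1}}e_n,f\rangle=\langle \Delta_{\omega f}e_d,h_d\rangle$ also stalls: it trades one unprimed $\Delta_e$ for a $\Delta_{\omega f}$, but cannot disentangle the \emph{two} primed $\Delta'$ operators on the left, nor produce the composite $\Delta_{h_k}\Delta_{e_{a-k}}'$ on the right.

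What the paper actually does, after the reciprocity step you describe, is apply the \emph{Pieri rule} \eqref{eq:def_dmunu} to $e_k\!\left[\tfrac{X}{M}\right]\widetilde H_\gamma[X]$, thereby rewriting the right-hand side as
\[
\sum_{\beta\vdash a+b}\Bigl(\sum_{\gamma\subset_k\beta}c_{\beta\gamma}^{(k)}\,B_\gamma T_\gamma\Bigr)\,e_a[B_\beta-1]\,\frac{M\Pi_\beta}{w_\beta},
\]
while the left-hand side is the straightforward eigenvalue sum $\sum_{\beta\vdash a+b} e_{a+b-k-1}[B_\beta-1]\,B_\beta\,e_a[B_\beta-1]\,\tfrac{M\Pi_\beta}{w_\beta}$. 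The entire theorem then reduces to the \emph{pointwise} identity
\[
e_{n-k-1}[B_\beta-1]\,B_\beta \;=\; \sum_{\gamma\subset_k\beta} c_{\beta\gamma}^{(k)}\,B_\gamma T_\gamma \qquad (\beta\vdash n),
\]
which is a new lemma proved by induction on $n-k$ via the Bergeron--Haiman Pieri recursion \eqref{eq:cmunu_recursion} and \eqref{eq:Pieri_sum1}. This Pieri-coefficient identity is the heart of the proof and is not anticipated anywhere in your outline; the obstacle you flag (the interaction of the two $\Delta'$ on the left) is in fact trivial---they just contribute the eigenvalue product---and the real work lies entirely on the right-hand side.
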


\subsection{Proof of Theorem~\ref{thm:schroeder_identity}}

We start working on the right hand side of \eqref{eq:Schroeder_identity}: using \eqref{eq:en_expansion}, we have
\begin{align*}
\Delta_{h_k}\Delta_{e_{a-k}}' e_{a+b-k} & = \sum_{\gamma\vdash a+b-k} h_k[B_\gamma] e_{a-k}[B_\gamma-1] \widetilde{H}_{\gamma}[X] \frac{MB_{\gamma}\Pi_{\gamma}}{w_{\gamma}}\\
\text{(using \eqref{eq:e_h_sum_alphabets})} & = \sum_{\gamma\vdash a+b-k} h_k[B_\gamma] \sum_{i=0}^{a-k}(-1)^ie_{a-k-i}[B_\gamma] \widetilde{H}_{\gamma}[X] \frac{MB_{\gamma}\Pi_{\gamma}}{w_{\gamma}}\\
& = \sum_{i=0}^{a-k}(-1)^i\sum_{\gamma\vdash a+b-k} h_k\left[\frac{MB_\gamma}{M}\right] e_{a-k-i}\left[\frac{MB_\gamma}{M}\right] \widetilde{H}_{\gamma}[X] \frac{MB_{\gamma}\Pi_{\gamma}}{w_{\gamma}}\\
\text{(using \eqref{eq:e_h_expansion})} & = \sum_{i=0}^{a-k}(-1)^i\sum_{\gamma\vdash a+b-k} \sum_{\mu\vdash a-i}\frac{e_k[B_{\mu}]\widetilde{H}_{\mu}[MB_{\gamma}]}{w_{\mu}} \cdot  \widetilde{H}_{\gamma}[X] \frac{MB_{\gamma}\Pi_{\gamma}}{w_{\gamma}}\\
\text{(as $e_k[B_{\mu}]=0$ for $k>|\mu|$)} & = \sum_{i=0}^{a}(-1)^i\sum_{\gamma\vdash a+b-k} \sum_{\mu\vdash a-i}\frac{e_k[B_{\mu}]\widetilde{H}_{\mu}[MB_{\gamma}]}{w_{\mu}} \cdot  \widetilde{H}_{\gamma}[X] \frac{MB_{\gamma}\Pi_{\gamma}}{w_{\gamma}}\\
\text{(using \eqref{eq:Macdonald_reciprocity})} & = \sum_{i=0}^{a}(-1)^i\sum_{\gamma\vdash a+b-k} \sum_{\mu\vdash a-i}\frac{e_k[B_{\mu}]}{w_{\mu}}\frac{\widetilde{H}_{\gamma}[MB_{\mu}]\Pi_{\mu}}{\Pi_{\gamma}} \cdot  \widetilde{H}_{\gamma}[X] \frac{MB_{\gamma}\Pi_{\gamma}}{w_{\gamma}}\\
& = \sum_{i=0}^{a}(-1)^i\sum_{\gamma\vdash a+b-k} \sum_{\mu\vdash a-i} e_k\left[\frac{MB_\mu}{M}\right] \widetilde{H}_{\gamma}[MB_{\mu}] \cdot  \widetilde{H}_{\gamma}[X] \frac{\Pi_{\mu} MB_{\gamma}}{w_{\mu}w_{\gamma}}\\
\text{(using \eqref{eq:def_dmunu})} & = \sum_{i=0}^{a}(-1)^i\sum_{\gamma\vdash a+b-k} \sum_{\mu\vdash a-i} \sum_{\beta \supset_k \gamma} d_{\beta \gamma}^{(k)} \widetilde{H}_{\beta}[MB_{\mu}] \cdot  \widetilde{H}_{\gamma}[X] \frac{\Pi_{\mu} MB_{\gamma}}{w_{\mu}w_{\gamma}}\\
\text{(using \eqref{eq:rel_cmunu_dmunu})} & = \sum_{i=0}^{a}(-1)^i\sum_{\gamma\vdash a+b-k} \sum_{\mu\vdash a-i} \sum_{\beta \supset_k \gamma} c_{\beta \gamma}^{(k)}\frac{w_{\gamma}}{w_{\beta}} \widetilde{H}_{\beta}[MB_{\mu}] \cdot  \widetilde{H}_{\gamma}[X] \frac{\Pi_{\mu} MB_{\gamma}}{w_{\mu}w_{\gamma}}\\
& =\sum_{i=0}^{a}(-1)^i \sum_{\mu\vdash a-i}  \sum_{\beta\vdash a+b} \sum_{\gamma \subset_k \beta} c_{\beta \gamma}^{(k)}B_{\gamma} \widetilde{H}_{\gamma}[X] \cdot \widetilde{H}_{\beta}[MB_{\mu}] \frac{\Pi_{\mu} M}{w_{\mu}w_{\beta}}\\
\text{(using \eqref{eq:Macdonald_reciprocity})} & =  \sum_{i=0}^{a}(-1)^i \sum_{\beta\vdash a+b} \sum_{\gamma \subset_k \beta} c_{\beta \gamma}^{(k)}B_{\gamma} \widetilde{H}_{\gamma}[X] \cdot \sum_{\mu\vdash a-i}  \frac{\widetilde{H}_{\mu}[MB_{\beta}]}{w_{\mu}}\frac{\Pi_{\beta} M}{w_{\beta}} 
\end{align*}

\begin{align*}
\text{(using \eqref{eq:e_h_expansion})}  & =  \sum_{\beta\vdash a+b} \sum_{\gamma \subset_k \beta} c_{\beta \gamma}^{(k)}B_{\gamma} \widetilde{H}_{\gamma}[X] \cdot \sum_{i=0}^{a}(-1)^ie_{a-i}[B_{\beta}]\frac{\Pi_{\beta} M}{w_{\beta}}\\
\text{(using \eqref{eq:e_h_sum_alphabets})} & =  \sum_{\beta\vdash a+b} \sum_{\gamma \subset_k \beta} c_{\beta \gamma}^{(k)}B_{\gamma} \widetilde{H}_{\gamma}[X] \cdot e_{a}[B_{\beta}-1]\frac{\Pi_{\beta} M}{w_{\beta}}.
\end{align*}
Taking the scalar product with $e_{a+b-k}$ we get
\begin{align*}
\langle \Delta_{h_k}\Delta_{e_{a-k}}' e_{a+b-k} ,e_{a+b-k}\rangle &  =  \sum_{\beta\vdash a+b} \sum_{\gamma \subset_k \beta} c_{\beta \gamma}^{(k)}B_{\gamma} \langle \widetilde{H}_{\gamma}[X],e_{a+b-k}\rangle \cdot e_a[B_{\beta}-1]\frac{\Pi_{\beta} M}{w_{\beta}}\\
\text{(using \eqref{eq:Mac_hook_coeff})} &  =  \sum_{\beta\vdash a+b} \sum_{\gamma \subset_k \beta} c_{\beta \gamma}^{(k)}B_{\gamma} e_{a+b-k}[B_{\gamma}] \cdot e_a[B_{\beta}-1]\frac{\Pi_{\beta} M}{w_{\beta}}\\
\text{(using \eqref{eq:Bmu_Tmu})} &  =  \sum_{\beta\vdash a+b} \sum_{\gamma \subset_k \beta} c_{\beta \gamma}^{(k)}B_{\gamma} T_{\gamma} \cdot e_a[B_{\beta}-1]\frac{\Pi_{\beta} M}{w_{\beta}}.
\end{align*}
For the left hand side we have
\begin{align*}
\langle \Delta_{e_a}'\Delta_{e_{a+b-k-1}}'e_{a+b},h_{a+b}\rangle 
&  =  \sum_{\beta\vdash a+b} e_{a+b-k-1}[B_{\beta}-1]B_{\beta} \langle \widetilde{H}_{\beta}[X],h_{a+b}\rangle \cdot e_a[B_{\beta}-1]\frac{\Pi_{\beta} M}{w_{\beta}}\\
\text{(using \eqref{eq:Bmu_Tmu})} &  =  \sum_{\beta\vdash a+b} e_{a+b-k-1}[B_{\beta}-1]B_{\beta}   \cdot e_a[B_{\beta}-1]\frac{\Pi_{\beta} M}{w_{\beta}}.
\end{align*}
So, in order to conclude the proof of \eqref{eq:Schroeder_identity}, it will be enough to prove the following lemma.
\begin{lemma}
For every $n,k\in \mathbb{N}$, with $n> k\geq 1$, and for every $\beta\vdash n$, we have
\begin{equation}
e_{n-k-1}[B_{\beta}-1]B_{\beta}=\sum_{\gamma \subset_k \beta} c_{\beta \gamma}^{(k)}B_{\gamma} T_{\gamma} .
\end{equation}
\end{lemma}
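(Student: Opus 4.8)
The plan is to prove the identity
\[
e_{n-k-1}[B_{\beta}-1]\,B_{\beta}=\sum_{\gamma \subset_k \beta} c_{\beta \gamma}^{(k)}B_{\gamma} T_{\gamma}
\]
by evaluating both sides against a convenient family of test functions, or directly by manipulating the Pieri coefficients. The most natural starting point is the right-hand side: the coefficients $c_{\beta\gamma}^{(k)}$ are exactly the coefficients appearing in the expansion $h_k^\perp \widetilde H_\beta=\sum_{\gamma\subset_k\beta}c_{\beta\gamma}^{(k)}\widetilde H_\gamma$. Since we also know (from \eqref{eq:Bmu_Tmu}) that $B_\gamma=e_1[B_\gamma]$ and $T_\gamma=e_{|\gamma|}[B_\gamma]=e_{n-k}[B_\gamma]$, the sum $\sum_\gamma c_{\beta\gamma}^{(k)}B_\gamma T_\gamma$ is the result of applying a Macdonald-eigenoperator-type expression. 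The cleanest route is: first interpret $\sum_\gamma c_{\beta\gamma}^{(k)}B_\gamma T_\gamma$ as a scalar product. Indeed, $T_\gamma e_{n-k-1}[B_\gamma-1]=\langle \nabla\Delta'_{e_{n-k-1}}\widetilde H_\gamma,\,\text{something}\rangle$ type manipulations, but more to the point, using $T_\gamma B_\gamma=e_{n-k}[B_\gamma]\,e_1[B_\gamma]$ and the Pieri expansion backwards, one recognizes
\[
\sum_{\gamma\subset_k\beta}c_{\beta\gamma}^{(k)}B_\gamma T_\gamma
=\sum_{\gamma\subset_k\beta}c_{\beta\gamma}^{(k)}\langle\widetilde H_\gamma,\,e_1 h_{n-k-1}\rangle\,T_\gamma,
\]
which, via $h_k^\perp$ and the nabla operator $\nabla\widetilde H_\gamma=T_\gamma\widetilde H_\gamma$, equals $\langle h_k^\perp\nabla\widetilde H_\beta,\,e_1h_{n-k-1}\rangle$ — except that one must be careful because $h_k^\perp$ and $\nabla$ do not commute, so this needs to be done in the right order.

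The key step I would carry out is: write $T_\gamma=T_\beta/T_{\beta/\gamma}$-type relations are messy, so instead use the recursion \eqref{eq:cmunu_recursion} for the $c$'s, or better, use the summation formula \eqref{eq:Pieri_sum1}, $B_\gamma=\sum_{\delta\subset_1\gamma}c_{\gamma\delta}^{(1)}$, combined with the analogue expressing $T_\gamma$. Actually the slickest approach is probably to prove the identity by checking it on the basis after applying $\phi^{-1}$ or by using \eqref{eq:glenn_formula}. Concretely: multiply both sides by $\widetilde H_\beta[X]\Pi_\beta M/w_\beta$ and sum over $\beta\vdash n$; the left side becomes $\Delta'_{e_{n-k-1}}\Delta'_{e_1}\cdots$ applied to $e_n$ (essentially $\Delta'_{e_{n-k-1}}\nabla e_n$ restricted appropriately, using $B_\beta=e_1[B_\beta]$), and the right side, after swapping the order of summation to sum over $\gamma$ first with $\beta\supset_k\gamma$, becomes (using \eqref{eq:rel_cmunu_dmunu} and \eqref{eq:def_dmunu}) an expression $e_k[X/M]\cdot(\text{sum over }\gamma)$ where the $\gamma$-sum reassembles, via \eqref{eq:en_expansion} applied to $n-k$ and the fact that $B_\gamma T_\gamma=e_1[B_\gamma]e_{n-k}[B_\gamma]$, into a product of delta operators applied to $e_{n-k}$. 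Then the lemma reduces to the symmetric-function identity $\Delta'_{e_{n-k-1}}\nabla e_n \overset{?}{=} (\text{coefficient extraction involving }e_k[X/M])$ which should match a known Pieri-type identity, or can be pushed to Macdonald–Koornwinder reciprocity \eqref{eq:Macdonald_reciprocity}.

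Alternatively — and this is the route I would actually try first because it is most elementary — I would attempt a direct proof using the Pieri recursion \eqref{eq:cmunu_recursion} together with \eqref{eq:Pieri_sum1}. Note that $\sum_{\gamma\subset_k\beta}c_{\beta\gamma}^{(k)}B_\gamma T_\gamma$ can be telescoped: apply \eqref{eq:cmunu_recursion} to write $c_{\beta\gamma}^{(k)}$ in terms of $c_{\beta\alpha}^{(k-1)}c_{\alpha\gamma}^{(1)}$ with $\gamma\subset_1\alpha\subset_{k-1}\beta$, and the $\sum_\gamma c_{\alpha\gamma}^{(1)}B_\gamma T_\gamma$ can be evaluated using \eqref{eq:Pieri_sum1} plus an analogous statement for $T$; this should yield a recursion in $k$ that matches the recursion satisfied by $e_{n-k-1}[B_\beta-1]B_\beta$ (which follows from \eqref{eq:deltaprime}-style identities: $e_{n-k-1}[B_\beta-1]$ satisfies $e_{n-k-1}[B_\beta-1]=e_{n-k-1}[B_\beta]-e_{n-k-2}[B_\beta]+\cdots$, the partial-sum telescoping of \eqref{eq:e_h_sum_alphabets} with $Y=-1$). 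I expect the \textbf{main obstacle} to be the non-commutativity of $h_k^\perp$ with $\nabla$ and $\Delta'$: one cannot naively pull $T_\gamma$ through the Pieri coefficient, so the bookkeeping in either the telescoping argument or the reciprocity argument is the delicate part, and I would need to set up the induction on $k$ very carefully, keeping track of the $T_\alpha/T_\beta$ factors that \eqref{eq:cmunu_recursion} introduces. If the direct approach gets stuck, falling back to the generating-function approach — summing against $\widetilde H_\beta[X]\Pi_\beta M/w_\beta$ and recognizing both sides as explicit delta-operator images — and then invoking Macdonald–Koornwinder reciprocity \eqref{eq:Macdonald_reciprocity} and \eqref{eq:e_h_expansion} to match them, should close the gap.
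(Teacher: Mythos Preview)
Your proposal is really a menu of approaches rather than a proof, so let me focus on the route you say you would actually try first: use \eqref{eq:Pieri_sum1} and the recursion \eqref{eq:cmunu_recursion} and induct. That is exactly what the paper does, and you have correctly identified the two key ingredients. But the specific way you propose to combine them is backwards, and that matters.

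You propose to expand $c_{\beta\gamma}^{(k)}$ via \eqref{eq:cmunu_recursion} in terms of $c_{\beta\alpha}^{(k-1)}c_{\alpha\gamma}^{(1)}$. Doing so introduces a factor $1/B_{\beta/\gamma}$ depending on $\gamma$, so the inner sum $\sum_{\gamma\subset_1\alpha} c_{\alpha\gamma}^{(1)}B_\gamma T_\gamma$ you hope to evaluate never appears cleanly; the $\gamma$-dependence is tangled across $B_\gamma$, $T_\gamma$, and $B_{\beta/\gamma}$. There is also no ``analogue of \eqref{eq:Pieri_sum1} for $T_\gamma$'' of the simple kind you allude to. So this direction stalls.

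The paper's move is to expand $B_\gamma$ (not $c_{\beta\gamma}^{(k)}$) using \eqref{eq:Pieri_sum1}: $B_\gamma=\sum_{\delta\subset_1\gamma}c_{\gamma\delta}^{(1)}$. Then the right-hand side becomes $\sum_{\delta\subset_{k+1}\beta}\sum_{\delta\subset_1\gamma\subset_k\beta}c_{\beta\gamma}^{(k)}c_{\gamma\delta}^{(1)}T_\gamma$, and now the stray $T_\gamma$ is \emph{exactly} the weight needed to collapse the inner sum via \eqref{eq:cmunu_recursion} into $B_{\beta/\delta}\,c_{\beta\delta}^{(k+1)}\,T_\delta$. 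Writing $B_{\beta/\delta}=B_\beta-B_\delta$ splits this into two pieces; one of them, $\sum_{\delta}c_{\beta\delta}^{(k+1)}T_\delta$, is identified directly as $\langle h_{k+1}^\perp\widetilde H_\beta,e_{n-k-1}\rangle=e_{n-k-1}[B_\beta]$ via \eqref{eq:Mac_hook_coeff}, and the other is the same sum with $k$ replaced by $k+1$. This yields a clean downward recursion in $n-k$ whose closed form is $e_{n-k-1}[B_\beta-1]B_\beta$, using $e_m[B_\beta-1]=e_m[B_\beta]-e_{m-1}[B_\beta-1]$.

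So the ``main obstacle'' you flagged --- tracking the $T$-ratios in \eqref{eq:cmunu_recursion} --- is precisely the point, but it is resolved by going from $k$ to $k{+}1$ rather than $k{-}1$, and by expanding $B_\gamma$ rather than the Pieri coefficient. Your fallback generating-function approach would likely also work but is much heavier; the paper's argument is entirely elementary once the right direction is chosen.
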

\begin{proof}
For $\gamma\vdash n-k$, applying \eqref{eq:Pieri_sum1}, we have 
\begin{equation}
B_{\gamma}=\sum_{\delta\subset_1 \gamma}c_{\gamma \delta}^{(1)}.
\end{equation}
	
Therefore
\begin{align*}
\sum_{\gamma \subset_k \beta} c_{\beta \gamma}^{(k)}B_{\gamma} T_{\gamma} & = \sum_{\gamma \subset_k \beta} c_{\beta \gamma}^{(k)}\sum_{\delta\subset_1 \gamma}c_{\gamma \delta}^{(1)} T_{\gamma}\\
& = \sum_{\delta\subset_{k+1} \beta}\sum_{\delta\subset_1 \gamma \subset_k \beta} c_{\beta \gamma}^{(k)}c_{\gamma \delta}^{(1)} T_{\gamma}\\
& = \sum_{\delta\subset_{k+1} \beta}T_{\delta}B_{\beta/\delta}\frac{1}{B_{\beta/\delta}}\sum_{\delta\subset_1 \gamma \subset_k \beta} c_{\beta \gamma}^{(k)}c_{\gamma \delta}^{(1)} \frac{T_{\gamma}}{T_{\delta}}\\
\text{(using \eqref{eq:cmunu_recursion})}& = \sum_{\delta\subset_{k+1} \beta}T_{\delta}B_{\beta/\delta} c_{\beta \delta}^{(k+1)}\\
& = B_{\beta}\sum_{\delta\subset_{k+1} \beta} c_{\beta \delta}^{(k+1)}T_{\delta} -\sum_{\delta\subset_{k+1} \beta} c_{\beta \delta}^{(k+1)}B_{\delta}T_{\delta},
\end{align*}
which gives
\begin{equation}
\sum_{\gamma \subset_k \beta} c_{\beta \gamma}^{(k)}B_{\gamma} T_{\gamma}=-\sum_{\delta\subset_{k+1} \beta} c_{\beta \delta}^{(k+1)}B_{\delta}T_{\delta} +B_{\beta}\sum_{\delta\subset_{k+1} \beta} c_{\beta \delta}^{(k+1)}T_{\delta}.
\end{equation}
Observe that by \eqref{eq:Bmu_Tmu} we have
\begin{align*}
\sum_{\delta\subset_{k+1} \beta} c_{\beta \delta}^{(k+1)}T_{\delta} 
 & = \sum_{\delta\subset_{k+1} \beta} c_{\beta \delta}^{(k+1)}e_{n-k-1}[B_{\delta}]\\
\text{(using \eqref{eq:Mac_hook_coeff})}& = \sum_{\delta\subset_{k+1} \beta} c_{\beta \delta}^{(k+1)}\langle \widetilde{H}_{\delta},e_{n-k-1}\rangle \\
\text{(using \eqref{eq:def_cmunu})}& =\langle h_{k+1}^{\perp}\widetilde{H}_{\beta},e_{n-k-1}\rangle\\
& =\langle \widetilde{H}_{\beta},e_{n-k-1}h_{k+1}\rangle\\
\text{(using \eqref{eq:Mac_hook_coeff})}  & = e_{n-k-1}[B_{\beta}],
\end{align*}
so
\begin{equation} \label{eq:inductive_step}
\sum_{\gamma \subset_k \beta} c_{\beta \gamma}^{(k)}B_{\gamma} T_{\gamma}=-\sum_{\delta\subset_{k+1} \beta} c_{\beta \delta}^{(k+1)}B_{\delta}T_{\delta} +B_{\beta}e_{n-k-1}[B_{\beta}].
\end{equation}
Therefore we can argue by induction on $n-k$: for $n-k=1$, we have $k=n-1$, so
\begin{equation}
\sum_{\gamma \subset_{n-1} \beta} c_{\beta \gamma}^{(n-1)}B_{\gamma} T_{\gamma}=c_{\beta (1)}^{(n-1)} B_{(1)}T_{(1)}=c_{\beta (1)}^{(n-1)},
\end{equation}
but
\begin{align*}
c_{\beta (1)}^{(n-1)} & =c_{\beta (1)}^{(n-1)}\langle e_{1},e_1\rangle\\
& =c_{\beta (1)}^{(n-1)}\langle \widetilde{H}_{(1)},e_1\rangle\\
& = \langle h_{n-1}^{\perp}\widetilde{H}_{\beta},e_1\rangle\\
& = \langle \widetilde{H}_{\beta},e_1 h_{n-1}\rangle\\
\text{(using \eqref{eq:Mac_hook_coeff})} & = e_1[B_{\beta}]= B_{\beta} = e_0[B_{\beta}-1]B_{\beta}.
\end{align*}
If $n-k\geq 2$, by induction
\begin{equation}
\sum_{\delta\subset_{k+1} \beta} c_{\beta \delta}^{(k+1)}B_{\delta}T_{\delta}=e_{n-k-2}[B_{\beta}-1]B_{\beta},
\end{equation}
so, using \eqref{eq:inductive_step}, we have
\begin{align*}
\sum_{\gamma \subset_k \beta} c_{\beta \gamma}^{(k)}B_{\gamma} T_{\gamma} & =-\sum_{\delta\subset_{k+1} \beta} c_{\beta \delta}^{(k+1)}B_{\delta}T_{\delta} +B_{\beta}e_{n-k-1}[B_{\beta}]\\
& = -e_{n-k-2}[B_{\beta}-1]B_{\beta}+B_{\beta}e_{n-k-1}[B_{\beta}]\\
& = B_{\beta}(e_{n-k-1}[B_{\beta}]-e_{n-k-2}[B_{\beta}-1])\\
& = B_{\beta}e_{n-k-1}[B_{\beta}-1].
\end{align*}
This concludes the proof of the lemma.
\end{proof}
We already observed that the lemma finishes the proof of Theorem~\ref{thm:schroeder_identity}.

\subsection{Some consequences of Theorem~\ref{thm:schroeder_identity}}

We deduce here a few consequences of Theorem~\ref{thm:schroeder_identity}.

First of all, we have the following corollary.
\begin{corollary}
\begin{equation} \label{eq:Schroeder_corol}
\langle \Delta_{e_a}\Delta_{e_{a+b-k-1}}'e_{a+b},h_{a+b}\rangle = \langle \Delta_{h_k}\Delta_{e_{a-k}} e_{a+b-k},e_{a+b-k}\rangle.
\end{equation}
\end{corollary}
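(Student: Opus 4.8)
The plan is to deduce the Corollary directly from Theorem~\ref{thm:schroeder_identity} by absorbing the relation $\Delta_{e_a}=\Delta_{e_a}'+\Delta_{e_{a-1}}'$ on both sides and matching terms. Concretely, I would first record that the operators $\Delta_{e_a}'$, $\Delta_{e_{a-k}}'$, $\Delta_{h_k}$, etc.\ all commute, since each acts diagonally on the Macdonald basis $\{\widetilde H_\mu\}$. Then, working on the homogeneous component $\Lambda^{(a+b)}$ for the left side and $\Lambda^{(a+b-k)}$ for the right side, I would expand using \eqref{eq:deltaprime}:
\begin{align*}
\langle \Delta_{e_a}\Delta_{e_{a+b-k-1}}'e_{a+b},h_{a+b}\rangle
&= \langle \Delta_{e_a}'\Delta_{e_{a+b-k-1}}'e_{a+b},h_{a+b}\rangle + \langle \Delta_{e_{a-1}}'\Delta_{e_{a+b-k-1}}'e_{a+b},h_{a+b}\rangle,\\
\langle \Delta_{h_k}\Delta_{e_{a-k}} e_{a+b-k},e_{a+b-k}\rangle
&= \langle \Delta_{h_k}\Delta_{e_{a-k}}' e_{a+b-k},e_{a+b-k}\rangle + \langle \Delta_{h_k}\Delta_{e_{a-k-1}}' e_{a+b-k},e_{a+b-k}\rangle.
\end{align*}

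The first summands on each side are equal by Theorem~\ref{thm:schroeder_identity} applied with the parameters $(a,b,k)$, valid as long as $a\ge 1$, $b\ge 1$, $1\le k\le a$. For the second summands, I would apply Theorem~\ref{thm:schroeder_identity} again but now with $a$ replaced by $a-1$ (and $b$ replaced by $b+1$, so that $a+b$ is unchanged and $(a-1)+(b+1)=a+b$): the statement
$$\langle \Delta_{e_{a-1}}'\Delta_{e_{(a-1)+(b+1)-k-1}}'e_{(a-1)+(b+1)},h_{(a-1)+(b+1)}\rangle = \langle \Delta_{h_k}\Delta_{e_{(a-1)-k}}' e_{(a-1)+(b+1)-k},e_{(a-1)+(b+1)-k}\rangle$$
reads exactly $\langle \Delta_{e_{a-1}}'\Delta_{e_{a+b-k-1}}'e_{a+b},h_{a+b}\rangle = \langle \Delta_{h_k}\Delta_{e_{a-k-1}}' e_{a+b-k},e_{a+b-k}\rangle$, which is precisely the identification of the two remaining terms. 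Adding the two instances of the theorem term by term gives \eqref{eq:Schroeder_corol}.

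The only subtlety — and the step I would be most careful about — is the range of validity: the second application of Theorem~\ref{thm:schroeder_identity} requires $a-1\ge 1$, i.e.\ $a\ge 2$, and $1\le k\le a-1$. So I would need to handle the boundary cases separately: when $a=1$ (forcing $k=1$, hence $\Delta_{e_{a-1}}'=\Delta_{e_0}'=0$ on positive-degree components, and similarly $\Delta_{e_{a-k-1}}'=\Delta_{e_{-1}}'=0$, so both extra terms vanish and the corollary reduces to the theorem), and when $k=a$ (so $\Delta_{e_{a-k-1}}'=\Delta_{e_{-1}}'=0$ and on the left $\Delta_{e_{a-1}}'\Delta_{e_{a+b-k-1}}'=\Delta_{e_{a-1}}'\Delta_{e_{b-1}}'$, which by the second instance of the theorem with parameters $(a-1,b+1,a-1)$ — legal since then $k=a-1$ on the right — matches $\Delta_{h_{a-1}}\cdots$; but here one must instead note directly that $\Delta_{h_k}\Delta_{e_{a-k-1}}'e_{a+b-k}=0$ when $k=a$, and correspondingly check the left-hand extra term vanishes or matches). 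A clean way to avoid case analysis is to observe that \eqref{eq:deltaprime} together with the conventions $\Delta_{e_j}'=0$ for $j<0$ makes all the degenerate terms automatically zero, so the term-by-term addition of the two instances of Theorem~\ref{thm:schroeder_identity} goes through uniformly, with the understanding that an ``instance'' whose hypotheses fail contributes only zero terms on both sides. I would spell this out in one short paragraph and conclude.
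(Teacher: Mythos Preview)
Your approach is exactly the one the paper takes: apply Theorem~\ref{thm:schroeder_identity} once with $(a,b,k)$ and once with $(a-1,b+1,k)$, then add and use $\Delta_{e_m}=\Delta_{e_m}'+\Delta_{e_{m-1}}'$. The paper's proof is in fact terser than yours and does not discuss the boundary cases at all; your care there is reasonable, though note that your treatment of the $k=a$ case is a bit tangled (the left-hand extra term $\langle \Delta_{e_{a-1}}'\Delta_{e_{b-1}}'e_{a+b},h_{a+b}\rangle$ is not obviously zero, and the paper, like you, effectively relies on the conventions for negative indices together with the implicit parameter ranges inherited from the theorem).
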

\begin{proof}
The same identity \eqref{eq:Schroeder_identity} with $a$ and $b$ replaced by $a-1$ and $b+1$ respectively, gives 
\begin{equation} \label{eq:Schroeder_id2}
\langle \Delta_{e_{a-1}}'\Delta_{e_{a+b-k-1}}'e_{a+b},h_{a+b}\rangle = \langle \Delta_{h_k}\Delta_{e_{a-k-1}}' e_{a+b-k},e_{a+b-k}\rangle.
\end{equation}
Adding this one to \eqref{eq:Schroeder_identity} and using $\Delta_{e_m}=\Delta_{e_m}'+\Delta_{e_{m-1}}'$ on $\Lambda^{(n)}$ with $1\leq m\leq n$, we get the result.
\end{proof}
As an application of \eqref{eq:Schroeder_identity}, we give a first algebraic proof of the symmetry \eqref{eq:HRW_symmetry} predicted by Conjecture 7.1 in \cite{haglundremmelwilson}, which is left open in \cite{zabrocki} as Conjecture 16. A combinatorial proof will be provided in Section~6.
\begin{theorem} \label{thm:symmetry}
	For $n>k+\ell$, the following expression is symmetric in $k$ and $\ell$:
	\begin{equation}
	\langle \Delta_{h_k}\Delta_{e_{n-k-\ell-1}}' e_{n-k},e_{n-k}\rangle.
	\end{equation}
\end{theorem}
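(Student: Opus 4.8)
The plan is to reduce the claimed symmetry in $k$ and $\ell$ to the identity in Theorem~\ref{thm:schroeder_identity}. Set $n=a+b$, so that $a+b-k-\ell-1$ is the lower index appearing in the delta prime operator. The key observation is that, by Theorem~\ref{thm:schroeder_identity} applied with the substitution $a \rightsquigarrow n-\ell$ (so that $b\rightsquigarrow \ell$) and $k$ unchanged, we have
\begin{equation*}
\langle \Delta_{h_k}\Delta_{e_{n-k-\ell-1}}' e_{n-k},e_{n-k}\rangle
= \langle \Delta_{e_{n-\ell}}'\Delta_{e_{n-k-\ell-1}}'e_{n},h_{n}\rangle,
\end{equation*}
valid whenever $n-\ell\geq 1$, $\ell\geq 1$ and $1\leq k\leq n-\ell$, i.e.\ precisely when $n>k+\ell$ and $\ell\geq 1$. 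The right-hand side is manifestly symmetric in $k$ and $\ell$: both $\Delta_{e_{n-\ell}}'$ and $\Delta_{e_{n-k-\ell-1}}'$ act on $\Lambda^{(n)}$ as scalar multiplication (on the $\widetilde H_\mu$ basis) by $e_{n-\ell}[B_\mu-1]$ and $e_{n-k-\ell-1}[B_\mu-1]$ respectively, and these two operators commute; moreover the second one, $e_{n-k-\ell-1}[B_\mu-1]$, is unchanged under swapping $k\leftrightarrow\ell$, while the first one becomes $e_{n-k}[B_\mu-1]$. So what one really needs is that
\begin{equation*}
\langle \Delta_{e_{n-\ell}}'\Delta_{e_{n-k-\ell-1}}'e_{n},h_{n}\rangle
= \langle \Delta_{e_{n-k}}'\Delta_{e_{n-k-\ell-1}}'e_{n},h_{n}\rangle,
\end{equation*}
and this will follow by applying Theorem~\ref{thm:schroeder_identity} a second time, now with the roles of $k$ and $\ell$ interchanged (substituting $a\rightsquigarrow n-k$, $b\rightsquigarrow k$), which rewrites the right-hand side above back as $\langle \Delta_{h_\ell}\Delta_{e_{n-k-\ell-1}}' e_{n-\ell},e_{n-\ell}\rangle$.

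Concretely, the steps I would carry out are: first, check that the hypotheses of Theorem~\ref{thm:schroeder_identity} are met under the substitution $a=n-\ell$, $b=\ell$, $k=k$ — that is, $a=n-\ell\geq 1$, $b=\ell\geq 1$, and $1\leq k\leq a=n-\ell$, all of which hold since $n>k+\ell$ forces $k\leq n-\ell-1<n-\ell$ and $\ell\geq 1$; second, invoke \eqref{eq:Schroeder_identity} to pass from the parking-function-style expression to the "$h_n$" expression $\langle \Delta_{e_{n-\ell}}'\Delta_{e_{n-k-\ell-1}}'e_{n},h_{n}\rangle$; third, observe the symmetry of this last quantity under $k\leftrightarrow\ell$ is \emph{not} yet visible from this form alone unless one also uses that the outer index transforms correctly — so instead one simply applies \eqref{eq:Schroeder_identity} once more with $a=n-k$, $b=k$ to land on $\langle \Delta_{h_\ell}\Delta_{e_{n-k-\ell-1}}' e_{n-\ell},e_{n-\ell}\rangle$, which is the original expression with $k$ and $\ell$ swapped. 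Chaining the two applications of Theorem~\ref{thm:schroeder_identity} gives exactly the asserted symmetry.

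I should however double-check one subtlety: Theorem~\ref{thm:schroeder_identity} as stated requires $b\geq 1$, and in the two applications above we take $b=\ell$ and $b=k$. If $\ell=0$ the statement of Theorem~\ref{thm:symmetry} is vacuous in the sense that symmetry in $k$ and $0$ is automatic by relabeling, and similarly for $k=0$; so the genuinely new content is when both $k,\ell\geq 1$, and there the hypotheses hold. The other edge case is $n=k+\ell$, which is excluded by the hypothesis $n>k+\ell$; here the exponent $n-k-\ell-1=-1$ would be negative and the convention that polynomials vanish when an index is negative would trivialize things anyway. So no separate argument is needed for boundary cases beyond a one-line remark.

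The main obstacle — and it is really the only place where work is required — is the first reduction, namely recognizing that the left-hand side of Theorem~\ref{thm:symmetry} is literally the right-hand side of \eqref{eq:Schroeder_identity} after the stated change of variables. Once that dictionary is in place, the proof is a two-step application of an already-proved identity and contains no computation. In other words, all the analytic difficulty has been front-loaded into Theorem~\ref{thm:schroeder_identity} (whose proof via Macdonald--Koornwinder reciprocity and the Pieri recursion \eqref{eq:cmunu_recursion} is the technical heart of Section~5), and Theorem~\ref{thm:symmetry} is then a formal corollary. I would present it in exactly that way: state the two substitutions, cite \eqref{eq:Schroeder_identity} twice, and conclude.
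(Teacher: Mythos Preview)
Your overall strategy---reduce to Theorem~\ref{thm:schroeder_identity} and read off the symmetry---is exactly what the paper does. However, your substitution is off by one, and this error propagates through the rest of the argument.

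In Theorem~\ref{thm:schroeder_identity} the right-hand side is $\langle \Delta_{h_k}\Delta_{e_{a-k}}' e_{a+b-k},e_{a+b-k}\rangle$. To match $\langle \Delta_{h_k}\Delta_{e_{n-k-\ell-1}}' e_{n-k},e_{n-k}\rangle$ you need $a-k=n-k-\ell-1$ and $a+b=n$, i.e.\ $a=n-\ell-1$ and $b=\ell+1$, not $a=n-\ell$ and $b=\ell$. With your substitution the $\Delta'$ index comes out as $n-k-\ell$, not $n-k-\ell-1$, so the identity you write down is not an instance of \eqref{eq:Schroeder_identity}. The same off-by-one occurs in your second application with $a=n-k$, $b=k$: it yields $\langle \Delta_{e_{n-k}}'\Delta_{e_{n-\ell-1}}'e_n,h_n\rangle$ on the left, not $\langle \Delta_{e_{n-k}}'\Delta_{e_{n-k-\ell-1}}'e_n,h_n\rangle$ as you need, so the two ends of your chain never meet.

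With the correct substitution $a=n-\ell-1$, $b=\ell+1$, the left-hand side of \eqref{eq:Schroeder_identity} becomes
\[
\langle \Delta_{e_{n-\ell-1}}'\Delta_{e_{n-k-1}}'e_{n},h_{n}\rangle,
\]
which is \emph{manifestly} symmetric in $k$ and $\ell$ (the two commuting operators simply swap). This is the paper's one-line proof; no second application of Theorem~\ref{thm:schroeder_identity} is required. Your instinct that the proof should be a formal corollary of \eqref{eq:Schroeder_identity} is right---you just need to fix the dictionary.
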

\begin{proof}[Algebraic proof of Theorem~\ref{thm:symmetry}]
	Using \eqref{eq:Schroeder_identity}, with $n=a+b$ and $\ell=b-1$, we have
	\begin{equation}
	\langle \Delta_{h_k}\Delta_{e_{n-k-\ell-1}}' e_{n-k},e_{n-k}\rangle=\langle \Delta_{e_{n-\ell-1}}'\Delta_{e_{n-k-1}}'e_{n},h_{n}\rangle ,
	\end{equation}
	which is obviously symmetric in $k$ and $\ell$.
\end{proof}
\begin{proposition} \label{prop:SF_sums_F_H}
For $n,d,\ell\geq 0$, $n\geq k+\ell$ and $n\geq d$, we have
\begin{equation} \label{eq:Fnk_sum}
\langle \Delta_{e_{n-\ell-1}}'e_{n},e_{n-d}h_{d}\rangle=\sum_{k=1}^n F_{n,k}^{(d,\ell)}=\sum_{k=1}^n \sum_{i=1}^{k-1} H_{n,k,i}^{(d,\ell)},
\end{equation}
and
\begin{equation} \label{eq:Ftildenk_sum}
\langle \Delta_{e_{n-\ell-1}}'e_{n},s_{d+1,1^{n-d-1}}\rangle=\sum_{k=1}^n \widetilde{F}_{n,k}^{(d,\ell)}=\sum_{k=1}^n \sum_{i=1}^{k-1} \overline{H}_{n,k,i}^{(d,\ell)}.
\end{equation}
\end{proposition}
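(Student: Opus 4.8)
The plan is to prove the two displayed identities \eqref{eq:Fnk_sum} and \eqref{eq:Ftildenk_sum} in parallel, deriving the second from the first by the standard $\Delta_{e_k}=\Delta_{e_k}'+\Delta_{e_{k-1}}'$ bookkeeping (cf. \eqref{eq:deltaprime}) together with \eqref{eq:rel_F_Ftilde}, \eqref{eq:rel_Htilde_Ftilde} and the Pieri expansion $e_dh_{n-d}=s_{(d+1,1^{n-d-1})}+s_{(d,1^{n-d})}$ (cf. \eqref{eq:Mac_hook_coeff_ss}), so that the Schur-function statement is literally the difference of two complete-homogeneous statements. Hence the real work is \eqref{eq:Fnk_sum}, which itself splits into two equalities: first $\langle \Delta_{e_{n-\ell-1}}'e_n,\,e_{n-d}h_d\rangle=\sum_{k=1}^n F_{n,k}^{(d,\ell)}$, and second $\sum_{k=1}^n F_{n,k}^{(d,\ell)}=\sum_{k=1}^n\sum_{i=1}^{k-1}H_{n,k,i}^{(d,\ell)}$.

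For the first equality I would start from the definition $F_{n,k}^{(d,\ell)}=\langle \Delta_{h_\ell}\Delta_{e_{n-\ell-d}}E_{n-\ell,k},e_{n-\ell}\rangle$ and sum over $k$. Using \eqref{eq:en_sum_Enk}, $\sum_{k=1}^{n-\ell}E_{n-\ell,k}=e_{n-\ell}$, so $\sum_k F_{n,k}^{(d,\ell)}=\langle \Delta_{h_\ell}\Delta_{e_{n-\ell-d}}e_{n-\ell},e_{n-\ell}\rangle$ (the range $k\le n$ versus $k\le n-\ell$ costs nothing since $E_{n-\ell,k}=0$ for $k>n-\ell$). Now apply Lemma~\ref{lem:Mac_hook_coeff} to move $\Delta_{e_{n-\ell-d}}$ onto the second argument: $\langle \Delta_{e_{n-\ell-d}}g,e_{n-\ell}\rangle=\langle \Delta_{e_{n-\ell-d}}g,h_{n-\ell}\rangle$ fails directly because the second slot is $e_{n-\ell}$ not $h_{n-\ell}$; instead I would use $\langle \Delta_{e_{n-\ell-d}}g,h_{n-\ell}\rangle=\langle g,e_{n-\ell-d}h_d\rangle$ after first absorbing $e_{n-\ell}$ via $\langle \Delta_{h_\ell}g',e_{n-\ell}\rangle$ and the adjunction $h_\ell^\perp$. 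Concretely: $\langle \Delta_{h_\ell}\Delta_{e_{n-\ell-d}}e_{n-\ell},e_{n-\ell}\rangle$ — expand everything on the $\widetilde H_\mu$ basis, use $\langle\widetilde H_\mu,e_{n-\ell}\rangle=\langle\widetilde H_\mu,e_{n-\ell}h_0\rangle=e_{n-\ell}[B_\mu]=T_\mu$, and match with $\langle \Delta_{e_{n-\ell-1}}'e_n,e_{n-d}h_d\rangle=\sum_{\beta\vdash n}e_{n-\ell-1}[B_\beta-1]B_\beta\,e_{n-d}[B_\beta]\Pi_\beta M/w_\beta$ using \eqref{eq:en_expansion}, \eqref{eq:Mac_hook_coeff}. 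The bridge between the size-$n$ partitions $\beta$ and the size-$(n-\ell)$ partitions $\mu$ is Macdonald reciprocity \eqref{eq:Macdonald_reciprocity} applied exactly as in the proof of Theorem~\ref{thm:rel_F_H}; I would essentially reuse that computation with $k=0$ and $d\mapsto d$, tracking the $\Delta_{h_\ell}$ factor throughout. I expect this to go through with no new ideas.

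The second equality, $\sum_k F_{n,k}^{(d,\ell)}=\sum_k\sum_{i=1}^{k-1}H_{n,k,i}^{(d,\ell)}$, is where I would invoke Theorem~\ref{thm:rel_F_H}: by \eqref{eq:rel_F_and_H} each $F_{n,k}^{(d,\ell)}=\sum_{j=0}^{\min(n-k,\ell)}H_{n,k+j,j}^{(d,\ell)}$, so $\sum_{k\ge 1}F_{n,k}^{(d,\ell)}=\sum_{k\ge1}\sum_{j\ge0}H_{n,k+j,j}^{(d,\ell)}$, and re-indexing $m=k+j$, $i=j$ gives $\sum_{m\ge1}\sum_{i=0}^{m-1}H_{n,m,i}^{(d,\ell)}$. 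The only gap versus the claimed $\sum_{i=1}^{k-1}$ is the $i=0$ term; but $H_{n,k,0}^{(d,\ell)}$ corresponds (via $\overline H$ and Remark~\ref{rmk:no_restrictions}) to paths with $0$ decorated rises among the first $k$ vertical steps, and the lower bound $i\ge1$ in \eqref{eq:Ftildenk_sum} vs.\ $i\ge0$ must be reconciled — I would check that either the $i=0$ contributions telescope to zero in the alternating $H$ vs $\overline H$ difference, or (more likely) that the statement's range $i=1,\dots,k-1$ is simply a harmless rewriting because $H_{n,k,0}^{(d,\ell)}$ is already counted inside $F_{n,k}^{(d,\ell)}$ itself, i.e.\ the outer sum is over $k$ and the diagonal term $j=0$ gives back $F$. \textbf{The main obstacle} is getting this index bookkeeping exactly right: matching the summation ranges $\min(n-k,\ell)$, the shift $d\mapsto d-k+i+s$, and the boundary cases $k=n$ (where \eqref{eq:rel_Htilde_Ftilde} degenerates to the $\delta_{i,\ell}$ formula) so that no term is double-counted or dropped. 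Once the combinatorial-style reindexing in Theorem~\ref{thm:rel_F_H} is applied cleanly, both identities of \eqref{eq:Ftildenk_sum} follow from \eqref{eq:Fnk_sum} by subtracting the $d$ and $d-1$ versions as indicated above.
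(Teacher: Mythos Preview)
Your overall architecture matches the paper's: sum the $F_{n,k}^{(d,\ell)}$ over $k$ via \eqref{eq:en_sum_Enk} to get $\langle \Delta_{h_\ell}\Delta_{e_{n-\ell-d}}e_{n-\ell},e_{n-\ell}\rangle$, convert this to $\langle \Delta_{e_{n-\ell-1}}'e_n,e_{n-d}h_d\rangle$, then use Theorem~\ref{thm:rel_F_H} for the $H$-sum, and deduce the tilde version by the alternating-sum bookkeeping. Where you diverge is in the conversion step. You propose to bridge the size-$(n-\ell)$ expansion and the size-$n$ expansion by a bare Macdonald-reciprocity computation ``as in the proof of Theorem~\ref{thm:rel_F_H}, with no new ideas.'' This is too optimistic: the identity you need here is precisely \eqref{eq:Schroeder_corol} (with $a=n-d$, $b=d$, $k=\ell$), the corollary of Theorem~\ref{thm:schroeder_identity} that the paper has just spent an entire section proving. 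That proof is not a routine reciprocity swap; it hinges on the Pieri-coefficient lemma $e_{n-k-1}[B_\beta-1]B_\beta=\sum_{\gamma\subset_k\beta}c_{\beta\gamma}^{(k)}B_\gamma T_\gamma$, established by induction on $n-k$ using \eqref{eq:cmunu_recursion}. The computation in Theorem~\ref{thm:rel_F_H} is structurally different (it relates sizes $n-d$, $n-\ell-k$ via \eqref{eq:mastereq}, not $n-\ell$ to $n$). So the clean route is simply: after summing to $\langle \Delta_{h_\ell}\Delta_{e_{n-\ell-d}}e_{n-\ell},e_{n-\ell}\rangle$, cite \eqref{eq:Schroeder_corol} to get $\langle \Delta_{e_{n-d}}\Delta_{e_{n-\ell-1}}'e_n,h_n\rangle$, then Lemma~\ref{lem:Mac_hook_coeff} to reach $\langle \Delta_{e_{n-\ell-1}}'e_n,e_{n-d}h_d\rangle$. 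For \eqref{eq:Ftildenk_sum} one uses \eqref{eq:Schroeder_identity} itself together with \eqref{eq:Mac_hook_coeff_ss}.

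On the index range: your instinct is correct. Reindexing \eqref{eq:rel_F_and_H} with $m=k+j$, $i=j$ gives $\sum_{m\ge 1}\sum_{i=0}^{m-1}H_{n,m,i}^{(d,\ell)}$, so the inner sum should start at $i=0$; the lower bound $i=1$ in the statement appears to be a typo rather than something to be reconciled by telescoping.
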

\begin{proof}
To prove the first equality in \eqref{eq:Fnk_sum}, observe that
\begin{align*}
	\sum_{k=1}^n\notag F_{n,k}^{(d,\ell)} & =\langle \Delta_{h_{\ell}} \Delta_{e_{n-d-\ell}}\sum_{k=1}^{n-\ell}E_{n-\ell,k},e_{n-\ell}\rangle\\ 
\text{(using \eqref{eq:en_sum_Enk})}	& =\langle \Delta_{h_{\ell}} \Delta_{e_{n-d-\ell}}e_{n-\ell},e_{n-\ell}\rangle \\
\text{(using \eqref{eq:Schroeder_corol})}& =\langle \Delta_{e_{n-d}}\Delta_{e_{n-\ell-1}}'e_{n},h_{n}\rangle\\
\text{(using \eqref{eq:Mac_hook_coeff})}& =\langle \Delta_{e_{n-\ell-1}}'e_{n},e_{n-d}h_{d}\rangle.
\end{align*}
The second equality follows simply from \eqref{eq:rel_F_and_H}.

A similar computations using \eqref{eq:Schroeder_identity}, \eqref{eq:Mac_hook_coeff_ss} and \eqref{eq:rel_Ftilde_and_Hbar} gives \eqref{eq:Ftildenk_sum}.
\end{proof}

\section{Combinatorial interpretations of plethystic formulae}	

In this section we combine our previous results to provide combinatorial interpretations of several plethystic formulae. We are going to use the notations introduced in Remark~\ref{rmk:no_restrictions}.

\medskip
In the following theorem, the first equalities of \eqref{eq:Ftildenk_qt} and \eqref{eq:Fnk_qt} are due to Zabrocki, as they are immediate consequences of \cite[Theorem~10]{zabrocki}. 
\begin{theorem} \label{thm:qt_enumerators_formulae}
Let $a,b,n,k,i\in \mathbb N$ with $n\geq 1$, $n\geq k\geq 1$, $\ell \geq i$. Then
	\begin{align} 
\label{eq:Ftildenk_qt}	\widetilde{F}_{n,k}^{(d,\ell)} & =\mathbf{D}\widetilde{S}_{n,k}^{(d,\ell)}=\mathbf{B}\widehat{S}_{n,k}^{(\ell,d)},\\
\label{eq:Fnk_qt}	F_{n,k}^{(d,\ell)} & =\mathbf{D}{S'}_{n,k}^{(d,\ell)}=\mathbf{B}S_{n,k}^{(\ell,d)},\\
\label{eq:Gtildenk_qt}	\widetilde{G}_{n-k,k}^{(d,\ell)} & =\mathbf{D}\widetilde{R}_{n,k}^{(d,\ell)}=\mathbf{B}\widehat{R}_{n,k}^{(\ell,d)},\\
\label{eq:Gnk_qt}	G_{n-k,k}^{(d,\ell)} & =\mathbf{D} {R'}_{n,k}^{(d,\ell)}=\mathbf{B}{R}_{n,k}^{(\ell,d)},\\
\label{eq:Hbarnk_qt}	\overline{H}_{n,k,i}^{(d,\ell)} & =\mathbf{B'}\overline{T}_{n,k,i}^{(d,\ell)},\\
\label{eq:Hnk_qt}	H_{n,k,i}^{(d,\ell)} & =\mathbf{B'}T_{n,k,i}^{(d,\ell)}.
	\end{align}
\end{theorem}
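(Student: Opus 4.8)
The plan is to prove the six identities by a simultaneous induction on $n$, matching each plethystic family to the corresponding $q,t$-enumerator by verifying that both sides satisfy the same recursion with the same initial conditions. The second equality in each line (e.g. $\mathbf{D}\widetilde{S}_{n,k}^{(d,\ell)}=\mathbf{B}\widehat{S}_{n,k}^{(\ell,d)}$) is already available: it is exactly Corollary~\ref{cor:zeta_map} for the $\widetilde S,\widetilde R$ families, and for the ``barred'' families the analogous statement follows from Remark~\ref{rmk:no_restrictions} together with the definitions of $S_{n,k}^{(a,b)}$, $R_{n,k}^{(a,b)}$, $T_{n,k,i}^{(a,b)}$ as disjoint unions. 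So the real content is the first equality in each line, i.e. identifying the plethystic polynomial with a combinatorial $q,t$-enumerator of decorated Dyck paths.

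The strategy is to observe that every plethystic family has, by Section~4, an explicit recursion, and every combinatorial family has, by Section~2, an explicit recursion, and that up to a relabelling of the decoration parameters these recursions coincide term by term. Concretely: I would match \eqref{eq:Ftildenk_qt} by comparing the recursion for $\widetilde{F}_{n,k}^{(d,\ell)}$ in Theorem~\ref{thm:reco1_F_and_Ftilde} with the recursion for $\mathbf{B}\widehat{S}_{n,k}^{(\ell,d)}$ in Theorem~\ref{thm:reco1_first_bounce} (which, via Proposition~\ref{prop:zeta_k}, is also the recursion for $\mathbf{D}\widetilde{S}_{n,k}^{(d,\ell)}$), noting that both have the same boundary term $\delta_{\ell,0}q^{\binom{n-d}{2}}{n-1\brack d}_q$ at $k=n$ and both have the inner sum with the factors $q^{\binom{s}{2}+\binom{i+1}{2}}t^{n-k-\ell}{k\brack s}_q{h-1\brack i}_q{h+s-i-1\brack h}_q$ multiplying $(\cdot^{(d-k+s,\ell-i)}+\cdot^{(d-k+s,\ell-i-1)})$; an induction on $n$ then closes it. The same scheme handles \eqref{eq:Fnk_qt} (comparing with the recursion for $\mathbf{B}S_{n,k}^{(\ell,d)}$, obtained from $S=\widehat S\sqcup\widehat S$ and the previous line), \eqref{eq:Gtildenk_qt} and \eqref{eq:Gnk_qt} (comparing Theorem~\ref{thm:recoG_and_Gtilde} with Theorem~\ref{thm:reco_R_first_bounce} / Theorem~\ref{thm:reco_R_first_bounce} via Corollary~\ref{cor:zeta_map}; note the index shift $\widetilde G_{n-k,k}\leftrightarrow\widetilde R_{n,k}$ coming from the fact that the first bounce path's first stretch contributes $k$ rows), and \eqref{eq:Hbarnk_qt}, \eqref{eq:Hnk_qt} (comparing Theorem~\ref{thm:reco_H_and_Htilde}, in the rewritten form of Remark~\ref{rem:rewriting_reco_H}, with the second-bounce recursion in Theorem~\ref{thm:reco_old_bounce}). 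It is worth doing the $\overline H\leftrightarrow \overline T$ matching first, or at least independently, since Theorem~\ref{thm:reco_H_and_Htilde} expresses $\overline H$ in terms of $\overline H$ directly and Theorem~\ref{thm:reco_old_bounce} expresses $\mathbf{B'}\overline T$ in terms of $\mathbf{B'}\overline T$ directly, so those two lines are self-contained; then \eqref{eq:Ftildenk_qt}--\eqref{eq:Gnk_qt} can be bootstrapped, or alternatively one can use Theorem~\ref{thm:rel_F_H} ($\widetilde F_{n,k}^{(d,\ell)}=\sum_j \overline H_{n,k+j,j}^{(d,\ell)}$) against the set-level decomposition $\widetilde{\mathcal D}_n^{(a,b)}=\sqcup_k\widetilde S_{n,k}^{(a,b)}$ versus $\overline{\mathcal D}_n^{(a,b)}=\sqcup_{k,i}\overline T_{n,k,i}^{(a,b)}$ refined appropriately, which gives a cleaner logical flow.

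The main obstacle will be bookkeeping: making sure the parameter dictionary is applied consistently, namely that $d$ (the $\Delta_e$-index, i.e.\ the number of decorated falls/rises) on the algebraic side corresponds to the second superscript on the combinatorial side while $\ell$ (the $\Delta_h$-index, i.e.\ the number of decorated peaks) corresponds to the first, and that the $\zeta$ map swaps these two while the $\psi$ map preserves the bistatistic but swaps $(a,b)$, so the $H$/$\overline H$ lines carry no swap whereas the $F,\widetilde F,G,\widetilde G$ lines do. A secondary nuisance is that the combinatorial recursions for $\mathbf{B}\widehat S$, $\mathbf{B}\widehat R$ in Section~2 are stated with a $t^{n-k-a}$ prefactor (where $a$ is the number of decorated peaks) while the algebraic recursions for $F$, $G$ carry $t^{n-k-\ell}$, $t^{n-\ell}$: one has to check these powers of $t$ agree under the dictionary, which they do precisely because a decorated peak of the first bounce path does not contribute to $\mathsf{bounce}$ — this is the combinatorial shadow of the $q^{\binom{k}{2}}$ / $t$-power normalisations chosen in the definitions of $G$ and $\overline H$. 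Once the dictionary and the $t$-powers are pinned down, each of the six matchings is a routine term-by-term comparison of the two recursions plus a one-line check of the $k=n$ initial condition (for which one uses \eqref{eq:e_q_binomial} and \eqref{eq:h_q_binomial} on the algebraic side and the direct count of staircase-type paths on the combinatorial side), and the simultaneous induction on $n$ then yields all six identities at once.
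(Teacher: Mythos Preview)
Your proposal is correct and matches the paper's own proof essentially line for line: the paper also derives \eqref{eq:Ftildenk_qt} by comparing Theorem~\ref{thm:reco1_first_bounce} with Theorem~\ref{thm:reco1_F_and_Ftilde} (plus Corollary~\ref{cor:zeta_map}), \eqref{eq:Gtildenk_qt} by comparing Theorem~\ref{thm:reco_R_first_bounce} with Theorem~\ref{thm:recoG_and_Gtilde}, and \eqref{eq:Hbarnk_qt} by comparing Theorem~\ref{thm:reco_old_bounce} with Theorem~\ref{thm:reco_H_and_Htilde} via Remark~\ref{rem:rewriting_reco_H}, then obtains \eqref{eq:Fnk_qt}, \eqref{eq:Gnk_qt}, \eqref{eq:Hnk_qt} from the tilde/bar versions and the definitions in Remark~\ref{rmk:no_restrictions}. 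Your extra remarks on the parameter dictionary and the $t$-power matching are exactly the bookkeeping the paper leaves implicit.
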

\begin{proof}
Identity \eqref{eq:Ftildenk_qt} follows by comparing Theorem~\ref{thm:reco1_first_bounce} with Theorem~\ref{thm:reco1_F_and_Ftilde}, and Corollary~\ref{cor:zeta_map}.

Identity \eqref{eq:Fnk_qt} follows from \eqref{eq:Ftildenk_qt}, and the definitions of the polynomials involved.

Identity \eqref{eq:Gtildenk_qt} follows by comparing Theorem~\ref{thm:reco_R_first_bounce} with Theorem~\ref{thm:recoG_and_Gtilde}, and Corollary~\ref{cor:zeta_map}.

Identity \eqref{eq:Gnk_qt} follows from \eqref{eq:Gtildenk_qt}, and the definitions of the polynomials involved.

Identity \eqref{eq:Hbarnk_qt} follows by comparing Theorem~\ref{thm:reco_old_bounce} with Theorem~\ref{thm:reco_H_and_Htilde} and Remark~\ref{rem:rewriting_reco_H}.

Identity \eqref{eq:Hnk_qt} follows from \eqref{eq:Hbarnk_qt}, and the definitions of the polynomials involved.
\end{proof}

\subsection{Decorated $q,t$-Schr\"{o}der}

Now we solve half of Problem 8.1 in \cite{haglundremmelwilson}, by proving Conjecture 5.2.1.1 in \cite{wilsonPhD}. This gives the decorated analogue of Haglund's $q,t$-Schr\"{o}der theorem \cite{haglundschroeder} for $\Delta_{e_k}e_n$.
\begin{theorem} \label{thm:decoqtSchroeder}
For $a,b,k\in \mathbb{N}\cup\{0\}$, $a\geq 1$, $a+b\geq k+1$, we have
\begin{equation} \label{eq:qtSchroder1}
\langle \Delta_{e_{a+b-k-1}}'e_{a+b},s_{b+1,1^{a-1}}\rangle= \mathbf{D}\widetilde{\mathcal{D}}_{a+b}^{(b,k)}=\mathbf{B'}\overline{\mathcal{D}}_{a+b}^{(b,k)}=\mathbf{B}\widehat{\mathcal{D}}_{a+b}^{(k,b)}.
\end{equation}
so that, for $a+b\geq k+1$, we have
\begin{equation} \label{eq:qtSchroder2}
\langle \Delta_{e_{a+b-k-1}}'e_{a+b},e_a h_{b}\rangle= \mathbf{D} \mathcal{D}_{a+b}^{(b,k)}=\mathbf{B'}\mathcal{D}_{a+b}^{(b,k)}=\mathbf{B}\mathcal{D}_{a+b}^{(b,k)}.
\end{equation}
\end{theorem}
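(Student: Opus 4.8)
The plan is to deduce Theorem~\ref{thm:decoqtSchroeder} by assembling the three families of combinatorial interpretations from Theorem~\ref{thm:qt_enumerators_formulae} together with the symmetric function identity of Theorem~\ref{thm:schroeder_identity} (in the form of Proposition~\ref{prop:SF_sums_F_H}). Concretely, I would first observe that the three $q,t$-enumerators $\mathbf{D}\widetilde{\mathcal D}_{a+b}^{(b,k)}$, $\mathbf{B'}\overline{\mathcal D}_{a+b}^{(b,k)}$ and $\mathbf{B}\widehat{\mathcal D}_{a+b}^{(k,b)}$ are each obtained by summing the appropriate ``piece'' polynomials over the decomposition index: using $\widetilde{\mathcal D}_n^{(a,b)}=\sqcup_{k=1}^n\widetilde S_{n,k}^{(a,b)}$, $\widehat{\mathcal D}_n^{(a,b)}=\sqcup_{k=1}^n\widehat S_{n,k}^{(a,b)}$, and $\overline{\mathcal D}_n^{(a,b)}=\sqcup_{k=1}^n\sqcup_{i}\overline T_{n,k,i}^{(a,b)}$ from Section~2, we have
\[
\mathbf{D}\widetilde{\mathcal D}_n^{(\ell,d)}=\sum_{k=1}^n\mathbf{D}\widetilde S_{n,k}^{(\ell,d)},\qquad
\mathbf{B}\widehat{\mathcal D}_n^{(d,\ell)}=\sum_{k=1}^n\mathbf{B}\widehat S_{n,k}^{(d,\ell)},\qquad
\mathbf{B'}\overline{\mathcal D}_n^{(\ell,d)}=\sum_{k=1}^n\sum_{i}\mathbf{B'}\overline T_{n,k,i}^{(\ell,d)}.
\]
Then I would apply \eqref{eq:Ftildenk_qt}, \eqref{eq:Hbarnk_qt} of Theorem~\ref{thm:qt_enumerators_formulae} to rewrite each of these as $\sum_k\widetilde F_{n,k}^{(d,\ell)}$ and $\sum_k\sum_i\overline H_{n,k,i}^{(d,\ell)}$ respectively, with $n=a+b$, $d=b$, $\ell=k$ in the notation of this final statement (note the clash of the letter $k$: the path-side decoration count $k$ plays the role of the symmetric-function parameter $\ell$).

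Next I would invoke Proposition~\ref{prop:SF_sums_F_H}: the second equality in \eqref{eq:Ftildenk_sum} gives $\sum_k\widetilde F_{n,k}^{(d,\ell)}=\sum_k\sum_i\overline H_{n,k,i}^{(d,\ell)}$, so the enumerators $\mathbf{D}\widetilde{\mathcal D}_{a+b}^{(b,k)}$, $\mathbf{B'}\overline{\mathcal D}_{a+b}^{(b,k)}$ and $\mathbf{B}\widehat{\mathcal D}_{a+b}^{(k,b)}$ all agree, and moreover the first equality in \eqref{eq:Ftildenk_sum} identifies their common value with $\langle\Delta_{e_{n-\ell-1}}'e_n,s_{d+1,1^{n-d-1}}\rangle$, which with $n=a+b$, $\ell=k$, $d=b$ is exactly $\langle\Delta_{e_{a+b-k-1}}'e_{a+b},s_{b+1,1^{a-1}}\rangle$. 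This establishes \eqref{eq:qtSchroder1}. For \eqref{eq:qtSchroder2} I would use the Pieri-type relation $e_ah_b=s_{b+1,1^{a-1}}+s_{b,1^a}$ (equivalently $s_{(b+1,1^{a-1})}+s_{(b+1,1^{a-2})}$ after the appropriate transpose bookkeeping) on the symmetric-function side, together with the fact that $\mathcal D_n^{(a,b)}$ (with no tilde/hat/bar restriction) decomposes, via Remark~\ref{rmk:no_restrictions}, into the restricted set and its copy with the distinguished peak decorated — so that $\mathbf{D}\mathcal D=\mathbf{D}\widetilde{\mathcal D}^{(b,k)}+\mathbf{D}\widetilde{\mathcal D}^{(b,k-1)}$ and similarly for $\mathbf{B'}$ and $\mathbf{B}$ — and this sum of two consecutive cases of \eqref{eq:qtSchroder1} telescopes to $\langle\Delta_{e_{a+b-k-1}}'e_{a+b},e_ah_b\rangle$ by the additivity of the scalar product. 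Care is needed to match conventions: $s_{\lambda}\mapsto s_{\lambda'}$ under $\omega$, the boundary cases $a=0$ (where only the $h_b$ part survives and one Schur function vanishes), and the degenerate ranges where some index becomes negative and the corresponding polynomial is zero by the standing convention.

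The main obstacle I anticipate is purely bookkeeping rather than conceptual: getting the triple translation of parameters right between (i) the path statistics' $(a,b,k)$ for size/decorated-peaks/decorated-rises, (ii) the piece-polynomial indices $(n,k,i,d,\ell)$, and (iii) the symmetric-function parameters in Proposition~\ref{prop:SF_sums_F_H}, while simultaneously tracking that the sets appearing are the tilde/hat/bar-restricted ones in Theorem~\ref{thm:qt_enumerators_formulae} but the unrestricted $\mathcal D_{a+b}^{(b,k)}$ in \eqref{eq:qtSchroder2}. I would handle this by writing out a short dictionary at the start: in \eqref{eq:qtSchroder1}, the symmetric function side uses $\langle\Delta_{e_{(a+b)-k-1}}'e_{a+b},s_{b+1,1^{a-1}}\rangle$, which by \eqref{eq:Ftildenk_sum} with $n=a+b$, $\ell=k$, $d=b$ equals $\sum_{j}\widetilde F_{a+b,j}^{(b,k)}$, and then the three path-side identities of Theorem~\ref{thm:qt_enumerators_formulae} close the loop. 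Once the dictionary is fixed, the remaining steps are one-line applications of the already-proven recursion-matching results, so no new estimates or constructions are required; the proof is essentially a diagram-chase through Sections~2--5.

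\begin{proof}
Set $n=a+b$. By \eqref{eq:Ftildenk_sum} of Proposition~\ref{prop:SF_sums_F_H}, applied with the parameters $n=a+b$, $d=b$, and the symmetric-function index $\ell$ there equal to $k$, we have
\begin{equation} \label{eq:proof_schroeder_main}
\langle \Delta_{e_{a+b-k-1}}'e_{a+b},s_{b+1,1^{a-1}}\rangle=\sum_{j=1}^{a+b} \widetilde{F}_{a+b,j}^{(b,k)}=\sum_{j=1}^{a+b}\sum_{i=1}^{j-1}\overline{H}_{a+b,j,i}^{(b,k)}.
\end{equation}
Now apply Theorem~\ref{thm:qt_enumerators_formulae}. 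By \eqref{eq:Ftildenk_qt}, $\widetilde{F}_{a+b,j}^{(b,k)}=\mathbf{D}\widetilde{S}_{a+b,j}^{(b,k)}=\mathbf{B}\widehat{S}_{a+b,j}^{(k,b)}$, and summing over $j$ and using $\widetilde{\mathcal D}_{a+b}^{(b,k)}=\sqcup_{j}\widetilde{S}_{a+b,j}^{(b,k)}$ and $\widehat{\mathcal D}_{a+b}^{(k,b)}=\sqcup_j\widehat{S}_{a+b,j}^{(k,b)}$ gives
\begin{equation} \label{eq:proof_schroeder_DB}
\sum_{j=1}^{a+b}\widetilde{F}_{a+b,j}^{(b,k)}=\mathbf{D}\widetilde{\mathcal D}_{a+b}^{(b,k)}=\mathbf{B}\widehat{\mathcal D}_{a+b}^{(k,b)}.
\end{equation}
Similarly, by \eqref{eq:Hbarnk_qt}, $\overline{H}_{a+b,j,i}^{(b,k)}=\mathbf{B'}\overline{T}_{a+b,j,i}^{(b,k)}$, and summing over $j$ and $i$ and using $\overline{\mathcal D}_{a+b}^{(b,k)}=\sqcup_{j}\sqcup_{i}\overline{T}_{a+b,j,i}^{(b,k)}$ gives
\begin{equation} \label{eq:proof_schroeder_Bprime}
\sum_{j=1}^{a+b}\sum_{i=1}^{j-1}\overline{H}_{a+b,j,i}^{(b,k)}=\mathbf{B'}\overline{\mathcal D}_{a+b}^{(b,k)}.
\end{equation}
Combining \eqref{eq:proof_schroeder_main}, \eqref{eq:proof_schroeder_DB} and \eqref{eq:proof_schroeder_Bprime} yields \eqref{eq:qtSchroder1}.

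For \eqref{eq:qtSchroder2}, recall that by Pieri $e_ah_b=s_{(b+1,1^{a-1})}+s_{(b+1,1^{a-2})}$ (with the second term absent when $a\leq 1$), so that
\begin{equation*}
\langle \Delta_{e_{a+b-k-1}}'e_{a+b},e_ah_b\rangle=\langle \Delta_{e_{a+b-k-1}}'e_{a+b},s_{b+1,1^{a-1}}\rangle+\langle \Delta_{e_{a+b-k-1}}'e_{a+b},s_{b+1,1^{a-2}}\rangle.
\end{equation*}
Applying \eqref{eq:qtSchroder1} to each term (the second with $a$ replaced by $a-1$, so that $b$ is replaced by $b+1$ and the size $a+b$ is unchanged) gives
\begin{equation*}
\langle \Delta_{e_{a+b-k-1}}'e_{a+b},e_ah_b\rangle=\mathbf{D}\widetilde{\mathcal D}_{a+b}^{(b,k)}+\mathbf{D}\widetilde{\mathcal D}_{a+b}^{(b,k-1)}=\mathbf{D}\mathcal D_{a+b}^{(b,k)},
\end{equation*}
where the last equality is the decomposition $\mathcal D_{a+b}^{(b,k)}=\widetilde{\mathcal D}_{a+b}^{(b,k)}\sqcup\widetilde{\mathcal D}_{a+b}^{(b,k-1)}$ from Remark~\ref{rmk:no_restrictions} (identifying the second copy with the paths whose distinguished peak carries the extra decoration, which does not affect the statistics). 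The same argument with $\mathbf{B'}$ and the decomposition $\mathcal D_{a+b}^{(b,k)}=\overline{\mathcal D}_{a+b}^{(b,k)}\sqcup\overline{\mathcal D}_{a+b}^{(b,k-1)}$, and with $\mathbf{B}$ and the decomposition coming from $\widehat{\mathcal D}$, gives $\mathbf{B'}\mathcal D_{a+b}^{(b,k)}$ and $\mathbf{B}\mathcal D_{a+b}^{(b,k)}$ for the same quantity. This proves \eqref{eq:qtSchroder2}.
\end{proof}
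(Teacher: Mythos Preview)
Your argument for \eqref{eq:qtSchroder1} is exactly the paper's: combine Proposition~\ref{prop:SF_sums_F_H} with Theorem~\ref{thm:qt_enumerators_formulae} and sum over the decomposition index. That part is fine.

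Your argument for \eqref{eq:qtSchroder2}, however, contains a genuine gap beyond the bookkeeping slips. First the bookkeeping: the Pieri rule gives $e_ah_b=s_{(b+1,1^{a-1})}+s_{(b,1^{a})}$, not $s_{(b+1,1^{a-1})}+s_{(b+1,1^{a-2})}$ (your second Schur function is not even of degree $a+b$). Consequently the correct substitution in \eqref{eq:qtSchroder1} is $(a,b)\mapsto(a+1,b-1)$, which produces $\mathbf{D}\widetilde{\mathcal D}_{a+b}^{(b-1,k)}$, and the decomposition from Remark~\ref{rmk:no_restrictions} drops the \emph{first} superscript (the peak count), so $\mathbf{D}\mathcal D_{a+b}^{(b,k)}=\mathbf{D}\widetilde{\mathcal D}_{a+b}^{(b,k)}+\mathbf{D}\widetilde{\mathcal D}_{a+b}^{(b-1,k)}$, not your $(b,k)+(b,k-1)$. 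The same correction applies to $\mathbf{B'}$ and $\overline{\mathcal D}$.

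The real problem is the last equality $\mathbf{B}\mathcal D_{a+b}^{(b,k)}$. In \eqref{eq:qtSchroder1} the superscripts for the $\mathbf{B}$ term are \emph{swapped}: it reads $\mathbf{B}\widehat{\mathcal D}_{a+b}^{(k,b)}$. Adding the two instances therefore yields $\mathbf{B}\widehat{\mathcal D}_{a+b}^{(k,b)}+\mathbf{B}\widehat{\mathcal D}_{a+b}^{(k,b-1)}$, in which the \emph{second} index varies; but the $\widehat{\mathcal D}$-decomposition of $\mathcal D_{a+b}^{(b,k)}$ requires the \emph{first} index to vary (and to equal $b$, not $k$). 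So ``the same argument'' does not close the loop. The paper resolves this by routing through the $\psi$-symmetry (Corollary~\ref{cor:comb_symmetry}):
\[
\mathbf{B}\widehat{\mathcal D}^{(k,b)}+\mathbf{B}\widehat{\mathcal D}^{(k,b-1)}
=\mathbf{B'}\overline{\mathcal D}^{(b,k)}+\mathbf{B'}\overline{\mathcal D}^{(b-1,k)}
=\mathbf{B'}\overline{\mathcal D}^{(k,b)}+\mathbf{B'}\overline{\mathcal D}^{(k,b-1)}
=\mathbf{B}\widehat{\mathcal D}^{(b,k)}+\mathbf{B}\widehat{\mathcal D}^{(b-1,k)}
=\mathbf{B}\mathcal D^{(b,k)},
\]
using \eqref{eq:qtSchroder1} twice and $\psi$ once. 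Without invoking Corollary~\ref{cor:comb_symmetry} (or an equivalent symmetry), the $\mathbf{B}$ case of \eqref{eq:qtSchroder2} is not proved.
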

\begin{proof}
To prove \eqref{eq:qtSchroder1}, just combine Proposition~\ref{prop:SF_sums_F_H} and Theorem~\ref{thm:qt_enumerators_formulae}.

To prove the first two equalities in \eqref{eq:qtSchroder2}, just add \eqref{eq:qtSchroder1} with itself where we replace $b$ by $b-1$ and $a$ by $a+1$. For the last equality in \eqref{eq:qtSchroder2}, doing the same operation we get
\begin{equation}
\langle \Delta_{e_{a+b-k-1}}'e_{a+b},e_a h_{b}\rangle=\mathbf{B}\widehat{\mathcal{D}}_{a+b}^{(k,b)}+\mathbf{B}\widehat{\mathcal{D}}_{a+b}^{(k,b-1)}.
\end{equation}
But using again \eqref{eq:qtSchroder1} and Corollary~\ref{cor:comb_symmetry}, we have
\begin{align}
\mathbf{B}\widehat{\mathcal{D}}_{a+b}^{(k,b)}+\mathbf{B}\widehat{\mathcal{D}}_{a+b}^{(k,b-1)} & = \mathbf{B}'\overline{\mathcal{D}}_{a+b}^{(b,k)}+\mathbf{B}'\overline{\mathcal{D}}_{a+b}^{(b-1,k)}\\
 & = \mathbf{B}'\overline{\mathcal{D}}_{a+b}^{(k,b)}+\mathbf{B}'\overline{\mathcal{D}}_{a+b}^{(k,b-1)}\\
 & = \mathbf{B}\widehat{\mathcal{D}}_{a+b}^{(b,k)}+\mathbf{B}\widehat{\mathcal{D}}_{a+b}^{(b-1,k)}\\
 & =  \mathbf{B} \mathcal{D}_{a+b}^{(b,k)},
\end{align}
as claimed.
\end{proof}
We are now able to give a combinatorial proof of Theorem~\ref{thm:symmetry}.
\begin{proof}[Combinatorial proof of Theorem~\ref{thm:symmetry}]
Using \eqref{eq:Schroeder_identity}, with $n=a+b$ and $\ell=b-1$,  \eqref{eq:Mac_hook_coeff_ss}, and Theorem~\ref{thm:decoqtSchroeder}, we have
\begin{align}
\langle \Delta_{h_k}\Delta_{e_{n-k-\ell-1}}' e_{n-k},e_{n-k}\rangle & =\langle \Delta_{e_{n-\ell-1}}'e_{n},s_{k+1,1^{n-k-1}}\rangle = \mathbf{B'}\overline{\mathcal{D}}_{n}^{(k,\ell)}.
\end{align}
Now the symmetry in $k$ and $\ell$ is explained combinatorially by the bijection $\psi$ of Theorem~\ref{thm:psi_map} (cf. Corollary~\ref{cor:comb_symmetry}).
\end{proof}

\section{Square $q,t$-lattice paths and $\Delta_{e_{n-1}}e_n$}

In this section we explain how a simple combinatorial transformation translates the Delta conjecture for $\Delta_{e_{n-1}}e_n$ into a new square conjecture, similar to the one for $\nabla \omega(p_n)$ proposed in \cite{loehrwarringtonqtsquare} and proved in \cite{leven} after the breakthrough in \cite{carlssonmellit}. In particular, we get a new $q,t$-square out of it.

The interest in these observations relies into a more surprising one: the two symmetric functions $\Delta_{e_{n-1}}e_n$ and $\nabla \omega(p_n)$ have the same evaluation at $t=1/q$.

\subsection{A new square conjecture}

Consider the set $\mathcal{SQ}_n^E$ of \emph{square paths} ending east, i.e. lattice paths going from $(0,0)$ to $(n,n)$ consisting of unit east or north steps, and ending with an east step. The set $\mathcal{SQ}_n^N$ of square paths ending north is defined similarly.

Let us denote by $\mathcal{LSQ}_{n}^{E}$ the set of \emph{labelled square paths} ending east, i.e. the set of pairs $(P,r)$ where $P\in \mathcal{SQ}_n^E$ and $r\in \mathfrak{S}_n$, $r=r_1r_2\cdots r_n$, is such that $r_i<r_{i+1}$ when $w_{i+1}(P)=w_i(P)+1$, where $w_i(P)$ are the letters of the area word of $P$ (i.e. the $i$-th vertical step of $P$ lies on the diagonal $y=x+w_i(P)$). As it is custom with labelled Dyck paths, we identify the $(P,r)$ with square paths with labels next to the right of their vertical steps: see Figure \ref{SQpath} for an example.

\begin{figure}[h!]
	\begin{center}
		\begin{tikzpicture}[scale=1.4]
			\draw[gray] (0,0) grid[step=0.5 cm](5.5,5.5);
			\draw[gray] (2,0) to (7.5,5.5);
			\filldraw (3.5,1.5) circle(2pt);
			\draw[blue, ultra thick, opacity=0.4](3.5,1.5)|-(4,2.5)|-(5,4.5) |-(5.5,5.5) (0,0)|-(1,1)|-(3.5,1.5);
			\draw (.25, 0.25) node {4} circle(.2 cm)
			(0.25, 0.75) node {6} circle(.2 cm)
			(1.25, 1.25) node {11} circle(.2 cm)
			(3.75, 1.75) node {7} circle(.2 cm)
			(3.75, 2.25) node {10} circle(.2 cm)
			(4.25, 2.75) node {1} circle(.2 cm)
			(4.25, 3.25) node {2} circle(.2 cm)
			(4.25, 3.75) node {5} circle(.2 cm)
			(4.25, 4.25) node {8} circle(.2 cm)
			(5.25, 4.75) node {3} circle(.2 cm)
			(5.25, 5.25) node {9} circle(.2 cm);
			\fill[gray,opacity=.2](0,0)--(0,1)--(1,1)--(1,1.5)--(3,1.5)--(3,1)--(2.5,1)--(2.5,.5)--(2,.5)--(2,0) (3.5,2) rectangle (4,2.5) (4,2.5) rectangle (4.5, 4) (4.5,3) rectangle (5,4)(5,3.5) rectangle (5.5,5.5) (5,4.5)rectangle (4,4) (5.5,4.5) rectangle (6.5,5.5) rectangle (7,5) (5.5,4.5) rectangle (6,4);
			\draw [decorate,decoration={brace, mirror,amplitude=8pt},xshift=0pt,yshift=0pt](0,0) --(2,0);
			\draw [decorate,decoration={brace, mirror,amplitude=8pt},xshift=0pt,yshift=0pt](5.5,0)--(5.5,1.5);
			\draw (1,-.5) node{$c$} (6,.75) node {$j$};
			\draw [gray](5.5,4.5) grid[step=.5](6.5,5.5) rectangle (7,5) (5.5,4.5) rectangle (6,4) (6,4.5) rectangle (6.5,5) ;
			\end{tikzpicture}
	\end{center}
	\caption{Example of a labelled square path}\label{SQpath}
\end{figure}
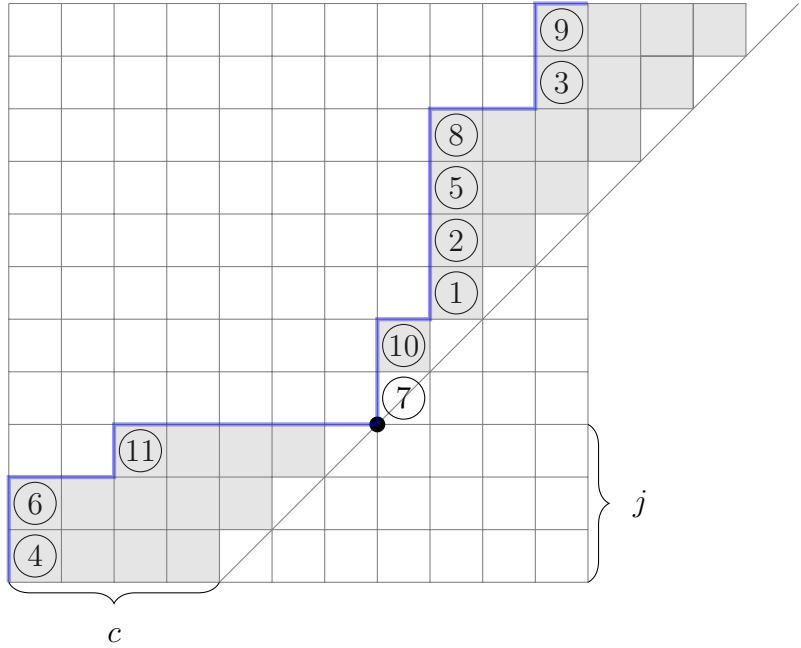

Set $c:=-\min \{w_i(P)\vert i=1,2,\dots n\}$ and $ j:=\min\{i\vert w_i(P)=-c\}-1$ and define $$\mathsf{area}((P,r))=\mathsf{area}(P):=\sum_{i=1}^n w_i(P)+nc-j-c.$$ For example, the path in Figure \ref{SQpath} has $c=4$, $j=3$ and area $31-3-4=24$.

Next, we set
\begin{equation}
\mathsf{dinv}(P,r):=\sum_{1\leq i<j\leq n}\chi(w_i(P)=w_j(P)\text{ and }r_i<r_j,\, \text{ or }w_i(P)=w_j(P)+1\text{ and }r_i>r_j).
\end{equation}
We define $\sigma(P,r)$ to be the permutation obtained by reading the labels of $(P,r)$ along the diagonals $y=x+k$ from top to bottom, from right to left. For example, if $(P,r)$ is the path in Figure~\ref{SQpath} then $\sigma(P,r)=6\,9\,8\,1\; \!\!1\, 4\,3\,5\,2\,1\,1\; \!\! 0\,7$.

We then call $\mathsf{ides}(\sigma(P,r)):=\mathsf{Des}(\sigma(P,r)^{-1})$, where, as usual, for any permutation $\tau\in \mathfrak{S}_n$, we set
\begin{equation}
\mathsf{Des}(\tau):=\{i\mid \tau(i)>\tau(i+1)\}\subseteq \{1,2,\dots,n-1\}.
\end{equation}
For every $S\subseteq \{1,2,\dots,n-1\}$, let $Q_{S,n}$ denote the \emph{Gessel fundamental quasisymmetric function} of degree $n$ indexed by $S$ , i.e.
\begin{equation}
	Q_{S,n}:=\mathop{\sum_{i_1\leq i_2\leq \cdots \leq i_n}}_{i_j<i_{j+1}\text{ if }j\in S}x_{i_1} x_{i_2}  \cdots   x_{i_n}.
\end{equation}
We can now formulate our new square conjecture.
\begin{conjecture} \label{conj:new_square}
	For every $n\geq 1$
	\begin{equation}
	\Delta_{e_{n-1}}e_n=\sum_{(P,r)\in \mathcal{LSQ}_n^E}q^{\mathsf{dinv}(P,r)}t^{\mathsf{area}(P,r)}Q_{\mathsf{ides}(\sigma(P,r)),n}.
	\end{equation}
\end{conjecture}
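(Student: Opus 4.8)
The plan is to deduce Conjecture~\ref{conj:new_square} from an instance of the Delta conjecture of \cite{haglundremmelwilson} by means of an explicit statistic-preserving bijection. The starting point is the algebraic identity $\Delta_{e_{n-1}}=\Delta_{e_{n-1}}'+\Delta_{e_{n-2}}'$ on $\Lambda^{(n)}$, which is the case $k=n-1$ of \eqref{eq:deltaprime}; since $\Delta_{e_{n-1}}'e_n=\nabla e_n$, this gives $\Delta_{e_{n-1}}e_n=\nabla e_n+\Delta_{e_{n-2}}'e_n$. The right-hand side is exactly the sum of the $k=0$ and $k=1$ cases of $\Delta_{e_{n-k-1}}'e_n$: the first is governed by the shuffle theorem of Carlsson and Mellit \cite{carlssonmellit}, and the second by the one-decoration case of the Delta conjecture. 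Hence it is enough to produce a bijection between $\mathcal{LSQ}_n^E$ and the disjoint union of labelled Dyck paths of size $n$ carrying no decoration and labelled Dyck paths of size $n$ carrying exactly one decoration, transporting the pair of statistics $\mathsf{dinv},\mathsf{area}$ together with the reading word $\sigma$ to the corresponding data on the Dyck side; the conjecture then follows by summing $q^{\mathsf{dinv}}t^{\mathsf{area}}Q_{\mathsf{ides}(\sigma),n}$ over both sides.

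The bijection is the ``easy translation'' announced in the introduction. Given $(P,r)\in\mathcal{LSQ}_n^E$, set $c=-\min_i w_i(P)$ and $j=\min\{i:w_i(P)=-c\}-1$ as in the definition of $\mathsf{area}$. Cyclically rotating the rows of $P$ so that the $(j+1)$-st row becomes the first, and then adding $c$ to every entry, produces the word $w_{j+1}(P)+c,\dots,w_n(P)+c,w_1(P)+c,\dots,w_j(P)+c$, which is the area word of a genuine Dyck path $D$: this uses that square-path area words obey the same local condition $w_{i+1}\le w_i+1$ as Dyck words, and that $w_n(P)\ge 0$ because $P$ ends east, so no violation occurs at the splice. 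One carries the labels $r$ along the same rotation, and, when $c>0$, records the position of the cut by decorating one suitably chosen rise of $D$ (when $c=0$ the path $P$ is already a Dyck path and $D$ receives no decoration). The inverse reads $c$ and $j$ off the decorated step and un-rotates; checking that this is a well-defined bijection onto labelled Dyck paths with at most one decoration is routine once the conventions are fixed.

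It then remains to check that this map transports the three pieces of data. For the area, the correction term $+nc-j-c$ in $\mathsf{area}(P)$ is tailored so that re-adding $c$ to each of the $n$ rows accounts for $nc$, while $-j-c$ discards exactly the cells that after re-gluing no longer lie between the path and the new main diagonal (including the row of the decorated rise), so that $\mathsf{area}(P)=\mathsf{area}(D)$. For $\mathsf{dinv}$, the defining condition is invariant under cyclic rotation on all pairs $(i,i')$ lying on the same side of the cut, while the pairs straddling the cut are in bijection with the $\mathsf{dinv}$ pairs that one decoration switches off, whence $\mathsf{dinv}(P,r)=\mathsf{dinv}(D)$. Finally, reading the labels along the diagonals $y=x+k$ from top to bottom and right to left is compatible with the cyclic re-gluing, so $\sigma(P,r)$ and $\sigma(D)$ agree, and therefore so do $\mathsf{ides}$ and the quasisymmetric functions $Q_{\mathsf{ides}(\sigma),n}$.

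The genuine obstacle is not the bijection but the fact that the statement we reduce to is still open: via this translation, Conjecture~\ref{conj:new_square} is equivalent to the $\Delta_{e_{n-2}}'e_n$ instance of the Delta conjecture of \cite{haglundremmelwilson}, which is presently known only in special cases, such as $q=1$ by Romero \cite{romero} or after various specializations. The secondary, more technical difficulty is the $\mathsf{dinv}$- and reading-word-bookkeeping across the splice, which, although elementary, must be carried out with care about the secondary dinv pairs and the order of the labels on each diagonal. What can be established unconditionally are the specializations: pairing both sides of Conjecture~\ref{conj:new_square} with $h_n$ or with $e_n$ collapses the quasisymmetric sum to the $q,t$-enumerators of decorated Dyck paths handled by Theorem~\ref{thm:decoqtSchroeder}, giving the new $q,t$-square of the title; and the evaluation at $t=1/q$ can be matched with that of $\nabla\omega(p_n)$ directly from the plethystic formulae, which is the broader phenomenon alluded to in the introduction.
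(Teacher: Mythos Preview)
The statement you are ``proving'' is labelled \emph{Conjecture} in the paper, and indeed the paper does not prove it: it is presented as a new square conjecture, equivalent under the paper's bijection $\gamma_E$ of Section~7.2 to the (open) Delta conjecture for $\Delta_{e_{n-1}}e_n$. So there is no proof in the paper to compare your attempt against, and your own write-up correctly diagnoses the situation: the argument you sketch is a reduction, not a proof, and the target of the reduction is still open.

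Your reduction is essentially the same as the paper's. The paper defines $\gamma_E:\mathcal{LSQ}_n^E\to\mathcal{LD}_{n,0}^{\mathrm{Fall}}\sqcup\mathcal{LD}_{n,1}^{\mathrm{Fall}}$ by cutting the square path at the point $(j+c,j)$, re-gluing with one extra east step inserted at the seam, and decorating the \emph{fall} so created; it then asserts that $\mathsf{area}^-$ and $\mathsf{dinv}^-$ on the decorated Dyck side match $\mathsf{area}$ and $\mathsf{dinv}$ on the square side. Your description via cyclic rotation of the area word and shift by $c$ is the same geometric operation phrased differently, with one discrepancy: you propose to decorate a \emph{rise}, whereas the paper lands in the fall version $\mathcal{LD}_{n,1}^{\mathrm{Fall}}$ with the statistics $\mathsf{area}^-,\mathsf{dinv}^-$ of \cite{haglundremmelwilson}. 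This matters because the extra horizontal step is inserted right after an east step (the last step of $P$), so what is naturally created and decorated is a fall, not a rise; your rise bookkeeping for $\mathsf{area}$ and $\mathsf{dinv}$ across the splice would need to be redone accordingly. Apart from this, your identification of what is unconditional (the hook-shape/Schr\"oder specializations via Theorem~\ref{thm:decoqtSchroeder}, and the $t=1/q$ evaluation) is exactly what the paper establishes.
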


A similar conjecture can be formulated using the square paths ending north.

\subsection{Relation with the Delta conjecture for $\Delta_{e_{n-1}}e_n$}
Let $\mathcal{LD}_{n,k}^{\text{Fall}}$ be the set of labelled Dyck paths with $k$ decorations on falls as in \cite{haglundremmelwilson} .

We describe a bijective map 
\begin{equation}  \label{gammamap}
\gamma_E: \mathcal{LSQ}^E_n \rightarrow \mathcal{LD}_{n,0}^{Fall} \sqcup \mathcal{LD}_{n,1}^{Fall}. 
\end{equation}
We have $\mathcal{LD}_{n,0}^{Fall}\subseteq \mathcal{LSQ}^E_n$ and so we define $\gamma_{E_{\big{\vert} \mathcal{LD}_{n,0}^{Fall} }}$ to be the identity map. Now take $(P,r)\in \mathcal{LSQ}^E_n$ such that $P$ is not a Dyck path. Create a labelled Dyck path $D$ from $P$ as follows: start from $(0,0)$ by copying $P$ and its labels starting from the point $(j+c,j)$. When we arrive at the end of $P$ we add an extra horizontal step (thus creating a fall since $P$ ended with an east step). Now continue $D$ by copying $P$ and its labels starting from $(0,0)$. The path $D$ ends when we reach the point $(c+j-1, j)$ in $P$. Finally decorate the fall that was created by adding the extra horizontal step. It is easy to see how to invert $\gamma_E$. We refer to Figure \ref{gamma} for an example.  
\begin{figure}[h!]
\centering
	\begin{minipage}{.6\textwidth}
	\centering
		\begin{tikzpicture}[scale=.8]
			\draw[gray] (0,0) grid (8,8) grid (11,5);
			\fill[white](8,5)--(11,5)--(11,8)--(8,5);
			\draw[white,ultra thick] (8,5)--(11,5)--(11,8);
			\draw[gray](3,0)--(11,8) (8,8)--(8,0);
			\draw[ultra thick, blue, opacity=.4](0,0)|-(3,1)|-(4,2) (5,2)|-(6,5)|-(7,7)|-(8,8);
			\draw[ultra thick, red, opacity=1](4,2)--(5,2) ;
			\filldraw (5,2) circle (3pt);
			\draw (.5,.5) node{2} circle (.4cm)
			(3.5,1.5) node{7}circle (.4cm)
			(5.5,2.5) node{1}circle (.4cm)
			(5.5,3.5) node{6}circle (.4cm)
			(5.5,4.5) node{8}circle (.4cm)
			(6.5,5.5) node{4}circle (.4cm)
			(6.5,6.5) node{5}circle (.4cm)
			(7.5,7.5) node{3} circle (.4cm);
		\end{tikzpicture}
	\end{minipage}%
	\begin{minipage}{.4\textwidth}
	\centering
		\begin{tikzpicture}[scale=.8]
			\draw[gray] (0,0) grid (8,8) (0,0)--(8,8);
			\draw[ultra thick, blue, opacity=.4](0,0)|-(1,3)|-(2,5)|-(4,6)|-(7,7)|-(8,8) ;
			\draw[ultra thick, red] (3,6)--(4,6);
			\filldraw (0,0) circle (3pt);
			\draw (.5,.5) node{1}circle (.4cm)
			(.5,1.5) node{6}circle (.4cm)
			(.5,2.5) node{8}circle (.4cm)
			(1.5,3.5) node{4}circle (.4cm)
			(1.5,4.5) node{5}circle (.4cm)
			(2.5,5.5) node{3}circle (.4cm)
			(5.5,6.5) node{2}circle (.4cm)
			(7.5,7.5) node{7}circle (.4cm)
			(2.5,6.5) node{$\ast$};
		\end{tikzpicture}
	\end{minipage} \caption{The map $\gamma_E$. }\label{gamma}
\end{figure}
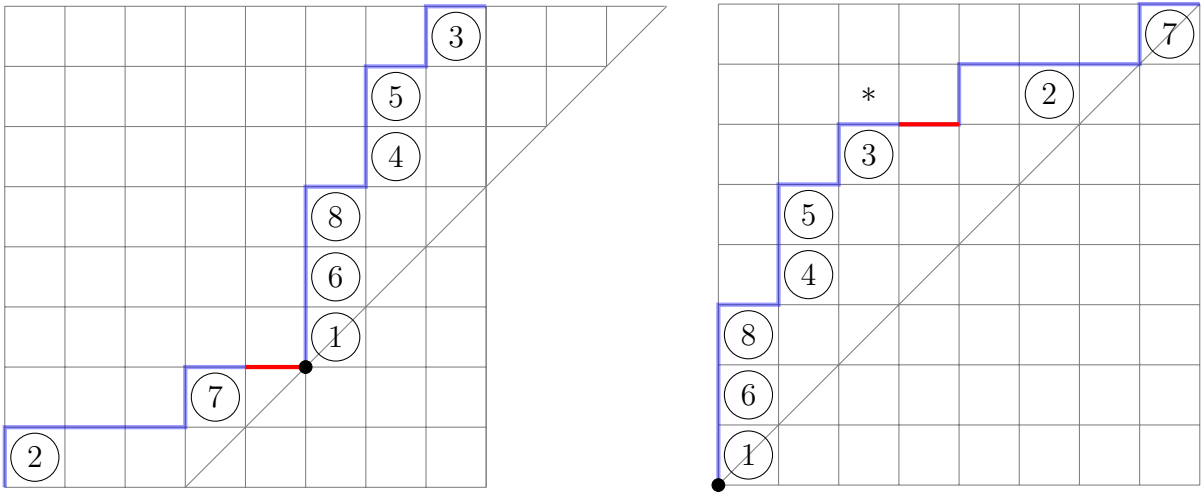

If $\area^-$ and $\dinv^-$ are the statistics on  $\mathcal{LD}_{n,k}^{\text{Fall}}$ defined in \cite[Section~3.1]{haglundremmelwilson}, then for all $(P,r) \in \mathcal{LSQ}^E_n$ \begin{align*} \area^-(\gamma_E((P,r))&=\area((P,r))\\ \dinv^-(\gamma_E((P,r))&=\dinv((P,r)).\end{align*}

\subsection{A new $q,t$-square}
Now consider the sets $\mathcal{SQ}_n^E$ and $\mathcal{SQ}_n^N$ of (unlabelled) square paths ending east and north, respectively. For $P\in \mathcal{SQ}_n^E$ define $\area(P)$ to be the same as in the labelled version and for $P'\in \mathcal{SQ}_n^N$ we define 
\begin{equation}
\area(P')= \sum_{i=1}^n w_i(P)+nc-j.
\end{equation} 

Next, for $P\in \mathcal{SQ}_n^E\sqcup \mathcal{SQ}_n^N$, we set
\begin{align}
\mathsf{dinv}(P)=\dinv(P') & :=\sum_{1\leq i<j\leq n}\chi(w_i(P)=w_j(P))+ \sum_{1\leq i<j< n}\chi(w_i(P)=w_j(P)+1)  \\
 & \quad + \sum_{1\leq i< n}\chi(w_i(P)=w_n(P)+1\text{ and }w_n(P)\geq 0).
\end{align}

Finally we define the $\bounce$ as follows. The bounce path of $P$ starts at the point $(c+j, j)$ and travels north until it hits the beginning of an east step when it turns east until it hits the diagonal $y=x-c$, where it turns north again, and so on. When it crosses the line $y=n$, at the point $(n+m,n)$ it stops and starts again at the point $(m,0)$. Here there is a slight difference between the definitions for paths ending east and paths ending north:
\begin{itemize}
	\item if the path ends east, then if $m=0$ the bounce path starts traveling east at the point $(0,0)$, otherwise, it travels north; 
	\item if the path ends north, then the bounce path always travels north starting from $(m,0)$.  
\end{itemize} 
Then the path bounces on the \emph{end} of east steps and again on the diagonal $y=x-c$. It ends when it arrives at the point $(c+j,j)$. We then label the vertical steps of the bounce starting with $0$'s and adding $1$ every time the bounce path bounces. Finally, $\bounce(P)=\bounce(P')$ is defined to be the sum of the labels of the bounce path. For an example, see Figure~\ref{squarebounceex}, whose bounce equals $15$. 
 
\begin{figure}[h!]
	\begin{center}
		\begin{tikzpicture}[scale=1.1]
		\draw[gray] (0,0) grid[step=0.5 cm](5.5,5.5) (2,0) to (7.5,5.5); 
		\filldraw (3.5,1.5) circle(2pt);
		\draw[blue, ultra thick, opacity=0.4](3.5,1.5)|-(4,2.5)|-(5,4.5) |-(5.5,5.5) (0,0)|-(1,1)|-(3.5,1.5);
		\draw[gray] (5.5,4.5) grid[step=.5](6.5,5.5) rectangle (7,5) (5.5,4.5) rectangle (6,4) ;
		\draw[ultra thick, dashed, opacity=.5] (3.5,1.5)|-(4.5,2.5)|-(6.5,4.5) to (6.5,5.5)(1,0)--(1,1)(1,1)--(3,1)|-(3.5,1.5);
		\draw (-.25, 0.25) node {2}
			(-.25, 0.75) node {2}
			(-.25, 1.25) node {3}
			(-.25, 1.75) node {0}
			(-.25, 2.25) node {0}
			(-.25, 2.75) node {1}
			(-.25, 3.25) node {1}
			(-.25, 3.75) node {1}
			(-.25, 4.25) node {1}
			(-.25, 4.75) node {2}
			(-.25, 5.25) node {2};
			\end{tikzpicture}
	\end{center}
	\caption{Example of a path in $\mathcal{SQ}_n^E$, its bounce path and its labels. } \label{squarebounceex}
\end{figure}
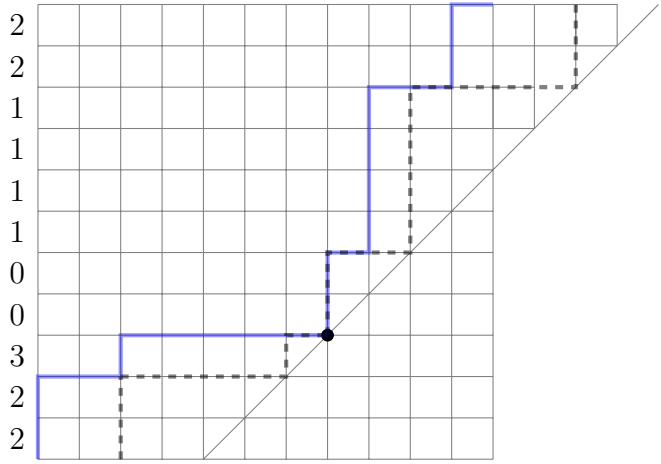
\begin{proposition} \label{prop:qtsquare_bijections}
There exist bijections
\begin{align}
\gamma_E' & : \mathcal{SQ}_n^E\rightarrow \mathcal{D}^{(0,0)}_n\sqcup \mathcal{D}_n^{(0,1)}=\widetilde{\mathcal{D}}^{(0,0)}_n\sqcup \widetilde{\mathcal{D}}_n^{(0,1)}\\
\gamma_N' & : \mathcal{SQ}_n^N\rightarrow \mathcal{D}_n^{(1,0)}=\widetilde{\mathcal{D}}^{(0,0)}_n\sqcup \widetilde{\mathcal{D}}_n^{(1,0)} =\overline{\mathcal{D}}^{(0,0)}_n\sqcup \overline{\mathcal{D}}_n^{(1,0)}
\end{align}
such that for all $P\in \mathcal{SQ}_n^E$ and $P'\in \mathcal{SQ}_n^N$ we have
\begin{align}
\area(\gamma_E'(P)) & =\area(P)  &   \area(\gamma_N'(P')) & =\area(P')  \\
\dinv(\gamma_E'(P)) & =\dinv(P) &    \dinv(\gamma_N'(P')) & =\dinv(P')  \\
\bounce(\gamma_E'(P)) & =\bounce(P) &   \bounce'(\gamma_N'(P')) & =\bounce(P').
\end{align}
\end{proposition}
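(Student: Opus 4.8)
The plan is to realise $\gamma_E'$ as the ``unlabelled shadow'' of the map $\gamma_E$ already described, and to define $\gamma_N'$ by the completely analogous cut-and-reglue procedure. Given $P\in\mathcal{SQ}_n^E$ with $c=-\min_iw_i(P)$ and $j$ as in the statement: if $c=0$ then $P$ is a Dyck path and we set $\gamma_E'(P)=P$, regarded as an element of $\widetilde{\mathcal D}_n^{(0,0)}$; if $c\geq 1$ we cut $P$ at the lattice point $(c+j,j)$ on the lowest diagonal $y=x-c$, read off the two arcs, reglue them in the opposite order with one extra east step inserted at the splice, and decorate that extra step. Because $P$ ends with an east step the inserted step is a fall, so we land in $\widetilde{\mathcal D}_n^{(0,1)}=\mathcal D_n^{(0,1)}$ (there are no decorated peaks, and a decorated fall is the same datum as a decorated rise). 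The map $\gamma_N'$ is defined the same way on $P'\in\mathcal{SQ}_n^N$; now the arc preceding the splice ends with a north step, so the inserted east step is the horizontal part of a \emph{peak}, which we decorate, landing in $\mathcal D_n^{(1,0)}=\widetilde{\mathcal D}_n^{(0,0)}\sqcup\widetilde{\mathcal D}_n^{(1,0)}=\overline{\mathcal D}_n^{(0,0)}\sqcup\overline{\mathcal D}_n^{(1,0)}$ under the identifications of Remark~\ref{rmk:no_restrictions}. In each case the inverse locates the decorated step, deletes it, and reverses the reglueing, so both maps are bijections onto the stated sets.

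The next step is to transport the three statistics, and here $\area$ and $\dinv$ are essentially bookkeeping. The area word of $\gamma_E'(P)$ is the cyclic rotation of the area word of $P$ that begins at position $j+1$, with $c$ added to every letter; the correction term $nc-j-c$ (respectively $nc-j$ in the north case, where no square is lost beneath the decorated peak) in the definition of $\area$ on square paths is reverse-engineered precisely so that this rotation-and-shift matches $\area$ of the resulting decorated Dyck path. For $\dinv$ one uses that $\dinv$ of a decorated Dyck path ignores decorated rises: the three sums in the square $\dinv$ then match, in order, primary dinv, secondary dinv among pairs $i<j<n$, and the secondary pairs involving the last row, with the side condition $w_n(P)\geq 0$ being exactly the condition under which that last vertical step still contributes after reglueing; in the north case the secondary pairs that disappear are exactly those killed by the new decorated peak. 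For $\gamma_E'$ one may instead just invoke the identities $\area^-\!\circ\,\gamma_E=\area$ and $\dinv^-\!\circ\,\gamma_E=\dinv$ recorded above and specialise the labelling.

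The genuinely delicate point, and the step I expect to be the main obstacle, is the bounce. One must prove that the square bounce path of $P$ (started at $(c+j,j)$, restarted at $(m,0)$ after crossing $y=n$) is carried by $\gamma_E'$ onto the \emph{first} bounce path of $\gamma_E'(P)$, so that $\bounce\mapsto\bounce$, while the square bounce path of $P'$ is carried by $\gamma_N'$ onto the \emph{second} bounce path of $\gamma_N'(P')$ drawn on its leaning stack, so that $\bounce\mapsto\bounce'$. The case distinctions built into the square bounce path are exactly what make this work: in the east case the decorated fall lies on the new main diagonal and the first bounce path simply ignores it, which is the clause ``travel east from $(0,0)$ if $m=0$, north otherwise''; in the north case the horizontal step of the decorated peak is deleted when forming the leaning stack and the second bounce path is redrawn against the shifted diagonal, which is the clause ``always travel north''. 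The plan is to establish the matching by following the two bounce paths in lockstep, one bounce segment at a time, using that each segment records a maximal run of a fixed value in the rotated-and-shifted area word and that the restart point $(m,0)$ is precisely the $\gamma_E'$- (respectively $\gamma_N'$-) image of the start of the Dyck bounce path; the only genuine care is needed at the splice, where the two boundary cases above are checked by hand. Carrying this out completes the proof.
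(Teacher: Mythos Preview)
Your approach is essentially the same as the paper's: define $\gamma_E'$ as the unlabelled version of $\gamma_E$, define $\gamma_N'$ by the analogous cut-and-reglue that produces a decorated peak instead of a decorated fall, and then verify the statistics. The paper's own proof is in fact considerably terser than yours---after describing the two maps exactly as you do, it simply declares that ``the statements of the proposition are straightforward to check,'' whereas you sketch how the area, dinv, and bounce verifications would actually go.
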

\begin{proof}
We define $\gamma_E'$ in exactly the same way as the $\gamma_E$ map (\ref{gammamap}) defined in the previous section for labelled objects. 

We define a new map  $\gamma_N'$ similarly to $\gamma_E'$: the image of a path is constructed in exactly the same way except the portion of the path right before the added horizontal step is a north step, so we created a peak instead of a fall. Note that the paths that get sent into paths with a decoration on the last peak are exactly the paths whose $j=0$.

Now the statements of the proposition are straightforward to check.
\end{proof}

The following theorem with its proof provides the analogue of several results about the $q,t$-square in \cite{loehrwarringtonqtsquare} and \cite{canloehr}. For this reason we call it a \emph{new $q,t$-square} theorem.
\begin{theorem} We have
	\begin{align}
	 \langle \Delta_{e_{n-1}}e_n,e_n \rangle& =\sum_{P\in \mathcal{SQ}_n^E}q^{\area(P)}t^{\bounce(P)} = \sum_{P\in \mathcal{SQ}_n^E}q^{\dinv(P)} t^{\area(P)} \\ & =\sum_{P\in \mathcal{SQ}_n^N}q^{\area(P)}t^{\bounce(P)} =\sum_{P\in \mathcal{SQ}_n^N}q^{\dinv(P)} t^{\area(P)}. 
	\end{align}
\end{theorem}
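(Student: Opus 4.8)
The plan is to reduce everything to results already proved in the excerpt. Observe first that the common value $\langle \Delta_{e_{n-1}}e_n,e_n\rangle$ is exactly the case $k=0$ of the decorated $q,t$-Schr\"oder theorem: taking $a+b=n$, $k=0$ in \eqref{eq:qtSchroder2} and recalling that $\Delta_{e_{n-1}} = \Delta_{e_{n-1}}'$ on $\Lambda^{(n)}$ (so $\Delta_{e_{a+b-0-1}}'e_{a+b} = \Delta_{e_{n-1}}e_n$), and summing over $b$ (i.e. using $e_n = \sum_{b=0}^n e_{n-b}h_b$ paired appropriately, or more directly using that $\mathcal D_{a+b}^{(b,0)}$ with $b$ ranging gives all undecorated Dyck paths weighted by $e_a h_b$) we get that $\langle \Delta_{e_{n-1}}e_n,e_n\rangle$ equals $\mathbf D \mathcal D_n^{(\bullet,0)} = \mathbf B' \mathcal D_n^{(\bullet,0)} = \mathbf B \mathcal D_n^{(\bullet,0)}$ summed over the appropriate decoration count. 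Concretely, $\langle \Delta_{e_{n-1}}e_n,e_n\rangle = \sum_{D\in\mathcal D_n}q^{\dinv D}t^{\area D}$ over ordinary (undecorated) Dyck paths, and likewise with $(\area,\bounce)$ and $(\area,\bounce')$, since in the $k=0$ case the decoration sets are empty and all three statistics reduce to Haglund's classical ones; this is precisely the classical $q,t$-Catalan / Dyck-path picture packaged inside Theorem~\ref{thm:decoqtSchroeder}.

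Next I would use Proposition~\ref{prop:qtsquare_bijections}. The bijection $\gamma_E'$ sends $\mathcal{SQ}_n^E$ onto $\mathcal D_n^{(0,0)}\sqcup \mathcal D_n^{(0,1)}$ and preserves $\area$, $\dinv$, and $\bounce$ simultaneously. Therefore
\begin{align*}
\sum_{P\in\mathcal{SQ}_n^E}q^{\area(P)}t^{\bounce(P)} &= \mathbf B\,\mathcal D_n^{(0,0)} + \mathbf B\,\mathcal D_n^{(0,1)},\\
\sum_{P\in\mathcal{SQ}_n^E}q^{\dinv(P)}t^{\area(P)} &= \mathbf D\,\mathcal D_n^{(0,0)} + \mathbf D\,\mathcal D_n^{(0,1)},
\end{align*}
and similarly $\gamma_N'$ gives
\begin{align*}
\sum_{P\in\mathcal{SQ}_n^N}q^{\area(P)}t^{\bounce(P)} &= \mathbf B'\,\overline{\mathcal D}_n^{(0,0)} + \mathbf B'\,\overline{\mathcal D}_n^{(1,0)},\\
\sum_{P\in\mathcal{SQ}_n^N}q^{\dinv(P)}t^{\area(P)} &= \mathbf D\,\widetilde{\mathcal D}_n^{(0,0)} + \mathbf D\,\widetilde{\mathcal D}_n^{(1,0)}.
\end{align*}
So all four sums in the theorem are expressed as enumerators over decorated Dyck paths with total decoration count $1$ (peak-or-rise for the $E$ case, peak-or-rise-avoiding-the-top for the $N$ case). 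Now I apply Theorem~\ref{thm:decoqtSchroeder}: with $a+b=n$ and $k=1$, equation \eqref{eq:qtSchroder2} says $\mathbf D\,\mathcal D_n^{(b,1)} = \mathbf B'\,\mathcal D_n^{(b,1)} = \mathbf B\,\mathcal D_n^{(b,1)} = \langle \Delta_{e_{n-2}}'e_n, e_ah_b\rangle$, and summing the appropriate combinations (or using the $\widehat{\phantom{x}},\widetilde{\phantom{x}},\overline{\phantom{x}}$ versions in \eqref{eq:qtSchroder1} together with Corollary~\ref{cor:comb_symmetry}) each of the four decorated-path sums equals $\langle \Delta_{e_{n-2}}'e_n, e_n\rangle + \langle \Delta_{e_{n-1}}'e_n, e_n\rangle$. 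Finally, $\Delta_{e_{n-2}}' + \Delta_{e_{n-1}}' = \Delta_{e_{n-1}}$ on $\Lambda^{(n)}$ by \eqref{eq:deltaprime} (taking $k=n-1$), so this common value is exactly $\langle \Delta_{e_{n-1}}e_n, e_n\rangle$.

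I expect the main obstacle to be bookkeeping of the single decoration rather than any genuine difficulty: one must carefully match the two pieces in each image of $\gamma_E'$ or $\gamma_N'$ (the ``$0$ decorations'' piece versus the ``$1$ decoration'' piece) against the right-hand sides of \eqref{eq:qtSchroder1}–\eqref{eq:qtSchroder2}, using the identities $\widetilde{\mathcal D}_n^{(0,0)}\sqcup\widetilde{\mathcal D}_n^{(0,1)} = \mathcal D_n^{(0,0)}\sqcup\mathcal D_n^{(0,1)}$ and $\widetilde{\mathcal D}_n^{(0,0)}\sqcup\widetilde{\mathcal D}_n^{(1,0)} = \overline{\mathcal D}_n^{(0,0)}\sqcup\overline{\mathcal D}_n^{(1,0)} = \mathcal D_n^{(1,0)}$ already recorded in Proposition~\ref{prop:qtsquare_bijections}, and invoking Corollary~\ref{cor:comb_symmetry} to pass freely between $\mathbf B'\overline{\mathcal D}_n^{(a,b)}$ and $\mathbf B'\overline{\mathcal D}_n^{(b,a)}$ when $\{a,b\}=\{0,1\}$. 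Once the combinatorial sides are all identified with $\langle\Delta_{e_{n-1}}e_n,e_n\rangle$ via Theorem~\ref{thm:decoqtSchroeder}, the four equalities follow at once. I would write the proof as: (i) recall the bijections and translate the four sums into decorated-Dyck-path enumerators; (ii) invoke Theorem~\ref{thm:decoqtSchroeder} and \eqref{eq:deltaprime} to evaluate each; (iii) conclude.
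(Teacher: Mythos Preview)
Your first paragraph contains a genuine error that you should drop entirely: you write ``$\Delta_{e_{n-1}} = \Delta_{e_{n-1}}'$ on $\Lambda^{(n)}$'', but this is false (it is $\Delta_{e_n}$ that equals $\Delta_{e_{n-1}}'$ there, since $e_n[B_\mu-1]=0$ for $|\mu|=n$; by contrast $\Delta_{e_{n-1}}=\Delta_{e_{n-1}}'+\Delta_{e_{n-2}}'$). Consequently your ``concretely'' sentence is also wrong: $\langle\Delta_{e_{n-1}}e_n,e_n\rangle$ is \emph{not} the undecorated $q,t$-Catalan $\sum_{D\in\mathcal D_n}q^{\dinv D}t^{\area D}$; the latter is $\langle\Delta_{e_{n-1}}'e_n,e_n\rangle$. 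Fortunately you never use this paragraph downstream.

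From your second paragraph on, the argument is correct and is essentially the paper's own proof. You push the four square-path sums through $\gamma_E'$ and $\gamma_N'$ (Proposition~\ref{prop:qtsquare_bijections}) to land on $\mathbf D$-, $\mathbf B$-, or $\mathbf B'$-enumerators of $\mathcal D_n^{(0,0)}\sqcup\mathcal D_n^{(0,1)}$ or of $\mathcal D_n^{(1,0)}=\overline{\mathcal D}_n^{(0,0)}\sqcup\overline{\mathcal D}_n^{(1,0)}=\widetilde{\mathcal D}_n^{(0,0)}\sqcup\widetilde{\mathcal D}_n^{(1,0)}$, then invoke Theorem~\ref{thm:decoqtSchroeder} together with Corollary~\ref{cor:comb_symmetry} (the $\psi$-symmetry $\mathbf B'\overline{\mathcal D}_n^{(1,0)}=\mathbf B'\overline{\mathcal D}_n^{(0,1)}$) to identify each with $\langle\Delta_{e_{n-1}}'e_n,e_n\rangle+\langle\Delta_{e_{n-2}}'e_n,e_n\rangle=\langle\Delta_{e_{n-1}}e_n,e_n\rangle$. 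The paper does exactly this, organising the same ingredients ($\gamma_E'$, $\gamma_N'$, $\zeta$, $\psi$, Theorem~\ref{thm:decoqtSchroeder}) into a commutative diagram; your outline and the paper's proof coincide in substance. Just delete the opening paragraph and you're done.
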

\begin{proof}
We observe that Theorem~\ref{thm:decoqtSchroeder} gives
\begin{align}
\langle \Delta_{e_{n-1}}e_n,e_n \rangle & =\langle \Delta_{e_{n-1}}'e_n,e_n \rangle+\langle \Delta_{e_{n-2}}'e_n,e_n \rangle\\
& = \mathbf{D} \mathcal{D}_n^{(0,0)} + \mathbf{D} \mathcal{D}_n^{(0,1)}\\
& = \mathbf{D} \widetilde{\mathcal{D}}_n^{(0,0)} + \mathbf{D} \widetilde{\mathcal{D}}_n^{(0,1)}.
\end{align}
	
Using $\gamma'_E$ and $\gamma'_N$ of Proposition~\ref{prop:qtsquare_bijections}, Theorem~\ref{thm:decoqtSchroeder}, the $\zeta$ map of Theorem~\ref{thm:zeta_map} and the $\psi$ map of Theorem~\ref{thm:psi_map}, we prove the equalities in the diagram below. 

$$
\begin{tikzcd}
&&\sum_{P\in \mathcal{SQ}^E_n}q^{\dinv(P)}t^{\area(P)} \dar["\gamma'_E"] &\\ 
&&\mathbf D \widetilde{\mathcal{D}}^{(0,0)}_n+ \mathbf D \widetilde{\mathcal{D}}^{(0,1)} \dar["\zeta"]  &\\ 
&&\mathbf B \widehat{\mathcal{D}}^{(0,0)}_n+ \mathbf B \widehat{\mathcal{D}}^{(1,0)} \dar[equal] &\\  &\mathbf D \mathcal{D}^{(1,0)}\rar[equal] &\mathbf B \mathcal{D}^{(1,0)}\rar[equal] &	 \mathbf B' \mathcal{D}^{(1,0)} \\
 &\sum_{P\in \mathcal{SQ}^N_n}q^{\dinv(P)}t^{\area(P)} \uar["\gamma'_N"]&
 \mathbf B'\overline{\mathcal{D}}^{(0,0)}+\mathbf B'\overline{\mathcal{D}}^{(1,0)} \urar[equal]\dar["\psi"]
 & \sum_{P\in \mathcal{SQ}^N_n}q^{\area(P)}t^{\bounce(P)} \uar["\gamma'_N"]  \\
 &&\mathbf B'\overline{\mathcal{D}}^{(0,0)}+\mathbf B'\overline{\mathcal{D}}^{(0,1)}& \\
  &&\mathbf B\overline{\mathcal{D}}^{(0,0)}+\mathbf B\overline{\mathcal{D}}^{(0,1)}\uar[equal]& \\
  &&\sum_{P\in \mathcal{SQ}^E_n}q^{\area(P)}t^{\bounce(P)} \uar["\gamma'_E"]&
\end{tikzcd}
$$
All this proves the result.
\end{proof}

The most remarkable connection between the old $q,t$-square and the new one is the fact that they coincide at $t=1/q$. In fact, comparing \cite[Corollary~4.8.1]{haglundbook} (with $d=1$) with \cite[Theorem~10]{loehrwarringtonqtsquare}, we deduce immediately the following proposition.
\begin{proposition} \label{prop:coinc}
We have
\begin{equation} \label{eq:t_1_q_id}
\langle \Delta_{e_{n-1}}e_n,e_n \rangle\big{|}_{t=1/q} = \langle \Delta_{e_{n}}\omega (p_n),e_n \rangle\big{|}_{t=1/q}=q^{-\binom{n}{2}} \frac{1}{1+q^n}\begin{bmatrix}
2n\\
n\\
\end{bmatrix}_q.
\end{equation}
\end{proposition}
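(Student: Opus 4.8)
The plan is to deduce Proposition~\ref{prop:coinc} purely from two results already in the literature, without doing any new combinatorics. The statement to prove is that
\begin{equation*}
\langle \Delta_{e_{n-1}}e_n,e_n \rangle\big{|}_{t=1/q} = \langle \Delta_{e_{n}}\omega (p_n),e_n \rangle\big{|}_{t=1/q}=q^{-\binom{n}{2}} \frac{1}{1+q^n}\begin{bmatrix} 2n\\ n\\ \end{bmatrix}_q,
\end{equation*}
so in fact there are two equalities to establish: the middle one (the value of $\langle \Delta_{e_{n-1}}e_n,e_n\rangle$ at $t=1/q$) and the right one (the value of $\langle \Delta_{e_n}\omega(p_n),e_n\rangle$ at $t=1/q$), and then the chain follows.

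First I would handle the $\nabla\omega(p_n)$ side. This is literally the $q,t$-square: by the theorem of Loehr--Warrington--Can--Loehr, $\langle\Delta_{e_n}\omega(p_n),e_n\rangle=\langle\nabla\omega(p_n),e_n\rangle$ is the $q,t$-enumerator of square paths, and its $t=1/q$ specialisation is computed in \cite[Theorem~10]{loehrwarringtonqtsquare}, giving exactly $q^{-\binom{n}{2}}\frac{1}{1+q^n}\qbinom{2n}{n}_q$ (up to the standard normalisation by a power of $q$ that matches our area conventions). Second, for the $\Delta_{e_{n-1}}e_n$ side I would invoke \cite[Corollary~4.8.1]{haglundbook}: that corollary gives the $t=1/q$ evaluation of $\langle\Delta_{e_{d-1}}e_n,e_{n-d}h_d\rangle$ (the $q,t$-Schröder polynomial specialisation), and taking $d=1$ collapses $e_{n-d}h_d$ to $e_{n-1}h_1=e_{n-1}e_1$; one checks that $\langle\Delta_{e_{n-1}}e_n,e_{n-1}e_1\rangle$ reduces to $\langle\Delta_{e_{n-1}}e_n,e_n\rangle$ on $\Lambda^{(n)}$ via Lemma~\ref{lem:Mac_hook_coeff} type manipulations (or simply note $d=1$ is the case the corollary is designed to produce), and that Haglund's formula at $d=1$ is precisely $q^{-\binom{n}{2}}\frac{1}{1+q^n}\qbinom{2n}{n}_q$. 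Since both sides equal the same closed form, the first equality in \eqref{eq:t_1_q_id} follows immediately.

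The only real care needed is bookkeeping: the statistics $\mathsf{area}$, $\mathsf{dinv}$, $\mathsf{bounce}$ used in \cite{haglundbook} and in \cite{loehrwarringtonqtsquare} differ from ours (and from each other) by fixed shifts and by $q\leftrightarrow t$ swaps, so one must track the exact power of $q$ (and the role of $t=1/q$ versus $q=1/t$) to be sure the two closed-form expressions literally coincide rather than merely agree up to an overall monomial. I expect this normalisation matching to be the main — and essentially only — obstacle; it is routine but must be done honestly. Once the conventions are aligned, Proposition~\ref{prop:coinc} is an immediate consequence of the cited results, which is exactly why the excerpt states that it follows ``immediately'' from comparing \cite[Corollary~4.8.1]{haglundbook} (with $d=1$) and \cite[Theorem~10]{loehrwarringtonqtsquare}.
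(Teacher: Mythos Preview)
Your approach is exactly the paper's: the proposition is asserted to follow immediately from comparing \cite[Corollary~4.8.1]{haglundbook} (with $d=1$) and \cite[Theorem~10]{loehrwarringtonqtsquare}, and that is what you do. One small slip worth correcting: Corollary~4.8.1 specialises the $q,t$-Schr\"oder polynomial $\langle \nabla e_n, e_{n-d}h_d\rangle = \langle \Delta_{e_n}e_n, e_{n-d}h_d\rangle$, not $\langle \Delta_{e_{d-1}}e_n,\,\cdot\,\rangle$; the link to $\langle \Delta_{e_{n-1}}e_n,e_n\rangle$ is that two applications of Lemma~\ref{lem:Mac_hook_coeff} give $\langle \Delta_{e_{n-1}}e_n,e_n\rangle = \langle \Delta_{e_{n-1}}\Delta_{e_n}e_n,h_n\rangle = \langle \nabla e_n, e_{n-1}h_1\rangle$, which is the $d=1$ Schr\"oder case (this is precisely the rewriting the paper records just after the proposition).
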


In fact, using \eqref{eq:lem_e_h_Delta}, we can rewrite \eqref{eq:t_1_q_id} as
\begin{equation}
\langle \Delta_{e_{n}}e_n,e_{n-1}e_1 \rangle\big{|}_{t=1/q} = \langle \Delta_{e_{n}}\omega (p_n),e_n \rangle\big{|}_{t=1/q}.
\end{equation}
In the next subsection we show that this is not an isolated phenomenon.

\subsection{$\Delta_f\omega(p_n)$ at $t=1/q$}

We start with the following theorem.
\begin{theorem} \label{thm:omegapn}
For any $f\in \Lambda^{(k)}$ we have
\begin{equation}
{\Delta_f \omega(p_n)}_{\big{|}_{t=1/q}}=\frac{[n]_qf[[n]_q]}{[k]_qq^{k(n-1)}}e_n[X[k]_q].
\end{equation}
\end{theorem}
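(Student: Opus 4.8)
The plan is to compute $\Delta_f\omega(p_n)$ directly from the definition of $\Delta_f$ on the Macdonald basis and then specialize at $t=1/q$. First I would recall the well-known expansion of $\omega(p_n)$ in the modified Macdonald basis: since $p_n$ is (up to a sign and a factor involving $M$) proportional to a single ``hook-free'' combination, one has the classical identity
\begin{equation}
\omega(p_n)=\sum_{\mu\vdash n}\frac{M B_\mu\,\Pi_\mu}{w_\mu}\cdot\frac{\text{(something)}}{\ }\widetilde H_\mu[X],
\end{equation}
but more precisely the cleanest route is to use that $\omega(p_n)=(-1)^{n-1}p_n$ and that $p_n$ has the Macdonald expansion $p_n=\sum_{\mu\vdash n}\frac{\widetilde H_\mu[X]\,(-1)^{?}\ \cdots}{w_\mu}$; concretely, from $e_n[X(1-q^k)/(1-q)]$ at appropriate specialization, or directly from \eqref{eq:en_expansion} combined with the fact that $\omega(p_n)$ is characterized by $\langle \omega(p_n),\widetilde H_\mu\rangle_\ast$ being supported suitably. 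The key structural fact I would invoke is that $\Delta_f\omega(p_n)=\sum_{\mu\vdash n} f[B_\mu]\,c_\mu\,\widetilde H_\mu[X]$ where $c_\mu$ are the coefficients of $\omega(p_n)$, and then pair against whatever is needed; but since the claimed answer is $e_n[X[k]_q]$ up to scalar, I would instead pair both sides with $\widetilde H_\nu$ under the star scalar product, or better, use the expansion \eqref{eq:qn_q_Macexp} of $e_n[X[k]_q]=e_n[X(1-q^k)/(1-q)]$ in the Macdonald basis, namely $(1-q^k)\sum_{\mu\vdash n}\frac{\Pi_\mu\widetilde H_\mu[X]h_k[(1-t)B_\mu]}{w_\mu}$, and compare coefficient-by-coefficient of $\widetilde H_\mu[X]/w_\mu$ on both sides of the claimed identity after applying $\Delta_f\omega(p_n)$.

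So the core reduces to a coefficient identity: for each $\mu\vdash n$, after writing $\Delta_f\omega(p_n)=\sum_\mu f[B_\mu]\,(\text{coeff of }\widetilde H_\mu\text{ in }\omega(p_n))\,\widetilde H_\mu[X]$, one must check at $t=1/q$ that
\begin{equation}
f[B_\mu]\cdot(\text{coeff}) = \frac{[n]_q\,f[[n]_q]}{[k]_q\,q^{k(n-1)}}\cdot(1-q^k)\frac{\Pi_\mu\,h_k[(1-t)B_\mu]}{w_\mu}\Big|_{t=1/q}.
\end{equation}
The point is that at $t=1/q$ the quantities $B_\mu$, $\Pi_\mu$, $w_\mu$ all degenerate dramatically: in fact $\Pi_\mu$ and $w_\mu$ contain factors that vanish unless $\mu$ is a single row or column (this is the same mechanism as in Proposition~\ref{prop:coinc} and \cite[Corollary~4.8.1]{haglundbook}), so the sum over $\mu$ collapses, and for the surviving shapes $B_\mu=[n]_q$ (hence $f[B_\mu]=f[[n]_q]$, which is why $f$ can be pulled out as a scalar $f[[n]_q]/[k]_q\cdots$). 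I would make this precise using the standard evaluations of $\omega(p_n)$'s Macdonald coefficients together with \eqref{eq:s_plethystic_eval}-type reasoning and the product formulas for $\Pi_\mu$, $w_\mu$, $T_\mu$ at $t=1/q$.

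The main obstacle I anticipate is bookkeeping the $t=1/q$ degeneration cleanly: one must identify exactly which partitions survive and compute the residual scalar (the powers of $q$ like $q^{-k(n-1)}$ and the factor $[n]_q/[k]_q$) without errors, since many factors of the form $(1-q^a t^b)$ blow up or vanish simultaneously and must be cancelled in the right order. A safer alternative, which I would pursue if the direct approach gets messy, is to verify the identity first for $f=e_k$ (or $f=h_k$) — where $\Delta_{e_k}\omega(p_n)$ has a known combinatorial/plethystic form from the literature on the square conjecture — establish the scalar there, and then note that both sides are linear in $f\in\Lambda^{(k)}$ and that $\{e_k\}$ together with its translates (or the fact that $f[B_\mu]$ only depends on $f$ through $f[[n]_q]$ on the surviving shapes, since those $\mu$ have $B_\mu=[n]_q$) forces agreement for all $f\in\Lambda^{(k)}$. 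Indeed the cleanest observation is precisely that: on the support of $\omega(p_n)$ after $t=1/q$ specialization every relevant $B_\mu$ equals $[n]_q$, so $\Delta_f$ acts as the scalar $f[[n]_q]$, reducing the whole theorem to the single case $f=e_k$ (equivalently Proposition~\ref{prop:coinc} generalized), which is the step I would isolate and prove carefully.
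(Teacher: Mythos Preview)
Your proposal has the right shape but contains two concrete errors that would derail the argument, and it misses the one fact that makes the specialization clean.

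First, the surviving shapes at $t=1/q$ are not ``a single row or column'' but \emph{all hooks} $(a,1^{n-a})$: the Macdonald expansion of $\omega(p_n)$, once specialized, is supported exactly on hooks because $\omega(p_n)=(-1)^{n-1}p_n$ and Murnaghan--Nakayama gives $p_n=\sum_{a=1}^n(-1)^{n-a}s_{(a,1^{n-a})}$. Second, on these hooks one has
\[
B_{(a,1^{n-a})}(q,1/q)=q^{-(n-a)}[n]_q,
\]
\emph{not} $[n]_q$. The $a$-dependent power of $q$ is exactly what produces the factor $q^{-k(n-1)}$ and the weights $q^{k(a-1)}$ that you need to resum into $e_n[X[k]_q]$ via Cauchy. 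Your reduction ``$\Delta_f$ acts as the scalar $f[[n]_q]$, so check one $f$'' is salvageable, but for a different reason than you give: since $f\in\Lambda^{(k)}$ is homogeneous, $f[q^{-(n-a)}[n]_q]=q^{-k(n-a)}f[[n]_q]$, so $f[[n]_q]$ factors out while the residual $q^{-k(n-a)}$ depends on $k$ only. With your stated reason (that all $B_\mu$ equal $[n]_q$) the argument would simply be false.

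What you are missing, and what the paper uses as the pivot, is the classical fact (see \cite{garsiahaimanqLagrange}) that $\widetilde H_\mu[X;q,1/q]$ is a scalar multiple of $s_\mu[X/(1-q)]$. This means you never need the Macdonald coefficients of $\omega(p_n)$ at all: write $\omega(p_n)/(1-q^n)=\sum_{a}(-1)^{a-1}s_{(a,1^{n-a})}[X/(1-q)]$ directly by Murnaghan--Nakayama, observe that these are eigenvectors of $\Delta_f|_{t=1/q}$ with eigenvalue $f[q^{-(n-a)}[n]_q]=q^{-k(n-a)}f[[n]_q]$, pull out $f[[n]_q]q^{-k(n-1)}$, and recognize the remaining sum $\sum_a(-q^k)^{a-1}(1-q^k)s_{(a,1^{n-a})}[X/(1-q)]=\sum_{\mu\vdash n}s_{\mu'}[1-q^k]s_\mu[X/(1-q)]=e_n[X[k]_q]$ via \eqref{eq:s_plethystic_eval} and the Cauchy identity. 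That is a complete proof in five lines; your route through $\Pi_\mu$, $w_\mu$ and cancelling simultaneous zeros is not needed.
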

\begin{proof}
By the definition of plethystic notation and Murnaghan-Nakayama rule, we have
\begin{align*}
\omega(p_n)/(1-q^n) & =(-1)^{n-1}p_n/(1-q^n)\\
& = ((-1)^{n-1}p_n)[X/(1-q)]\\
& = \left(\sum_{a=1}^n(-1)^{a-1}s_{(a,1^{n-a})}\right)[X/(1-q)]\\
& = \sum_{a=1}^n(-1)^{a-1}s_{(a,1^{n-a})}[X/(1-q)]
\end{align*}
It is well-known (see \cite{garsiahaimanqLagrange}) that
\begin{equation}
\widetilde{H}_{\mu}(X;q,1/q)=Cs_{\mu}\left[\frac{X}{1-q}\right]
\end{equation}
for all $\mu\vdash n$ and some constant $C$ (depending on $\mu$), so
\begin{align*}
{\Delta_f \omega(p_n)/(1-q^n)}_{\big{|}_{t=1/q}} & = \sum_{a=1}^n(-1)^{a-1}\Delta_f s_{(a,1^{n-a})}[X/(1-q)]\\
& = \sum_{a=1}^n(-1)^{a-1}f[B_{(a,1^{n-a})}(q,1/q)] s_{(a,1^{n-a})}[X/(1-q)].
\end{align*}
But notice that
\begin{equation}
B_{(a,1^{n-a})}(q,1/q)=q^{-(n-a)}[n]_q,
\end{equation}
so that
\begin{align*}
{\Delta_f \omega(p_n)/(1-q^n)}_{\big{|}_{t=1/q}} & = \sum_{a=1}^n(-1)^{a-1}f[B_{(a,1^{n-a})}(q,1/q)] s_{(a,1^{n-a})}[X/(1-q)]\\
& = \sum_{a=1}^n(-1)^{a-1}f[q^{-(n-a)}[n]_q] s_{(a,1^{n-a})}[X/(1-q)]\\
& = \sum_{a=1}^n(-1)^{a-1}q^{-k(n-a)}f[[n]_q] s_{(a,1^{n-a})}[X/(1-q)]\\
& =  f[[n]_q] q^{-k(n-1)}\sum_{a=1}^n(-1)^{a-1}q^{k(a-1)} s_{(a,1^{n-a})}[X/(1-q)]\\
& =  \frac{f[[n]_q]}{(1-q^k) q^{k(n-1)}}\sum_{a=1}^n(-q^k)^{a-1} (1-q^k) s_{(a,1^{n-a})}[X/(1-q)]
\end{align*}
\begin{align*}
\text{(using \eqref{eq:s_plethystic_eval})} & =  \frac{f[[n]_q]}{(1-q^k) q^{k(n-1)}}\sum_{a=1}^ns_{(n-a+1,1^{a-1})}[1-q^k] s_{(a,1^{n-a})}[X/(1-q)]\\
\text{(using \eqref{eq:s_plethystic_eval})}  & = \frac{f[[n]_q]}{(1-q^k) q^{k(n-1)}}\sum_{\mu\vdash n} s_{\mu'}[1-q^k] s_{\mu}[X/(1-q)]\\
\text{(using \eqref{eq:Cauchy_identities})}& = \frac{f[[n]_q]}{(1-q^k) q^{k(n-1)}}e[X[k]_q].
\end{align*}
This immediately implies our theorem.
\end{proof}
The following theorem, due to Haglund, Remmel and Wilson, is proved in a similar way.
\begin{theorem}[\cite{haglundremmelwilson} Theorem~5.1]
For any $f\in \Lambda^{(k)}$ we have
\begin{equation} \label{eq:HRW_lemma}
{\Delta_f e_n}_{\big{|}_{t=1/q}}=\frac{f[[n]_q]}{[k+1]_qq^{k(n-1)}}e_n[X[k+1]_q].
\end{equation}
\end{theorem}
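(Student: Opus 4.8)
The plan is to mirror, almost verbatim, the proof of Theorem~\ref{thm:omegapn} given just above, but with $\omega(p_n)$ replaced by $e_n$. The only structural input that changes is the Schur expansion of the plethystic object: instead of the Murnaghan--Nakayama expansion of $\omega(p_n)$ over hook shapes with alternating signs, I would start from the trivial fact that $e_n = s_{(1^n)}$ is a single Schur function indexed by the column, and then use the same specialization $\widetilde H_\mu(X;q,1/q) = C_\mu\, s_\mu[X/(1-q)]$ from \cite{garsiahaimanqLagrange} together with $\Delta_f$ acting diagonally on the $\widetilde H_\mu$ basis.

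Concretely, I would proceed as follows. First, write $e_n[X/(1-q)] \cdot (1-q)$-type manipulations are not needed; instead observe that $\Delta_f$ is defined on the $\widetilde H_\mu$ basis, so I must expand $e_n$ (or a rescaled version) in that basis before specializing. The cleanest route: note that $\Delta_f s_\mu[X/(1-q)]\big|_{t=1/q} = f[B_\mu(q,1/q)]\, s_\mu[X/(1-q)]\big|_{t=1/q}$ because $\widetilde H_\mu(X;q,1/q)$ is a scalar multiple of $s_\mu[X/(1-q)]$. Then compute $B_{(1^n)}(q,1/q)$: the column partition $(1^n)$ has cells with co-arm $0$ and co-legs $0,1,\dots,n-1$, so $B_{(1^n)}(q,t) = \sum_{i=0}^{n-1} t^i$, hence $B_{(1^n)}(q,1/q) = \sum_{i=0}^{n-1} q^{-i} = q^{-(n-1)}[n]_q$. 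Therefore $f[B_{(1^n)}(q,1/q)] = f[q^{-(n-1)}[n]_q] = q^{-k(n-1)} f[[n]_q]$ since $f$ is homogeneous of degree $k$. This already produces the prefactor $f[[n]_q]/q^{k(n-1)}$, and what remains is to identify $e_n[X/(1-q)]\big|$ appropriately rescaled with $\frac{1}{[k+1]_q} e_n[X[k+1]_q]$.

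For that last identification I would use the Cauchy-type argument exactly as in the proof of Theorem~\ref{thm:omegapn}: write $e_n = \sum_{a=1}^n (-1)^{a-1} s_{(a,1^{n-a})}$? No --- that is for $p_n$. Rather, for $e_n$ the correct move is to expand $e_n[X[k+1]_q]$ using the Cauchy identity \eqref{eq:Cauchy_identities}: $e_n[X[k+1]_q] = \sum_{\mu\vdash n} s_\mu[X] s_{\mu'}[[k+1]_q]$, and separately $\Delta_f e_n$ on $\Lambda^{(n)}$ equals $\sum_\mu \langle e_n,\widetilde H_\mu/w_\mu\rangle_\ast$-type expansion; at $t=1/q$ the sum collapses. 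Honestly the shortest path is: apply $\Delta_f$ to the known identity $e_n[X[k+1]_q/[k+1]_q]\cdot(\text{something})$ --- but cleanest of all, I would simply cite that \eqref{eq:HRW_lemma} is Theorem~5.1 of \cite{haglundremmelwilson}, whose proof is the one sketched, and note that the argument is parallel to Theorem~\ref{thm:omegapn} with the hook-sum expansion of $p_n$ replaced by the Cauchy expansion needed to turn $\sum_a (-q^{k+1})^{a-1}(1-q^{k+1})s_{(a,1^{n-a})}[X/(1-q)]$ --- which arises from $s_{(n-a+1,1^{a-1})}[1-q^{k+1}]$ via \eqref{eq:s_plethystic_eval} --- into $\frac{1}{1-q^{k+1}} e_n[X[k+1]_q]$ via \eqref{eq:Cauchy_identities}.

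\textbf{Main obstacle.} The delicate point is bookkeeping the powers of $q$ and the exact $[k+1]_q$ versus $[k]_q$ normalization: in Theorem~\ref{thm:omegapn} the relevant arm/leg data came from hook shapes $(a,1^{n-a})$ with $B = q^{-(n-a)}[n]_q$ giving a denominator $[k]_q$, whereas here the $\Delta_f$ eigenvalue uses $B_{(1^n)} = q^{-(n-1)}[n]_q$ but the resulting Schur-positivity expansion is governed by evaluations $s_{\mu'}[1-q^{k+1}]$, shifting $k \mapsto k+1$ in the alphabet $[k+1]_q$ and hence in the denominator. Getting that shift right --- and checking the single edge power $q^{-k(n-1)}$ survives cleanly --- is the only real content; everything else is a transcription of the preceding proof. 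Since the paper explicitly attributes this to \cite[Theorem~5.1]{haglundremmelwilson} and says it "is proved in a similar way," I would present the proof as the above parallel argument, spelling out $B_{(1^n)}(q,1/q) = q^{-(n-1)}[n]_q$ and the Cauchy collapse, and leave the residual $q$-arithmetic as the routine verification it is.
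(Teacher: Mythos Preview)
Your proposal contains a genuine gap in its central step. You write that since $e_n = s_{(1^n)}$, the only eigenvalue needed is $f[B_{(1^n)}(q,1/q)]$, and then ``what remains is to identify $e_n[X/(1-q)]$ appropriately rescaled with $\frac{1}{[k+1]_q} e_n[X[k+1]_q]$.'' But $e_n[X/(1-q)]$ is a fixed symmetric function independent of $k$, so it cannot equal a $k$-dependent rescaling of $e_n[X[k+1]_q]$. The confusion is between $e_n[X]=s_{(1^n)}[X]$ (one Schur function in the variables $X$) and the expansion of $e_n[X]$ in the basis $\{s_\mu[X/(1-q)]\}$, which is what the $\widetilde H_\mu$ basis becomes at $t=1/q$. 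The latter expansion is not a single term.

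The step that replaces the Murnaghan--Nakayama expansion of $\omega(p_n)$ is the Cauchy identity itself, applied with the alphabet $1-q$:
\[
e_n[X]=e_n\!\left[\tfrac{X}{1-q}\cdot(1-q)\right]=\sum_{\mu\vdash n} s_\mu\!\left[\tfrac{X}{1-q}\right]s_{\mu'}[1-q]
=\sum_{a=1}^n (-q)^{a-1}(1-q)\, s_{(a,1^{n-a})}\!\left[\tfrac{X}{1-q}\right],
\]
using \eqref{eq:s_plethystic_eval}. So the $\Delta_f$ eigenvalues run over \emph{all} hooks $(a,1^{n-a})$, exactly as in Theorem~\ref{thm:omegapn}, with the same $B_{(a,1^{n-a})}(q,1/q)=q^{-(n-a)}[n]_q$. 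Plugging this in gives
\[
\Delta_f e_n\big|_{t=1/q}
=\frac{f[[n]_q]}{q^{k(n-1)}}\sum_{a=1}^n(-q)^{a-1}(1-q)\,q^{k(a-1)}\,s_{(a,1^{n-a})}\!\left[\tfrac{X}{1-q}\right]
=\frac{f[[n]_q]}{q^{k(n-1)}}\cdot\frac{1-q}{1-q^{k+1}}\sum_{a=1}^n s_{(n-a+1,1^{a-1})}[1-q^{k+1}]\,s_{(a,1^{n-a})}\!\left[\tfrac{X}{1-q}\right],
\]
and the last sum is $e_n[X[k+1]_q]$ by Cauchy again. The shift $k\mapsto k+1$ thus comes not from a different eigenvalue but from the extra $(-q)^{a-1}$ in the hook expansion of $e_n$ combining with the $q^{k(a-1)}$ from $f[B_{(a,1^{n-a})}]$. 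Your ``main obstacle'' paragraph misattributes this shift; once the correct hook expansion of $e_n$ is in place there is no remaining bookkeeping difficulty.
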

We have an easy corollary, which is at the origin of our investigations. This corollary shows that Proposition~\ref{prop:coinc} is not an isolated coincidence.
\begin{corollary} \label{cor:omegapn}
For any $f\in \Lambda^{(k)}$ we have
\begin{equation}
{\langle\Delta_f e_n,e_{n-1}e_1\rangle}_{\big{|}_{t=1/q}}={\langle\Delta_f \omega(p_n),e_{n}\rangle}_{\big{|}_{t=1/q}}.
\end{equation}
\end{corollary}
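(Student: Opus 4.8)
The plan is to deduce Corollary~\ref{cor:omegapn} directly from the two theorems that precede it, namely Theorem~\ref{thm:omegapn} and the Haglund--Remmel--Wilson identity \eqref{eq:HRW_lemma}, together with the elementary scalar-product identity of Lemma~\ref{lem:Mac_hook_coeff}. The key observation is that both sides, after specializing $t=1/q$, reduce to a scalar product of an explicit plethystic $e_n$ against $e_n$, and these two scalar products can be matched by a single $q$-binomial manipulation.

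First I would rewrite the left-hand side. By Lemma~\ref{lem:Mac_hook_coeff} (with $d=1$, $n$ in place of $n$, applied to $f$ replaced by $e_n\in\Lambda^{(n)}$, noting $e_{n-1}e_1=e_1h_{n-1}$ up to the Pieri expansion — more precisely using \eqref{eq:lem_e_h_Delta} directly as the paper does right after Proposition~\ref{prop:coinc}), we have $\langle \Delta_f e_n, e_{n-1}e_1\rangle = \langle \Delta_{e_1}\Delta_f e_n, h_n\rangle$; alternatively, and more usefully, one applies $\omega$-type bookkeeping to land on $\langle \Delta_f e_n, e_{n-1}e_1\rangle$. Then I would feed in \eqref{eq:HRW_lemma}: at $t=1/q$,
\[
\langle \Delta_f e_n, e_{n-1}e_1\rangle\big|_{t=1/q}
=\frac{f[[n]_q]}{[k+1]_q q^{k(n-1)}}\,\langle e_n[X[k+1]_q], e_{n-1}e_1\rangle.
\]
Now $\langle e_n[X[k+1]_q], e_{n-1}e_1\rangle = \langle \Delta_{e_1} e_n[X[k+1]_q], h_n\rangle$, and since $\Delta_{e_1}$ acts on $\widetilde H_\mu$ by $B_\mu$, expanding $e_n[X[k+1]_q]$ in the Macdonald basis and pairing with $h_n$ gives a sum that evaluates (via \eqref{eq:Mac_hook_coeff} and the branching/Pieri structure, or directly via $h$- and $e$-specializations at $[k+1]_q$) to $[n]_q\,e_{n-1}[[k+1]_q]\cdot(\text{something})$; the cleanest route is to observe $\langle e_n[XY], e_{n-1}e_1\rangle = p_1[Y]\,\langle e_n[X], e_{n-1}e_1\rangle$-type homogeneity is false, so instead use $\langle e_n[XY], h_n\rangle=e_n[Y]$ and $\langle e_n[XY], e_{n-1}e_1\rangle = \langle\Delta_{e_1}e_n[XY],h_n\rangle$ with $\Delta_{e_1}\mapsto e_1[B_\mu]$, producing $\sum_\mu (MB_\mu\Pi_\mu/w_\mu)\widetilde H_\mu[XY]\,e_1[B_\mu]$ paired with $h_n$, which is $\sum_\mu (MB_\mu\Pi_\mu/w_\mu) B_\mu \widetilde H_\mu[Y]$; at $Y=[k+1]_q$ this is a one-variable $q$-quantity. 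Similarly, for the right-hand side, Theorem~\ref{thm:omegapn} gives
\[
\langle \Delta_f\omega(p_n), e_n\rangle\big|_{t=1/q}
=\frac{[n]_q f[[n]_q]}{[k]_q q^{k(n-1)}}\,\langle e_n[X[k]_q], e_n\rangle
=\frac{[n]_q f[[n]_q]}{[k]_q q^{k(n-1)}}\,e_n[[k]_q],
\]
using $\langle e_n[XY],e_n\rangle=\langle e_n[XY],h_n\rangle\cdot(\dots)$ — actually $\langle e_n[XY], e_n\rangle$ we compute via Cauchy as well, but the simplest is $\langle e_n[X[k]_q], e_n\rangle = e_n[[k]_q]$ only after noting $\langle e_n[XZ],e_n\rangle = e_n[Z]$ when... here one must be careful, since $\langle\,,e_n\rangle$ is not $\langle\,,h_n\rangle$; the correct statement is $\langle e_n[XY],s_\lambda\rangle = s_{\lambda'}[Y]$ by \eqref{eq:Cauchy_identities}, so $\langle e_n[XY],e_n\rangle=\langle e_n[XY],s_{(1^n)}\rangle=s_{(n)}[Y]=h_n[Y]$. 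So the right side is $\frac{[n]_q f[[n]_q]}{[k]_q q^{k(n-1)}}h_n[[k]_q]$.

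So the identity to verify reduces, after cancelling the common factor $f[[n]_q]/q^{k(n-1)}$, to a purely $q$-binomial statement:
\[
\frac{1}{[k+1]_q}\,\big\langle e_n[X[k+1]_q],\, e_{n-1}e_1\big\rangle
=\frac{[n]_q}{[k]_q}\,h_n[[k]_q].
\]
Using \eqref{eq:e_q_binomial}, $e_n[[k+1]_q]=q^{\binom n2}{k+1\brack n}_q$ and $h_n[[k]_q]=q^{0}{n+k-1\brack n}_q$ by \eqref{eq:h_q_binomial}; and $\langle e_n[X[k+1]_q],e_{n-1}e_1\rangle$ — using the Pieri rule $e_{n-1}e_1=s_{(n)}+s_{(2,1^{n-2})}$, hence by \eqref{eq:Cauchy_identities} equals $s_{(1^n)}[[k+1]_q]+s_{(n-1,1)}[[k+1]_q]=h_n[[k+1]_q]+s_{(n-1,1)}[[k+1]_q]$. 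The resulting identity among $q$-binomials is then a short computation (expand $s_{(n-1,1)}=h_{n-1}h_1-h_n$, use $h_m[[k+1]_q]={m+k\brack m}_q$, and simplify). The main obstacle I anticipate is purely bookkeeping: getting the scalar-product reductions exactly right — in particular keeping $\langle\,,h_n\rangle$ versus $\langle\,,e_n\rangle$ versus $\langle\,,e_{n-1}e_1\rangle$ straight and choosing the cleanest Pieri/Cauchy route — and then confirming the final $q$-binomial identity, which is elementary but must be checked carefully. I would present the proof as: (1) apply Theorem~\ref{thm:omegapn} and \eqref{eq:HRW_lemma}; (2) reduce both sides to one-variable plethystic evaluations via the Cauchy identity \eqref{eq:Cauchy_identities}; (3) conclude by the $q$-binomial identity, citing \eqref{eq:h_q_binomial} and \eqref{eq:e_q_binomial}.
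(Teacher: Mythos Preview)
Your approach is correct and is essentially the same as the paper's: apply Theorem~\ref{thm:omegapn} and \eqref{eq:HRW_lemma}, evaluate the remaining scalar products via the Cauchy identity \eqref{eq:Cauchy_identities}, and finish with a $q$-binomial check. The paper streamlines your step~(2) by observing directly that $\langle e_n[XY],g\rangle=(\omega g)[Y]$ for any $g$ (an immediate consequence of \eqref{eq:Cauchy_identities}), so that $\langle e_n[X[k]_q],e_n\rangle=h_n[[k]_q]$ and $\langle e_n[X[k+1]_q],e_{n-1}e_1\rangle=(h_{n-1}h_1)[[k+1]_q]$ without any Pieri detour or $\Delta_{e_1}$ computation; note also the slip $e_{n-1}e_1=s_{(n)}+s_{(2,1^{n-2})}$ should read $s_{(1^n)}+s_{(2,1^{n-2})}$, though your subsequent conjugation is correct.
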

\begin{proof}
Using \eqref{eq:h_q_binomial}, we have
\begin{equation}
h_n[[k]_q]=\begin{bmatrix}
n+k-1\\
k-1
\end{bmatrix}_q.
\end{equation}
Using Cauchy's identity \eqref{eq:Cauchy_identities}, we get
\begin{equation}
e[X[k]_q]=\sum_{\mu\vdash n}s_{\mu'}[X]s_{\mu}[[k]_q].
\end{equation}
Therefore
\begin{equation}
\langle e[X[k]_q],e_n\rangle = h_n[[k]_q]=\begin{bmatrix}
n+k-1\\
k-1
\end{bmatrix}_q,
\end{equation}
and similarly
\begin{equation}
\langle e[X[k+1]_q],e_{n-1}e_1\rangle  = (h_{n-1}h_1)[[k+1]_q]=\begin{bmatrix}
n+k-1\\
k
\end{bmatrix}_q [k+1]_q.
\end{equation}
So
\begin{align*}
{\langle\Delta_f \omega(p_n),e_{n}\rangle}_{\big{|}_{t=1/q}} & = \frac{[n]_qf[[n]_q]}{[k]_qq^{k(n-1)}}\langle e_n[X[k]_q],e_{n} \rangle\\
& = \frac{f[[n]_q]}{q^{k(n-1)}}\frac{[n]_q}{[k]_q}\begin{bmatrix}
n+k-1\\
k-1
\end{bmatrix}_q\\
& = \frac{f[[n]_q]}{q^{k(n-1)}}\begin{bmatrix}
n+k-1\\
k
\end{bmatrix}_q\\
& = \frac{f[[n]_q]}{[k+1]_qq^{k(n-1)}}\begin{bmatrix}
n+k-1\\
k
\end{bmatrix}_q[k+1]_q\\
& = \frac{f[[n]_q]}{[k+1]_qq^{k(n-1)}}\langle e[X[k+1]_q],e_{n-1}e_1\rangle\\
& = {\langle \Delta_f e_n,e_{n-1}e_1\rangle}_{\big{|}_{t=1/q}}.
\end{align*}
	
\end{proof}

The following result shows a surprisingly tight relation between the square conjecture in \cite{loehrwarringtonqtsquare} and our Conjecture~\ref{conj:new_square}.
\begin{theorem} \label{thm:square_cnjs_at_1_q}
We have
\begin{equation}
{\Delta_{e_n} \omega(p_n)}_{\big{|}_{t=1/q}}={\Delta_{e_{n-1}}e_n}_{\big{|}_{t=1/q}}.
\end{equation}
\end{theorem}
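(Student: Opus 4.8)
The statement is a direct consequence of the two specialization formulas already available: Theorem~\ref{thm:omegapn} for $\Delta_f\omega(p_n)$ and identity \eqref{eq:HRW_lemma} for $\Delta_f e_n$, applied with carefully chosen $f$. The plan is to evaluate each side of the claimed identity by plugging in the appropriate $f$, then to reduce both to the same explicit symmetric function (a scalar multiple of $e_n[X[n]_q]$), and finally to check the scalars agree.

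First I would apply Theorem~\ref{thm:omegapn} to the left-hand side with $f=e_n\in\Lambda^{(n)}$, so $k=n$. This gives
\[
{\Delta_{e_n}\omega(p_n)}_{\big{|}_{t=1/q}}=\frac{[n]_q\,e_n[[n]_q]}{[n]_q\,q^{n(n-1)}}\,e_n[X[n]_q]=\frac{e_n[[n]_q]}{q^{n(n-1)}}\,e_n[X[n]_q].
\]
Then, using \eqref{eq:e_q_binomial} with $k=n$, one has $e_n[[n]_q]=q^{\binom{n}{2}}\begin{bmatrix}n\\ n\end{bmatrix}_q=q^{\binom{n}{2}}$, and since $\binom{n}{2}-n(n-1)=-\binom{n}{2}$ this collapses to $q^{-\binom{n}{2}}e_n[X[n]_q]$.

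Next I would apply \eqref{eq:HRW_lemma} to the right-hand side with $f=e_{n-1}\in\Lambda^{(n-1)}$, so $k=n-1$. This gives
\[
{\Delta_{e_{n-1}}e_n}_{\big{|}_{t=1/q}}=\frac{e_{n-1}[[n]_q]}{[n]_q\,q^{(n-1)^2}}\,e_n[X[n]_q].
\]
Again by \eqref{eq:e_q_binomial}, $e_{n-1}[[n]_q]=q^{\binom{n-1}{2}}\begin{bmatrix}n\\ n-1\end{bmatrix}_q=q^{\binom{n-1}{2}}[n]_q$, so the $[n]_q$'s cancel and, using $\binom{n-1}{2}-(n-1)^2=-\binom{n}{2}$, this also collapses to $q^{-\binom{n}{2}}e_n[X[n]_q]$. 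Comparing with the previous display yields the theorem.

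There is essentially no obstacle here: the only thing to verify is the pair of elementary identities of $q$-exponents $\binom{n}{2}-n(n-1)=\binom{n-1}{2}-(n-1)^2=-\binom{n}{2}$ and the cancellation of the $[n]_q$ factors, both of which are immediate. The conceptual content is entirely carried by Theorem~\ref{thm:omegapn} and \eqref{eq:HRW_lemma}; the present theorem is just the observation that, after specialization, $\Delta_{e_n}$ acting on $\omega(p_n)$ and $\Delta_{e_{n-1}}$ acting on $e_n$ both land on the same image, namely $q^{-\binom{n}{2}}e_n[X[n]_q]$.
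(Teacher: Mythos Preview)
Your proof is correct and follows essentially the same approach as the paper: both apply Theorem~\ref{thm:omegapn} with $f=e_n$ and \eqref{eq:HRW_lemma} with $f=e_{n-1}$, then use \eqref{eq:e_q_binomial} to evaluate $e_n[[n]_q]$ and $e_{n-1}[[n]_q]$ and check the resulting scalars agree. The paper presents this as a chain of equalities transforming the left-hand side into the right-hand side, while you compute each side separately to the common value $q^{-\binom{n}{2}}e_n[X[n]_q]$; this is a cosmetic difference only.
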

\begin{proof}
Using \eqref{eq:e_q_binomial}, we have
\begin{equation} \label{eq:aux_e_qbin}
e_n[[n]_{q}]=q^{\binom{n}{2}}\quad \text{ and }\quad
e_{n-1}[[n]_{q}]=q^{\binom{n-1}{2}}[n]_q.
\end{equation}
	
Using Theorem~\ref{thm:omegapn} we have
\begin{align*}
{\Delta_{e_n} \omega(p_n)}_{\big{|}_{t=1/q}} & =
\frac{[n]_qe_{n}[[n]_q]}{[n]_qq^{n(n-1)}}e_n[X[n]_q]\\
\text{(using \eqref{eq:aux_e_qbin})}& = \frac{[n]_qq^{\binom{n}{2}}}{[n]_qq^{n(n-1)}}e_n[X[n]_q]\\
& = \frac{[n]_qq^{\binom{n-1}{2}+(n-1)}}{[n]_qq^{n(n-1)}}e_n[X[n]_q]
\end{align*}
\begin{align*}
& = \frac{[n]_qq^{\binom{n-1}{2}}}{[n]_qq^{(n-1)^2}}e_n[X[n]_q]\\
\text{(using \eqref{eq:aux_e_qbin})}& = \frac{e_{n-1} [[n]_q]}{[n]_qq^{(n-1)^2}}e_n[X[n]_q]\\
\text{(using \eqref{eq:HRW_lemma})} & = {\Delta_{e_{n-1}} e_n}_{\big{|}_{t=1/q}}.
\end{align*}
\end{proof}

\section{Open problems}

Other than the open questions in \cite{haglundremmelwilson} that we didn't answer, our work leaves open a few natural combinatorial questions.

For example, consider the identity
\begin{equation}
\mathbf{D} \widetilde{S}_{n,k}^{(a,b)} = \sum_{j=0}^{\min(n-k,b)} \mathbf{B}' \overline{T}_{n,k+j,j}^{(a,b)},
\end{equation}
which is given by combining Theorem~\ref{thm:qt_enumerators_formulae} with Theorem~\ref{thm:rel_F_H}. There should be a bijective explanation of this identity. Notice that, summing over $k$, this would lead to a bijective explanation of the identity
\begin{equation}
\mathbf{D} \widetilde{\mathcal{D}}_{n}^{(a,b)} = \mathbf{B}' \overline{\mathcal{D}}_{n}^{(a,b)},
\end{equation}
coming from Theorem~\ref{thm:decoqtSchroeder}.

We observe here that for $b=0$ these explanations are provided by the bijection given in \cite{ehkk}, so it is conceivable to look for an extension of that bijection to objects with decorated rises.

Another natural problem is the following: assuming Conjecture~\ref{conj:new_square} and recalling the results in \cite{leven}, find a combinatorial explanation of Theorem~\ref{thm:square_cnjs_at_1_q}.

\section{Acknowledgments}

We thank Alessandro Iraci for useful conversations. The first author is pleased to thank Adriano Garsia, who, after countless hours of explanations along several years, finally converted him to this fascinating subject.

\section*{Appendix: proofs of elementary lemmas}

\subsection*{Proof of Lemma~\ref{lem:elementary4}}

Using \eqref{eq:qbin_recursion}, we have
\begin{equation*}
q^{s}\begin{bmatrix}
h-1\\
s\\
\end{bmatrix}_q \begin{bmatrix}
h+k-s-1\\
h\\
\end{bmatrix}_q + \begin{bmatrix}
h-1\\
s-1\\
\end{bmatrix}_q \begin{bmatrix}
h+k-s\\
h\\
\end{bmatrix}_q=\qquad \qquad \qquad \qquad \qquad 
\end{equation*}
\begin{align*}
& = q^{s}\begin{bmatrix}
h-1\\
s\\
\end{bmatrix}_q \begin{bmatrix}
h+k-s-1\\
h\\
\end{bmatrix}_q \\
& \quad +  \begin{bmatrix}
h-1\\
s-1\\
\end{bmatrix}_q\left( \begin{bmatrix}
h+k-s-1\\
k-s-1\\
\end{bmatrix}_q+q^{k-s} \begin{bmatrix}
h+k-s-1\\
k-s\\
\end{bmatrix}_q\right)
\end{align*}
\begin{align*}
\qquad \quad & = \begin{bmatrix}
h\\
s\\
\end{bmatrix}_q \begin{bmatrix}
h+k-s-1\\
k-s-1\\
\end{bmatrix}_q+ q^{k-s}\begin{bmatrix}
h-1\\
s-1\\
\end{bmatrix}_q \begin{bmatrix}
h+k-s-1\\
k-s\\
\end{bmatrix}_q \\
& = \frac{[h]_q!}{[s]_q![h-s]_q!}\frac{[h+k-s-1]_q!}{[h]_q![k-s-1]_q!} +q^{k-s}\frac{[h-1]_q!}{[s-1]_q![h-s]_q!}\frac{[h+k-s-1]_q!}{[h-1]_q![k-s]_q!}\\
& = \frac{[k]_q!}{[s]_q![k-s]_q!}\frac{[h+k-s-1]_q!}{[k-1]_q![h-s]_q!} \frac{1}{[k]_q} \left( [k-s]_q+q^{k-s}[s]_q\right)\\
& =\begin{bmatrix}
k\\
s\\
\end{bmatrix}_q \begin{bmatrix}
h+k-s-1\\
h-s\\
\end{bmatrix}_q.
\end{align*}
This completes the proof of the lemma.

\subsection*{Proof of Lemma~\ref{lem:elementary1}}

	Let us call $f(s,i,a)$ the left hand side of \eqref{eq:first_qlemma}. 
	First we rewrite
	\begin{align*}
	f(s,i,a) & = \sum_{r=1}^{i}\begin{bmatrix}
	i-1\\
	r-1
	\end{bmatrix}_q \begin{bmatrix}
	r+s+a-1\\
	s-1
	\end{bmatrix}_q q^{\binom{r}{2}+r-ir}(-1)^{i-r}\\
	& = \sum_{j=0}^{i-1} \begin{bmatrix}
	i-1\\
	j
	\end{bmatrix}_q  \begin{bmatrix}
	s+a+j\\
	s-1
	\end{bmatrix}_q q^{\binom{j+2}{2}-ij-i}(-1)^{i-j-1}.
	\end{align*}
	Then, using \eqref{eq:qbin_recursion}, we have
	\begin{align*}
	f(s,i,a) & = \sum_{j=0}^{i-1} \begin{bmatrix}
	i-1\\
	j
	\end{bmatrix}_q  \begin{bmatrix}
	s+a+j\\
	s-1
	\end{bmatrix}_q q^{\binom{j+2}{2}-ij-i}(-1)^{i-j-1}\\
	& = \sum_{j=0}^{i-1}\left(q^j\begin{bmatrix}
	i-2\\
	j
	\end{bmatrix}_q+\begin{bmatrix}
	i-2\\
	j-1
	\end{bmatrix}_q\right) \begin{bmatrix}
	s+a+j\\
	s-1
	\end{bmatrix}_q q^{\binom{j+2}{2}-ij-i}(-1)^{i-j-1}\\
	\\
	& = -q^{-1}\sum_{j=0}^{i-1}\begin{bmatrix}
	i-2\\
	j
	\end{bmatrix}_q \begin{bmatrix}
	s+a+j\\
	s-1
	\end{bmatrix}_q q^{\binom{j+2}{2}-(i-1)j-(i-1)}(-1)^{(i-1)-j-1}\\
	\\
	& \quad + \sum_{j=0}^{i-1} \begin{bmatrix}
	i-2\\
	j-1
	\end{bmatrix}_q  \begin{bmatrix}
	s+a+j\\
	s-1
	\end{bmatrix}_q q^{\binom{j+2}{2}-ij-i}(-1)^{i-j-1}\\
	& =-q^{-1}f(s,i-1,a)\\
	& \quad + \sum_{h=0}^{i-2} \begin{bmatrix}
	i-2\\
	h
	\end{bmatrix}_q  \begin{bmatrix}
	s+a+1+h\\
	s-1
	\end{bmatrix}_q q^{\binom{h+2}{2}+h+2-ih-i-i}(-1)^{i-h-2}\\
	&=-q^{-1}f(s,i-1,a)+q^{1-i}f(s,i-1,a+1).
	\end{align*}
	So by induction,
	\begin{align*}
	f(s,i,a)&=-q^{-1}f(s,i-1,a)+q^{1-i}f(s,i-1,a+1)\\
	& =-q^{-1}q^{\binom{i-1}{2}+(i-2)a} \begin{bmatrix}
	s+a\\
	i-1+a
	\end{bmatrix}_q+q^{1-i}q^{\binom{i-1}{2}+(i-2)(a+1)} \begin{bmatrix}
	s+a+1\\
	i+a
	\end{bmatrix}_q
		\end{align*}
			\begin{align*}
	& =q^{\binom{i-1}{2}+(i-2)a-1}\left(\begin{bmatrix}
	s+a+1\\
	i+a
	\end{bmatrix}_q- \begin{bmatrix}
	s+a\\
	i-1+a
	\end{bmatrix}_q \right)\\
	& =q^{\binom{i-1}{2}+(i-2)a-1}q^{i+a}\begin{bmatrix}
	s+a\\
	i+a
	\end{bmatrix}_q \\
	& =q^{\binom{i}{2}+(i-1)a} \begin{bmatrix}
	s+a\\
	i+a
	\end{bmatrix}_q,
	\end{align*}
as we wanted. 

The base cases are easy to check, and the lemma is proved.

\subsection*{Proof of Lemma~\ref{lem:elementary2}}

	Let us call $f(k,s,j)$ the right hand side of \eqref{eq:qlemma3}. 
	
	We proceed by induction on $k$, $s$ and $j$. 
	
	For the inductive step, using \eqref{eq:qbin_recursion}, we have
	\begin{equation*}
	f(k,s,j) =\qquad \qquad \qquad \qquad \qquad \qquad \qquad \qquad \qquad \qquad \qquad \qquad 
	\end{equation*}
	\begin{align*}
	& = \sum_{a=0}^{k-s} \sum_{b= 0 }^{a}h_{k-s-a} \left[\frac{1}{1-q}\right]q^{\binom{a}{2}+s\cdot a} \begin{bmatrix}
	s+j+b\\
	s+a
	\end{bmatrix}_q e_{b} \left[-\frac{1}{1-q}\right]  e_{a-b} \left[\frac{1}{1-q}\right]\\
	& = \sum_{a=0}^{k-s} \sum_{b= 0 }^{a}h_{k-s-a} \left[\frac{1}{1-q}\right]q^{\binom{a}{2}+s\cdot a} \begin{bmatrix}
	s+j-1+b\\
	j-1-a+b
	\end{bmatrix}_q e_{b} \left[-\frac{1}{1-q}\right]  e_{a-b} \left[\frac{1}{1-q}\right]\\
	& \quad + \sum_{a=0}^{k-s} \sum_{b= 0 }^{a}h_{k-s-a} \left[\frac{1}{1-q}\right]q^{\binom{a}{2}+s\cdot a} q^{j-a+b} \begin{bmatrix}
	s+j+b-1\\
	j-a+b
	\end{bmatrix}_q e_{b} \left[-\frac{1}{1-q}\right]  e_{a-b} \left[\frac{1}{1-q}\right]\\
	& = f(k,s,j-1)+\\
	& \quad + q^j\sum_{a=0}^{k-s} \sum_{b= 0 }^{a}h_{k-s-a} \left[\frac{1}{1-q}\right]q^{\binom{a}{2}+(s-1)\cdot a} \begin{bmatrix}
	s-1+j+b\\
	j-a+b
	\end{bmatrix}_q e_{b} \left[-\frac{1}{1-q}\right]  e_{a-b} \left[\frac{1}{1-q}\right]\\
	& \quad + q^{j} \sum_{a=0}^{k-s} \sum_{b= 0 }^{a}h_{k-s-a} \left[\frac{1}{1-q}\right]q^{\binom{a}{2}+(s-1)\cdot a} \begin{bmatrix}
	s+j+b-1\\
	j-a+b
	\end{bmatrix}_q (q^b-1)e_{b} \left[-\frac{1}{1-q}\right]  e_{a-b} \left[\frac{1}{1-q}\right]
		\end{align*}
			\begin{align*}
	& = f(k,s,j-1)+q^jf(k-1,s-1,j)+\\
	& \quad + q^{j} \sum_{a=1}^{k-s} \sum_{b= 1 }^{a}h_{k-s-a} \left[\frac{1}{1-q}\right]q^{\binom{a}{2}+(s-1)\cdot a} \begin{bmatrix}
	s+j+b-1\\
	j-a+b
	\end{bmatrix}_q e_{b-1} \left[-\frac{1}{1-q}\right]  e_{a-b} \left[\frac{1}{1-q}\right]\\
	& = f(k,s,j-1)+q^jf(k-1,s-1,j)+\\
	& \quad + q^{j} \sum_{\tilde{a}=0}^{k-1-s} \sum_{\tilde{b}= 0 }^{\tilde{a}}h_{k-1-s-\tilde{a}} \left[\frac{1}{1-q}\right]q^{\binom{\tilde{a}}{2}+\tilde{a}+(s-1)\cdot (\tilde{a}+1)} \begin{bmatrix}
	s+j+\tilde{b}\\
	j-\tilde{a}+\tilde{b}
	\end{bmatrix}_q e_{\tilde{b}} \left[-\frac{1}{1-q}\right]  e_{\tilde{a}-\tilde{b}} \left[\frac{1}{1-q}\right]\\
	& = f(k,s,j-1)+q^jf(k-1,s-1,j)+q^{j+s-1}f(k-1,s,j).
	\end{align*}
	By induction, this gives
	\begin{align*}
	f(k,s,j)& = f(k,s,j-1)+q^jf(k-1,s-1,j)+q^{j+s-1}f(k-1,s,j)\\
	& = \begin{bmatrix}
	s+j-1\\
	j-1
	\end{bmatrix}_q \begin{bmatrix}
	k+j-2\\
	k-s
	\end{bmatrix}_q
	+q^j \begin{bmatrix}
	s+j-1\\
	j
	\end{bmatrix}_q \begin{bmatrix}
	k+j-2\\
	k-s
	\end{bmatrix}_q \\
	&\quad +q^{j+s-1} \begin{bmatrix}
	s+j\\
	j
	\end{bmatrix}_q \begin{bmatrix}
	k+j-2\\
	s+j-1
	\end{bmatrix}_q\\
	& = \begin{bmatrix}
	s+j\\
	j
	\end{bmatrix}_q \begin{bmatrix}
	k+j-2\\
	s+j-2
	\end{bmatrix}_q +q^{j+s-1} \begin{bmatrix}
	s+j\\
	j
	\end{bmatrix}_q \begin{bmatrix}
	k+j-2\\
	s+j-1
	\end{bmatrix}_q\\
	& =\begin{bmatrix}
	s+j\\
	j
	\end{bmatrix}_q \begin{bmatrix}
	k+j-1\\
	s+j-1
	\end{bmatrix}_q,
	\end{align*}
	as we wanted. 
	
	The base cases are easy to check. This completes the proof of the lemma.

\subsection*{Proof of Lemma~\ref{lem:elementary3}}

We have
	\begin{align*}
	\begin{bmatrix}
	k\\
	s
	\end{bmatrix}_q \begin{bmatrix}
	k+j-1\\
	j
	\end{bmatrix}_q (1-q^{s+j}) & = (1-q)\begin{bmatrix}
	k\\
	s
	\end{bmatrix}_q \begin{bmatrix}
	k+j-1\\
	j
	\end{bmatrix}_q [s+j]_q \frac{[s+j-1]_q!}{[s+j-1]_q!}\\
	& = (1-q)\frac{[k]_q!}{[k-s]_q![s]_q} \frac{[k+j-1]_q!}{[k-1]_q![j]_q}\frac{[s+j]_q!}{[s+j-1]_q!}\\
	& = (1-q)[k]_q\begin{bmatrix}
	s+j\\
	s
	\end{bmatrix}_q \begin{bmatrix}
	k+j-1\\
	k-s
	\end{bmatrix}_q \\
	& = (1-q^k) \begin{bmatrix}
	s+j\\
	s
	\end{bmatrix}_q \begin{bmatrix}
	k+j-1\\
	k-s
	\end{bmatrix}_q .
	\end{align*}
This proves the lemma.

\end{document}